\DeclareMathOperator{\E}{\mathbb{E}}
\DeclareMathOperator{\Y}{\mathcal{Y}}
\DeclareMathOperator{\X}{\mathcal{X}}
\DeclareMathOperator{\W}{\mathcal{W}}
\DeclareMathOperator{\Z}{\mathcal{Z}}
\newcommand{\reals}{\mathbb{R}}
\begin{document}

\title{A Global Lipschitz Stability Perspective for Understanding Approximate Approaches in Bayesian Sequential Learning}

\author{\name Liliang Wang \email liliangw@umich.edu \\
       \addr Department of Aerospace Engineering\\
       University of Michigan\\
       Ann Arbor, MI 48109, USA
       \AND
       \name Alex Gorodetsky \email goroda@umich.edu \\
       \addr Department of Aerospace Engineering\\
       University of Michigan\\
       Ann Arbor, MI 48109, USA}

\editor{}

\maketitle

\begin{abstract}

We establish a general, non-asymptotic error analysis framework for understanding the effects of incremental approximations made by practical approaches for Bayesian sequential learning (BSL) on their long-term inference performance. Our setting covers inverse problems, state estimation, and parameter-state estimation. In these settings, we bound the difference---termed the learning error---between the unknown true posterior and the approximate posterior computed by these approaches, using three widely used distribution metrics: total variation, Hellinger, and Wasserstein distances. This framework builds on our establishment of the global Lipschitz stability of the posterior with respect to the prior across these settings. To the best of our knowledge, this is the first work to establish such global Lipschitz stability under the Hellinger and Wasserstein distances and the first general error analysis framework for approximate BSL methods.

Our framework offers two sets of upper bounds on the learning error. The first set demonstrates the stability of general approximate BSL methods with respect to the incremental approximation process, while the second set is estimable in many practical scenarios. 

Furthermore, as an initial step toward understanding the phenomenon of learning error decay, which is sometimes observed, we identify sufficient conditions under which data assimilation leads to learning error reduction. 

\end{abstract}

\begin{keywords}
  Bayesian sequential learning, approximate methods, stability of Bayesian learning,    Wasserstein distance, online variational inference 
\end{keywords}

\section{Introduction}
\label{sec:intro}

Approximations are at the heart of Bayesian learning approaches addressing real-world problems: in most practical scenarios, exact Bayesian solutions are intractable and approximations are made. In this paper, we study approximate methods for Bayesian sequential learning (BSL), in the context of inverse problems (IP), state estimation (SE), and parameter-state estimation (PS). When data arrive and are assimilated incrementally in these settings, due to either large-scale data or a dynamical system, BSL seeks to obtain a sequence of posteriors $P_k, k = 1,2,\cdots$, by recursively updating an initial prior distribution $P_0$. This process is illustrated in Figure~\ref{fig:sequential_bayes_learning}. At each step $k$, BSL updates the prior $P_{k-1}$, which is the posterior of the previous step, using the newly received data $y_k$, to obtain the posterior $P_k$. This update can be expressed as $P_k=F_k(P_{k-1})$. Here, $F_k$ denotes the prior-to-posterior map, which varies across the three problem contexts. In IP, SE, and PS, the posterior $P_k$ at step $k$ represents, respectively, the distribution of some unknown parameter $X$ given all available data $\mathcal{Y}_{1:k} = (y_1, \ldots, y_k)$, the distribution of a time-evolving state $X_k$ conditioned on $\mathcal{Y}_{1:k}$, and the joint distribution of $X_k$ and system parameters $W$ given $\mathcal{Y}_{1:k}$. 

\begin{figure}
     \centering
     \includegraphics[width=0.8\linewidth]{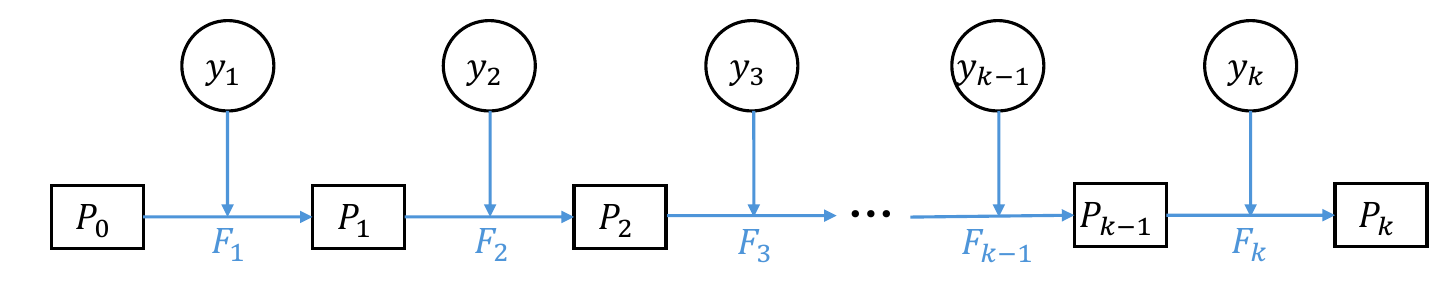}
     \caption{Process of BSL. At each step $k$, BSL updates the prior $P_{k-1}$ using the data $y_k$ to obtain the posterior $P_k$. The symbol $F_k$ denotes the map from a prior to its corresponding posterior at step $k$.}
    \label{fig:sequential_bayes_learning}
\end{figure}

In practice, the exact posterior $P_k$ is typically intractable to compute. Therefore, practical BSL approaches generate approximate solutions. More specifically, at each step $k$, they compute an approximate posterior $Q_k$, by approximately assimilating the new data $y_k$, using the previous approximate posterior $Q_{k-1}$ as the prior. In other words, $Q_k$ is the result of approximately applying $F_k$ onto $Q_{k-1}$. We denote this approximate map by $\hat{F}_k$, i.e., $Q_k=\hat{F}_k(Q_{k-1})$. This procedure is shown in Figure~\ref{fig:approx_sequential_learning}. Let $d$ denote a metric for probability measures, such as the total variation (TV) or Hellinger distance. We define the distance between the exact solution $P_k$ and the approximate solution $Q_k$, denoted by $d(P_k, Q_k)$, as the {\it learning error}. As a result, the prior error at step $k$ is precisely the learning error at previous step $k-1$. 
\begin{figure}
     \centering
     \includegraphics[width=0.8\linewidth]{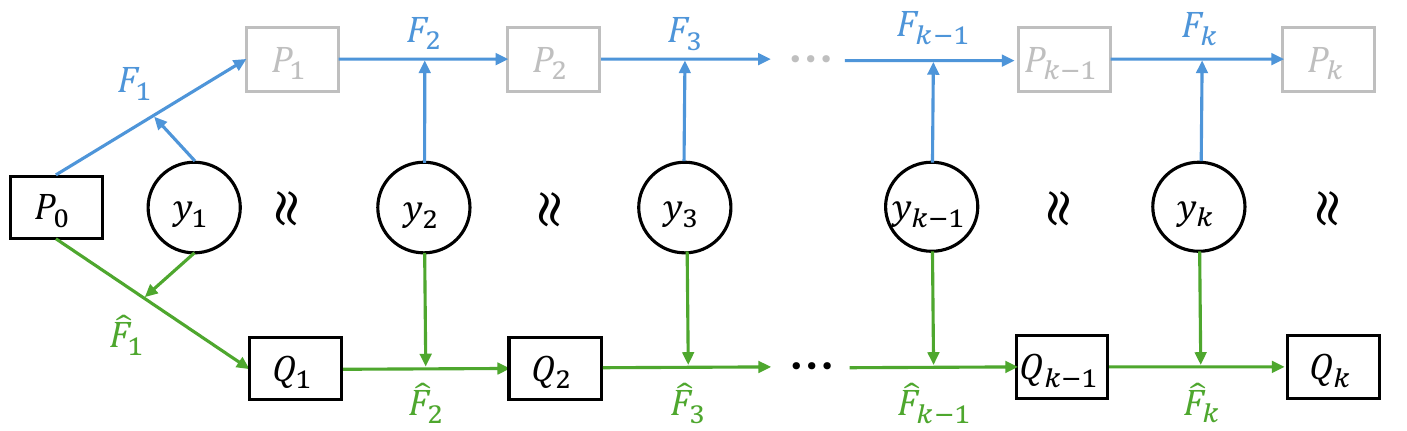}
     \caption{Procedure of approximate approaches for BSL. The exact posterior $P_k$ is typically intractable. The approximate approaches for BSL instead compute an approximate posterior $Q_k$ at each step $k$, by using the previous approximation $Q_{k-1}$ as the prior and approximately assimilating the new data $y_k$. Here, $Q_k$ is obtained from $Q_{k-1}$ via $Q_k=\hat{F}_k(Q_{k-1})$. The distance between $P_k$ and $Q_k$, $d(P_k,Q_k)$, is referred to as the learning error.}
    \label{fig:approx_sequential_learning}
\end{figure}

It is important to note that $Q_k$ is not the exact posterior obtained by using $Q_{k-1}$ as the prior and assimilating the data $y_k$ at step $k$. Let $Q_k^*$ denote this exact posterior, i.e., $Q_k^*=F_k(Q_{k-1})$. As $Q_k^*$ is normally intractable to obtain, approximate methods for BSL instead approximate it by $Q_k$. This procedure is illustrated in Figure~\ref{fig:break_down_approx_sequential_learning}. The process of approximating $Q_k^*$ with $Q_k$ is referred to as the {\it incremental approximation process} at step $k$. We call the distance between $Q_k$ and $Q_k^*$, $d(Q_k,Q_k^*)$, the {\it incremental approximation error} of step $k$.
\begin{figure}
     \centering
     \includegraphics[width=0.8\linewidth]{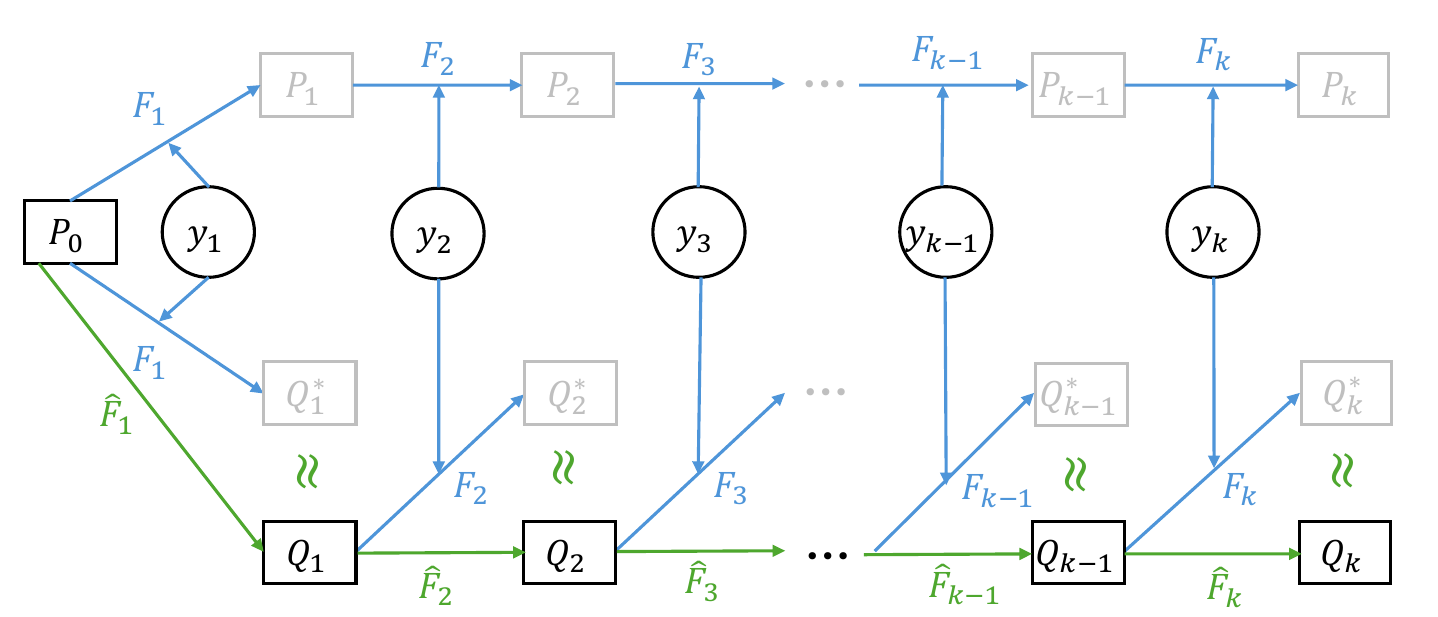}
     \caption{Break-down procedure of approximate approaches in BSL. The exact posterior and approximate posterior at step $k$ are denoted by $P_k$ and $Q_k$, respectively. When the previous approximate posterior, $Q_{k-1}$, is used as the prior, the corresponding exact posterior is denoted by $Q_k^*=F_k(Q_{k-1})$. As $Q_k^*$ is normally infeasible to compute, approximate approaches for BSL construct $Q_k=\hat{F}_k(Q_{k-1})$ to approximate $Q_k^*$. This process is referred to as the incremental approximation process, and the distance $d(Q_k, Q_k^*)$ is called the incremental approximation error.}
    \label{fig:break_down_approx_sequential_learning}
\end{figure}

In recent years, a wide range of methods have been developed for approximately solving BSL problems. These include online/sequential variational inference (VI) methods (e.g., \cite{onlinevi_nonlinear,onlinevi_realtime, online_VI_linear, fbovi}) and optimal transport-based sequential learning approaches (e.g., \cite{filter_ot_data_driven, filter_amortize_ot, ot_dual_estimation}). Approximate BSL methods also encompass classical Bayesian filtering techniques such as the Gaussian filter (\cite{Gaussian_filter}) and sequential importance sampling (particle filter) (e.g., \cite{particle_filter,block_pf_jmlr}). The Gaussian filter category includes Kalman filter variants, such as the unscented Kalman filter (UKF) and ensemble Kalman filter (EnKF) (\cite{bayesian_filter}).

Despite the extensive development of approximate approaches for BSL, rigorous analysis of these approaches remains limited. Among the available studies, most focus on asymptotic properties of specific approaches (e.g., \cite{Stable_approx_optimal_filter, enkf_large_sample, survey_convergence_pf}). Non-asymptotic analysis, by contrast, has been developed to a lesser extent. Existing contributions, such as \cite{pf_adaptive}, \cite{tt_sequential}, \cite{local_pf_dim}, \cite{stats_accuracy}, and \cite{smoothing_error_jmlr}, typically provide results tailored to very specific methods. There is a gap in the literature: a general, method-agnostic framework for non-asymptotic analysis of approximate approaches for BSL is still lacking, and general properties of these methods are rarely studied. 

This work aims to bridge this gap by providing a non-asymptotic error analysis framework that applies broadly to various approximate approaches for BSL. This framework enables us to understand how the incremental approximation errors affect the long-term learning performance of approximate BSL methods. It can also serve as a foundational tool for further analysis and evaluation of a specific method when integrated with method-specific theoretical results. For instance, existing accuracy analysis for offline VI (e.g., \cite{approx_accuracy_vi, sparse_vi, convergence_rate_vi}) can be incorporated to assess the incremental approximation error $(Q_k, Q_k^*)$ when applying our framework to sequential VI methods.

Error analysis of general approximate approaches for BSL requires understanding how the prior error, $d(P_{k-1},Q_{k-1})$, propagates to the posterior error, $d(P_k,Q_k^*)$. In practical scenarios, the prior error can be substantial. As a result, existing local stability results for the prior-to-posterior map $F_k$, which require the prior error to be sufficiently small, often fail to capture the relationship between the prior and posterior errors in practical BSL settings. 

Moreover, since the exact previous posterior $P_{k-1}$ is typically intractable, our knowledge of it is highly limited. It is often impossible to identify a class of distributions which $P_{k-1}$ belongs to, let alone a class that contains both $P_{k-1}$ and $Q_{k-1}$. Additionally, because our analysis targets general approximate BSL methods, we do not restrict $Q_{k-1}$ to lie within any specific class. Consequently, the existing results of global robustness of posterior, which rely on assumptions that the priors belong to a certain class, are not applicable in this context. 

As elaborated in Section~\ref{sec:related_work}, most existing results on the stability of the posterior with respect to the prior are either limited to local stability or rely on restrictive assumptions that the prior belongs to a specific class. Therefore, to establish meaningful error bounds for general approximate approaches in BSL, it is essential to answer the following fundamental question:

\noindent \textbf{QUESTION 1 (global stability of posterior)}: Is the posterior globally stable with respect to the prior, irrespective of the prior class? 

We then use the answer to derive results addressing the following questions:

\noindent \textbf{QUESTION 2 (stability w.r.t. the incremental approximation process)}: Is the overall learning error $d(P_k,Q_k)$ of general approximate approaches for BSL guaranteed to be upper bounded, provided that the incurred incremental approximation errors $d(Q_j,Q_j^*), j \leq k$ are upper bounded, regardless of how large those bounds may be? Furthermore, given another sequence of approximate posteriors $\tilde{Q}_j, j \leq k$, is the distance between the two approximate posteriors $Q_k$ and $\tilde{Q}_k$, namely $d(Q_k,\tilde{Q}_k)$, also guaranteed to be bounded if their respective incremental approximation errors $d(Q_j,Q_j^*)$ and $d(\tilde{Q}_j,\tilde{Q}_j^*), j \leq k$ are bounded?

\noindent \textbf{QUESTION 3 (accuracy analysis)}: Does there exist a finite upper bound for the learning error $d(P_k,Q_k)$ which is independent of the true posteriors $P_j, j \leq k$, and determined solely by the approximate posteriors $Q_j, j \leq k$ and incremental approximation errors $d(Q_j,Q_j^*), j \leq k$?

In practical applications, it is often observed that the learning error $d(P_k,Q_k)$ decreases as the number of update steps $k$ increases. Existing analysis of this phenomenon typically focus on specific algorithms, such as bootstrap particle filter and mean-field EnKF. To gain theoretical insight into this behavior for general BSL approaches, we pose the following question:

\noindent \textbf{QUESTION 4 (learning error decay)}: Under what conditions does the learning error of \textit{general} approximate methods for BSL decay as more data are assimilated? 

\subsection{Contributions}
We answer these questions through three sets of results:
\begin{enumerate}
\item (\textbf{Global stability of Bayesian inference for arbitrary priors})
We establish global Lipschitz stability of the posterior with respect to the prior, i.e., the pointwise global Lipschitz continuity of the prior-to-posterior map $F_k$ under suitable assumptions in Theorem~\ref{thm:pointwise_lipschitz}. The formal definition of pointwise globally Lipschitz continuous functions can be found in Definition~\ref{def:pointwise_lipschitz}. The assumptions required by Theorem~\ref{thm:pointwise_lipschitz} under the TV and Hellinger distances are very mild and hold in most problems. To the best of our knowledge, this is the first work to prove pointwise \textit{global} Lipschitz continuity of the prior-to-posterior map under the Hellinger and Wasserstein distances. For the TV distance, for which global Lipschitz stability bounds already exist, our derived bound improves the state-of-the-art one by $50\%$ under same assumptions.

\item (\textbf{Stability and accuracy analyses}) Building on Theorem~\ref{thm:pointwise_lipschitz}, we establish two sets of upper bounds on the learning error $d(P_k,Q_k)$ in Theorem~\ref{thm:learning_error}. The first set is linear in the incremental approximation errors $d(Q_j, Q_j^*), j \leq k$, with the approximate posteriors $Q_j,j\leq k$ entering the bounds only through the incremental approximation errors. This result shows that, under mild assumptions, the overall learning error remains bounded as long as the incremental approximation errors are bounded. Moreover, under the same assumptions, the first set of bounds also implies that the distance between two approximate posteriors, $Q_k$ and $\tilde{Q}_k$, is bounded, provided that both sequences of incremental approximation errors, $d(Q_j,Q_j^*)$ and $d(\tilde{Q}_j,\tilde{Q}_j^*), j \leq k$, are bounded.
The second set of bounds depends only on the approximate posteriors and incremental approximation errors, and is independent of the exact posteriors $P_j, j\leq k$, making it computable in many practical applications. 

\item (\textbf{Learning error reduction by data assimilation}) In Theorems~\ref{thm:er:all:tv}---\ref{thm:er:se&ps:W}, we provide sufficient conditions on the system and the distributions $P_{k-1}$ and $Q_{k-1}$ under which $d(P_k,Q_k^*) \leq d(P_{k-1},Q_{k-1})$ holds. Combining these theorems with the triangle inequality $d(P_k,Q_k) \leq d(P_k,Q_k^*)+d(Q_k^*,Q_k)$ enables us to derive sufficient conditions on the system, posteriors and approximate posteriors that guarantee $d(P_k,Q_k) \leq d(P_{k-1},Q_{k-1})$—without assuming that the approximate posteriors belong to any specific class or satisfy certain structural properties. These results offer an initial step toward explaining the phenomenon of learning error decay in general BSL methods.

In addition to the above, the contributions of this work also include:
\item (\textbf{Analysis under the Wasserstein distance}) We provide an error analysis under the Wasserstein distance for problems involving dynamical systems, including both state estimation and parameter-state estimation. To the best of our knowledge, this work presents the first error analysis under the Wasserstein distance for BSL in parameter-state estimation. Additionally, there are very few existing studies on learning error in state estimation under the Wasserstein distance. Compared to other commonly used probability metrics, the Wasserstein distance has two key advantages. First, it captures not only the magnitude of differences between distributions but also how these differences are spatially distributed, making it well-suited for comparing distributions with different supports (\cite{wasserstein_fundamental,wasserstein_2,wasserstein_1}). Second, it enables meaningful comparisons between continuous and discrete distributions (\cite{wasserstein_2,wasserstein_fundamental}), which is especially useful for analyzing Monte Carlo-based BSL methods. 

\item (\textbf{Practical insights}) Our theoretical results offer guidance for choosing the initial prior and approximate posteriors in practical BSL applications. Specifically, under certain conditions, choosing a well-designed alternative of the true initial prior can enhance learning accuracy in state estimation problems. Additionally, under the same conditions, selecting an approximate posterior $Q_k$ with a larger immediate incremental approximation error $d(Q_k^*,Q_k)$ may be preferable if it facilitates more accurate incremental approximations in later stages of the learning process, relative to a choice that minimizes the immediate error but complicates the incremental approximation process over time.
\end{enumerate} 

\subsection{Related work}
This paper makes two main contributions: (i) establishing the global Lipschitz stability of the posterior with respect to the prior in Bayesian learning, and (ii) developing a general, non-asymptotic error analysis framework for approximate approaches to BSL. In this section, we review related literature on posterior stability in Bayesian learning and on non-asymptotic analyses of approximate BSL methods.
\subsubsection{Posteriors stability in Bayesian learning}
This section reviews the literature on posterior stability with respect to the prior in inverse and state estimation problems. 
\label{sec:related_work}
\paragraph{Posterior stability in inverse problems}
\label{sec:stability:ip}
In this work, we establish the global Lipschitz stability of the posterior w.r.t. the prior, as formalized in Theorem~\ref{thm:pointwise_lipschitz}. Specifically, for inverse problems, the theorem shows that for every prior $\mu \in \bar{\mathcal{P}}_k(\X)$, the distance between the posteriors $F_k(\mu)$ and $F_k(\mu^\prime)$ is bounded by
\begin{equation}
d(F_k(\mu), F_k(\mu^\prime)) \leq K(\mu)d(\mu,\mu^\prime), \quad \forall \mu^\prime \in \bar{\mathcal{P}}_k(\X),
\label{ieq:lit_review:global}
\end{equation}
where $\bar{\mathcal{P}}_k(\X)$ denotes the set of all prior probability measures of which the corresponding posteriors are well-defined. The formal definition of $\bar{\mathcal{P}}_k(\X)$ can be found in Definition~\ref{def:set_P_k}. Here $\X$ represents the space of the unknown variable $X$. Equation~\eqref{ieq:lit_review:global} establishes the pointwise global Lipschitz continuity of the prior-to-posterior map $F_k$. 

Over the past four decades, considerable work has been devoted to understanding the posterior stability w.r.t. the prior in Bayesian inverse problems. Broadly, the literature falls into three categories: global robustness analysis, local sensitivity analysis, and local stability analysis.

Global robustness analysis investigates the range of values that a functional of the posterior, $\phi(F_k(\mu))$, may take as the prior $\mu$ varies within a specified class $\Gamma$. Specifically, it computes
\begin{equation*}
\underline{\phi}  = \inf_{\mu \in \Gamma}\phi(F_k(\mu)), \quad \overline{\phi} = \sup_{\mu \in \Gamma}\phi(F_k(\mu)),
\end{equation*}
and treats the interval $(\underline{\phi},\overline{\phi})$ as a robustness range of possible outputs of $\phi(F_k(\mu))$ (\cite{robust_overview}). The prior class $\Gamma$ can be parametric (e.g., normal distribution with fixed mean and bounded covariance (\cite{sets_post_bounded_variance}) or non-parametric (e.g., \cite{bounds_post_density_bounded, class_contaminations_example,quantile_example}). 

Local sensitivity analysis (e.g., \cite{local_sensitivity_original,local_sensitivity_new}) studies the local sensitivity of the posterior w.r.t. the prior. This sensitivity is defined as:
\begin{equation*}
s := \lim_{\epsilon \rightarrow 0}\frac{d(F_k(\mu),F_k(\mu_{\epsilon}))}{d(\mu,\mu_{\epsilon})},
\end{equation*}
where $\mu_{\epsilon}$ is the prior obtained by applying the perturbation $\epsilon$ to the base prior $\mu$.

Posterior stability w.r.t. the prior, without restricting the prior to a particular class, has been studied in different works (e.g., \cite{uniform_stability, infinitesimal_robustness,  lipschitz_stability_ip,post_stability_IPM}). For example, \cite{post_stability_IPM} proves a relation between the posterior difference and the prior difference under a family of integral probability metrics (IPM). Specifically, it shows:
\begin{equation}
d_c(F_k(\mu),F_k(\mu^\prime)) \leq K^\prime(\mu,\mu^\prime)d_{c_{y_k}}(\mu,\mu^\prime), \qquad \forall \mu,\mu^\prime \in \mathcal{P}_1(\X;c_{y_k}),
\label{ieq:ipm_bound}
\end{equation}
where $d_c$ and $d_{c_{y_k}}$ are two metrics within the IPM family, and the space $\mathcal{P}_1(\X;c_{y_k})$ is a set depending on the data $y_k$.
Because $K^\prime$ depends on both priors $\mu$ and $\mu^\prime$, this result does not establish global pointwise Lipschitz continuity of $F_k$.

The works in \cite{uniform_stability, infinitesimal_robustness} establish the local Lipschitz stability, showing that if the likelihood is bounded above, then for each prior $\mu \in \bar{\mathcal{P}}_k(\X)$, 
there exists $\delta >0$ such that 
\begin{equation}
d(F_k(\mu),F_k(\mu^\prime)) \leq  \tilde{K}(\mu)d(\mu,\mu^\prime), \quad if \; \mu^\prime \in \bar{\mathcal{P}}_k(\X) \; and \; d(\mu,\mu^\prime) < \delta.
\label{ieq:related_work:local_lipschitz}
\end{equation}

The study in \cite{lipschitz_stability_ip} proves the pointwise global Lipschitz continuity of $F_k$ under the TV distance, and the pointwise local Lipschitz continuity of $F_k$ under the Hellinger and $1$-Wasserstein distances. Under the same assumptions, our Theorem~\ref{thm:pointwise_lipschitz} yields an upper bound on the posterior distance $d(F_k(\mu),F_k(\mu^\prime))$ under the TV distance that is half the bound established in \cite{lipschitz_stability_ip}. A detailed comparison between our results and those in \cite{lipschitz_stability_ip} is provided in Appendix~\ref{sec:app:compare1}.

To conclude, prior work on global posterior robustness typically requires that the priors belong to a specific class. In contrast, most existing work that do not impose such restrictions on priors only establish local posterior stability with respect to the prior.

It is important to note that the Lipschitz constants in posterior stability results typically depend on the evidence terms associated with at least one of the priors.  The evidence, also known as the marginal likelihood, is a functional of the prior. Specifically, the evidence of a prior $\mu$, $Z_k(\mu)$, is defined as
\begin{equation*}
Z_k(\mu) = \E_{X \sim \mu}[p(y \mid X)],
\end{equation*}
where $y$ denotes the data and $p(y \mid X)$ is the likelihood. The evidence $Z_k(\mu)$ represents the expected value of the data’s probability density under the prior distribution $\mu$. 

In our result, the Lipschitz constant $K(\mu)$ in Equation~\eqref{ieq:lit_review:global} depends on the evidence $Z_k(\mu)$. To the best of our knowledge, the Lipschitz constants appearing in the existing results on the Lipschitz stability of the posterior either explicitly depend on the evidence terms of one or both priors (e.g., \cite{uniform_stability, infinitesimal_robustness, lipschitz_stability_ip}), or can be easily shown to be larger than another Lipschitz constant that is a functional of the priors' evidence terms under their respective assumptions (e.g., \cite{post_stability_IPM}). For instance, the Lipschitz constant $\tilde{K}$ in Equation~\eqref{ieq:related_work:local_lipschitz} is directly expressed as a functional of $Z_k(\mu)$ in \cite{uniform_stability, infinitesimal_robustness} and \cite{lipschitz_stability_ip}. Furthermore, the proof of Equation~\eqref{ieq:ipm_bound} in \cite{post_stability_IPM} shows the existence of another Lipschitz constant—depending on both $Z_k(\mu)$ and $Z_k(\mu^\prime)$—that is strictly smaller than the Lipschitz constant $K^\prime(\mu, \mu^\prime)$ stated in their result.

\paragraph{Posterior stability in state estimation problems}
\label{sec:stability:se}
For state estimation problems, the global Lipschitz continuity of $F_k$ under the TV distance has been shown in \cite{local_pf_dim}, \cite{ds}, and \cite{ip_ds} under a strong assumption that the likelihood is bounded above and below by positive constants. This assumption is stronger than ours in Theorem~\ref{thm:pointwise_lipschitz}. Under this assumption, our Theorem~\ref{thm:pointwise_lipschitz} also establishes global Lipschitz continuity of $F_k$ and provides an upper bound on the posterior distance $d(F_k(\mu),F_k(\mu^\prime))$ that is $\frac{1}{2}$ of the bound given in these works. See Appendix~\ref{sec:app:compare2} for a detailed derivation.

\subsubsection{Existing error analyses for approximate approaches for BSL}
This section reviews existing non-asymptotic error analyses for approximate approaches to BSL. As previously noted, such analyses are relatively underexplored. The existing results generally exhibit two limitations: 
\begin{itemize}
\item Most of them are restricted to specific methods and can not generalize to other algorithms;
\item Many rely on strong assumptions.
\end{itemize}

For instance, \cite{smoothing_error_jmlr} derives upper bounds on the error between the expectation of a specific type of functional under the approximate smoothing distribution and the expectation of the same functional under the true, unknown smoothing distribution in state estimation problems. The approximate smoothing distribution is the output of a particular sequential VI method based on the backward factorization of the smoothing distributions. The assumptions imposed in \cite{smoothing_error_jmlr} include the following condition:
\begin{equation*}
\sigma_1 \leq p(y_{k+1} \mid X_{k+1})p(X_{k+1} \mid X_k) \leq \sigma_2, \quad \forall X_k, X_{k+1},
\end{equation*}
where the data $y_{k+1}$ is given, $p(X_{k+1} \mid X_k)$ denotes the transition probability density from the state $X_{k+1}$ to the state $X_k$ (i.e., the state transition model), and $p(y_{k+1} \mid X_{k+1})$ denotes the probability density of $y_{k+1}$ given $X_{k+1}$ (i.e., the observation model). The condition $\sigma_1 \leq p(y_{k+1} \mid X_{k+1})p(X_{k+1} \mid X_k)$ is quite restrictive. For example, it is not satisfied in common settings where both the state transition and observation models are Gaussian.

The work in \cite{pf_adaptive} analyzes the learning error of a block-adaptive particle filter that periodically updates the number of particles. The study in \cite{ip_ds} provides learning error bounds for the bootstrap particle filter. Both analyses are based on the assumption that the observation model is bounded both above and below by positive constants—an assumption that is quite strong.

In \cite{tt_sequential}, the authors present an error analysis for a tensor train-based sequential learning method in parameter-state estimation problems. Although the assumptions made in \cite{tt_sequential} are mild, the analysis is tailored to a specific procedure used by this tensor train-based method and is not able to provide meaningful results for other approaches.

\subsection{Organization of the paper}
The remainder of this paper is organized as follows. Section~\ref{sec:background} introduces the necessary notation, definitions, and the problems considered. Section~\ref{sec:main_results} presents our assumptions and main theorems. Section~\ref{sec:discussion} provides several discussions of the main theorems. In Section~\ref{sec:application_example}, we apply our results to a specific class of BSL methods—online VI. Section~\ref{sec:illustrative_example} presents an illustrative numerical example. Finally, we conclude the paper in Section~\ref{sec:conclusion}.

\section{Background}
\label{sec:background}
In this section, we present the necessary notation, provide the definition of pointwise globally Lipschitz continuous functions, introduce three types of distances for probability measures, and formally describe the problems considered in this paper.
\subsection{Notation}
Let $\reals$ denote the set of real numbers, and $\reals_{+}$ the set of positive real numbers. For $a, b \in \reals$, we let $a \wedge b$ and $a \vee b$ denote the minimum and maximum of $a$ and $b$, respectively. The symbol $d_{\Z}$ denotes a metric for the space $\Z$ and $\mathcal{B}(\Z)$ denotes the Borel $\sigma$-algebra on $\Z$. If a measure $\mu$ is absolutely continuous w.r.t. a measure $\nu$, we write $\mu \ll \nu$. 

Random variables are denoted by uppercase letters. The realization of a random variable is denoted by the corresponding lowercase letter (e.g., $y$ for $Y$). Let expectation and probability be denoted as $\E$ and $\mathbb{P}$, respectively. If the probability measure $\mathbb{P}$ is absolutely continuous w.r.t. the Lebesgue measure, its probability density is denoted by $p$. The conditional probability density $p(X=x \mid Y=y)$ is denoted by $p(x \mid y)$ for brevity. The symbol $\mathcal{N}(m,R)$ denotes a normal distribution with mean $m$ and covariance $R$. The value of the probability density function (pdf) of this normal distribution evaluated at $z$ is denoted by $p_{\mathcal{N}}(z \mid m,R)$.

We use $d_{TV}$, $d_H$ and $W_1$ to denote the total variation distance, Hellinger distance and $1$-Wasserstein distance, respectively.

\subsection{Definitions}
This section provides definitions of pointwise globally Lipschitz continuous functions and the distances for probability measures. 

\subsubsection{Pointwise globally Lipschitz continuous functions}
The definition of pointwise globally Lipschitz continuous functions is as follows.
\begin{definition}[\textbf{Pointwise globally Lipschitz continuous functions}]\label{def:pointwise_lipschitz}
Let $(\Z,d_{\Z})$ and $(\tilde{\Z},d_{\tilde{\Z}})$ be two metric spaces. A map $f: \Z \rightarrow \tilde{\Z}$ is called pointwise globally Lipschitz continuous if for every $z \in \Z$, there exists a constant $K(z) \in \reals_{+}$ such that
\begin{equation}
d_{\tilde{\Z}}\left(f(z), f(z^\prime)\right) \leq K(z)d_{\Z}(z,z^\prime), \quad \forall z^\prime \in \Z.
\label{eq:pointwise_global_Lipschitz}
\end{equation}
For every $z \in \Z$, a constant $K(z) \in \reals_{\geq 0}$ that satisfies Equation~\eqref{eq:pointwise_global_Lipschitz} is called a pointwise Lipschitz constant of $f$ at $z$.
\end{definition}

\begin{remark}
Function $f$ is called pointwise \textit{locally} Lipschitz continuous if the inequality in Equation~\eqref{eq:pointwise_global_Lipschitz} only holds for $z^\prime \in B(z,r(z))$, where $B(z,r(z))$ denotes an open ball centered at $z$ with radius $r(z)$ (\cite{pointwise_lipschitz}).
\end{remark}

\subsubsection{Distances for probability measures}
In this section, we introduce three types of distances that quantify the difference between probability measures: total variation distance, Hellinger distance, and Wasserstein distance.

Let $\mu$ and $\mu^\prime$ be two probability measures defined on the measurable space $(\Z,\mathcal{B}(\Z))$. Let $\mathcal{P}(\Z)$ denote the set of all probability measures defined on the space $(\Z,\mathcal{B}(\Z))$. Assume that $\mu$ and $\mu^\prime$ are both absolutely continuous with respect to the $\sigma$-finite measure $\nu$ (such $\nu$ always exists). The total variation distance and Hellinger distance are defined as follows.

\begin{definition}[\textbf{Total variation distance}]
The total variation distance between two probability measures $\mu, \mu^\prime \in \mathcal{P}(\Z)$ is defined by
\begin{equation*}
d_{TV}(\mu, \mu^\prime) := \sup_{A \in \mathcal{B}(\Z)}\big \lvert \mu(A) - \mu^\prime(A) \big \rvert =\frac{1}{2}\int_{\Z}\bigg \lvert \frac{d\mu}{d\nu}(z) - \frac{d\mu^\prime}{d\nu}(z) \bigg \rvert \nu(dz). 
\end{equation*}
\end{definition}

\begin{definition}[\textbf{Hellinger distance}]
The Hellinger distance between two probability measures $\mu, \mu^\prime \in \mathcal{P}(\Z)$ is defined by
\begin{equation*}
d_{H}(\mu, \mu^\prime) := \sqrt{\frac{1}{2}\int_{\Z}\left( \sqrt{\frac{d\mu}{d\nu}(z)} - \sqrt{\frac{d\mu^\prime}{d\nu}(z)} \right )^2\nu(dz)}  . 
\end{equation*}
\end{definition}

The definitions of the Wasserstein distance and Wasserstein space are provided below.
\begin{definition}[\textbf{Wasserstein distance and Wasserstein space}]
Let $(\Z,d_{\Z})$ be a Polish metric space. For two probability measures $\mu,\mu^\prime \in \mathcal{P}_q(\Z)$, the Wasserstein distance of order $q$ between $\mu$ and $\mu^\prime$ is defined as
\begin{align*}
W_q(\mu,\mu^\prime) :=& \inf_{\gamma \in \Gamma(\mu,\mu^\prime)}\left(\E_{(Z,Z^\prime)\sim \gamma}[d^q_{\Z}(Z,Z^\prime)]\right)^{\frac{1}{q}} ,
\end{align*}
where $\Gamma(\mu,\mu^\prime)$ denotes the set of all couplings of $\mu$ and $\mu^\prime$, and $\mathcal{P}_q(\Z)$ is a space defined as:
\begin{equation*}
\mathcal{P}_q(\Z):= \bigg \{\mu \in \mathcal{P}(\Z) : \int_{\Z}d_{\Z}(z_0,z)^q\mu(dz) < \infty \bigg \},
\end{equation*}
where $z_0 \in \Z$. The space $\mathcal{P}_q$ is called the Wasserstein space of order $q$ (\cite{ot_old_new}). The distance $W_q$ is referred to as the $q$-Wasserstein distance. In this paper, we focus on the $1$-Wasserstein distance, $W_1$, which is well-defined on the space $\mathcal{P}_1$. 

\end{definition}

\subsection{Problem statement}
\label{sec:problem_statement}
This section formally introduces the three problems considered in this paper: inverse problems, state estimation in dynamical systems, and parameter-state estimation in dynamical systems.

\subsubsection{Inverse problems}\label{section:def_ip}

In inverse problems, the objective is to learn an unknown variable $X$ using the received data $\Y_{1:N}=(y_1,y_2,\cdots,y_N)$. From the Bayesian perspective, $X$ is treated as a random variable. Let $\X$ denote the space for the realizations of $X$, equipped with a metric $d_{\X}$. We assume $(\X,d_{\X})$ is a Polish metric space. Each data point $y_i$ can be viewed as a realization of the observable $Y$. We assume the conditional distribution of $Y$ given $X$ admits a density $p(y \mid x) = h(y,x)$ w.r.t. the Lebesgue measure. Given the data $y_k$, the likelihood model $h(y_k,x)$, as a function of $x$, is assumed to be measurable. 

At each update step $k$, the prior refers to the distribution of $X$ before assimilating the data $y_k$, and the posterior refers to the distribution of $X$ after incorporating $y_k$. 
Given a prior $\mu$, by Bayes' rule, the corresponding posterior $F_k\mu$ (also denoted by $F_k(\mu)$) is given by
\begin{equation*}
F_k\mu(dx) = \frac{h(y_k,x)\mu(dx)}{\int_{\X}h(y_k ,x)\mu(dx)}.
\end{equation*}
If the prior $\mu$ satisfies 
\begin{equation*}
0 < \int_{\X}h(y_k,x)\mu(dx) < \infty,
\end{equation*}
then the posterior $F_k\mu$ is well-defined. 

\subsubsection{State estimation}\label{section:def_se}
State estimation requires the solution of a sequence of inverse problems, interspersed with propagation of the measure through the dynamics. 

Consider the following discrete-time Markov dynamical system with partially observable states: 
\begin{align}
\begin{split}
p(x_k \mid x_{0:k-1}) &= p(x_k \mid x_{k-1}) =T_k(x_{k},x_{k-1}), \\
p(y_k \mid x_{0:k},\Y_{1:k-1}) &= p(y_k \mid x_k) = h_k(y_k,x_k),
\end{split}
\label{sys:known}
\end{align}
where $x_k$ and $y_k$ represent the realizations of the hidden state $X_k$ and observation $Y_k$, respectively. Here $x_{0:k}$ represents the sequence $(x_0,x_1,\cdots,x_k)$. The state space $\X$ is assumed to be identical across all time steps $k$ and forms a Polish metric space with a metric $d_{\X}$. Given the data $y_k$, the observation model $h_k(y_k,x_k)$, as a function of $x_{k}$, is assumed to be measurable. We also assume that the state transition model $T_k(x_k,x_{k-1})$ is measurable. 

At each time step $k$, the prior denotes the distribution of the state $X_{k-1}$ before incorporating the data $y_k$, and the posterior represents the distribution of the state $X_{k}$ after assimilating $y_k$. Let the prior $\mu$ be an arbitrary probability measure. By Bayes' rule, the corresponding posterior $F_k\mu$ is given by
\begin{equation*}
F_k\mu(dx_k) = \frac{h_k(y_k,x_k)\left(\int_{\X}T_k(x_k, x_{k-1})\mu(dx_{k-1})\right)dx_k}{\int_{\X}h_k(y_k,x_k)\left(\int_{\X}T_k(x_k, x_{k-1})\mu(dx_{k-1})\right)dx_k}.
\end{equation*}
If $\mu$ satisfies
\begin{equation*}
\int_{\X}T_k(x_k,x_{k-1})\mu(dx_{k-1}) < \infty, \quad \forall x_k \in \X,
\end{equation*}
and 
\begin{equation*}
0 < \int_{\X}h_k(y_k,x_k)\left(\int_{\X}T_k(x_k, x_{k-1})\mu(dx_{k-1})\right)dx_k < \infty,
\end{equation*}
then the posterior $F_k\mu$ is well-defined.

State estimation problems assume fully known models. The next subsection introduces parameter-state estimation, where model parameters are unknown.

\subsubsection{Parameter-state estimation}\label{section:def_ps}
Consider the following discrete-time Markov dynamical system with unknown system parameters:
\begin{align}
\begin{split}
p(x_k \mid x_{0:k-1},w) &= p(x_k \mid x_{k-1},w) =T_k(x_{k},x_{k-1},w), \\
p(y_k \mid x_{0:k},\Y_{1:k-1},w) &= p(y_k \mid x_k,w) = h_k(y_k,x_k,w),
\end{split}
\label{sys:unknown}
\end{align}
where $w$ represents the realization of the system parameters $W$. We assume that the state space $\X$ is time-invariant. The state space $\X$ and parameter space $\W$ form a Polish metric space $(\X \times \W, d_{\X \times \W})$ with a metric $d_{\X \times \W}$. Assume the state transition model $T_k$ is measurable. Given the data $y_k$, 
the observation model $h(y_k,x_k,w)$, as a function of $x_k $ and $w$, is also assumed to be measurable.

At each time step $k$, the prior refers to the joint distribution of the state $X_{k-1}$ and system parameters $W$ before assimilating the data $y_k$, and the posterior refers to the joint distribution the state $X_{k}$ and $W$ after assimilating $y_k$. Suppose that the prior $\mu$ is absolutely continuous with respect to the Lebesgue measure, with density $\pi$. Then, by Bayes' rule, the corresponding posterior $F_k\mu$ is given by
\begin{align*}
    &F_k\mu(dx_k,dw) = \frac{h_k(y_k, x_k,w)\left(\int_{\X}T_k(x_k, x_{k-1},w)\pi(x_{k-1},w)dx_{k-1}\right)dx_kdw}{\int_{\X\times \W}h_k(y_k, x_k,w)\left(\int_{\X}T_k(x_k, x_{k-1},w)\pi(x_{k-1},w)dx_{k-1}\right)dx_kdw}.
\end{align*}
If $\mu$ satisfies:
\begin{equation*}
\int_{\X}T_k(x_k,x_{k-1},w)\pi(x_{k-1},w)dx_{k-1} < \infty, \quad \forall x_k \in \X, w \in \W,
\end{equation*}
and 
\begin{equation*}
0 < \int_{\X\times \W}h_k(y_k, x_k,w)\left(\int_{\X}T_k(x_k, x_{k-1},w)\pi(x_{k-1},w)dx_{k-1}\right)dx_kdw < \infty,
\end{equation*}
then the posterior $F_k\mu$ is well-defined. 
\begin{remark}
Systems~\ref{sys:known} and~\ref{sys:unknown} encompass both autonomous and non-autonomous systems. If external inputs are present, they are assumed to be incorporated into the state transition model $T_k$. 
\end{remark}

\subsection{Unified Bayesian learning framework for different problems}
\label{sec:unified_framework}
In this section, we present a unified framework for Bayesian learning in the three problem settings introduced earlier: inverse problems, state estimation, and parameter-state estimation.

Throughout this paper, let $\bar{\X}$ represent the space $\X$ in inverse and state estimation problems, and $\X\times \W$ in parameter-state estimation problems. Let $\mathcal{P}(\bar{\X})$ denote the set of all probability measures defined on the measurable space $(\bar{\X},\mathcal{B}(\bar{\X}))$. Similarly, let $\mathcal{M}(\bar{\X})$ denote the set of all $\sigma$-finite measures defined on $(\bar{\X},\mathcal{B}(\bar{\X}))$. Let $\lambda$ denote the Lebesgue measure defined on the measurable space $(\X \times \W, \mathcal{B}(\X\times\W))$.

We refer to the prior distributions at step $k$ of which the corresponding posteriors are well-defined as \textit{admissible priors}. We then define $\bar{\mathcal{P}}_k(\X)$ as the set of all admissible prior probability measures in inverse and state estimation problems. For parameter-state estimation problems, $\bar{\mathcal{P}}_k(\X\times \W)$ is defined as the set of admissible priors that are absolutely continuous with respect to the Lebesgue measure. The formal definition of $\bar{\mathcal{P}}_k(\bar{\X})$ is provided below.
\begin{definition}[Admissible prior space $\bar{\mathcal{P}}_k(\bar{\X})$]
\label{def:set_P_k}
At step $k$, given the data $y_k$, the space $\bar{\mathcal{P}}_k(\bar{\X})$ is defined as follows:
\begin{itemize}
\item in inverse problems: $\bar{\mathcal{P}}_k(\X):=\{\mu \in \mathcal{P}(\X):0<\int_{\X}h(y_k,x)\mu(dx)<\infty \}$,

\item in state estimation problems: 
\begin{align*}
&\bar{\mathcal{P}}_k(\X) :=\bigg \{\mu \in \mathcal{P}(\X): \int_{\X}T_k(x_k,x_{k-1})\mu(dx_{k-1})<\infty \quad \forall x_{k}\in \X, \\
&\qquad 0< \int_{\X}h_k(y_k,x_k)\left(\int_{\X}T_k(x_k,x_{k-1})\mu(dx_{k-1})\right)dx_k < \infty \bigg \},
\end{align*}

\item in parameter-state estimation problems: 
\begin{align*}
&\bar{\mathcal{P}}_k(\X\times\W):=\bigg \{\mu \in \mathcal{P}(\X\times \W): \mu \ll \lambda, \\
&\quad \int_{\X}T_k(x_k,x_{k-1},w)\frac{d\mu}{d\lambda}(x_{k-1},w)dx_{k-1}<\infty \quad  \forall x_{k}\in \X, w \in \W, \\
&\qquad 0< \int_{\X\times\W}h_k(y_k,x_k,w)\left(\int_{\X}T_k(x_k,x_{k-1},w)\frac{d\mu}{d\lambda}(x_{k-1},w)dx_{k-1}\right)dx_kdw< \infty \bigg \}.
\end{align*}
\end{itemize}
\end{definition}
\begin{remark}
The space $\bar{\mathcal{P}}_k(\bar{\X})$ is determined by the system. For instance, in inverse problems, given the data $y_k$, if the likelihood model $h$ satisfies $h(y_k,x)>0$ for all $x \in \X$ and $\sup_{x \in \X}h(y_k,x) < \infty$, then we have $\bar{\mathcal{P}}_k(\X)=\mathcal{P}(\X)$.
\end{remark}

Next, we introduce the formal definition of the prior-to-posterior map $F_k$ and evidence $Z_k$. 
\begin{definition}[Prior-to-posterior map $F_k$ and evidence $Z_k$ ]
\label{def:post_propagation&evidence}
Let $F_k: \bar{\mathcal{P}}_k(\bar{\X}) \rightarrow \mathcal{P}(\bar{\X})$ and $\tilde{F}_k: \bar{\mathcal{P}}_k(\bar{\X}) \rightarrow \mathcal{M}(\bar{\X})$ be two functions. For a probability measure $\mu \in \bar{\mathcal{P}}_k(\bar{\X})$, let $F_k\mu$ and $\tilde{F}_k\mu$ denote the function applications $F_k(\mu)$ and $\tilde{F}_k(\mu)$, respectively. At step $k$, given the data $y_k$,  the probability measure $F_k\mu$ is defined by 
\begin{equation}
F_k\mu(A) := \frac{\tilde{F}_k\mu(A)}{Z_k(\mu)}, \quad \forall A \in \mathcal{B}(\bar{\X}),
\label{eq:def_F_k}
\end{equation}
where the evidence $Z_k$, which is a real-valued function, is defined as
\begin{equation}
Z_k(\mu) := \int_{\bar{\mathcal{X}}}\tilde{F}_k\mu(d\bar{x}).
\end{equation}
The measure $\tilde{F}_k\mu$ is defined as follows:
\begin{itemize}
    \item in inverse problems: $\tilde{F}_k\mu(dx) := h(y_k, x)\mu(dx)$,

\item in state estimation problems: 
$$\tilde{F}_k\mu(dx):=h_k(y_k,x)\left(\int_{\mathcal{X}}T_k(x,x_{k-1})\mu(dx_{k-1})\right)dx,$$

\item in parameter-state estimation problems: 
\begin{equation*}
\tilde{F}_k\mu(dx,dw) := h_k(y_k,x,w)\left(\int_{\X}T_k(x, x_{k-1},w)\pi(x_{k-1},w)dx_{k-1}\right)dxdw,
\end{equation*}
where $\pi$ is the density of $\mu$ with respect to the Lebesgue measure.
\end{itemize}
\end{definition}
As illustrated in Figure~\ref{fig:break_down_approx_sequential_learning}, given the data $y_k$, $P_k$ and $Q_k^*$ are obtained by $P_k = F_k(P_{k-1})$ and $Q_k^* = F_k(Q_{k-1})$, respectively. The approximate posterior $Q_k$ is computed through $Q_k = \hat{F}_k(Q_{k-1})$.

\section{Main results}\label{sec:main_results}
In this section, we present the main theorems of this paper. 
\subsection{Assumptions}
In this section, we present the assumptions necessary for establishing the theorems in the following sections. To distinguish the assumptions by context, we label them with "IP" for inverse problems, "SE" for state estimation problems, and "PS" for parameter-state estimation problems. We begin with the assumptions for inverse problems.
\begin{assumptionp}{IP.1}[\textbf{Upper-bounded likelihood}]
Given the data $y_k$, the likelihood model $h(y_k,x)$ in the inverse problems defined in Section~\ref{section:def_ip} satisfies 
\begin{equation}
C_h(y_k) := \sup_{x \in \X}h(y_k,x) < \infty. 
\label{eq:bound_likelihood}
\end{equation}
\label{assump:ip_single_step}
\end{assumptionp}

\begin{remark}
For a given data $y_k$, $C_h(y_k)$ is a constant. This assumption is quite mild and holds for most inverse problems. For instance, in the common setting where $Y = H(X) + V$, with $H$ being a forward map and $V$ an additive noise, this assumption is satisfied when the pdf of $V$ is upper bounded—such as when $V$ follows a normal distribution.
\end{remark}

The following assumption is required for theorems related to the $1$-Wasserstein distance for inverse problems.
\begin{assumptionp}{IP.2}[\textbf{Lipschitz continuous likelihood}]\label{assump:lipschitz_likelihood}
Given the data $y_k$, for inverse problems, the likelihood model $h(y_k,x)$, as a function of $x \in \X$, is (globally) Lipschitz continuous. Let $\lVert h \rVert_{\text{Lip}}(y_k)$ denote its best Lipschitz constant. 
\end{assumptionp}

Next, we present the assumptions required for state estimation problems.
\begin{assumptionp}{SE.1}
 At time step $k$, given the data $y_k$, the state transition model $T_k$ and observation model $h_k$ in system~\ref{sys:known} satisfy
\begin{equation}
C_{Th}(y_k;k) := \sup_{x_{k-1}\in \X}\int_{\X}h_k(y_k,  x)T_k(x, x_{k-1})dx < \infty.
\label{eq:C_hf}
\end{equation}
\label{assump:se_single_step}
\end{assumptionp}

\begin{remark}
 Assumption~\ref{assump:se_single_step} is a weaker condition than assuming $h_k(y_k,x)$ is uniformly upper bounded for $x \in \X$, as the latter directly implies Equation~\eqref{eq:C_hf}. 
\label{remark:se_single_step}
\end{remark}

The following assumption is used for theorems involving the $1$-Wasserstein distance for state estimation problems.
\begin{assumptionp}{SE.2}\label{assump:lipschitz_hf:se}
At time step $k$, given the data $y_k$, the state transition model $T_k$ and observation model $h_k$ in system~\ref{sys:known} satisfy
\begin{equation*}
T_{\text{Lip}}(x_k;k) := \sup_{x,x^\prime \in \mathcal{X}, x \neq x^\prime}\frac{\lvert T_k(x_k,x) - T_k(x_k, x^\prime) \rvert}{d_{\X}(x,x^\prime)} < \infty, \quad \forall x_k \in \mathcal{X},
\end{equation*}
and
\begin{equation}
C^*_{Th}(y_k;k) := \int_{\mathcal{X}}h_k(y_k, x_k)T_{\text{Lip}}(x_k;k)dx_k < \infty.
\label{eq:C_hf_star}
\end{equation}
\end{assumptionp}

Lastly, we state the assumptions required for parameter-state estimation problems.
\begin{assumptionp}{PS.1}
At time step $k$, given the data $y_k$, the state transition model $T_k$ and observation model $h_k$ in system~\ref{sys:unknown} satisfy
\begin{equation}
\tilde{C}_{Th}(y_k;k) := \sup_{x_{k-1}\in \X, w \in \W}\int_{\X}h_k(y_k,x, w)T_k(x,x_{k-1},w)dx < \infty.
\label{eq:tilde_C_hf}
\end{equation}
\label{assump:ps_single_step}
\end{assumptionp}

\begin{remark}
Assumption~\ref{assump:ps_single_step} is mild and sets a weaker condition than that the observation model $h_k$ satisfies $\sup_{x \in \X, w \in \W}h_k(y_k,x,w) < \infty$. 
\label{remark:ps_single_step}
\end{remark}

The following assumptions are required for theorems related to the $1$-Wasserstein distance for parameter-state estimation problems.
\begin{assumptionp}{PS.2}\label{assump:lipschitz_hf:ps}
At time step $k$, given the data $y_k$, the state transition model $T_k$ and observation model $h_k$ in system~\ref{sys:unknown} satisfy
\begin{align*}
&\tilde{f}_{\text{Lip}}(x_k;k) \\
&\quad := \sup_{(x,w),(x^\prime,w^\prime) \in \mathcal{X}\times \W, (x,w) \neq (x^\prime,w^\prime)}\frac{\lvert \tilde{f}_k(y_k,x_k,x,w) - \tilde{f}_k(y_k,x_k,x^\prime,w^\prime) \rvert}{d_{\X\times \W}((x,w),(x^\prime,w^\prime))} < \infty, \quad \forall x_k \in \mathcal{X},
\end{align*}
where $\tilde{f}_k(y_k,x_k,x,w):=h_k(y_k,x_k,w)T_k(x_k,x,w), \quad \forall x \in \X,  w \in \W$,\\
and
\begin{equation}
\tilde{C}^*_{Th}(y_k;k) := \int_{\mathcal{X}}\tilde{f}_{\text{Lip}}(x_k;k)dx_k < \infty.
\label{eq:tilde_C_hf_star}
\end{equation}
\end{assumptionp}

\begin{assumptionp}{PS.3}\label{assump:metric_space:ps}
The metric space $(\X\times \W,d_{\X \times \W})$ satisfies
\begin{equation*}
d_{\X \times \W}((x,w),(x,w^\prime)) \leq d_{\X \times \W}((\tilde{x},w),(x^\prime,w^\prime)), \quad \forall x,\tilde{x},x^\prime \in \X, \; w,w^\prime \in \W.
\end{equation*}
\end{assumptionp}
\begin{remark}
Assumption~\ref{assump:metric_space:ps} is generally not restrictive as it is actually the property shared by many metrics. For instance, the Euclidean distance satisfies this assumption.
\end{remark}

An additional assumption required for establishing results under the $1$-Wasserstein distance for all problems is as follows.
\begin{assumptionp}{IP-SE-PS.4}[\textbf{Bounded metric space}]\label{assump:bounded_space}
The metric space $(\bar{\X},d_{\bar{\X}})$ satisfies
\begin{equation*}
\sup_{\bar{x},\bar{x}^\prime \in \bar{\X}}d_{\bar{\X}}(\bar{x},\bar{x}^\prime) = D < \infty,
\end{equation*}
where $\bar{\X}$ represents $\X$ in inverse problems and state estimation problems, and represents $\X \times \W$ in parameter-state estimation problems, as introduced in Section~\ref{sec:unified_framework}.
\end{assumptionp}

\subsection{Pointwise global Lipschitz continuity of the prior-to-posterior map}
\label{sec:post}
Theorem~\ref{thm:pointwise_lipschitz} establishes the pointwise global Lipschitz continuity of the prior-to-posterior map $F_k$.
\begin{theorem}
[\textbf{Pointwise global Lipschitz continuity of prior-to-posterior map $F_k$}]
\label{thm:pointwise_lipschitz}
Given the data $y_k$, the prior-to-posterior map $F_k: (\bar{\bar{\mathcal{P}}}_k(\bar{\X}), d) \rightarrow (\mathcal{P}(\bar{\X}), d)$ is pointwise globally Lipschitz continuous, i.e., for every $\mu \in \bar{\bar{\mathcal{P}}}_k(\bar{\X})$, there exists a constant $K(\mu;y_k)$ such that
\begin{equation}
d(F_k(\mu), F_k(\mu^\prime)) \leq K(\mu;y_k)d(\mu,\mu^\prime), \quad \forall \mu^\prime \in \bar{\bar{\mathcal{P}}}_k(\bar{\X}),
\label{eq:thm:pointwise_Lipschitz}
\end{equation}
\begin{itemize}
\item in \textbf{inverse problems}
\begin{itemize}
    \item when $d$ is the total variation distance or the Hellinger distance, and $\bar{\bar{\mathcal{P}}}_k(\bar{\X})$ represents $\bar{\mathcal{P}}_k(\X)$, under Assumption~\ref{assump:ip_single_step}; or
    \item when $d$ is the $1$-Wasserstein distance, and $\bar{\bar{\mathcal{P}}}_k(\bar{\X})$ represents $\bar{\mathcal{P}}_k(\X) \bigcap \mathcal{P}_1(\X)$, under Assumptions~\ref{assump:ip_single_step},~\ref{assump:lipschitz_likelihood}, and~\ref{assump:bounded_space};
\end{itemize}
\item in \textbf{state estimation problems}
\begin{itemize}
    \item when $d$ is the total variation distance or the Hellinger distance, and $\bar{\bar{\mathcal{P}}}_k(\bar{\X})$ represents $\bar{\mathcal{P}}_k(\X)$, under Assumption~\ref{assump:se_single_step}; or
    \item when $d$ is the $1$-Wasserstein distance, and $\bar{\bar{\mathcal{P}}}_k(\bar{\X})$ represents $\bar{\mathcal{P}}_k(\X) \bigcap \mathcal{P}_1(\X)$, under Assumptions~\ref{assump:lipschitz_hf:se} and~\ref{assump:bounded_space};
\end{itemize}
\item in \textbf{parameter-state estimation problems}
\begin{itemize} 
    \item  when $d$ is the total variation distance or the Hellinger distance, and $\bar{\bar{\mathcal{P}}}_k(\bar{\X})$ represents $\bar{\mathcal{P}}_k(\X\times \W)$, under Assumption~\ref{assump:ps_single_step}; or
    \item when $d$ is the $1$-Wasserstein distance, and $\bar{\bar{\mathcal{P}}}_k(\bar{\X})$ is $\bar{\mathcal{P}}_k(\X\times \W) \bigcap \mathcal{P}_1(\X \times \W)$, under Assumptions~\ref{assump:ps_single_step},~\ref{assump:lipschitz_hf:ps},~\ref{assump:metric_space:ps}, and~\ref{assump:bounded_space}.
\end{itemize}
\end{itemize}

The pointwise global Lipschitz constants $K(\mu;y_k)$ associated with different distances and various problems are summarized in Table~\ref{table:normalized:Z}.

\begin{table}
\centering
\caption{Pointwise global Lipschitz constant $K(\mu;y_k)$ of the prior-to-posterior map $F_k$ for different problems and distances of probability measures.}
\label{table:normalized:Z}
\begin{tabularx}{1.0\linewidth}{ 
  | >{\hsize=0.8\hsize\centering\arraybackslash}X 
  | >{\hsize=1.1\hsize\centering\arraybackslash}X 
  | >{\hsize=0.9\hsize\centering\arraybackslash}X |>{\hsize=1.2\hsize\centering\arraybackslash}X | }
 \hline
 $d$ / Problem & Inverse problems & State estimation &Parameter-state estimation \\
 \hline
 \rule{0pt}{15pt} $d_{TV}$  & $\frac{C_h(y_k)}{Z_k(\mu)}$ & $\frac{C_{Th}(y_k;k)}{Z_k(\mu)}$ & $\frac{\tilde{C}_{Th}(y_k;k)}{Z_k(\mu)}$\\
 \hline
 \rule{0pt}{15pt} $d_H$ & $2\sqrt{\frac{C_h(y_k)}{Z_k(\mu)}}$ & $2\sqrt{\frac{C_{Th}(y_k;k)}{Z_k(\mu)}}$ &$2\sqrt{\frac{\tilde{C}_{Th}(y_k;k)}{Z_k(\mu)}}$ \\
 \hline
 \rule{0pt}{15pt} $W_1$ &$\frac{2D\lVert h \rVert _{\text{Lip}}(y_k)+C_h(y_k)}{Z_k(\mu)}$ & $\frac{2DC^*_{Th}(y_k;k)}{Z_k(\mu)}$ & $\frac{2D\tilde{C}^*_{Th}(y_k;k)+\tilde{C}_{Th}(y_k;k)}{Z_k(\mu)}$ \\
\hline
\end{tabularx}
\end{table}
\end{theorem}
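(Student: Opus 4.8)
The plan is to exploit the structure $F_k\mu=\tilde F_k\mu/Z_k(\mu)$ common to all three settings, in which $\tilde F_k$ is a \emph{positive linear} operator on measures and $Z_k(\mu)=\tilde F_k\mu(\bar{\X})$ is its total mass. Fix $\mu,\mu'\in\bar{\bar{\mathcal P}}_k(\bar{\X})$ and write $\eta=\tilde F_k\mu$, $\eta'=\tilde F_k\mu'$, $Z=Z_k(\mu)$, $Z'=Z_k(\mu')$, and $\rho=\eta-\eta'=\tilde F_k(\mu-\mu')$, a signed measure by linearity. The single fact I use repeatedly is a boundedness lemma: for every nonnegative finite measure $\sigma$, $\tilde F_k\sigma(\bar{\X})\le C\,\sigma(\bar{\X})$, where $C$ equals $C_h(y_k)$ (IP), $C_{Th}(y_k;k)$ (SE), or $\tilde C_{Th}(y_k;k)$ (PS). This follows from Tonelli's theorem and Assumptions IP.1/SE.1/PS.1; e.g.\ in SE, $\int_{\X}h_k(x)\int_{\X}T_k(x,x')\sigma(dx')\,dx=\int_{\X}\big[\int_{\X}h_k(x)T_k(x,x')dx\big]\sigma(dx')\le C_{Th}\,\sigma(\bar{\X})$. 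Admissibility of $\mu,\mu'$ guarantees $Z,Z'\in(0,\infty)$.

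For the total variation distance I first record $F_k\mu-F_k\mu'=\tfrac1Z\big(\rho-\rho(\bar{\X})F_k\mu'\big)$, obtained by adding and subtracting $\eta'/Z$. Taking $A=\{\,dF_k\mu/d\nu\ge dF_k\mu'/d\nu\,\}$ so that $d_{TV}(F_k\mu,F_k\mu')=F_k\mu(A)-F_k\mu'(A)$, and using $\rho(\bar{\X})=\rho(A)+\rho(A^c)$ together with $F_k\mu'(A)+F_k\mu'(A^c)=1$, the identity rearranges to
\begin{equation*}
d_{TV}(F_k\mu,F_k\mu')=\tfrac1Z\big[\rho(A)F_k\mu'(A^c)-\rho(A^c)F_k\mu'(A)\big].
\end{equation*}
The decisive step, and the source of the claimed $50\%$ improvement over the naive bound $2C/Z$, is to estimate $|\rho(A)|$ and $|\rho(A^c)|$ \emph{separately}. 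Writing the Jordan decomposition $\mu-\mu'=\sigma^+-\sigma^-$ with $\sigma^+(\bar{\X})=\sigma^-(\bar{\X})=d_{TV}(\mu,\mu')$, linearity gives $\rho=\tilde F_k\sigma^+-\tilde F_k\sigma^-$ with both parts nonnegative, so $|\rho(A)|\le\max\{\tilde F_k\sigma^+(\bar{\X}),\tilde F_k\sigma^-(\bar{\X})\}\le C\,d_{TV}(\mu,\mu')$, and likewise for $|\rho(A^c)|$. Since the two convex weights sum to one, the bracket is at most $\max\{|\rho(A)|,|\rho(A^c)|\}\le C\,d_{TV}(\mu,\mu')$, giving $K=C/Z$.

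For the Hellinger distance let $g,g'$ be the $\nu$-densities of $\eta,\eta'$. I split $\sqrt g/\sqrt Z-\sqrt{g'}/\sqrt{Z'}=(\sqrt g-\sqrt{g'})/\sqrt Z+\sqrt{g'}\,(Z^{-1/2}-Z'^{-1/2})$ and apply $(a+b)^2\le2a^2+2b^2$. The cross term integrates to $2(\sqrt{Z'}-\sqrt Z)^2/Z$, and the reverse triangle inequality in $L^2(\nu)$ gives $(\sqrt Z-\sqrt{Z'})^2=(\norm{\sqrt g}_2-\norm{\sqrt{g'}}_2)^2\le\int(\sqrt g-\sqrt{g'})^2 d\nu$. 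It then suffices to prove $\int(\sqrt g-\sqrt{g'})^2 d\nu\le2C\,d_H(\mu,\mu')^2$: in IP this is immediate from $g=h(y_k,\cdot)f$, while in SE/PS, with $g(x)=h_k(x)m(x)$ and $m(x)=\int T_k(x,x')\,d\mu(x')$ the predictive density, I use that $\Phi(a,b)=(\sqrt a-\sqrt b)^2$ is convex and positively homogeneous of degree one, hence subadditive; its integral form yields the pointwise bound $(\sqrt{m(x)}-\sqrt{m'(x)})^2\le\int T_k(x,x')(\sqrt f-\sqrt{f'})^2(x')\,dx'$, and multiplying by $h_k$, integrating, and swapping the order of integration produces the factor $C_{Th}$ (resp.\ $\tilde C_{Th}$). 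This gives $K=2\sqrt{C/Z}$.

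For the $1$-Wasserstein distance I work through the Kantorovich--Rubinstein dual; Assumption~\ref{assump:bounded_space} lets me restrict to $\norm{\phi}_{\mathrm{Lip}}\le1$ and, after centering, $\norm{\phi}_\infty\le D$. The same add-and-subtract identity gives $\int\phi\,dF_k\mu-\int\phi\,dF_k\mu'=\tfrac1Z\big[\int\phi\,d\rho-\rho(\bar{\X})\int\phi\,dF_k\mu'\big]$ with $|\int\phi\,dF_k\mu'|\le D$, and it remains to read off two Lipschitz constants: pushing the integral onto the old variable writes $\int\phi\,d\rho=\int G\,d(\mu-\mu')$ and $\rho(\bar{\X})=\int g\,d(\mu-\mu')$, each bounded by its Lipschitz constant times $W_1(\mu,\mu')$. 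In IP, $G=\phi\,h$ has $\norm{G}_{\mathrm{Lip}}\le D\norm{h}_{\mathrm{Lip}}+C_h$ and $\norm{h}_{\mathrm{Lip}}$ controls $\rho(\bar{\X})$, giving $2D\norm{h}_{\mathrm{Lip}}+C_h$; in SE the Lipschitz dependence enters only through $T_{\mathrm{Lip}}$, producing $2DC^*_{Th}$. I expect the main obstacle to lie in the parameter-state case, where the static parameter $w$ appears simultaneously in $\phi(x,w)$ and in the kernel: computing $\norm{G}_{\mathrm{Lip}}$ correctly hinges on bundling $h_kT_k$ into $\tilde f_k$ as in Assumption~\ref{assump:lipschitz_hf:ps} and on Assumption~\ref{assump:metric_space:ps}, which is exactly what controls the contribution of $\phi$'s dependence on $w$ and yields the extra additive $\tilde C_{Th}$, hence $2D\tilde C^*_{Th}+\tilde C_{Th}$. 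The other delicate point is the sharp TV constant above, since the factor of two is only recovered through the set-wise rearrangement rather than a direct triangle inequality.
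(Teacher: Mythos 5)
Your proposal is correct and reaches all the stated constants, but the total variation part takes a genuinely different route from the paper, so a comparison is worth recording. For TV, the paper works directly with densities: it splits $F_k\mu - F_k\mu'$ by adding and subtracting $h/Z_k(\mu)\cdot d\mu'/d\nu$, then restricts to the set $\X^*$ where the \emph{prior} density difference has the sign matching $Z_k(\mu)-Z_k(\mu')$, and shows the two resulting integrals over $\X^*$ and its complement recombine into a single integral over $\X^*$, which is then bounded by $C\,d_{TV}(\mu,\mu')$ via the identity $d_{TV}(\mu,\mu')=\int_{\X^*}|d\mu/d\nu - d\mu'/d\nu|\,d\nu$. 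You instead exploit linearity of $\tilde F_k$ at the level of signed measures: the identity $F_k\mu-F_k\mu'=\tfrac1Z(\rho-\rho(\bar{\X})F_k\mu')$ with $\rho=\tilde F_k(\mu-\mu')$, evaluated on the set $A$ where the \emph{posterior} densities are ordered, plus the Jordan decomposition bound $|\rho(B)|\le\max\{\tilde F_k\sigma^+(\bar{\X}),\tilde F_k\sigma^-(\bar{\X})\}\le C\,d_{TV}(\mu,\mu')$ and the convex-weight estimate. Both arguments recover the factor-of-one constant through the same underlying cancellation (the naive triangle inequality only gives $2C/Z$), but yours is more operator-theoretic and avoids the case split on which evidence is larger; the paper's is more explicit and reuses its intermediate inequalities (e.g.\ Equation~\eqref{ieq:intermediate:tv:ip}) in the error-reduction theorems later, which your packaging would not directly supply. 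For the Hellinger distance your two-step structure is the same as the paper's (a Lipschitz bound for the unnormalized map $\tilde F_k$ followed by a renormalization estimate); your $(a+b)^2\le 2a^2+2b^2$ split together with the reverse triangle inequality in $L^2(\nu)$ is exactly the content of the paper's Lemma on scaled measures (Lemma~\ref{lemma:unnormalized2normalized:H}), and your convexity-plus-homogeneity argument for $(\sqrt{m}-\sqrt{m'})^2\le\int T_k(\sqrt{f}-\sqrt{f'})^2$ is equivalent to the Cauchy--Schwarz step the paper uses. The $W_1$ argument is essentially identical to the paper's, including the centering of the dual potential, the two Lipschitz constants (one for $\int\phi\,d\rho$, one for the evidence difference), and the correct identification of the extra additive $\tilde C_{Th}$ term in the parameter-state case arising from the $w$-dependence of $\phi$ under Assumption~\ref{assump:metric_space:ps}. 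The only points you should make explicit in a full write-up are the routine Fubini/Tonelli justifications when pushing the dual potential through the transition kernel (the paper verifies integrability using $|\phi|\le D$ and admissibility of the prior), and, in the parameter-state setting, that the Jordan components $\sigma^\pm$ inherit absolute continuity with respect to Lebesgue measure so that $\tilde F_k\sigma^\pm$ is well defined.
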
 

\begin{proof}
See Appendix~\ref{sec:app:pointwise_lipschitz}.
\end{proof}

\begin{remark}
Note that the posterior $F_k(\mu')$ depends on the evidence $Z_k(\mu')$, which typically differs from the evidence $Z_k(\mu)$. As a result, without imposing very strong assumptions, it is challenging to establish a global bound on the posterior distance $d(F_k(\mu), F_k(\mu'))$ that holds uniformly for all $\mu' \in \bar{\bar{\mathcal{P}}}_k(\bar{\X})$ and is independent of $Z_k(\mu^\prime)$. To the best of our knowledge, Theorem~\ref{thm:pointwise_lipschitz} provides the first such global bound under the Hellinger and Wasserstein distances.
\end{remark}
The Lipschitz constant $K(\mu; y_k)$ depends only on one of the priors, $\mu$, and is independent of the other prior, $\mu^\prime$. Moreover, inequality~\eqref{eq:thm:pointwise_Lipschitz} holds for all $\mu, \mu^\prime \in \bar{\bar{\mathcal{P}}}_k(\bar{\X})$. 

Given that $P_k=F_k(P_{k-1})$ and $Q_k^*=F_k(Q_{k-1})$, Theorem~\ref{thm:pointwise_lipschitz} implies that, regardless of how different the priors $P_{k-1}$ and $Q_{k-1}$ are, the propagated learning error satisfies 
\begin{equation}
d(P_k,Q_k^*) \leq K(P_{k-1};y_k)d(P_{k-1},Q_{k-1}), 
\label{eq:remark:1}
\end{equation}
and 
\begin{equation}
d(P_k,Q_k^*) \leq K(Q_{k-1};y_k)d(P_{k-1},Q_{k-1}).
\label{eq:remark:2}
\end{equation}
As discussed in Section \ref{sec:related_work}, for the Hellinger and Wasserstein distances, the existing stability results for Bayesian learning either (i) hold with a Lipschitz constant depending on both priors (e.g.,\cite{post_stability_IPM}), or (ii) establish only the pointwise local Lipschitz continuity of $F_k$ (e.g., \cite{uniform_stability, infinitesimal_robustness,lipschitz_stability_ip}). In case (i), the bound takes the form
\begin{equation}
d(P_k,Q_k^*) \leq K^\prime(P_{k-1},Q_{k-1};y_k)d(P_{k-1},Q_{k-1}).
\label{eq:remark:other}
\end{equation}
In case (ii), the bound on $d(P_k,Q_k^*)$ is valid only when $d(P_{k-1},Q_{k-1})$ is sufficiently small. In practical settings, however, the prior error $d(P_{k-1},Q_{k-1})$ can be large, and such results often fail to yield bounds on $d(P_k,Q_k^*)$.

Combining inequality~\eqref{eq:remark:1} with the triangle inequality $d(P_k,Q_k) \leq d(P_k,Q_k^*)+d(Q_k^*,Q_k)$, we obtain
\begin{equation}
d(P_k,Q_k) \leq K(P_{k-1};y_k)d(P_{k-1},Q_{k-1})+d(Q_k^*,Q_k).
\label{eq:remark:1_2}
\end{equation}
Recursively applying inequality~\eqref{eq:remark:1_2} from step $k$ down to $1$ yields an upper bound on the learning error $d(P_k,Q_k)$. The approximate posteriors $Q_j, j \leq k$ only enter this bound through the incremental approximation errors $d(Q_j^*,Q_j), j \leq k$. This bound shows that if the incremental approximation errors $d(Q_j,Q_j^*)$ remain bounded for all $j \leq k$, then the overall learning error $d(P_k,Q_k)$ is also bounded. Further discussion of this bound is provided in Sections~\ref{sec:final_error} and~\ref{sec:discussion:stability}.

Similarly, applying inequality~\eqref{eq:remark:2} yields
\begin{equation}
d(P_k,Q_k) \leq K(Q_{k-1};y_k)d(P_{k-1},Q_{k-1})+d(Q_k^*,Q_k),
\label{eq:remark:2_2}
\end{equation}
and the recursive application of inequality~\eqref{eq:remark:2_2} from $k$ to $1$ leads to a second bound on $d(P_k,Q_k)$ that depends only on the approximate posteriors $Q_j, j \leq k$ and the incremental approximation errors. This bound can be computed in many practical scenarios. The learning error bounds derived from inequalities~\eqref{eq:remark:1_2} and~\eqref{eq:remark:2_2} are presented in detail in Section~\ref{sec:final_error}.

In contrast, learning error bounds derived from inequality~\eqref{eq:remark:other} depend on the incremental approximation errors as well as both the exact and approximate posteriors. These bounds are generally not computable, as the exact posteriors are typically intractable. Moreover, they do not guarantee that the learning error is bounded when the incremental approximation errors are bounded. Given bounded incremental approximation errors, these learning error bounds themselves may still be unbounded, due to their dependence on the approximate posteriors (the condition that the incremental approximation errors $d(Q_j^*,Q_j), j\leq k$ are bounded does not guarantee that a functional of the approximate posteriors $Q_j, j \leq k$ is bounded).

These results underscore the necessity of Theorem~\ref{thm:pointwise_lipschitz} for the establishment of a broadly applicable and practical analysis framework for approximate approaches of BSL.

\subsection{Learning error bounds of approximate approaches for BSL }\label{sec:final_error}
Theorem~\ref{thm:pointwise_lipschitz} uncovers the relation between the propagated error $d(P_k,Q_k^*)$ and the prior error $d(P_{k-1},Q_{k-1})$. As illustrated in Figure~\ref{fig:break_down_approx_sequential_learning}, this result can be combined with the triangle inequality $d(P_i, Q_i) \leq d(P_i, Q_i^*) + d(Q_i^*, Q_i)$ and applied recursively from $i = k$ down to $i = 1$ to derive upper bounds on the overall learning error $d(P_k, Q_k)$. 

Let $\Y_{m:n}$ denote the data received from step $m$ to step $n$, i.e., $(y_m,y_{m+1},\cdots,y_n)$. Theorem~\ref{thm:learning_error} presents two sets of upper bounds on the overall learning error $d(P_k,Q_k)$, with each set containing one bound under the TV distance, one under the Hellinger distance, and one under the $1$-Wasserstein distance. The first set of bounds is provided in Equation~\eqref{ieq:learning_error_bound}, while the second set is given in Equation~\eqref{ieq:learning_error_bound2}. 
\begin{theorem}\label{thm:learning_error}
Given a sequence of data $\Y_{1:k}$, suppose the assumptions in Theorem~\ref{thm:pointwise_lipschitz} hold for all steps up to step $k$. Assume $P_{i} \in \bar{\mathcal{P}}_{i+1}(\bar{\X})$ for all $i \in [0,k-1]$ and $Q_{i} \in \bar{\mathcal{P}}_{i+1}(\bar{\X})$ for all $i \in [1,k-1]$.
For any step $k\geq2$, the distance between the true posterior $P_k$ and the approximate posterior $Q_k$ is bounded by
\begin{equation}
d(P_k,Q_k) \leq \sum_{j=1}^{k-1}C(\Y_{j+1:k},p(\Y_{j+1:k} \mid \Y_{1:j}),j)d(Q_j^*,Q_j) + d(Q_k^*,Q_k),
\label{ieq:learning_error_bound}
\end{equation}
and 
\begin{equation}
d(P_k,Q_k) \leq \sum_{j=1}^{k-1}C^\prime(\Y_{j+1:k},Q_{j:k-1},j)d(Q_j^*,Q_j) + d(Q_k^*,Q_k),
\label{ieq:learning_error_bound2}
\end{equation}
where $Q_{j:k-1}$ represents $(Q_{j},Q_{j+1},\cdots,Q_{k-1})$.
The definitions of the real-valued functions $C$ and $C^\prime$ are provided in Table~\ref{table:C} and Table~\ref{table:C'}, respectively.
\begin{table}
\centering
\caption{Definition of function $C(\Y_{j+1:k},p(\Y_{j+1:k} \mid \Y_{1:j}),j)$}
\begin{tabularx}{1.0\linewidth}
{ 
  | >{\hsize=.2\hsize\centering\arraybackslash}X 
  | >{\hsize=1.3\hsize\centering\arraybackslash}X 
  | >{\hsize=1.1\hsize\centering\arraybackslash}X |>{\hsize=1.4\hsize\centering\arraybackslash}X | }
 \hline
$d$ / Pro-blem & Inverse problems & State estimation & Parameter-state estimation \\
 \hline
\rule{0pt}{18pt}$d_{TV}$  & $\frac{\prod_{i=j+1}^kC_h(y_i)}{p(\Y_{j+1:k}\mid \Y_{1:j})}$ & $\frac{\prod_{i=j+1}^kC_{Th}(y_i;i)}{p(\Y_{j+1:k}\mid \Y_{1:j})}$ & $\frac{\prod_{i=j+1}^k\tilde{C}_{Th}(y_i;i)}{p(\Y_{j+1:k}\mid \Y_{1:j})}$ \\
 \hline
\rule{0pt}{18pt}$d_H$ & $\frac{2^{k-j}\prod_{i=j+1}^k\sqrt{C_h(y_i)}}{\sqrt{p(\Y_{j+1:k}\mid \Y_{1:j})}}$ & $\frac{2^{k-j}\prod_{i=j+1}^k\sqrt{C_{Th}(y_i;i)}}{\sqrt{p(\Y_{j+1:k}\mid \Y_{1:j})}}$ & $\frac{2^{k-j}\prod_{i=j+1}^k\sqrt{\tilde{C}_{Th}(y_i;i)}}{\sqrt{p(\Y_{j+1:k}\mid \Y_{1:j})}}$  \\
 \hline
 \rule{0pt}{18pt}$W_1$ & $\frac{\prod_{i=j+1}^k(2D\lVert h \rVert_{\text{Lip}}(y_i)+C_h(y_i))}{p(\Y_{j+1:k}\mid \Y_{1:j})}$ & $\frac{(2D)^{k-j}\prod_{i=j+1}^k C^*_{Th}(y_i;i)}{p(\Y_{j+1:k}\mid \Y_{1:j})}$ & $\frac{\prod_{i=j+1}^k(2D\tilde{C}^*_{Th}(y_i;i)+\tilde{C}_{Th}(y_i;i))}{p(\Y_{j+1:k} \mid \Y_{1:j})}$ \\
\hline
\end{tabularx}
\label{table:C}
\end{table}

\begin{table}
\centering
\caption{Definition of function $C'(\Y_{j+1:k},Q_{j:k-1},j)$}
\begin{tabularx}{1.0\linewidth}{ 
  | >{\hsize=0.2\hsize\centering\arraybackslash}X 
  | >{\hsize=1.28\hsize\centering\arraybackslash}X 
  | >{\hsize=1.15\hsize\centering\arraybackslash}X
  | >{\hsize=1.37\hsize\centering\arraybackslash}X | }
 \hline
$d$ / Pro-blem & Inverse problems & State estimation & Parameter-state estimation \\
 \hline
\rule{0pt}{15pt}$d_{TV}$  & $\prod_{i=j+1}^k\frac{C_h(y_i)}{Z_i(Q_{i-1})}$ & $\prod_{i=j+1}^k\frac{C_{Th}(y_i;i)}{Z_i(Q_{i-1})}$ & $\prod_{i=j+1}^k\frac{\tilde{C}_{Th}(y_i;i)}{Z_i(Q_{i-1})}$ \\
 \hline
\rule{0pt}{18pt}$d_H$ & $2^{k-j}\prod_{i=j+1}^k\frac{\sqrt{C_h(y_i)}}{\sqrt{Z_i(Q_{i-1})}}$ & $2^{k-j}\prod_{i=j+1}^k\frac{\sqrt{C_{Th}(y_i;i)}}{\sqrt{Z_i(Q_{i-1})}}$ & $2^{k-j}\prod_{i=j+1}^k\frac{\sqrt{\tilde{C}_{Th}(y_i;i)}}{\sqrt{Z_i(Q_{i-1})}}$  \\
 \hline
\rule{0pt}{18pt}$W_1$ & $\prod_{i=j+1}^k\frac{2D\lVert h \rVert_{\text{Lip}}(y_i)+C_h(y_i)}{Z_i(Q_{i-1})}$ & $(2D)^{k-j}\prod_{i=j+1}^k \frac{C^*_{Th}(y_i;i)}{Z_i(Q_{i-1})}$ & $\prod_{i=j+1}^k\frac{2D\tilde{C}^*_{Th}(y_i;i)+\tilde{C}_{Th}(y_i;i)}{Z_i(Q_{i-1})}$ \\
\hline
\end{tabularx}
\label{table:C'}
\end{table}
\end{theorem}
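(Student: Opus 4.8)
The plan is to convert the one-step propagation estimates already recorded in inequalities~\eqref{eq:remark:1_2} and~\eqref{eq:remark:2_2} into closed form by solving the induced linear recurrence, and then to simplify the resulting product of Lipschitz constants. Writing $a_i := d(P_i,Q_i)$ and $b_i := d(Q_i^*,Q_i)$, Theorem~\ref{thm:pointwise_lipschitz} combined with the triangle inequality gives, for each $i \geq 1$,
\[
a_i \leq K(P_{i-1};y_i)\, a_{i-1} + b_i \qquad\text{and}\qquad a_i \leq K(Q_{i-1};y_i)\, a_{i-1} + b_i,
\]
where the admissibility hypotheses $P_{i-1},Q_{i-1}\in\bar{\mathcal{P}}_i(\bar{\X})$ ensure $0<Z_i(\cdot)<\infty$, so that every $K(\cdot;y_i)$ in Table~\ref{table:normalized:Z} is finite and $F_i$ applies. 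The two processes are initialized at the same prior, $P_0=Q_0$ (this is why the hypotheses demand $Q_i\in\bar{\mathcal{P}}_{i+1}$ only from $i=1$ while requiring $P_0$ to be admissible), so $a_0=0$. A straightforward induction on $k$ then yields, for the first recurrence,
\[
a_k \leq \sum_{j=1}^{k} \Bigl(\prod_{i=j+1}^{k} K(P_{i-1};y_i)\Bigr) b_j ,
\]
with the empty product at $j=k$ equal to $1$, and the analogous bound with $K(Q_{i-1};y_i)$ for the second recurrence. This already isolates the $j=k$ term as the bare incremental error $d(Q_k^*,Q_k)$ that sits outside the sums in \eqref{ieq:learning_error_bound} and \eqref{ieq:learning_error_bound2}.

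The remaining work is to rewrite each product of Lipschitz constants in the tabulated form. Substituting the entries of Table~\ref{table:normalized:Z}, each product factors into a numerator collecting the problem-specific constants ($C_h$, $C_{Th}$, $\tilde C_{Th}$ and their Wasserstein counterparts, together with the factors $2$ or $2D$ for the Hellinger and Wasserstein cases) over a denominator equal to $\prod_{i=j+1}^{k} Z_i(P_{i-1})$, respectively $\prod_{i=j+1}^{k} Z_i(Q_{i-1})$. For the second set no further simplification is available, since the $Q_{i-1}$ are arbitrary approximations; the evidences $Z_i(Q_{i-1})$ therefore remain explicitly in $C'$, which is exactly Table~\ref{table:C'}. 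For the first set the key identity is
\[
\prod_{i=j+1}^{k} Z_i(P_{i-1}) = p(\Y_{j+1:k}\mid \Y_{1:j}),
\]
which I would obtain by first showing $Z_i(P_{i-1}) = p(y_i\mid \Y_{1:i-1})$ and then telescoping through the chain rule $p(\Y_{j+1:k}\mid\Y_{1:j}) = \prod_{i=j+1}^{k} p(y_i\mid \Y_{1:i-1})$. Dividing the numerators by this denominator reproduces $C$ in Table~\ref{table:C} across all three metrics.

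The main obstacle is the evidence identity $Z_i(P_{i-1}) = p(y_i\mid \Y_{1:i-1})$, whose verification is genuinely problem-dependent. Here one uses that $P_{i-1}$ is the \emph{true} posterior, so it represents the density $p(\cdot\mid\Y_{1:i-1})$, and then unfolds the definitions of $Z_i$ and $\tilde F_i$ from Definition~\ref{def:post_propagation&evidence}. In inverse problems this is immediate, since $Z_i(P_{i-1}) = \int_{\X} h(y_i,x)\,P_{i-1}(dx) = \int_{\X} p(y_i\mid x)\,p(x\mid\Y_{1:i-1})\,dx$. In state and parameter-state estimation it additionally requires recognizing the inner integral $\int_{\X} T_i(x,x_{i-1})\,P_{i-1}(dx_{i-1})$ as the one-step predictive density $p(x_i\mid\Y_{1:i-1})$ (a Chapman--Kolmogorov marginalization), after which integrating against $h_i$ again yields $p(y_i\mid\Y_{1:i-1})$; the parameter-state case carries the extra variable $w$ through the same computation.

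Once this identity is established in each setting, both bounds follow by assembling the recurrence solution with the appropriate denominator: the chain-rule product for \eqref{ieq:learning_error_bound} and the raw product of approximate evidences for \eqref{ieq:learning_error_bound2}. The only points requiring care beyond the evidence identity are the bookkeeping of the empty product at $j=k$ and confirming that the admissibility assumptions propagate so that every factor $K(P_{i-1};y_i)$ and $K(Q_{i-1};y_i)$ along the chain is well-defined; these are routine given Theorem~\ref{thm:pointwise_lipschitz}.
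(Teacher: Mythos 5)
Your proposal is correct and follows essentially the same route as the paper's proof: a one-step bound from Theorem~\ref{thm:pointwise_lipschitz} plus the triangle inequality, unrolled by induction, combined with the evidence identity $Z_i(P_{i-1})=p(y_i\mid\Y_{1:i-1})$ (verified per problem setting via the predictive-density marginalization) and the chain-rule telescoping $\prod_{i=j+1}^{k}p(y_i\mid\Y_{1:i-1})=p(\Y_{j+1:k}\mid\Y_{1:j})$. The only cosmetic difference is that you solve the abstract linear recurrence first and substitute the tabulated constants afterward, whereas the paper inducts directly on the closed-form bounds starting from the base case $k=2$ with $Q_1^*=F_1(P_0)=P_1$; the content is identical.
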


\begin{proof}
See Appendix~\ref{sec:app:learning_error}.
\end{proof}

\begin{remark}
We note that, the bound provided in Equation~\eqref{ieq:learning_error_bound} is a linear function of the incremental approximation errors. This can significantly facilitate further stability analysis.
\end{remark}

\begin{remark}
For inverse problems and state estimation problems, Theorem~\ref{thm:learning_error} holds for approximate posteriors from arbitrary distribution families, without requiring them to be absolutely continuous with respect to the Lebesgue measure. This makes Theorem~\ref{thm:learning_error} applicable to a broad class of BSL methods, including those based on the Monte Carlo technique. 
\end{remark}

As shown in Equation~\eqref{ieq:learning_error_bound}, the first set of upper bounds depends only on the incremental approximation errors $d(Q_j^*,Q_j), j \leq k$ and the constant $C(\Y_{j+1:k},p(\Y_{j+1:k}\mid \Y_{1:j}),j)$. For a given problem, the probability density $p(\Y_{j+1:k}\mid \Y_{1:j})$ is fixed, and therefore the constant  $C(\Y_{j+1:k},p(\Y_{j+1:k}\mid \Y_{1:j}),j)$ is also fixed. These bounds thus demonstrate that, as long as the incremental approximation errors, $d(Q_j^*,Q_j), j \leq k$, remain bounded, the overall learning error $d(P_k,Q_k)$ is guaranteed to be bounded. Furthermore, if we are given another sequence of approximate posteriors $\tilde{Q}_j, j \leq k$, Equation~\eqref{ieq:learning_error_bound}, combined with the triangle inequality $d(Q_k,\tilde{Q}_k) \leq d(Q_k, P_k)+d(\tilde{Q}_k,P_k)$, shows that the distance between two approximate posteriors $Q_k$ and $\tilde{Q}_k$ is bounded, as long as both sequences of incremental approximation errors, $d(Q_j^*, Q_j)$ and $d(\tilde{Q}_j^*, \tilde{Q}_j), j \leq k$, are bounded. A detailed discussion of the stability of approximate BSL methods with respect to the incremental approximation process, which is demonstrated by Equation~\eqref{ieq:learning_error_bound}, is provided in Section~\ref{sec:discussion:stability}. However, the probability density $p(\Y_{j+1:k}\mid \Y_{1:j})$ in the first set of bounds is normally intractable. As a result, while these bounds provide a stability guarantee, they are generally not computable in practice. 

In contrast, the second set of upper bounds, given in Equation~\eqref{ieq:learning_error_bound2}, is independent of $p(\Y_{j+1:k}\mid \Y_{1:j})$. Computing these bounds only requires the approximate posteriors $Q_j,1\leq j \leq k-1$, which are the learning results, and the incremental approximate errors $d(Q_j^*,Q_j), j\leq k$. This makes the second set of bounds estimable in many real-world applications. Further discussion on the estimation of these bounds is provided in Section~\ref{sec:estimation_bound}.

In practical applications, it can sometimes be observed that the learning error $d(P_k,Q_k)$ decreases as the number of update steps $k$ increases. However, this phenomenon cannot be explained by the results presented so far in this paper. Although Theorem~\ref{thm:pointwise_lipschitz}, as discussed in Section~\ref{sec:related_work}, yields tighter upper bounds on $d(P_k,Q_k^*)$ than those obtainable from existing results, it can be easily proved that these bounds are still larger than $d(P_{k-1},Q_{k-1})$. Consequently, this leads to an increase in the derived upper bounds on $d(P_k,Q_k)$ as $k$ grows. In the following section, we will attempt to understand these observations by providing sufficient conditions under which the learning error decreases after data assimilation.

\subsection{Error reduction by data assimilation}\label{sec:er}
Understanding the phenomenon of learning error decay requires understanding what leads to $d(P_k,Q_k)\leq d(P_{k-1},Q_{k-1})$. Recall that $Q_k = \hat{F}_k(Q_{k-1})$. If the map $\hat{F}_k$ exhibits any specific structural properties, these properties can be used to help establish sufficient conditions ensuring learning error reduction. However, since our analysis targets general approximate methods for BSL, we do not impose any structural assumptions on $\hat{F}_k$. Instead, we leverage the structure of approximate BSL methods and establish sufficient conditions on the system, $P_{k-1}$, and $Q_{k-1}$, under which the inequality $d(P_k,Q_k^*) \leq d(P_{k-1},Q_{k-1})$ holds. If these conditions are met and the incremental approximation error satisfies $d(Q_k,Q_k^*) \leq d(P_{k-1},Q_{k-1}) - d(P_k,Q_k^*)$, then the learning error at step $k$ is smaller than that at step $k-1$.

To facilitate this analysis, we introduce a unified notation involving a variable $\bar{x}$, a system-dependent function $g$, and a space of reference measures, $\bar{\mathcal{M}}(\bar{\X})$, as summarized in Table~\ref{table:x_bar_g_M}.
\begin{table}
\centering
\caption{Definitions of variable $\bar{x}$, function $g$, and space $\bar{\mathcal{M}}(\bar{\X})$ in different problems. The symbol $\mathcal{M}(\X)$ denotes the set of all $\sigma$-finite measures defined on the space $(\X,\mathcal{B}(\X))$, and $\lambda$ is the Lebesgue measure defined on the space $(\X\times \W,\mathcal{B}(\X\times \W))$.}
\begin{tabularx}{1.0\linewidth}{ 
  | >{\hsize=1.5\hsize\centering\arraybackslash}X 
  | >{\hsize=.4\hsize\centering\arraybackslash}X
  | >{\hsize=1.7\hsize\centering\arraybackslash}X |>{\hsize=0.4\hsize\centering\arraybackslash}X | }
 \hline
\rule{0pt}{11pt} Problem & $\bar{x}$ & $g(\bar{x},y_k)$ & $\bar{\mathcal{M}}(\bar{\X})$ \\
 \hline
 Inverse problems  & $x$ & $h(y_k,x)$ & $\mathcal{M}(\X)$\\
 \hline
 State estimation & $x_{k-1}$ & $\int_{\X}h_k(y_k,x_k)T_k(x_k,x_{k-1})dx_k$ & $\mathcal{M}(\X)$ \\
 \hline
 Parameter-state estimation &$(x_{k-1},w)$ & $\int_{\X}h_k(y_k,x_k,w)T_k(x_k,x_{k-1},w)dx_k$ & $\{\lambda\}$ \\
\hline
\end{tabularx}
\label{table:x_bar_g_M}
\end{table}
We first present sufficient conditions that guarantee error reduction under the TV distance for all three learning problems considered in this paper.
\begin{theorem}\label{thm:er:all:tv}
Given the data $y_k$, assume that $P_{k-1} \in \bar{\mathcal{P}}_k(\bar{\X})$ and $Q_{k-1} \in \bar{\mathcal{P}}_k(\bar{\X})$. Suppose that there exists a measure $\nu \in \bar{\mathcal{M}}(\bar{\X})$ such that $P_{k-1} \ll \nu, Q_{k-1} \ll \nu$, and 
\begin{align*}
&\int_{\bar{\mathcal{X}}^*}g(y_k,\bar{x})\bigg \lvert \frac{dP_{k-1}}{d\nu}(\bar{x})-\frac{dQ_{k-1}}{d\nu}(\bar{x}) \bigg \rvert \nu(d\bar{x})  \nonumber \\
&\qquad \leq \int_{\bar{\mathcal{X}}^*}g(y_k,\bar{x})\nu(d\bar{x})\int_{\bar{\mathcal{X}}^*}\bigg \lvert \frac{dP_{k-1}}{d\nu}(\bar{x})-\frac{dQ_{k-1}}{d\nu}(\bar{x}) \bigg \rvert \nu(d\bar{x}),
\end{align*}
and 
\begin{equation*}
\int_{\bar{\mathcal{X}}^*}g(y_k,\bar{x})\nu(d\bar{x}) \leq  \int_{\bar{\X}}g(y_k,\bar{x})P_{k-1}(d\bar{x}) \vee \int_{\bar{\X}}g(y_k,\bar{x})Q_{k-1}(d\bar{x}),
\end{equation*}
where
\begin{equation*}
\bar{\mathcal{X}}^* := \begin{cases} \{\bar{x} \in \bar{\mathcal{X}}: \frac{dP_{k-1}}{d\nu}(\bar{x}) \geq \frac{dQ_{k-1}}{d\nu}(\bar{x})\}, \quad \text{if } Z_k(P_{k-1}) \geq Z_k(Q_{k-1}), \nonumber \\
\{\bar{x} \in \bar{\mathcal{X}}: \frac{dP_{k-1}}{d\nu}(\bar{x}) \leq \frac{dQ_{k-1}}{d\nu}(\bar{x})\}, \quad \text{otherwise}. \end{cases}
\end{equation*}
Then the distributions $P_k$ and $Q_k^*$ satisfy
\begin{equation*}
d_{TV}(P_k,Q_k^*) \leq d_{TV}(P_{k-1},Q_{k-1}).
\end{equation*}
\end{theorem}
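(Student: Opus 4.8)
Plan: I would treat the three settings uniformly by working with the unnormalized posterior measures $\tilde{F}_kP_{k-1}$ and $\tilde{F}_kQ_{k-1}$ from Definition~\ref{def:post_propagation&evidence}, exploiting two facts that hold across IP, SE, and PS. First, the evidence admits the common form $Z_k(\mu)=\int_{\bar{\X}}g(y_k,\bar{x})\mu(d\bar{x})$: this is immediate for inverse problems, and for state and parameter-state estimation it follows from Tonelli, since integrating $h_k(y_k,x_k)T_k(x_k,\bar{x})$ over $x_k$ reproduces exactly the $g$ of Table~\ref{table:x_bar_g_M}. Second, writing $\rho_P:=\tfrac{dP_{k-1}}{d\nu}$, $\rho_Q:=\tfrac{dQ_{k-1}}{d\nu}$, $Z_P:=Z_k(P_{k-1})$, $Z_Q:=Z_k(Q_{k-1})$, the normalized posteriors $P_k,Q_k^*$ have densities $\tilde{p}/Z_P$ and $\tilde{q}/Z_Q$ (with respect to $\nu$ for IP and to the Lebesgue measure for SE and PS), where $\tilde{p},\tilde{q}$ are the densities of the unnormalized posteriors. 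I would assume without loss of generality that $Z_P\geq Z_Q$; the complementary case follows by exchanging the roles of $P$ and $Q$, which is precisely why $\bar{\mathcal{X}}^*$ is defined by a case split on the ordering of the evidences, so that here $\bar{\mathcal{X}}^*=\{\bar{x}:\rho_P(\bar{x})\geq\rho_Q(\bar{x})\}$.

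Using the variational identity $d_{TV}(P_k,Q_k^*)=\int(\tfrac{\tilde{p}}{Z_P}-\tfrac{\tilde{q}}{Z_Q})^{+}$, I would first carry out a sign analysis to strip off the normalization. On the set where the integrand is positive we have $\tilde{p}Z_Q\geq\tilde{q}Z_P\geq\tilde{q}Z_Q$, hence $\tilde{p}\geq\tilde{q}$, and moreover $\tfrac{\tilde{p}}{Z_P}-\tfrac{\tilde{q}}{Z_Q}=\tfrac{\tilde{p}Z_Q-\tilde{q}Z_P}{Z_PZ_Q}\leq\tfrac{\tilde{p}-\tilde{q}}{Z_P}$ (using $\tilde{q}Z_P\geq\tilde{q}Z_Q$). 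Enlarging the domain of integration to $\{\tilde{p}\geq\tilde{q}\}$, where the extra integrand is nonnegative, yields $d_{TV}(P_k,Q_k^*)\leq\frac{1}{Z_P}\int(\tilde{p}-\tilde{q})^{+}$.

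The central step is to push the positive part of the posterior difference back to the prior level. For inverse problems this is an identity, since $\tilde{p}-\tilde{q}=g(y_k,\cdot)(\rho_P-\rho_Q)$ gives $\int(\tilde{p}-\tilde{q})^{+}d\nu=\int_{\bar{\mathcal{X}}^*}g(y_k,\bar{x})\lvert\rho_P-\rho_Q\rvert\,\nu(d\bar{x})$. For SE and PS, where $\tilde{p}-\tilde{q}=h_k(y_k,x_k)\int T_k(x_k,\bar{x})(\rho_P-\rho_Q)(\bar{x})\,\nu(d\bar{x})$, I would use positivity of $T_k$ to move the positive part inside the kernel, namely $(\int T_k(x_k,\bar{x})(\rho_P-\rho_Q)\,\nu(d\bar{x}))^{+}\leq\int T_k(x_k,\bar{x})(\rho_P-\rho_Q)^{+}\,\nu(d\bar{x})$, then multiply by $h_k\geq 0$, integrate over $x_k$, and apply Tonelli together with the definition of $g$ to arrive at the same bound $\int(\tilde{p}-\tilde{q})^{+}\leq\int_{\bar{\mathcal{X}}^*}g(y_k,\bar{x})\lvert\rho_P-\rho_Q\rvert\,\nu(d\bar{x})$.

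Finally I would invoke the two hypotheses in sequence. The first factorizes the right-hand side as $\int_{\bar{\mathcal{X}}^*}g\lvert\rho_P-\rho_Q\rvert\,d\nu\leq(\int_{\bar{\mathcal{X}}^*}g\,d\nu)(\int_{\bar{\mathcal{X}}^*}\lvert\rho_P-\rho_Q\rvert\,d\nu)$; the second gives $\int_{\bar{\mathcal{X}}^*}g\,d\nu\leq Z_P\vee Z_Q=Z_P$, so the leading factor $\frac{1}{Z_P}\int_{\bar{\mathcal{X}}^*}g\,d\nu\leq 1$. Combined with the elementary identity $\int_{\bar{\mathcal{X}}^*}\lvert\rho_P-\rho_Q\rvert\,d\nu=\tfrac{1}{2}\int_{\bar{\X}}\lvert\rho_P-\rho_Q\rvert\,d\nu=d_{TV}(P_{k-1},Q_{k-1})$, which holds because $\bar{\mathcal{X}}^*$ is exactly the region where $\rho_P-\rho_Q$ keeps a fixed sign and the signed density $\rho_P-\rho_Q$ integrates to zero, this chains to $d_{TV}(P_k,Q_k^*)\leq d_{TV}(P_{k-1},Q_{k-1})$. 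I expect the main obstacle to be the SE/PS reduction in the central step: justifying the interchange of the positive part with the transition kernel and the ensuing Tonelli interchange (whose finiteness and measurability are guaranteed by $P_{k-1},Q_{k-1}\in\bar{\mathcal{P}}_k(\bar{\X})$), and tracking the $Z_P$-versus-$Z_Q$ asymmetry so that the second hypothesis cleanly cancels the leading $1/Z_P$—this bookkeeping is exactly what dictates the case split in the definition of $\bar{\mathcal{X}}^*$.
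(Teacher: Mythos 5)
Your proof is correct, and it reaches the paper's key intermediate inequality $d_{TV}(P_k,Q_k^*)\leq\frac{1}{Z_k(P_{k-1})\vee Z_k(Q_{k-1})}\int_{\bar{\mathcal{X}}^*}g(y_k,\bar{x})\lvert\rho_P-\rho_Q\rvert\,\nu(d\bar{x})$ by a genuinely different route. The paper (Propositions in Appendix D.1, reusing Equations~\eqref{ieq:intermediate:tv:ip} and~\eqref{eq:ip:normalized:prior_diff:rewrite} from the proof of Theorem~\ref{thm:pointwise_lipschitz}) splits $d_{TV}(P_k,Q_k^*)$ by the triangle inequality into a density-mismatch term and an evidence-mismatch term $\lvert Z_k(P_{k-1})-Z_k(Q_{k-1})\rvert$, then algebraically recombines the two using the decomposition of the evidence difference over $\bar{\mathcal{X}}^*$ and its complement. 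You instead use the positive-part representation $d_{TV}=\int(\cdot)^{+}$, a pointwise sign analysis showing that $\tilde{p}Z_Q>\tilde{q}Z_P$ forces $\tilde{p}>\tilde{q}$ together with $\tfrac{\tilde{p}}{Z_P}-\tfrac{\tilde{q}}{Z_Q}\leq\tfrac{\tilde{p}-\tilde{q}}{Z_P}$ when $Z_P\geq Z_Q$, and then push the positive part through the nonnegative transition kernel via $(\int T(\rho_P-\rho_Q))^{+}\leq\int T(\rho_P-\rho_Q)^{+}$ and Tonelli. All of these steps check out, including the WLOG reduction (the case split in $\bar{\mathcal{X}}^*$ is exactly what makes the symmetric argument work) and the identity $\int_{\bar{\mathcal{X}}^*}\lvert\rho_P-\rho_Q\rvert\,d\nu=d_{TV}(P_{k-1},Q_{k-1})$. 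Your approach buys a single unified argument covering IP, SE, and PS at once (the paper proves three separate propositions), and it avoids ever introducing the evidence-difference term only to cancel it later; the paper's route has the advantage of reusing machinery already established for the Lipschitz stability theorem, so its error-reduction proofs are shorter on the page. The final application of the two hypotheses is identical in both arguments.
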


\begin{proof}
See Appendix~\ref{sec:app:er:tv}.
\end{proof}

We next establish conditions under which error reduction occurs under the Hellinger distance. The first such condition is given below.
\begin{assumptionp}{ER.H.1}\label{assump:er:all:H:1}
Given the data $y_k$, there exists a measure $\nu \in \bar{\mathcal{M}}(\bar{\X})$ such that $P_{k-1} \ll \nu, Q_{k-1} \ll \nu$, and 
\begin{equation*}
\int_\mathcal{\bar{X}}g(y_k,\bar{x})\sqrt{\frac{dP_{k-1}}{d\nu}(\bar{x})\frac{dQ_{k-1}}{d\nu}(\bar{x})}\nu(d\bar{x}) \geq \int_{\bar{\X}}g(y_k,\bar{x})\nu(d\bar{x})\int_{\bar{\X}}\sqrt{\frac{dP_{k-1}}{d\nu}(\bar{x})\frac{dQ_{k-1}}{d\nu}(\bar{x})}\nu(d\bar{x}),
\end{equation*}
and
\begin{equation*}
\int_{\bar{\X}}g(y_k,\bar{x})\nu(d\bar{x}) \geq \sqrt{\int_{\bar{\X}}g(y_k, \bar{x})P_{k-1}(d\bar{x})\int_{\bar{\X}}g(y_k,\bar{x})Q_{k-1}(d\bar{x})}.
\end{equation*}
\end{assumptionp}

An alternative sufficient condition for Hellinger error reduction is provided in the following assumption.
\begin{assumptionp}{ER.H.2}\label{assump:er:all:H:2}
Given the data $y_k$, there exists a measure $\nu \in \bar{\mathcal{M}}(\bar{\X})$ such that $P_{k-1} \ll \nu, Q_{k-1} \ll \nu$, and 
\begin{align*}
&\int_{\bar{\X}}g(y_k,\bar{x})\left(\sqrt{\frac{dP_{k-1}}{d\nu}(\bar{x})}-\sqrt{\frac{dQ_{k-1}}{d\nu}(\bar{x})}\right)^2\nu(d\bar{x}) \\
&\qquad \leq \int_{\bar{\X}}g(y_k,\bar{x})\nu(d\bar{x})\int_{\bar{\X}}\left(\sqrt{\frac{dP_{k-1}}{d\nu}(\bar{x})}-\sqrt{\frac{dQ_{k-1}}{d\nu}(\bar{x})}\right)^2\nu(d\bar{x}),
\end{align*}
and 
\begin{equation*}
\int_{\bar{\X}}g(y_k,\bar{x})\nu(d\bar{x})  \leq \sqrt{\int_{\bar{\X}}g(y_k,\bar{x})P_{k-1}(d\bar{x})\int_{\bar{\X}}g(y_k,\bar{x})Q_{k-1}(d\bar{x})}.
\end{equation*}
\end{assumptionp}

\begin{theorem}\label{thm:er:all:H}
Assume that $P_{k-1} \in \bar{\mathcal{P}}_k(\bar{\X})$ and $Q_{k-1} \in \bar{\mathcal{P}}_k(\bar{\X})$. Suppose \textbf{either} Assumption~\ref{assump:er:all:H:1} or Assumption~\ref{assump:er:all:H:2} holds. Then the distributions $P_k$ and $Q_k^*$ satisfy
\begin{equation*}
d_H(P_k,Q_k^*) \leq d_H(P_{k-1},Q_{k-1}).
\end{equation*}
\end{theorem}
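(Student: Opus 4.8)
The plan is to pass from the Hellinger distance to the \emph{Hellinger affinity} and to show that data assimilation can only increase it. Using the identity $d_H(\mu,\mu')^2 = 1 - \int\sqrt{(d\mu/d\nu)(d\mu'/d\nu)}\,\nu(d\bar{x})$, the claim $d_H(P_k,Q_k^*)\le d_H(P_{k-1},Q_{k-1})$ is equivalent to showing that the posterior affinity is at least the prior affinity $B := \int_{\bar{\X}}\sqrt{(dP_{k-1}/d\nu)(dQ_{k-1}/d\nu)}\,\nu(d\bar{x})$, where $\nu\in\bar{\mathcal{M}}(\bar{\X})$ is the reference measure supplied by the invoked assumption. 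Since the affinity is independent of the dominating measure, I may compute the prior affinity against $\nu$ and the posterior affinity against Lebesgue measure on the (possibly different) posterior space.

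First I would rewrite the posterior affinity through the unified likelihood $g$ of Table~\ref{table:x_bar_g_M}. Inserting the posterior density from Definition~\ref{def:post_propagation&evidence} and pulling the normalizers $Z_P := Z_k(P_{k-1})$ and $Z_Q := Z_k(Q_{k-1})$ out of the square root, the inverse-problem posterior affinity equals $A/\sqrt{Z_P Z_Q}$ exactly, where $A := \int_{\bar{\X}}g(y_k,\bar{x})\sqrt{(dP_{k-1}/d\nu)(dQ_{k-1}/d\nu)}\,\nu(d\bar{x})$. In state and parameter-state estimation the transition kernel keeps the two predictive integrals under a common square root; there I would apply the Cauchy--Schwarz inequality $\sqrt{(\int_{\X}T_k\,dP_{k-1})(\int_{\X}T_k\,dQ_{k-1})} \ge \int_{\X}T_k\sqrt{\tfrac{dP_{k-1}}{d\nu}\tfrac{dQ_{k-1}}{d\nu}}\,\nu(dx_{k-1})$ pointwise in $x_k$ (and $w$), and then exchange the $x_k$- and $\bar{x}$-integrations by Tonelli so that $\int_{\X} h_k T_k\,dx_k$ collapses into $g$. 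This yields the bound (an equality for inverse problems) that the posterior affinity is $\ge A/\sqrt{Z_P Z_Q}$, so it suffices to prove $A \ge B\sqrt{Z_P Z_Q}$.

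Setting $G := \int_{\bar{\X}}g(y_k,\bar{x})\,\nu(d\bar{x})$, and noting $Z_P = \int g\,dP_{k-1}$, $Z_Q = \int g\,dQ_{k-1}$ and $B\le 1$ (again Cauchy--Schwarz), the two assumptions then close the estimate by elementary algebra. Under Assumption~\ref{assump:er:all:H:1} the two conditions read $A\ge GB$ and $G\ge\sqrt{Z_P Z_Q}$, whence $A\ge GB\ge B\sqrt{Z_P Z_Q}$ using $B\ge 0$. Under Assumption~\ref{assump:er:all:H:2}, expanding $(\sqrt{dP_{k-1}/d\nu}-\sqrt{dQ_{k-1}/d\nu})^2 = dP_{k-1}/d\nu + dQ_{k-1}/d\nu - 2\sqrt{(dP_{k-1}/d\nu)(dQ_{k-1}/d\nu)}$ turns the conditions into $Z_P + Z_Q - 2A \le 2G(1-B)$ and $G\le\sqrt{Z_P Z_Q}$; rearranging the first gives $A \ge \tfrac{1}{2}(Z_P+Z_Q) - G + GB$, and since $\tfrac{1}{2}(Z_P+Z_Q)-G \ge \sqrt{Z_P Z_Q}-G \ge (\sqrt{Z_P Z_Q}-G)B$ by AM--GM together with $G\le\sqrt{Z_P Z_Q}$ and $B\le 1$, I again obtain $A\ge B\sqrt{Z_P Z_Q}$.

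The hard part will be the affinity reduction in the dynamical settings: keeping track of the fact that the posteriors live on the state space at time $k$ while both assumptions are phrased on the prior space, ensuring the Cauchy--Schwarz step points in the direction that \emph{lower}-bounds the posterior affinity, and justifying the Tonelli interchange from the admissibility $P_{k-1},Q_{k-1}\in\bar{\mathcal{P}}_k(\bar{\X})$. The Assumption~\ref{assump:er:all:H:2} algebra is also somewhat delicate, as it hinges on combining AM--GM with $B\le 1$ to control the sign of the $G$-terms; I would verify those two scalar inequalities explicitly.
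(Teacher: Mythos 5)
Your proposal is correct and follows essentially the same route as the paper's proof: the identity $d_H^2=1-\text{affinity}$, the Cauchy--Schwarz (H\"older) bound $\sqrt{(\int T_k\,dP_{k-1})(\int T_k\,dQ_{k-1})}\ge\int T_k\sqrt{\tfrac{dP_{k-1}}{d\nu}\tfrac{dQ_{k-1}}{d\nu}}\,\nu(dx_{k-1})$ followed by Tonelli to collapse $\int h_kT_k\,dx_k$ into $g$, and the same combination of the two conditions with AM--GM (your step $\tfrac12(Z_P+Z_Q)\ge\sqrt{Z_PZ_Q}$ is exactly the paper's $\tfrac12(\sqrt{Z_P/Z_Q}+\sqrt{Z_Q/Z_P})\ge1$). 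Your reduction of all three settings to the single scalar inequality $A\ge B\sqrt{Z_PZ_Q}$ is a clean repackaging, but the mathematical content is identical to the paper's case-by-case computation.
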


\begin{proof}
See Appendix~\ref{sec:app:er:H}.
\end{proof}

\begin{remark}
\label{remark:interpret_tv_H}
Theorems~\ref{thm:er:all:tv} and~\ref{thm:er:all:H} characterize a scenario where we may have $d(P_k,Q_k^*) \leq d(P_{k-1},Q_{k-1})$, with $d$ being the TV or Hellinger distance.

This scenario arises when $P_{k-1}$ and $Q_{k-1}$ are relatively similar where the function $g$ takes large values, and both assign high average probability to those regions. Additionally, $P_{k-1}$ and $Q_{k-1}$ should differ more significantly in regions where $g$ is small but both assign, on average, low probability to such regions. It should be noted that both $P_{k-1}$ and $Q_{k-1}$ assigning low \textit{average} probability to low-$g$ regions does not preclude them from being substantially different in those regions. 
\end{remark}

Let the symbol "$\otimes$" denote the product of two independent probability measures. Specifically, let $\nu_1$ and $\nu_2$ be two probability measures defined on measurable spaces $(\Z_1,\mathcal{B}(\Z_1))$ and $(\Z_2,\mathcal{B}(\Z_2))$, respectively. The probability measure $\nu_1\otimes\nu_2$ is defined as $$\nu_1\otimes\nu_2(A,B)=\nu_1(A)\nu_2(B), \quad \forall A \in \mathcal{B}(\Z_1), B \in \mathcal{B}(\Z_2).$$ We now present sufficient conditions for error reduction under the $1$-Wasserstein distance in inverse problems.

\begin{theorem}\label{thm:er:ip:W}
In inverse problems, given the data $y_k$, suppose that $P_{k-1}\in \bar{\mathcal{P}}_k(\X) \bigcap \mathcal{P}_1(\X)$ and $Q_{k-1}\in \bar{\mathcal{P}}_k(\X) \bigcap \mathcal{P}_1(\X)$. Assume the likelihood model $h$ and the distributions $P_{k-1}$ and $Q_{k-1}$ satisfy
\begin{align*}
&\frac{\E_{(X,X^\prime) \sim P_{k-1} \bigotimes Q_{k-1}}[d_{\X}(X,X^\prime)h(y_k,X)h(y_k,X^\prime)]}{\E_{X \sim P_{k-1}}[h(y_k,X)]\E_{X \sim Q_{k-1}}[h(y_k,X)]} \\
&\qquad \leq \sup_{x_0 \in \mathcal{X}}\bigg \lvert \E_{X\sim P_{k-1}}[d_{\X}(X,x_0)]-\E_{X\sim Q_{k-1}}[d_{\X}(X,x_0)]\bigg \rvert.
\end{align*}
Then the distributions $P_k$ and $Q_k^*$ satisfy $W_1(P_k,Q_k^*) \leq  W_1(P_{k-1},Q_{k-1})$.
\end{theorem}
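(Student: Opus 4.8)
The plan is to combine the Kantorovich--Rubinstein dual representation of the $1$-Wasserstein distance with the explicit Bayes form of the inverse-problem posterior, and then to sandwich $W_1(P_k,Q_k^*)$ between the two sides of the hypothesized inequality. Recall that in inverse problems $P_k(dx) = h(y_k,x)P_{k-1}(dx)/Z_k(P_{k-1})$ and $Q_k^*(dx) = h(y_k,x)Q_{k-1}(dx)/Z_k(Q_{k-1})$, with $Z_k(P_{k-1}) = \E_{X\sim P_{k-1}}[h(y_k,X)]$ and analogously for $Q_{k-1}$. Since $(\X,d_\X)$ is Polish and all four measures lie in $\mathcal{P}_1(\X)$, duality gives
\[
W_1(P_k,Q_k^*) = \sup_{\|f\|_{\text{Lip}}\le 1}\left(\int f\,dP_k - \int f\,dQ_k^*\right).
\]

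The key step is, for a fixed $1$-Lipschitz $f$, to write the difference over the common denominator $Z_k(P_{k-1})Z_k(Q_{k-1})$ and recognize the numerator, using independence of $X\sim P_{k-1}$ and $X'\sim Q_{k-1}$, as a product-measure expectation in which an antisymmetric factor $f(X)-f(X')$ appears:
\[
\int f\,dP_k - \int f\,dQ_k^* = \frac{\E_{(X,X')\sim P_{k-1}\otimes Q_{k-1}}[(f(X)-f(X'))h(y_k,X)h(y_k,X')]}{\E_{X\sim P_{k-1}}[h(y_k,X)]\,\E_{X\sim Q_{k-1}}[h(y_k,X)]}.
\]
Applying the Lipschitz bound $|f(X)-f(X')|\le d_\X(X,X')$ together with the nonnegativity of $h$ yields
\[
\left|\int f\,dP_k - \int f\,dQ_k^*\right| \le \frac{\E_{(X,X')\sim P_{k-1}\otimes Q_{k-1}}[d_\X(X,X')h(y_k,X)h(y_k,X')]}{\E_{X\sim P_{k-1}}[h(y_k,X)]\,\E_{X\sim Q_{k-1}}[h(y_k,X)]},
\]
and taking the supremum over $f$ bounds $W_1(P_k,Q_k^*)$ above by precisely the left-hand side of the theorem's hypothesis. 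For the right-hand side I would observe that for each fixed $x_0$ the map $x\mapsto d_\X(x,x_0)$ is $1$-Lipschitz, so by duality $|\E_{P_{k-1}}[d_\X(X,x_0)]-\E_{Q_{k-1}}[d_\X(X,x_0)]|\le W_1(P_{k-1},Q_{k-1})$, and the supremum over $x_0$ preserves this bound. Chaining these three inequalities through the assumed hypothesis then delivers $W_1(P_k,Q_k^*)\le W_1(P_{k-1},Q_{k-1})$.

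The routine portions are justifying the interchange of integration (Fubini/Tonelli) in forming the product measure and confirming finiteness of all the expectations, which follow from $P_{k-1},Q_{k-1}\in\mathcal{P}_1(\X)$ together with the integrability of the likelihood; I would also note that $P_k,Q_k^*\in\mathcal{P}_1(\X)$, needed for the duality, follows from the same integrability. The only genuinely nontrivial step is the algebraic rewriting over the common denominator that exposes the antisymmetric factor $f(X)-f(X')$ under the product measure $P_{k-1}\otimes Q_{k-1}$: once this structure is identified, the Lipschitz upper bound and the matching dual characterization of the right-hand side are immediate, and the hypothesis is exactly the inequality that closes the chain.
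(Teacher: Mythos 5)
Your proof is correct, but it reaches the key upper bound by a different route than the paper. The paper works on the primal side: it exhibits the independent coupling $\gamma_0(dx,dx') = P_k(dx)\,Q_k^*(dx')$ of the two posteriors, so that by the very definition of $W_1$ one has $W_1(P_k,Q_k^*) \leq \int_{\X\times\X} d_{\X}(x,x')P_k(dx)Q_k^*(dx')$, and then substitutes the Bayes formulas for $P_k$ and $Q_k^*$ to identify this coupling cost with the ratio on the left-hand side of the hypothesis. You instead invoke Kantorovich--Rubinstein duality for $W_1(P_k,Q_k^*)$ and, after putting the difference over the common denominator $Z_k(P_{k-1})Z_k(Q_{k-1})$, expose the antisymmetric factor $f(X)-f(X')$ under the product measure; bounding $\lvert f(X)-f(X')\rvert$ by $d_{\X}(X,X')$ then lands on exactly the same quantity, so your dual estimate is in effect a proof of weak duality against the product coupling. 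The treatment of the right-hand side---testing $W_1(P_{k-1},Q_{k-1})$ against the $1$-Lipschitz functions $d_{\X}(\cdot,x_0)$---is identical to the paper's. The primal route buys a small technical economy: it never needs $P_k,Q_k^*\in\mathcal{P}_1(\X)$ as a precondition for duality applied to the posteriors, whereas your argument does (you flag this, and it does hold here, since the product-coupling cost is finite under the stated hypothesis together with $P_{k-1},Q_{k-1}\in\mathcal{P}_1(\X)$). Your antisymmetrization identity is a clean, reusable device, but in this instance it reaches the same bound by a slightly longer path.
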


\begin{proof}
See Appendix~\ref{sec:app:er:ip:W}.
\end{proof}

Finally, we outline the sufficient conditions for error reduction under the $1$-Wasserstein distance in state estimation and parameter-state estimation problems.

\begin{theorem}\label{thm:er:se&ps:W}
Given the data $y_k$, assume that the distributions $P_{k-1}$ and $Q_{k-1}$ belong to the space $\bar{\mathcal{P}}_k(\bar{\X}) \bigcap \mathcal{P}_1(\bar{\X})$. Suppose the state transition model $T_k$, the observation model $h_k$, and the distributions $P_{k-1}$ and $Q_{k-1}$ satisfy
\begin{align*}
&\E_{(\bar{X},\bar{X}^\prime)\sim P_k^- \bigotimes Q_k^{-}}[d_{\bar{\X}}(\bar{X},\bar{X}^\prime)h_k(y_k,\bar{X})h_k(y_k,\bar{X}^\prime)] \\
&\qquad \leq \left(\int_{\bar{\X}}h_k(y_k, \bar{x})d\bar{x}\right)^2\E_{(\bar{X},\bar{X}^\prime)\sim P_k^- \bigotimes Q_k^{-}}[d_{\bar{\X}}(\bar{X},\bar{X}^\prime)],
\end{align*}
and 
\begin{equation*}
\E_{(\bar{X},\bar{X}^\prime)\sim P_k^- \bigotimes Q_k^{-}}[d_{\bar{\X}}(\bar{X},\bar{X}^\prime)] \leq \sup_{\bar{x}_0 \in \bar{\X}}\bigg \lvert \E_{\bar{X}\sim P_{k-1}}[d_{\bar{\X}}(\bar{X},\bar{x}_0)] -\E_{\bar{X}\sim Q_{k-1}}[d_{\bar{\X}}(\bar{X},\bar{x}_0)]\bigg \rvert,
\end{equation*}
and 
\begin{equation*}
\left(\int_{\bar{\X}}h_k(y_k, \bar{x})d\bar{x}\right)^2 \leq \E_{\bar{X}\sim P_k^-}[h_k(y_k,\bar{X})]\E_{\bar{X}\sim Q_k^{-}}[h_k(y_k,\bar{X})],
\end{equation*}
where $P_k^-$ and $Q_k^{-}$ are probability measures with Lebesgue densities $p_k^-$ and $q_k^{-}$, respectively, defined as: 
\begin{itemize}
\item in state estimation problems, 
$$p_k^-(x):= \E_{X\sim P_{k-1}}[T_k(x,X)], \quad
q_k^{-}(x):= \E_{X\sim Q_{k-1}}[T_k(x,X)];$$ 
\item in parameter-state estimation problems, 
\begin{align*}
p_k^-(x,w) &:= \int_{\X}T_k(x,x_{k-1},w)p_{k-1}(x_{k-1},w)dx_{k-1}, \\
q_k^{-}(x,w) &:= \int_{\X}T_k(x,x_{k-1},w)q_{k-1}(x_{k-1},w)dx_{k-1},
\end{align*}
where $p_{k-1}$ and $q_{k-1}$ are the Lebesgue densities of $P_{k-1}$ and $Q_{k-1}$, respectively.
\end{itemize}
Then, the distributions $P_k$ and $Q_k^*$ satisfy $$W_1(P_k,Q_k^*) \leq W_1(P_{k-1},Q_{k-1}).$$
\end{theorem}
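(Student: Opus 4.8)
The plan is to exploit the two-stage structure of the prior-to-posterior map: first propagate through the dynamics to obtain the predicted measures $P_k^-$ and $Q_k^-$ (with Lebesgue densities $p_k^-$ and $q_k^-$ as defined in the statement), and then perform the Bayesian update with the common likelihood $h_k$. I would first verify that $P_k^-$ and $Q_k^-$ are genuine probability measures, using that $\int_{\X} T_k(x,x_{k-1},\cdot)\,dx = 1$, and then observe that $P_k = F_k(P_{k-1})$ and $Q_k^* = F_k(Q_{k-1})$ admit densities proportional to $h_k(y_k,\bar{x})\,p_k^-(\bar{x})$ and $h_k(y_k,\bar{x})\,q_k^-(\bar{x})$, with normalizing constants $\E_{P_k^-}[h_k]$ and $\E_{Q_k^-}[h_k]$. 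These two constants are exactly $Z_k(P_{k-1})$ and $Z_k(Q_{k-1})$, hence strictly positive by admissibility of $P_{k-1},Q_{k-1}\in\bar{\mathcal{P}}_k(\bar{\X})$. This reduces both the state and the parameter-state cases to a single computation in the variable $\bar{x}$ (in the parameter-state case $w$ is carried along unchanged but still enters both $T_k$ and $h_k$).

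Next I would upper bound $W_1(P_k,Q_k^*)$ by the \emph{independent} (product) coupling $P_k \otimes Q_k^*$, which is a valid coupling, so that
$$W_1(P_k,Q_k^*) \leq \E_{(X,X')\sim P_k \otimes Q_k^*}[d_{\bar{\X}}(X,X')].$$
Writing out the two densities, the right-hand side equals $\E_{P_k^- \otimes Q_k^-}[d_{\bar{\X}}(\bar{X},\bar{X}')\,h_k(y_k,\bar{X})\,h_k(y_k,\bar{X}')]$ divided by $\E_{P_k^-}[h_k]\,\E_{Q_k^-}[h_k]$. At this point the three hypotheses slot in cleanly: the first bounds the numerator by $\big(\int_{\bar{\X}}h_k\,d\bar{x}\big)^2\,\E_{P_k^- \otimes Q_k^-}[d_{\bar{\X}}(\bar{X},\bar{X}')]$, and the third bounds the denominator below by $\big(\int_{\bar{\X}}h_k\,d\bar{x}\big)^2$. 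Since $\E_{P_k^- \otimes Q_k^-}[d_{\bar{\X}}]\geq 0$, these combine to give $W_1(P_k,Q_k^*) \leq \E_{P_k^- \otimes Q_k^-}[d_{\bar{\X}}(\bar{X},\bar{X}')]$, eliminating the likelihood entirely.

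Finally I would translate the remaining term back to a bound at step $k-1$. The second hypothesis asserts that $\E_{P_k^- \otimes Q_k^-}[d_{\bar{\X}}]$ is at most $\sup_{\bar{x}_0}\big|\E_{P_{k-1}}[d_{\bar{\X}}(\cdot,\bar{x}_0)]-\E_{Q_{k-1}}[d_{\bar{\X}}(\cdot,\bar{x}_0)]\big|$, and for each fixed $\bar{x}_0$ the map $\bar{x}\mapsto d_{\bar{\X}}(\bar{x},\bar{x}_0)$ is $1$-Lipschitz by the triangle inequality. Hence Kantorovich--Rubinstein duality gives $\big|\E_{P_{k-1}}[d_{\bar{\X}}(\cdot,\bar{x}_0)]-\E_{Q_{k-1}}[d_{\bar{\X}}(\cdot,\bar{x}_0)]\big| \leq W_1(P_{k-1},Q_{k-1})$ uniformly in $\bar{x}_0$. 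Chaining the three inequalities then yields $W_1(P_k,Q_k^*) \leq W_1(P_{k-1},Q_{k-1})$.

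I expect the main difficulty to be bookkeeping rather than analysis: establishing the prediction-update factorization carefully in the parameter-state case, and ensuring every integral is finite so that the density manipulations and the product-coupling bound are justified---this is where $P_{k-1},Q_{k-1}\in\mathcal{P}_1(\bar{\X})$ together with the finiteness implicit in the three hypotheses is used. The genuinely non-obvious modeling choice is the use of the product coupling rather than an optimal one; this is precisely what makes the hypotheses align with the independent-product expectations $\E_{P_k^- \otimes Q_k^-}[\,\cdot\,]$ appearing in the statement.
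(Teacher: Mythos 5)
Your proposal is correct and follows essentially the same route as the paper's proof: bound $W_1(P_k,Q_k^*)$ by the independent product coupling of the two posteriors, rewrite that expectation as the ratio of $\E_{P_k^-\otimes Q_k^-}[d_{\bar{\X}}\,h_k\,h_k]$ to $Z_k(P_{k-1})Z_k(Q_{k-1})$, apply the first and third hypotheses to cancel the likelihood, and then use the second hypothesis together with the Kantorovich--Rubinstein dual bound for the $1$-Lipschitz test functions $\bar{x}\mapsto d_{\bar{\X}}(\bar{x},\bar{x}_0)$ to reach $W_1(P_{k-1},Q_{k-1})$. The only cosmetic difference is that you make the prediction-update factorization and the integrability bookkeeping explicit, which the paper treats implicitly.
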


\begin{proof}
See Appendix~\ref{sec:app:er:se&ps:W}.
\end{proof}

\begin{remark}
The results presented in Theorems~\ref{thm:er:all:tv},~\ref{thm:er:all:H},~\ref{thm:er:ip:W}, and ~\ref{thm:er:se&ps:W} apply to any pair of distributions $P_{k-1}$ and $Q_{k-1}$ that satisfy the sufficient conditions specified therein.
\end{remark}

\section{Discussion}
\label{sec:discussion}
In this section, we provide discussions of the main theorems. These discussions are carried out under the assumption that the conditions of Theorems~\ref{thm:pointwise_lipschitz} and~\ref{thm:learning_error} are satisfied. 

\subsection{Stability of approximate approaches in BSL with respect to the incremental approximation process}
\label{sec:discussion:stability}
In this section, we discuss the stability of approximate approaches for BSL with respect to the incremental approximation process, based on Theorems~\ref{thm:learning_error} and~\ref{thm:pointwise_lipschitz}.
\subsubsection{Bounded input, bounded output stability of learning error evolution}
\label{section:bibo}
The evolution of the learning error $d(Q_k,P_k)$ over the step $k$ can be viewed as a dynamical system with input $d(Q_k,Q_k^*)$. According to Theorems~\ref{thm:pointwise_lipschitz} and~\ref{thm:learning_error}, $d(Q_k,P_k)$ is upper bounded by 
\begin{align*}
d(Q_k,P_k) &\leq \tilde{C}(y_k,p(y_k \mid \Y_{1:k-1}),k)d(Q_{k-1},P_{k-1})+d(Q_k,Q_k^*) \\
&\leq \sum_{i=1}^{k-1}C(\Y_{i+1:k}, p(\Y_{i+1:k}\mid \Y_{1:i}),i)d(Q_i,Q_i^*)+d(Q_k,Q_k^*),
\end{align*}
where $\tilde{C}(y_k,p(y_k \mid \Y_{1:k-1}),k)$ is a constant given $y_k$, $p(y_k \mid \Y_{1:k-1})$, and $k$. Function $C$ is defined in Table~\ref{table:C}. This implies that the learning error system exhibits bounded input, bounded output (BIBO) stability: starting from a distribution $Q_{k_0}$ at an arbitrary step $k_0<k$, if the input $d(Q_i,Q_i^*)$ remains upper bounded over steps $i \in (k_0,k]$, then the output $d(Q_k,P_k)$ is upper bounded.

\subsubsection{Distance between different outputs of an approximate BSL approach}
Consider two different sequences of approximate posteriors $Q_i$ and $\tilde{Q}_i, i \leq k$. By Theorem~\ref{thm:learning_error} and the triangle inequality satisfied by the distance $d$, the distance between $Q_k$ and $\tilde{Q}_k$ is bounded by:
\begin{align}
&d(Q_k, \tilde{Q}_k) \leq d(Q_k,P_k)+d(\tilde{Q}_k,P_k) \nonumber \\
&\qquad \leq \sum_{i=1}^{k-1}C(\Y_{i+1:k}, p(\Y_{i+1:k}\mid \Y_{1:i}),i)\left(d(Q_i,Q_i^*)+d(\tilde{Q}_i,\tilde{Q}_i^*)\right)+d(Q_k,Q_k^*)+d(\tilde{Q}_k,\tilde{Q}_k^*). 
\label{ieq:stability:different_output}
\end{align}
Given the data $\Y_{1:k}$, the coefficients $C(\Y_{i+1:k}, p(\Y_{i+1:k}\mid \Y_{1:i}),i)$ are fixed for each $i \leq k-1$. Inequality~\eqref{ieq:stability:different_output} thus indicates that, as long as both sequences of incremental approximation errors, $d(Q_i,Q_i^*)$ and $d(\tilde{Q}_i,\tilde{Q}_i^*), i \leq k$ are upper bounded, the distance between the approximate posteriors $Q_k$ and $\tilde{Q}_k$---representing two different outputs of an approximate BSL method---remains bounded.

\subsection{Estimation of learning error bounds}
\label{sec:estimation_bound}
As shown in Equation~\eqref{ieq:learning_error_bound2}, estimating the learning error bound $d(P_k,Q_k)$ requires evaluating the evidence terms $Z_i(Q_{i-1})$ for $i=2,3,\cdots, k$, where $Q_{i-1}$ are the outputs of the learning methods. Since each $Z_i(Q_{i-1})$ involves only a single data point $y_i$, computing these terms is generally much easier than evaluating the offline learning evidence, which involves the entire batch of data. 

In practice, $Z_i(Q_{i-1})$ can often be estimated, for instance, using Monte Carlo methods with samples from $Q_{i-1}$. This is particularly convenient for methods that represent the approximate posteriors with empirical distributions. While the ability to estimate $Z_i(Q_{i-1})$ implies that one could, in principle, construct $Q_i^*$, practical BSL methods do not typically do so. Setting $Q_i=Q_i^*$ would lead to increasingly complex representations of $Q_i$ over time, significantly increasing computational costs and reducing inference speed-both of which are critical considerations in BSL applications. For example, one may want to retain a Gaussian form for $Q_i$ throughout the learning process. 

In addition to evidence terms, estimating the learning error bounds under the TV and Hellinger distances requires evaluating $C_h(y_i)$ for inverse problems, $C_{Th}(y_i;i)$ for state estimation problems, and $\tilde{C}_{Th}(y_i;i)$ for parameter-state estimation problems.

Given the data $y_i$, it is usually straightforward to compute $C_h(y_i)$, which is the upper bound of the likelihood. Although $C_{Th}(y_i;i)$ and $\tilde{C}_{Th}(y_i;i)$ may be more difficult to compute directly, they are upper bounded by their respective observation model bounds. These bounds can therefore be used as conservative estimates for $C_{Th}(y_i;i)$ and $\tilde{C}_{Th}(y_i;i)$, although this may result in looser learning error bounds. 

In summary, the learning error bounds under the TV and Hellinger distances are estimable in many practical settings. However, estimating the learning error bound under the $1$-Wasserstein distance using Equation~\eqref{ieq:learning_error_bound2} is generally more challenging, as it requires computing $\lVert h\rVert_{\text{Lip}}(y_i)$, $C^*_{Th}(y_i;i)$, and $\tilde{C}^*_{Th}(y_i;i)$, with the latter two being particularly difficult to evaluate in practice. Nevertheless, the inequality $W_1(P_k,Q_k) \leq Dd_{TV}(P_k,Q_k)$ (\cite{lipschitz_stability_ip,choose_metric}), where $D$ is defined in Assumption~\ref{assump:bounded_space}, provides an alternative way to upper bound $1$-Wasserstein learning error using the estimated TV bound.

Finally, it is worth noticing that the bound in Equation~\eqref{ieq:learning_error_bound2} can be computed recursively, which is especially suitable for sequential learning applications.
\subsection{Inaccurate initial prior} 
\label{section:inaccurate_prior}
Based on Theorem~\ref{thm:pointwise_lipschitz}, Theorem~\ref{thm:learning_error} can be easily extended to the scenarios in which the true initial prior $P_0$ is unknown, and an estimated initial prior $P_0^\prime$ is used for inference. In this case, if we have $P_0^\prime \in \bar{\mathcal{P}}_1(\bar{\X})$, the learning error is bounded by
\begin{align*}
d(Q_k,P_k) \leq& \sum_{i=1}^{k-1}C(\Y_{i+1:k}, p(\Y_{i+1:k}\mid \Y_{1:i}),i)d(Q_i,Q_i^*)+d(Q_k,Q_k^*) \\
&\qquad + C(\Y_{2:k},p(\Y_{2:k} \mid y_1),1)\hat{C}(y_1,p(y_1))d(P_0^\prime,P_0),
\end{align*}
where $Q_1^*=F_1(P_0^\prime)$ and $Q_i^*=F_i(Q_{i-1})$ for $i\geq 2$. Here $\hat{C}(y_1,p(y_1))$ is a constant depending on data $y_1$ and probability density $p(y_1)$. This upper bound grows linearly with the initial prior error $d(P_0^\prime,P_0)$. Assuming the incremental approximation errors $d(Q_i,Q_i^*), i\leq k$ are bounded, the learning error remains bounded as long as the initial prior error $d(P_0^\prime,P_0)$ is bounded. 

\subsection{Choice of initial prior and approximate posteriors}
\label{section:choice}
In this section, we combine Theorems~\ref{thm:pointwise_lipschitz} and~\ref{thm:learning_error} with existing results on long-term posterior behavior to provide insights into the choice of initial prior and approximate posteriors in state estimation problems. 

Let $F_{m:n}$ denote the composition of the prior-to-posterior maps from step $m$ to $n$:
\begin{equation*}
F_{m:n} := F_n\circ F_{n-1} \circ \cdots \circ F_{m}, \qquad m<n,
\end{equation*}
where "$\circ$" denotes function composition. For instance, $F_n\circ F_{n-1}(\mu) := F_n(F_{n-1}(\mu))$ for a probability measure $\mu$. Using this notation, the true posterior at step $k$ can be written as $P_k=F_{1:k}(P_0)$ and $P_k=F_{j+1:k}(P_j)$ for any $j<k$. According to \cite{stability_longtime_Hilbert_metric}, given the data $\Y_{j:k}$, if the product of state transition and observation models, $T_i(x_i,x_{i-1})h_i(y_i,x_i)$, is bounded above and below by positive constants for all $i \in [j,k]$, then 
 $d_{TV}(F_{j:k}(\mu),F_{j:k}(\mu^\prime))$ can become arbitrarily small for any $\mu$ and $\mu^\prime$, when $k-j$ is sufficiently large. 
\subsubsection{Choice of initial prior}
Let $P_0^\prime$ denote an alternative initial prior. Suppose we use $P_0^\prime$ as the true initial prior and approximate the corresponding posterior $P_k^\prime=F_{1:k}(P_0^\prime)$ with $Q_k$ at each time step $k$. Assume $P_i^\prime \in \bar{\mathcal{P}}_{i+1}(\bar{\X})$ for all $i \in [0,k-1]$. Then the learning error under the TV distance satisfies: 
\begin{align}
&d_{TV}(Q_k,P_k) \leq d_{TV}(Q_k,P_k^\prime)+d_{TV}(P_k^\prime,P_k) \nonumber \\
&\quad \leq \sum_{i=1}^{k-1}C(\Y_{i+1:k}, p^\prime(\Y_{i+1:k}\mid \Y_{1:i}),i)d_{TV}(Q_i,Q_i^*)+d_{TV}(Q_k,Q_k^*) + d_{TV}(P_k^\prime,P_k),
\label{ieq:different_prior}
\end{align}
where $Q_1^*=F_1(P_0^\prime)$ and $Q_i^*=F_i(Q_{i-1})$ for $i\geq2$. Here the probability density $p^\prime(\Y_{i+1:k}\mid \Y_{1:i})$ is computed under $P_0^\prime$. The constant $C$ is defined in Table~\ref{table:C} and is proportional to $1/p^\prime(\Y_{i+1:k}\mid \Y_{1:i})$. 

If $T_i(x_i,x_{i-1})h_i(y_i,x_i)$ is bounded below and above by positive constants for all $i \in [1,k]$, then as $k$ increases, $d_{TV}(P_k^\prime,P_k)$ will become negligible and the upper bound given in Equation~\eqref{ieq:different_prior} will be dominated by the first two terms.

Comparing Equations~\eqref{ieq:different_prior} and~\eqref{ieq:learning_error_bound}, we see that the bound in Equation~\eqref{ieq:different_prior} can be significantly smaller if $P_0^\prime$ leads to either lower incremental approximation errors or higher probability densities $p^\prime(\Y_{i+1:k}\mid \Y_{1:i})$. For instance, smaller incremental approximation errors may arise when $P_0^\prime$ yields a simpler optimization problem in practice.
This highlights the following insight: {\it when the product of the state transition and observation models is bounded above and below, using an alternative, well-chosen initial prior $P_0^\prime$, even if the true initial prior $P_0$ is available, may improve learning accuracy.} 

\subsubsection{Choice of approximate posterior $Q$}
By applying Theorem~\ref{thm:pointwise_lipschitz} and triangle inequality iteratively, we obtain the following upper bound on the learning error: 
\begin{align}
&d_{TV}(Q_k,P_k) \leq d_{TV}(Q_k,F_{j+1:k}(Q_j))+d_{TV}(F_{j+1:k}(Q_j),P_k) \nonumber \\
&\leq\sum_{i=j+1}^{k-1}C(\Y_{i+1:k}, p_q(\Y_{i+1:k}\mid \Y_{1:i}),i)d_{TV}(Q_i,Q_i^*)+d_{TV}(Q_k,Q_k^*) + d_{TV}(F_{j+1:k}(Q_j),P_k),
\label{ieq:choice_approx_post}
\end{align}
where $p_q(\Y_{i+1:k}\mid \Y_{1:i})$ is the probability density computed using $Q_j$ as the true posterior at step $j$. Function $C$ is defined in Table~\ref{table:C}. Since $P_k=F_{j+1:k}(P_j)$, the bound in Equation~\eqref{ieq:choice_approx_post} will be gradually dominated by the first two terms as $k$ grows, provided that the product of the state transition and observation models is bounded above and below by positive constants.

When selecting $Q_j$ at an earlier step $j$, we may compare two candidates. One offers a smaller immediate incremental approximation error $d_{TV}(Q_j,Q_j^*)$. The other has a larger immediate incremental approximation error but may facilitate more accurate approximations at future steps, resulting in smaller incremental approximation errors $d_{TV}(Q_i,Q_i^*), i > j$. This can occur, for instance, if the candidate belongs to a simpler distribution family that is easier to propagate in subsequent steps. 
 
In such cases, inequality~\eqref{ieq:choice_approx_post} suggests that, in the case where the product of state transition and observation models is bounded above and below, the second choice may yield a lower learning error over time (note that the summation in the upper bound given in inequality~\eqref{ieq:choice_approx_post} starts from $j+1$). 

\section{Application example}\label{sec:application_example}
In this section, we consider a specific class of algorithms for BSL: online variational inference for joint state and parameter estimation. Online VI has emerged in recent years as one of the most popular and effective approaches for sequential inference. However, theoretical analysis of their learning error remains sparse. We leverage Theorem~\ref{thm:learning_error} to derive learning error bounds for these methods in specific system settings.

Online VI approaches for joint state and parameter estimation can be categorized into two types: 
\begin{itemize}
\item Type 1 methods (e.g., \cite{online_VI_linear,fbovi}) approximate the joint posterior of state $X_k$ and parameter $W$, solving the parameter-state estimation problem considered in this paper.
\item Type 2 methods (e.g., \cite{onlinevi_nonlinear,onlinevi_realtime}) estimate the posterior distribution of state $X_k$, and compute a point estimate $\hat{w}_k$ of the system parameter.
\end{itemize}
We refer to these as Type 1 and Type 2 methods, respectively.
\subsection{Type 1 online VI methods}
Consider the following discrete-time Markov system:
\begin{align}
\begin{split}
p(x_k \mid x_{0:k-1},w) &= p(x_k \mid x_{k-1},w) =T_k(x_{k},x_{k-1},w), \\
p(y_k \mid x_{0:k},\Y_{1:k-1},w) &= p(y_k \mid x_k,w) =p_{\mathcal{N}}(y_k \mid \Phi_k(x_k,w),\Gamma),
\end{split}
\label{sys:application:1}
\end{align}
where $y_k \in \reals^r$ and $p_{\mathcal{N}}(y_k \mid \Phi_k(x_k,w),\Gamma)$ denotes the value of the probability density function the normal distribution with mean $\Phi_k(x_k,w)$ and covariance $\Gamma$ evaluated at $y_k$. Assume the initial prior distribution $P_0(X_0,W)$ is absolutely continuous w.r.t. the Lebesgue measure. 

Standard Type 1 methods (e.g., \cite{online_VI_linear}) compute the approximate posterior $Q_k$ by maximizing the evidence lower bound (ELBO). Let $q_k$ and $q_{k-1}$ denote the pdfs of $Q_k$ and $Q_{k-1}$, respectively. The ELBO at step $k$ is given by:
\begin{equation*}
\mathcal{L}_k(Q_k) := \E_{q_k(x_k,w)}[\log \left(p_{\mathcal{N}}(y_k \mid \Phi_k(x_k,w),\Gamma)q_k^{*-}(x_k,w)\right)] - \E_{q_k(x_k,w)}[\log q_k(x_k,w)],
\end{equation*}
where $q_k^{*-}(x_k,w):=\int_{\X}T_k(x_k,x_{k-1},w)q_{k-1}(x_{k-1},w)dx_{k-1}$. The following corollary applies Theorem~\ref{thm:learning_error} to bound the learning error $d(P_k,Q_k)$ for standard Type 1 methods applied to system~\eqref{sys:application:1}.
\begin{corollary}\label{coro:example1}
Suppose the ELBO satisfies $\mathcal{L}_i(Q_i) \geq \epsilon_i$\footnote{Note that neither the ELBO $\mathcal{L}_i(Q_i)$ nor the threshold $\epsilon_i$ is necessarily positive.} for all $i \in [1,k]$. Assume $P_i \in \bar{\mathcal{P}}_{i+1}(\X\times\W)$ for all $i \in [0,k-1]$ and $Q_i \in \bar{\mathcal{P}}_{i+1}(\X\times\W)$ for all $i \in [1,k-1]$. Also assume that $Q_i$ is absolutely continuous with respect to the Lebesgue measure for all $i \in [1,k]$. Then the learning error $d(P_k, Q_k)$ is bounded as:
\begin{align}
d(P_k,Q_k) &\leq \sum_{j=1}^{k-1}C_{VI}(\Y_{j+1:k},p(\Y_{j+1:k} \mid \Y_{1:j}),j)\sqrt{-\frac{r}{2}\log(2\pi)-\frac{1}{2}\log(\det(\Gamma)) - \epsilon_j} \nonumber \\
&\qquad  + \alpha \sqrt{-\frac{r}{2}\log(2\pi)-\frac{1}{2}\log(\det(\Gamma)) - \epsilon_k},
\label{ieq:learning_error_bound1_vi}
\end{align}
and 
\begin{align}
d(P_k,Q_k) &\leq \sum_{j=1}^{k-1}C_{VI}^\prime(\Y_{j+1:k},Q_{j:k-1},j)\sqrt{-\frac{r}{2}\log(2\pi)-\frac{1}{2}\log(\det(\Gamma)) - \epsilon_j} \nonumber \\
&\qquad  + \alpha \sqrt{-\frac{r}{2}\log(2\pi)-\frac{1}{2}\log(\det(\Gamma)) - \epsilon_k},
\label{ieq:learning_error_bound2_vi}
\end{align}
for $d$ being the total variation or Hellinger distance.

If in addition the metric space $(\X\times \W, d_{\X\times\W})$ satisfies $$\sup_{(x,w),(x^\prime,w^\prime) \in \X\times\W}d_{\X\times \W}((x,w),(x^\prime,w^\prime))=D<\infty,$$ then inequalities~\eqref{ieq:learning_error_bound1_vi} and ~\eqref{ieq:learning_error_bound2_vi} also hold with $d$ being the $1$-Wasserstein distance.

 The definitions of functions $C_{VI}$, $C_{VI}^\prime$, and the constant $\alpha$ for each distance are summarized in Table~\ref{table:C_C'_VI}.
\begin{table}
\centering
\caption{Definitions of $C_{VI}$, $C_{VI}^\prime$, and $\alpha$ for different distances of probability measures.}
\begin{tabularx}{1.0\linewidth}
{ 
  | >{\hsize=0.5\hsize\centering\arraybackslash}X 
  | >{\hsize=1.2\hsize\centering\arraybackslash}X 
  | >{\hsize=1.2\hsize\centering\arraybackslash}X
  |>{\hsize=0.1\hsize\centering\arraybackslash}X | }
 \hline
Distance $d$ & $C_{VI}(\Y_{j+1:k},p(\Y_{j+1:k}\mid \Y_{1:j}),j)$ & $C_{VI}^\prime(\Y_{j+1:k},Q_{j:k-1},j)$ & $\alpha$ \\
 \hline
 \rule{0pt}{18pt}$d_{TV}$  & $\frac{(2\pi)^{-\frac{r(k-j)}{2}}\det(\Gamma)^{-\frac{k-j}{2}}}{\sqrt{2}p(\Y_{j+1:k}\mid \Y_{1:j})}$ & $\frac{(2\pi)^{-\frac{r(k-j)}{2}}\det(\Gamma)^{-\frac{k-j}{2}}}{\sqrt{2}\prod_{i=j+1}^k Z_i(Q_{i-1})}$ & $\frac{1}{\sqrt{2}}$\\
 \hline
\rule{0pt}{20pt}$d_H$  & $\frac{2^{(k-j)}(2\pi)^{-\frac{r(k-j)}{4}}\det(\Gamma)^{-\frac{k-j}{4}}}{\sqrt{2p(\Y_{j+1:k}\mid \Y_{1:j})}}$ & $\frac{2^{(k-j)}(2\pi)^{-\frac{r(k-j)}{4}}\det(\Gamma)^{-\frac{k-j}{4}}}{\sqrt{2}\prod_{i=j+1}^k\sqrt{Z_i(Q_{i-1})}}$   & $\frac{1}{\sqrt{2}}$ \\
 \hline
 \rule{0pt}{18pt} $W_1$  & $\frac{D(2\pi)^{-\frac{r(k-j)}{2}}\det(\Gamma)^{-\frac{k-j}{2}}}{\sqrt{2}p(\Y_{j+1:k}\mid \Y_{1:j})}$ & $\frac{D(2\pi)^{-\frac{r(k-j)}{2}}\det(\Gamma)^{-\frac{k-j}{2}}}{\sqrt{2}\prod_{i=j+1}^kZ_i(Q_{i-1})}$   & $\frac{D}{\sqrt{2}}$ \\
 \hline
\end{tabularx}
\label{table:C_C'_VI}
\end{table}
\end{corollary}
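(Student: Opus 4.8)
The plan is to reduce the corollary to Theorem~\ref{thm:learning_error} by (i) bounding each incremental approximation error $d(Q_j^*,Q_j)$ through the ELBO gap, and (ii) bounding the problem-dependent constants $\tilde{C}_{Th}(y_i;i)$ appearing in the functions $C$ and $C'$. The central identity is the standard variational decomposition of the ELBO. Writing $q_k^*$ for the Lebesgue density of $Q_k^*=F_k(Q_{k-1})$, Definition~\ref{def:post_propagation&evidence} gives $p_{\mathcal{N}}(y_k\mid\Phi_k(x_k,w),\Gamma)\,q_k^{*-}(x_k,w)=Z_k(Q_{k-1})\,q_k^*(x_k,w)$. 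Substituting this into $\mathcal{L}_k(Q_k)$ and splitting the logarithm yields
\[
\mathcal{L}_k(Q_k)=\log Z_k(Q_{k-1})-D_{\mathrm{KL}}(Q_k\|Q_k^*),
\]
so the hypothesis $\mathcal{L}_k(Q_k)\ge\epsilon_k$ becomes $D_{\mathrm{KL}}(Q_k\|Q_k^*)\le\log Z_k(Q_{k-1})-\epsilon_k$.

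The next step is to bound the evidence $Z_k(Q_{k-1})$ from above. Since $\Gamma$ is fixed, the Gaussian likelihood satisfies $p_{\mathcal{N}}(y_k\mid\Phi_k(x_k,w),\Gamma)\le(2\pi)^{-r/2}\det(\Gamma)^{-1/2}$ pointwise, and because each $T_k(\cdot,x_{k-1},w)$ integrates to one in its first argument, a change of integration order gives $\int_{\X\times\W}q_k^{*-}(x_k,w)\,dx_k\,dw=1$. Hence $Z_k(Q_{k-1})\le(2\pi)^{-r/2}\det(\Gamma)^{-1/2}$ and therefore
\[
D_{\mathrm{KL}}(Q_k\|Q_k^*)\le-\tfrac{r}{2}\log(2\pi)-\tfrac12\log(\det(\Gamma))-\epsilon_k.
\]
Converting KL divergence to the three metrics then gives $d(Q_k^*,Q_k)\le\alpha\sqrt{-\frac{r}{2}\log(2\pi)-\frac12\log(\det(\Gamma))-\epsilon_k}$: Pinsker's inequality gives $\alpha=1/\sqrt2$ for $d_{TV}$, the inequality $d_H^2\le\frac12 D_{\mathrm{KL}}$ gives $\alpha=1/\sqrt2$ for $d_H$, and the bounded-space inequality $W_1\le D\,d_{TV}$ (Assumption~\ref{assump:bounded_space}) gives $\alpha=D/\sqrt2$. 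Note that $D_{\mathrm{KL}}\ge0$ forces the radicand to be nonnegative, which is precisely why the ELBO and $\epsilon_k$ need not be positive.

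The same two facts bound the constants in Tables~\ref{table:C} and~\ref{table:C'}. The pointwise Gaussian bound together with $\int T_k\,dx=1$ gives $\tilde{C}_{Th}(y_i;i)=\sup_{x_{i-1},w}\int h_i T_i\,dx\le(2\pi)^{-r/2}\det(\Gamma)^{-1/2}$, so every factor in the products defining $C$ and $C'$ is controlled by $(2\pi)^{-r/2}\det(\Gamma)^{-1/2}$. Multiplying these bounded constants by the incremental bound above and absorbing the factor $\alpha$ reproduces exactly the entries $C_{VI}$ and $C_{VI}'$ in Table~\ref{table:C_C'_VI} for the TV and Hellinger rows; substituting into Equations~\eqref{ieq:learning_error_bound} and~\eqref{ieq:learning_error_bound2} yields inequalities~\eqref{ieq:learning_error_bound1_vi} and~\eqref{ieq:learning_error_bound2_vi}. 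The Wasserstein rows follow by applying $W_1(P_k,Q_k)\le D\,d_{TV}(P_k,Q_k)$ to the already-established TV bounds, which simply multiplies every term—both the summands and $\alpha$—by $D$.

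I expect the main obstacle to be the measure-theoretic bookkeeping in the ELBO identity rather than any hard inequality. One must verify that $Q_k\ll Q_k^*$ so that $D_{\mathrm{KL}}(Q_k\|Q_k^*)$ is well defined and finite, and that $q_k^{*-}$ and $q_k^*$ are indeed the correct Radon--Nikodym derivatives matching Definition~\ref{def:post_propagation&evidence}. Strict positivity of the Gaussian likelihood everywhere helps ensure $q_k^*>0$ on the support of $Q_k$; formalizing this, and justifying the Fubini interchange used to prove $\int q_k^{*-}=1$, are the steps requiring the most care. Everything downstream is a direct substitution into Theorem~\ref{thm:learning_error}.
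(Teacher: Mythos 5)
Your proposal follows essentially the same route as the paper's own proof: the ELBO--KL identity $\mathcal{L}_k(Q_k)=\log Z_k(Q_{k-1})-D_{\mathrm{KL}}(Q_k\|Q_k^*)$, the pointwise Gaussian bound $(2\pi)^{-r/2}\det(\Gamma)^{-1/2}$ used both to control $Z_k(Q_{k-1})$ and $\tilde{C}_{Th}(y_i;i)$, the Pinsker/Hellinger conversions of the KL bound, substitution into Theorem~\ref{thm:learning_error}, and the $W_1\le D\,d_{TV}$ step for the Wasserstein case. The argument is correct and matches the paper's proof in all essentials.
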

\begin{proof}
See Appendix~\ref{sec:app:coro1}.
\end{proof}

\subsection{Type 2 online VI methods}
Type 2 methods solve a different problem: they assume the existence of a true but unknown system parameter $\bar{w}$ and aim to estimate the true posterior $P_{k}$, defined as the distribution of the state $X_k$ given all the received data $\Y_{1:k}$ under the underlying system with parameter $\bar{w}$, and compute a point estimate $\hat{w}_k$ of the system parameter.

Some Type 2 methods, such as \cite{onlinevi_realtime}, assume that the observation model is known and only the state transition model is unknown. We adopt this setting and consider the following system:
\begin{align}
\begin{split}
p(x_k \mid x_{0:k-1},w) &= p(x_k \mid x_{k-1},w) =T_k(x_{k},x_{k-1},w), \\
p(y_k \mid x_{0:k},\Y_{1:k-1},w) &= p(y_k \mid x_k) = p_{\mathcal{N}}(y_k \mid \Phi_k(x_k),\Gamma),
\end{split}
\label{sys:application:2}
\end{align}
where $y_k \in \reals^r$. Assume that the initial prior distribution $P_0(X_0)$ is absolutely continuous w.r.t. the Lebesgue measure. Let $q_k$ denote the pdf of $Q_k$. Standard Type 2 methods obtain $Q_k$ and $\hat{w}_k$ by maximizing the following ELBO:
\begin{equation*}
\mathcal{L}_k(Q_k,\hat{w}_k) := \E_{q_k(x_k)}[\log \left(p_{\mathcal{N}}(y_k \mid \Phi_k(x_k),\Gamma)q_{k,\hat{w}_k}^{*-}(x_k)\right)] - \E_{q_k(x_k)}[\log q_k(x_k)],
\end{equation*}
where $q_{k,\hat{w}_k}^{*-}(x_k):= \int_{\X}T_k(x_k,x_{k-1},\hat{w}_k)Q_{k-1}(dx_{k-1})$.

By applying Theorem~\ref{thm:learning_error} to the state estimation setting considered here, we obtain Corollary~\ref{coro:example2}, which provides an upper bound on the state estimation error $d(P_k, Q_k)$.
\begin{assumptionp}{VI.1}\label{assump:vi}
At time step $k$, assume the state transition and observation model of  system~\eqref{sys:application:2} satisfies
\begin{equation*}
g_{\text{Lip}}(x;k) := \sup_{\forall x_{k-1}\in \X, w, w^\prime \in \W, w \neq w^\prime}\frac{\lvert T_k(x,x_{k-1},w)-T_k(x,x_{k-1},w^\prime)\rvert }{\lVert w - w^\prime \rVert_m} < \infty, \qquad \forall x \in \X,
\end{equation*}
where $\lVert \cdot \rVert_m$ is a metric on the parameter space $\W$, \\
and 
$$\tilde{C}_{VI}(y_k;k):=\int_{\X}p_{\mathcal{N}}(y_k \mid \Phi_k(x),\Gamma)g_{\text{Lip}}(x;k)dx < \infty.$$
\end{assumptionp}
\begin{corollary}\label{coro:example2}
Suppose Assumption~\ref{assump:vi} holds for all time steps up to $k$. Assume the ELBO satisfies $\mathcal{L}_i(Q_i,\hat{w}_i) \geq \epsilon_i$\footnote{Neither the ELBO $\mathcal{L}_i(Q_i,\hat{w}_i)$ nor $\epsilon_i$ is necessarily positive.} for all $i \in [1,k]$. Assume $P_i \in \bar{\mathcal{P}}_{i+1}(\X)$\footnote{The set $\bar{\mathcal{P}}_{i+1}(\X)$ corresponds to the system with the true parameter $\bar{w}$.} for all $i \in [0,k-1]$ and $Q_i \in \bar{\mathcal{P}}_{i+1}(\X)$ for all $i \in [1,k-1]$. Also assume that $Q_i$ is absolutely continuous with respect to the Lebesgue measure for all $i \in [1,k]$. Then the state estimation error satisfies
\begin{align}
&d(P_{k},Q_k) 
\leq \sum_{j=1}^{k-1}C_{VI}^\prime(\Y_{j+1:k},Q_{j:k-1},j)\left({\sqrt{-\frac{r}{2}\log(2\pi)-\frac{1}{2}\log(\det(\Gamma)) - \epsilon_j}}\right. \nonumber \\
&\qquad \left.{+\beta\left(Q_{j-1},y_j,\hat{w}_j,\lVert \hat{w}_j-\bar{w}\rVert_m,j\right)}\right) \nonumber \\
&\qquad \qquad + \alpha \left(\sqrt{-\frac{r}{2}\log(2\pi)-\frac{1}{2}\log(\det(\Gamma)) - \epsilon_k}+\beta\left(Q_{k-1},y_k,\hat{w}_k,\lVert \hat{w}_k-\bar{w}\rVert_m,k\right)\right),
\label{ieq:learning_error_bound_vi:type2}
\end{align}
with $d$ being the total variation or Hellinger distance.

Assuming further that the metric space $(\X,d_{\X})$ satisfies $\sup_{x,x^\prime \in \X}d_{\X}(x,x^\prime)=D<\infty$, then inequality~\eqref{ieq:learning_error_bound_vi:type2} holds with $d$ being the $1$-Wasserstein distance. The functions $C_{VI}^\prime$ and $\beta$ are defined in Tables~\ref{table:C_C'_VI} and~\ref{table:beta}, respectively.
\begin{table}
\centering
\caption{Definitions of function $\beta$ for different distances of probability measures. The function $Z_{k,\hat{w}_k}$is defined as $Z_{k,\hat{w}_k}(Q_{k-1}):= \int_{\X}p_{\mathcal{N}}(y_k \mid H_k(x),\Gamma)\left(\int_{\X}T_k(x,x_{k-1},\hat{w}_k)Q_{k-1}(dx_{k-1})\right)dx$.}
\begin{tabularx}{1.0\linewidth}
{ 
  | >{\hsize=1.2\hsize\centering\arraybackslash}X 
  | >{\hsize=0.9\hsize\centering\arraybackslash}X 
  |>{\hsize=0.9\hsize\centering\arraybackslash}X | }
 \hline
Function $\beta$ & $d_{TV}$, $W_1$ & $d_H$ \\
 \hline
 \rule{0pt}{17pt} $\beta(Q_{k-1},y_k,\hat{w}_k,\lVert \hat{w}_k-\bar{w}\rVert_m,k)$  & $\frac{\sqrt{2}\tilde{C}_{VI}(y_k;k)\lVert \hat{w}_k - \bar{w} \rVert_m }{Z_{k,\hat{w}_k}(Q_{k-1})}$ & $2\sqrt{\frac{\tilde{C}_{VI}(y_k;k)\lVert \hat{w}_k - \bar{w} \rVert_m}{Z_{k,\hat{w}_k}(Q_{k-1})}}$\\
 \hline
\end{tabularx}
\label{table:beta}
\end{table}
\end{corollary}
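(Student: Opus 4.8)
The plan is to reduce Corollary~\ref{coro:example2} to the state-estimation instance of Theorem~\ref{thm:learning_error}, recognizing that the exact map $F_k$ acts on system~\eqref{sys:application:2} with the \emph{true} parameter $\bar{w}$ and the Gaussian observation model $h_k(y_k,\cdot)=p_{\mathcal{N}}(y_k\mid\Phi_k(\cdot),\Gamma)$. First I would verify the hypotheses of Theorem~\ref{thm:learning_error}: Assumption~\ref{assump:se_single_step} is automatic here, since $h_k\le(2\pi)^{-r/2}\det(\Gamma)^{-1/2}$ and $\int_{\X}T_k(x,x_{k-1},\bar{w})\,dx=1$ give $C_{Th}(y_k;k)\le(2\pi)^{-r/2}\det(\Gamma)^{-1/2}<\infty$; the membership hypotheses $P_i,Q_i\in\bar{\mathcal{P}}_{i+1}(\X)$ are assumed. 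For the $W_1$ claim I would avoid Assumption~\ref{assump:lipschitz_hf:se} and instead invoke $W_1(P_k,Q_k)\le D\,d_{TV}(P_k,Q_k)$ at the very end, so that the $1$-Wasserstein bound is simply $D$ times the TV bound. Applying~\eqref{ieq:learning_error_bound2} then yields $d(P_k,Q_k)\le\sum_{j=1}^{k-1}C'(\Y_{j+1:k},Q_{j:k-1},j)\,d(Q_j^*,Q_j)+d(Q_k^*,Q_k)$, with $C'$ taken from Table~\ref{table:C'} and $C_{Th}$ replaced by its Gaussian bound.

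The crux is that the iterate $Q_j$ does not approximate $Q_j^*=F_j(Q_{j-1})$ (which uses $\bar{w}$) but rather the posterior built from the \emph{estimated} parameter $\hat{w}_j$. I would therefore insert the intermediate measure $Q_{j,\hat{w}_j}^*$ with density proportional to $p_{\mathcal{N}}(y_j\mid\Phi_j(\cdot),\Gamma)\,q_{j,\hat{w}_j}^{*-}$ and split each incremental error as $d(Q_j^*,Q_j)\le d(Q_j^*,Q_{j,\hat{w}_j}^*)+d(Q_{j,\hat{w}_j}^*,Q_j)$. The second piece is exactly the VI/ELBO term treated as in Corollary~\ref{coro:example1}: one has $\mathrm{KL}(Q_j\,\|\,Q_{j,\hat{w}_j}^*)=\log Z_{j,\hat{w}_j}(Q_{j-1})-\mathcal{L}_j(Q_j,\hat{w}_j)$, and using $Z_{j,\hat{w}_j}(Q_{j-1})\le(2\pi)^{-r/2}\det(\Gamma)^{-1/2}$ (because $q_{j,\hat{w}_j}^{*-}$ integrates to one) together with $\mathcal{L}_j\ge\epsilon_j$ bounds this KL by $-\tfrac{r}{2}\log(2\pi)-\tfrac12\log\det(\Gamma)-\epsilon_j$. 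Pinsker's inequality, the Hellinger--KL bound $d_H^2\le\tfrac12\mathrm{KL}$, and $W_1\le D\,d_{TV}$ then convert it into the $\sqrt{-\tfrac{r}{2}\log(2\pi)-\tfrac12\log\det(\Gamma)-\epsilon_j}$ terms, with conversion prefactor $\alpha_0\in\{1/\sqrt2,\,1/\sqrt2,\,D/\sqrt2\}$.

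The main obstacle is the first piece $d(Q_j^*,Q_{j,\hat{w}_j}^*)$, which compares two \emph{normalized} posteriors built from the same prior $Q_{j-1}$ but different transition kernels $T_j(\cdot,\cdot,\bar{w})$ and $T_j(\cdot,\cdot,\hat{w}_j)$. Because the two measures differ in the map and not the prior, Theorem~\ref{thm:pointwise_lipschitz} does not apply directly; moreover Assumption~\ref{assump:vi} controls only the \emph{likelihood-weighted} integral $\tilde{C}_{VI}$, so routing the estimate through an unweighted $\int g_{\mathrm{Lip}}\,dx$ is unavailable. I would instead compute the TV distance by hand: writing $u=q_{j,\bar{w}}^{*-}$, $v=q_{j,\hat{w}_j}^{*-}$, $a=p_{\mathcal{N}}(y_j\mid\Phi_j(\cdot),\Gamma)$, and adding and subtracting $au/Z_v$ inside $\tfrac12\int|au/Z_u-av/Z_v|\,dx$ collapses both resulting terms to $\tfrac{1}{Z_v}\int a\,|u-v|\,dx$ after using $|Z_u-Z_v|\le\int a|u-v|\,dx$. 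The hypothesis $|T_j(x,x_{j-1},\bar{w})-T_j(x,x_{j-1},\hat{w}_j)|\le g_{\mathrm{Lip}}(x;j)\lVert\bar{w}-\hat{w}_j\rVert_m$ and Fubini give $\int a|u-v|\,dx\le\tilde{C}_{VI}(y_j;j)\lVert\hat{w}_j-\bar{w}\rVert_m$, hence $d_{TV}(Q_j^*,Q_{j,\hat{w}_j}^*)\le\tilde{C}_{VI}(y_j;j)\lVert\hat{w}_j-\bar{w}\rVert_m/Z_{j,\hat{w}_j}(Q_{j-1})$, and $d_H^2\le d_{TV}$ and $W_1\le D\,d_{TV}$ propagate it to the other metrics, producing the $\beta$ terms of Table~\ref{table:beta}. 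Finally I would substitute both pieces into the bound from the first step, absorb the prefactor $\alpha_0$ into $C'$ to obtain $C'_{VI}=\alpha_0 C'$ and $\alpha=\alpha_0$, rescale the mismatch bounds by $1/\alpha_0$ to obtain the tabulated $\beta$, and collect the $\sqrt{\cdots-\epsilon_j}$ and $\beta$ contributions into the stated inequality~\eqref{ieq:learning_error_bound_vi:type2}.
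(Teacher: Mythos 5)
Your proposal is correct and follows essentially the same route as the paper's proof: verifying Assumption~\ref{assump:se_single_step} via the Gaussian bound on $h_k$, splitting each incremental error through the intermediate posterior $Q_{j,\hat{w}_j}^*$, bounding the VI piece by the ELBO/KL argument of Corollary~\ref{coro:example1}, bounding the parameter-mismatch piece in total variation by the same two-term (normalization plus kernel-difference) decomposition under Assumption~\ref{assump:vi}, and deducing the $W_1$ case from $W_1 \leq D\,d_{TV}$ at the end. The only departure is your Hellinger estimate of the mismatch term via $d_H^2 \leq d_{TV}$, which is marginally tighter than (and hence implies) the paper's bound obtained through the unnormalized Hellinger distance and Lemma~\ref{lemma:unnormalized2normalized:H}.
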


\begin{proof}
See Appendix~\ref{sec:app:coro2}.
\end{proof}

\section{Illustrative numerical example}\label{sec:illustrative_example}
In this section, we present a numerical example to validate the upper bounds established in Theorem~\ref{thm:pointwise_lipschitz} for inverse problems and to investigate the following questions:
\begin{itemize}
\item How tight are the derived bounds?
\item How does the tightness of the derived bound on the posterior distance $d(F_k(\mu),F_k(\mu^\prime))$ change when the priors $\mu$ and $\mu^\prime$ vary?
\item As the priors $\mu$ and $\mu^\prime$ vary, do $d(F_k(\mu),F_k(\mu^\prime))$ and the derived bound on it exhibit similar trends?
\end{itemize}

As will be shown, the derived bounds can be remarkably tight. To minimize the influence of computational and estimation errors on the validation of the theorem, we consider a simple one-dimensional inverse problem with a linear Gaussian likelihood model and Gaussian initial prior distributions. Under this setting, both the posterior distributions and the Hellinger distance between them admit closed-form expressions. The model is chosen to be one-dimensional to minimize the estimation error of the TV distance.

Consider the following model:
\begin{equation*}
Y = aX + \eta, \quad \eta \sim \mathcal{N}(0,3),
\end{equation*}
where $a=1.1$. We generate a single observation $Y=y$ and evaluate three cases with three different sets of initial distributions $\mu_0$ and $\mu^\prime_0$. The first two sets are $\mu_0 =\mathcal{N}(-10,5)$ and $\mu^\prime_0 =\mathcal{N}(8,5)$, and $\mu_0 =\mathcal{N}(0,1)$ and $\mu^\prime_0 =\mathcal{N}(2,1)$, representing cases where the initial distributions are significantly different and relatively close, respectively. The third set of initial distributions consists of Gaussian distributions whose means are uniformly sampled from the interval $[-10, 10]$ and variances uniformly sampled from the interval $[0, 5]$. In this case, the variances of the two initial distributions are not necessarily equal. We refer to the cases using the first, second, and third sets of initial distributions as Case 1, Case 2, and Case 3, respectively. We update the corresponding posteriors sequentially over 20 iterations. For each update step $k$, the updates follow $\mu_k = F_k(\mu_{k-1})$ and $\mu^\prime_k = F_k(\mu^\prime_{k-1})$. The same data $y$ is used at every iteration, i.e., $y_k = y$ for all $k$.

In this example, the assumptions in Theorem~\ref{thm:pointwise_lipschitz} for the TV and Hellinger distances are satisfied. The TV distance between $\mu_k$ and $\mu_k^\prime$, $d_{TV}(\mu_k,\mu_k^\prime)$, and the derived bound on it are shown in Figure~\ref{fig:tv}. The Hellinger distance between $\mu_k$ and $\mu_k^\prime$, $d_H(\mu_k,\mu_k^\prime)$, and its corresponding bound are shown in Figure~\ref{fig:H}.  
\begin{figure}
     \centering
     \begin{subfigure}[b]{0.31\linewidth}
         \centering
         \includegraphics[width=\linewidth]{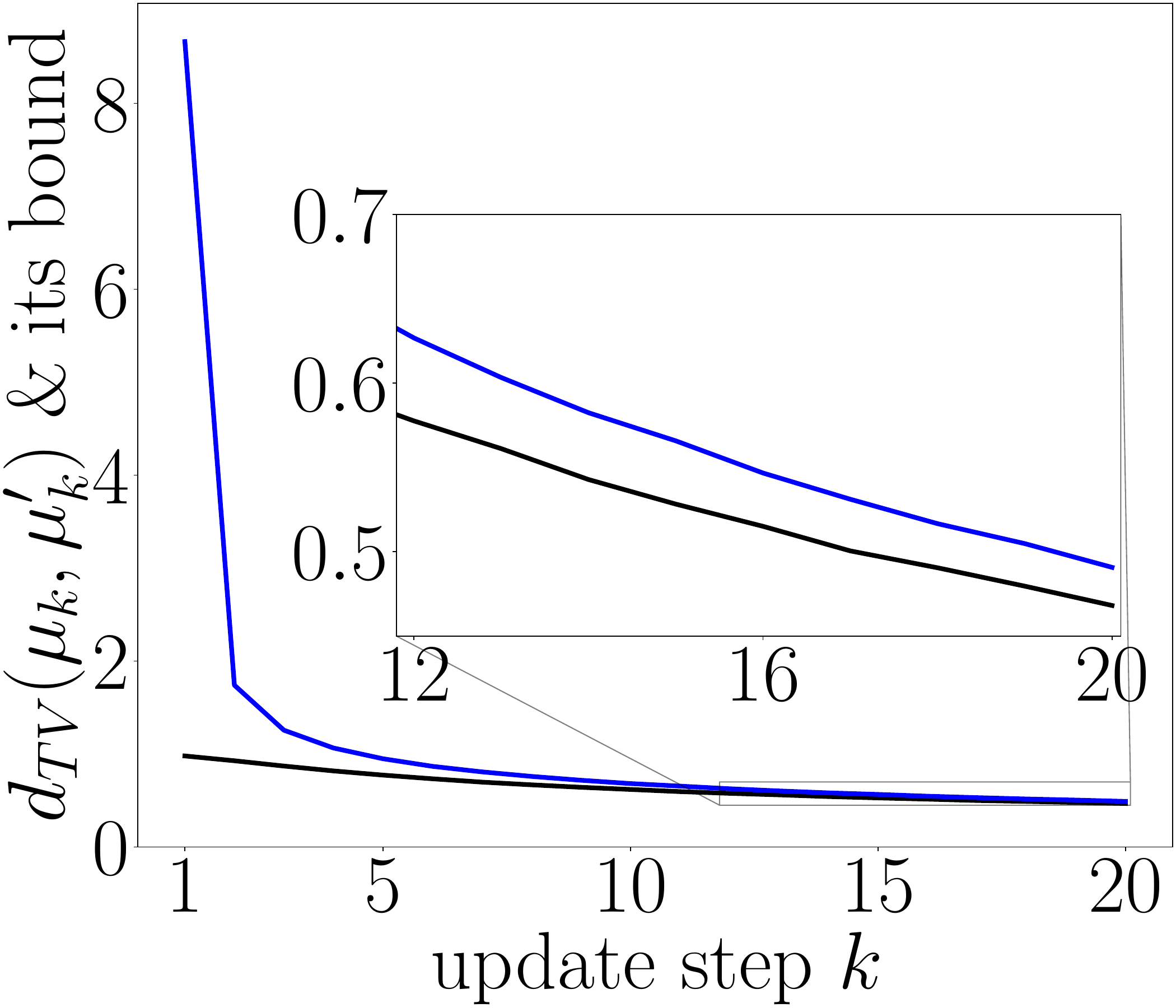}
         \caption{Case 1}
         \label{fig:case1_tv}
     \end{subfigure}
     \hfill
     \begin{subfigure}[b]{0.31\linewidth}
         \centering
         \includegraphics[width=\linewidth]{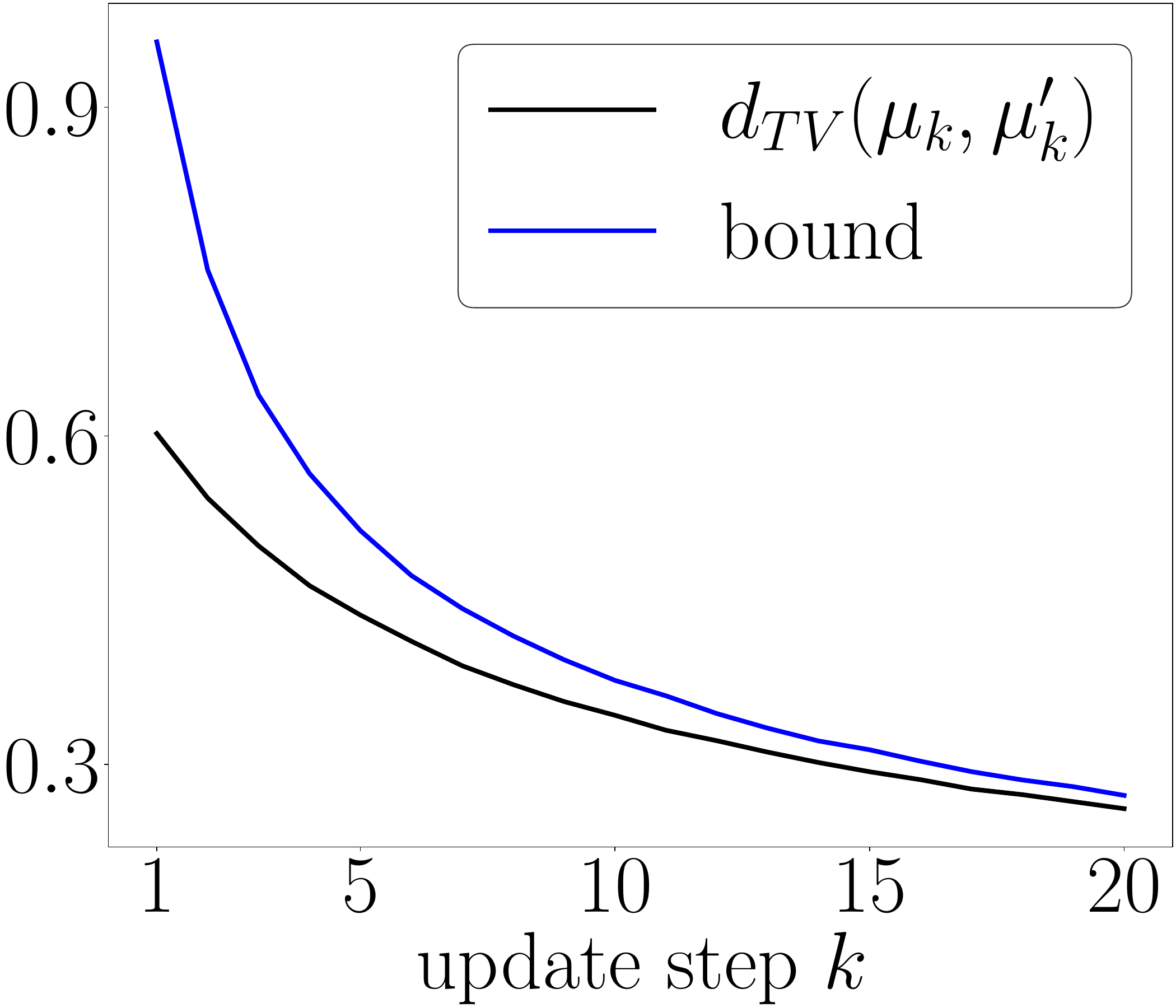}
         \caption{Case 2}
         \label{fig:case2_tv}
     \end{subfigure}
     \hfill
     \begin{subfigure}[b]{0.31\linewidth}
         \centering
         \includegraphics[width=\linewidth]{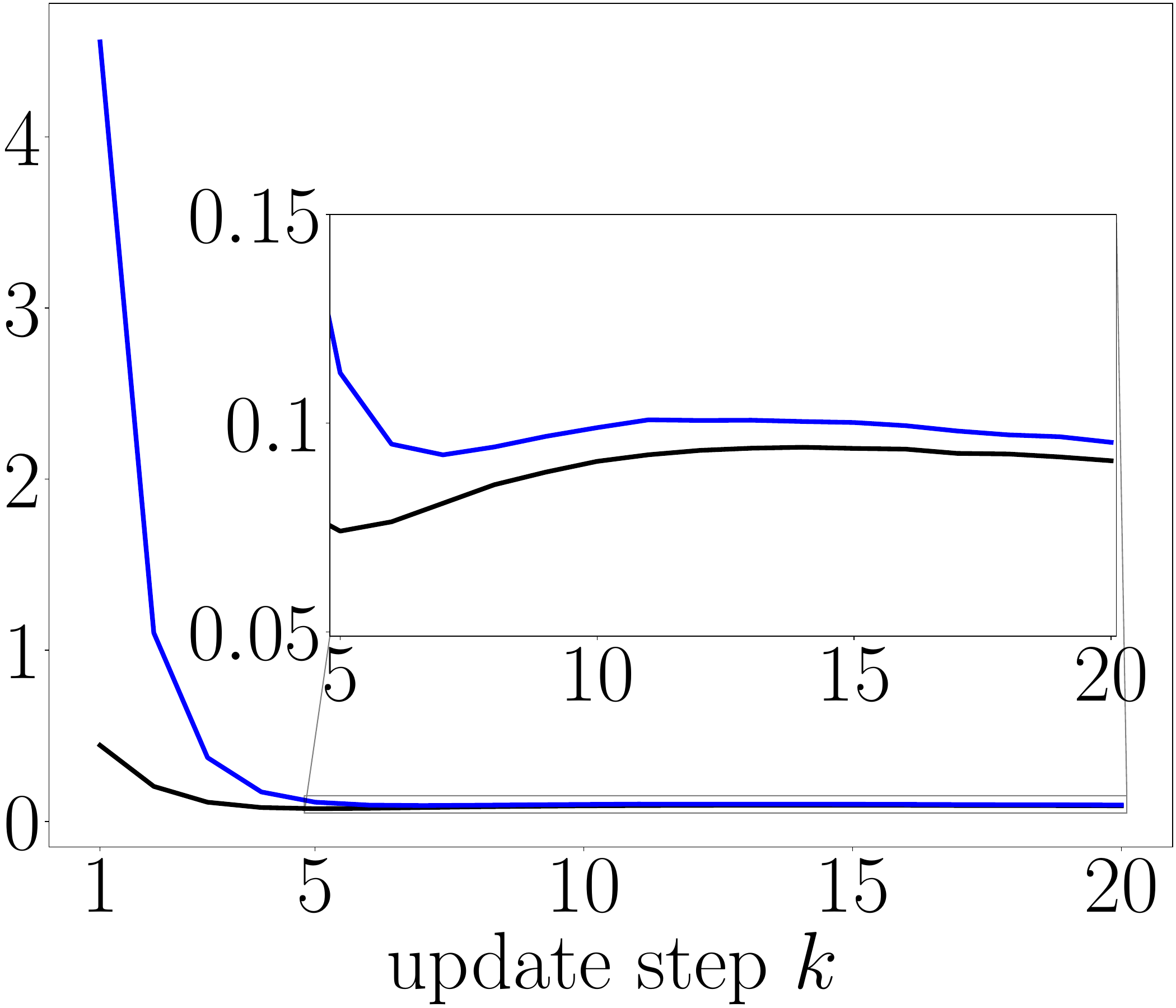}
         \caption{Case 3}
         \label{fig:case3_tv}
     \end{subfigure}
         \caption{TV distance between posteriors, $d_{TV}(\mu_k,\mu_k^\prime)$, and its upper bound provided by Theorem~\ref{thm:pointwise_lipschitz}. Because of the symmetry of the TV distance, Theorem~\ref{thm:pointwise_lipschitz} yields the bound $\frac{C_h(y)}{Z_k(\mu_{k-1})\vee Z_k(\mu^\prime_{k-1})}d_{TV}(\mu_{k-1},\mu_{k-1}^\prime)$. A zoom-in view of steps $12$ through $20$ is shown in panel (a), and a zoom-in view of steps $5$ through $20$ is shown in panel (c). The posterior distance is always below the derived bound. In Cases 1 and Case 2, when the posterior distance decreases, the derived bound also declines. In Case 3, the trends of changes in the posterior distance and the bound are similar most of the time. }
         \label{fig:tv}
\end{figure}
\begin{figure}
     \centering
     \begin{subfigure}[b]{0.31\linewidth}
         \centering
         \includegraphics[width=\linewidth]{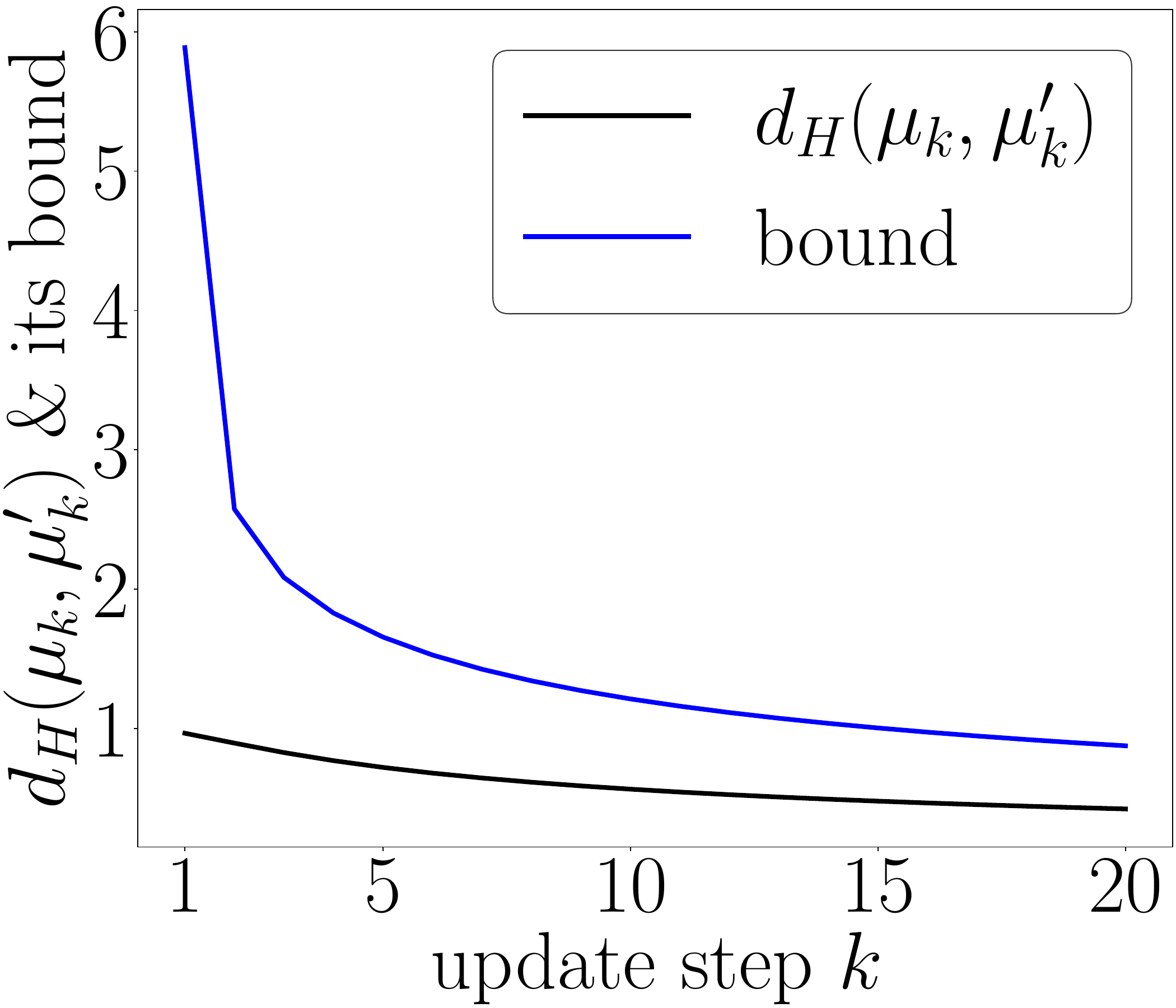}
         \caption{Case 1}
         \label{fig:case1_Hellinger}
     \end{subfigure}
     \hfill
     \begin{subfigure}[b]{0.31\linewidth}
         \centering
         \includegraphics[width=\linewidth]{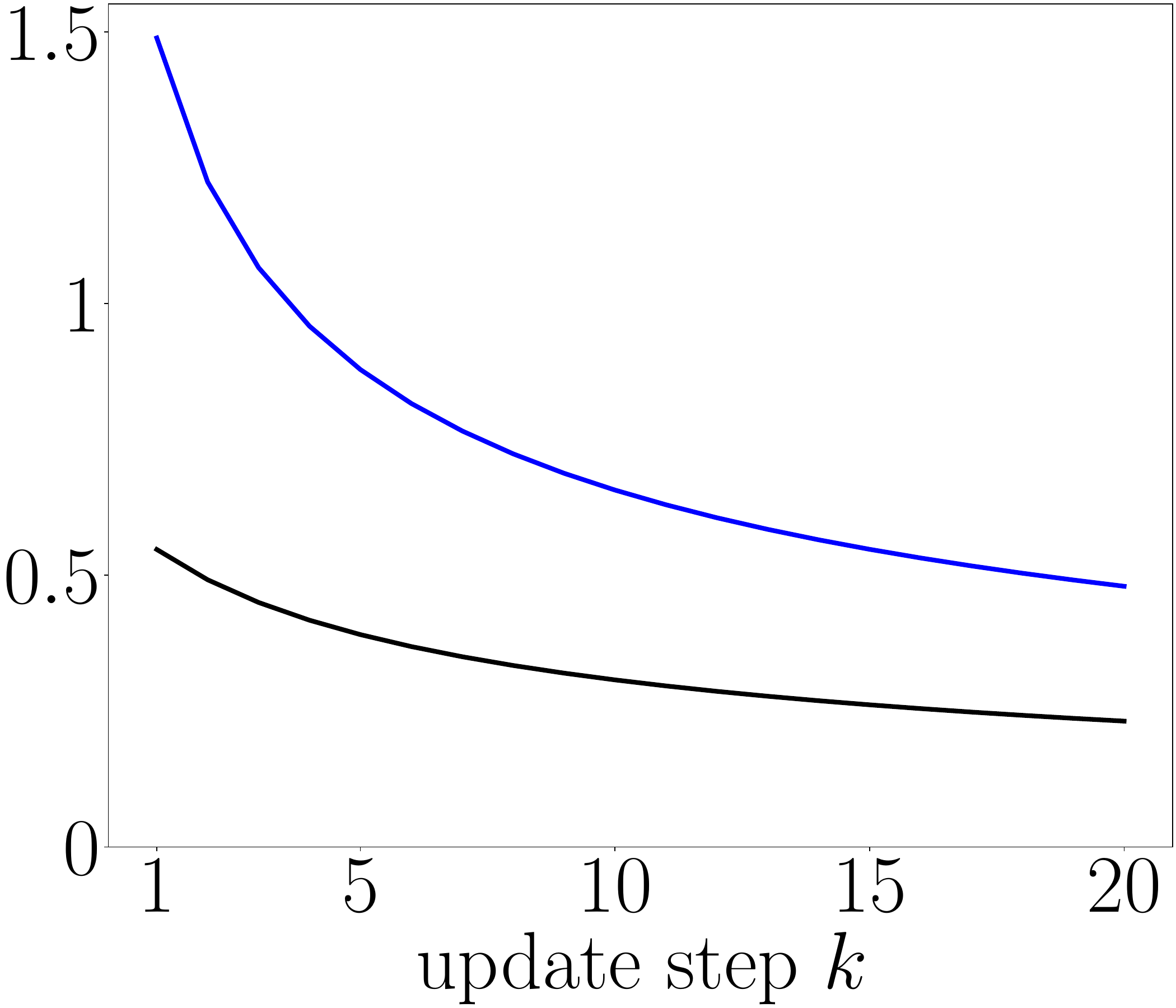}
         \caption{Case 2}
         \label{fig:case2_Hellinger}
     \end{subfigure}
     \hfill
     \begin{subfigure}[b]{0.31\linewidth}
         \centering
         \includegraphics[width=\linewidth]{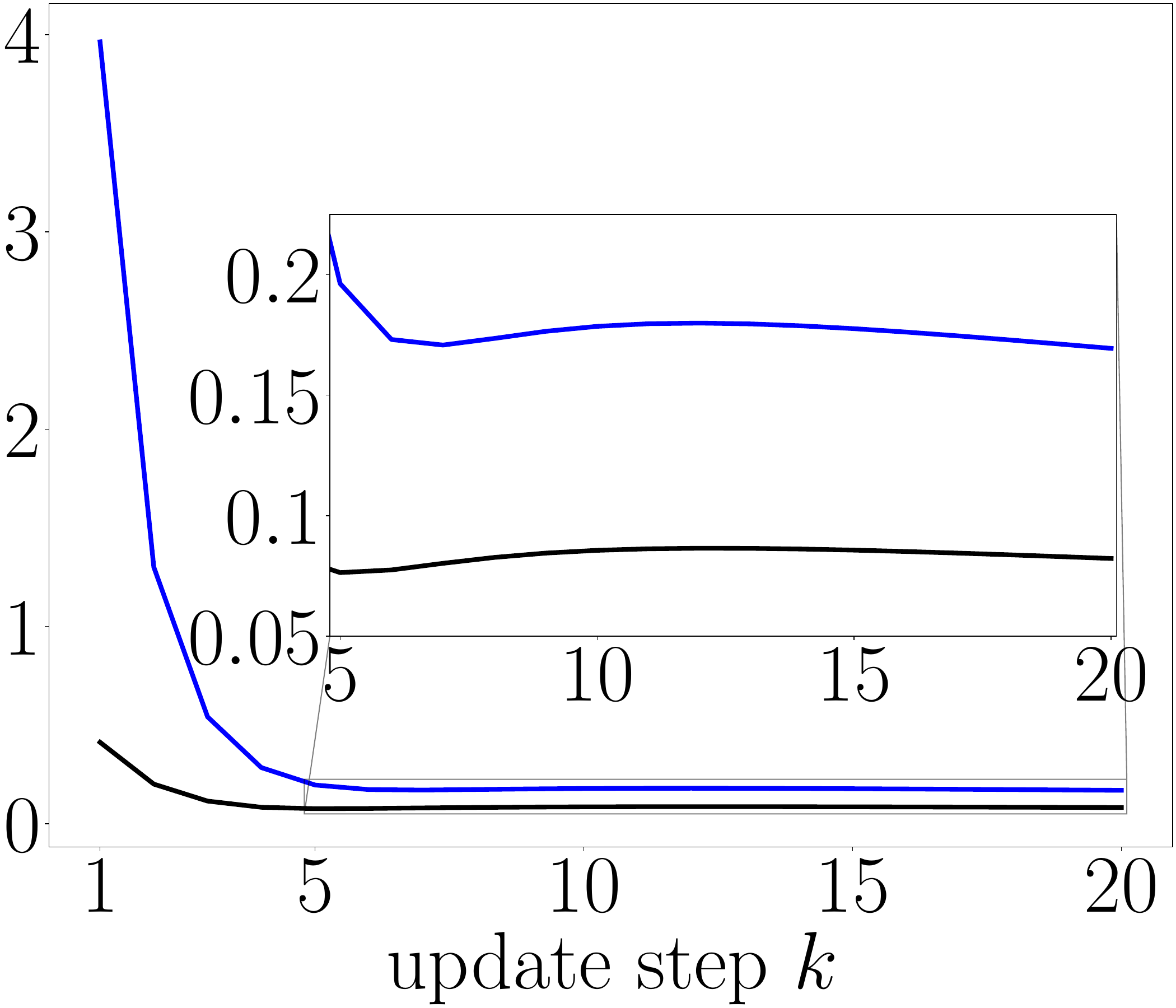}
         \caption{Case 3}
         \label{fig:case3_Hellinger}
     \end{subfigure}
         \caption{Hellinger distance between posteriors, $d_{H}(\mu_k,\mu_k^\prime)$, and its upper bound from Theorem~\ref{thm:pointwise_lipschitz}. Because the Hellinger distance is symmetric, Theorem~\ref{thm:pointwise_lipschitz} provides the bound $2\sqrt{\frac{C_h(y)}{Z_k(\mu_{k-1})\vee Z_k(\mu^\prime_{k-1})}}d_H(\mu_{k-1},\mu^\prime_{k-1})$. A zoom-in view of steps $5$ through $20$ is shown in panel (c). The posterior distance remains below the derived bound throughout. In Cases 1 and 2, both the posterior distance and its bound decrease over the update steps. In Case 3, the trends of the posterior distance and the bound are generally aligned.}
         \label{fig:H}
\end{figure}

Figures~\ref{fig:tv} and~\ref{fig:H} validate the results under the TV and Hellinger distances for inverse problems in Theorem~\ref{thm:pointwise_lipschitz}. Both the TV and Hellinger bounds become progressively tighter over update steps as more data are assimilated. Notably, the TV bound can be quite sharp. Furthermore, both the TV and Hellinger bounds closely track the actual posterior distances, as the trends of the bound and the posterior distance align most of the time. 

\section{Conclusions}
\label{sec:conclusion}
In this work, we have presented a rigorous theoretical analysis of general approximate approaches for Bayesian sequential learning (BSL) in the context of inverse problems, state estimation, and parameter-state estimation. By establishing pointwise global Lipschitz continuity of the prior-to-posterior map under the total variation, Hellinger, and $1$-Wasserstein distances, we derived two sets of upper bounds on the learning error associated with these methods. The derived bounds demonstrate the stability of these approaches with respect to the incremental approximation process and offer estimable guidance in real-world applications.

Our results mark the first establishment of global Lipschitz stability of the posterior with respect to the prior under the Hellinger and Wasserstein distances, and provide the first general error analysis framework for approximate BSL methods. Moreover, we identified sufficient conditions under which learning error decays, a phenomenon often observed empirically but not previously explored theoretically for general approximate methods in BSL.

To demonstrate the applicability of our framework, we apply it to two different online variational inference methods in the context of joint state and parameter learning.

Our theoretical results yield practical insights. In particular, for state estimation problems, under certain conditions, using a well-designed alternative initial prior—rather than the true one—can lead to improved learning accuracy. Moreover, under the same conditions, our results suggest that selecting an approximate posterior $Q_k$ with a larger immediate incremental approximation error $d(Q_k,Q_k^*)$ may be advantageous if it enables more accurate incremental approximations in subsequent steps, compared to a choice that minimizes the immediate incremental approximation error but leads to less accurate incremental approximations over time.

\acks{This work was supported by an NSF CAREER Award, grant number CMMI-2238913.}

\newpage

\appendix
\section{Comparison with the results in existing literature}
\subsection{Comparison with the results in \cite{lipschitz_stability_ip}}
\label{sec:app:compare1}
We compare the posterior distance bounds provided by Theorem~\ref{thm:pointwise_lipschitz} for inverse problems with those presented in \cite{lipschitz_stability_ip}. When the assumptions required by the results in \cite{lipschitz_stability_ip} are satisfied, the assumptions for Theorem~\ref{thm:pointwise_lipschitz} are also satisfied. Under these \textit{same} assumptions, the corresponding upper bounds are summarized in Table~\ref{table:related_work} below.
\begin{table}[H]
\centering
\caption{Upper bounds on the distance between posteriors, $d(F_k(\mu),F_k(\mu^\prime))$. The Lipschitz constant $\lVert h \rVert_{\text{Lip}}(y_k)$, metric space diameter $D$, and evidence function $Z_k$ are defined in Assumption IP.1, Assumption IP-SE-PS.4, and Definition 2.4, respectively. Under the assumptions in~\cite{lipschitz_stability_ip}, it can be readily shown that we have $Z_k(\mu) \vee Z_k(\mu^\prime) \leq 1$.}
\begin{tabularx}{0.9\linewidth}{ 
  | >{\hsize=.7\hsize\centering\arraybackslash}X 
  | >{\hsize=1.15\hsize\centering\arraybackslash}X 
  | >{\hsize=1.15\hsize\centering\arraybackslash}X | }
\hline
Distance & Ours & \cite{lipschitz_stability_ip} \\
 \hline
 Total variation  & $\bm{\frac{1}{Z_k(\mu) \vee Z_k(\mu^\prime)}d_{TV}(\mu,\mu^\prime)}$ & $\frac{2}{Z_k(\mu) \vee Z_k(\mu^\prime)}d_{TV}(\mu,\mu^\prime)$ \\
 \hline
 Hellinger & $\bm{2\sqrt{\frac{1}{Z_k(\mu) \vee Z_k(\mu^\prime)}}d_{H}(\mu,\mu^\prime)}$ & $\frac{2}{Z_k(\mu) \wedge Z_k(\mu^\prime)}d_{H}(\mu,\mu^\prime)$  \\
 \hline
 $1$-Wasserstein &$\frac{1+2D\lVert h \rVert _{\text{Lip}}(y_k)}{Z_k(\mu) \vee Z_k(\mu^\prime)}W_1(\mu,\mu^\prime)$ & $\frac{\left(1+D\lVert h \rVert _{\text{Lip}}(y_k)\right)^2}{\left(Z_k(\mu) \wedge Z_k(\mu^\prime)\right)^2}W_1(\mu,\mu^\prime)$, $\tilde{K}(\mu,\mu^\prime)W_1(\mu,\mu^\prime)$  \\
\hline
\end{tabularx}
\label{table:related_work}
\end{table}
 where $$\tilde{K}(\mu,\mu^\prime)=(1+D\lVert h \rVert _{\text{Lip}}(y_k))\left(\frac{1+\lVert h \rVert _{\text{Lip}}(y_k)\frac{\lvert \mu \rvert_{\mathcal{P}_1}}{Z_k(\mu)}}{Z_k(\mu^\prime)} \wedge \frac{1+\lVert h \rVert _{\text{Lip}}(y_k)\frac{\lvert \mu^\prime \rvert_{\mathcal{P}_1}}{Z_k(\mu^\prime)}}{Z_k(\mu)}\right),$$
with $\lvert \mu \rvert_{\mathcal{P}_1}$ defined under the metric space $(\X,d_{\X})$ as
\begin{equation*}
\lvert \mu \rvert_{\mathcal{P}^1} := \inf_{x_0 \in \X}\int_{\X}d_{\X}(x,x_0)\mu(dx).
\end{equation*}
 
 Table~\ref{table:related_work} shows that our bounds are tighter than those presented in \cite{lipschitz_stability_ip} under the total variation and Hellinger distances. As we have $Z_k(\mu) \vee Z_k(\mu^\prime) \leq 1$, the ratio between our bound for the Hellinger distance between the posteriors and that in \cite{lipschitz_stability_ip} is less than $\frac{Z_k(\mu) \wedge Z_k(\mu^\prime)}{Z_k(\mu) \vee Z_k(\mu^\prime)}$.  This ratio can be particularly small when the priors differ significantly. For $1$-Wasserstein distance, \cite{lipschitz_stability_ip} presents two upper bounds. It can be readily proven that our bound is tighter than $\frac{\left(1+D\lVert h \rVert _{\text{Lip}}(y_k)\right)^2}{\left(Z_k(\mu) \wedge Z_k(\mu^\prime)\right)^2}W_1(\mu,\mu^\prime)$. However, for two arbitrary priors $\mu$ and $\mu^\prime$, a direct comparison between our bound and $\tilde{K}(\mu,\mu^\prime)W_1(\mu,\mu^\prime)$ is not possible; the relative tightness depends on the specific choice of $\mu$ and $\mu^\prime$. It is noteworthy that the bounds in \cite{lipschitz_stability_ip} only establish pointwise \textit{local} Lipschitz continuity of the prior-to-posterior map $F_k$ under the Hellinger or $1$-Wasserstein distance. In contrast, our bounds enable the establishment of pointwise \textit{global} Lipschitz continuity of $F_k$ under all three distance metrics. It is also noteworthy that although both results prove the global Lipschitz stability of posterior with respect to prior under total variation distance, our upper bound is tighter by a factor of $1/2$.

\subsection{Comparison with results in \cite{local_pf_dim,ds,ip_ds}}
\label{sec:app:compare2}
For state estimation problems, \cite{local_pf_dim,ds,ip_ds} prove that given the data $y_k$, if there exists $\kappa \in (0,1)$ such that the observation model $h_k(y_k,x)$ satisfies
\begin{equation}
\kappa \leq \alpha h_k(y_k,x) \leq \frac{1}{\kappa}, \quad \forall x \in \X,
\label{ieq:assump:lit:se}
\end{equation}
where $\alpha >0$ is a scaling factor, then the prior-to-posterior map $F_k$ satisfies
\begin{equation}
d_{TV}(F_k(\mu),F_k(\mu^\prime)) \leq \frac{2}{\kappa^2}d_{TV}(\mu,\mu^\prime), \quad \forall \mu,\mu^\prime \in \bar{\mathcal{P}}_k(\X),
\label{ieq:ub:lit:se}
\end{equation}
This result establishes global Lipschitz stability under the assumption that the observation model $h_k(y_k,x)$ is bounded above and below by positive constants. While the observation function is often upper bounded in practice, the lower boundedness assumption is much stronger and does not hold in many problems. For example, consider a common setting: a scalar system with observation $Y= aX+V$, where $V$ is a Gaussian noise. In this case, the likelihood is not bounded below by a positive constant.

Next, we show that under the assumption in Equation~\eqref{ieq:assump:lit:se}, the upper bound provided by Theorem~\ref{thm:pointwise_lipschitz} is half of that given in Equation~\eqref{ieq:ub:lit:se}. When Equation~\eqref{ieq:assump:lit:se} holds, the assumption for Theorem~\ref{thm:pointwise_lipschitz} under the total variation distance for state estimation problems is satisfied. Theorem~\ref{thm:pointwise_lipschitz} yields:
\begin{equation}
d_{TV}(F_k(\mu),F_k(\mu^\prime)) \leq \frac{C_{Th}(y_k;k)}{Z_k(\mu)\vee Z_k(\mu^\prime)}d_{TV}(\mu,\mu^\prime), \quad \forall \mu,\mu^\prime \in \bar{\mathcal{P}}_k(\X),
\label{ieq:ub:our:se}
\end{equation}
Under the assumption in Equation~\eqref{ieq:assump:lit:se}, $C_{Th}(y_k;k)$ and $Z_k(\mu)$ satisfy
\begin{equation}
C_{Th}(y_k;k) := \sup_{x_{k-1} \in \X}\int_{\X}h_k(y_k,x)T_k(x,x_{k-1})dx \leq \sup_{x \in \X}h_k(y_k,x) \leq \frac{1}{\alpha \kappa},
\label{ieq:ub:our:se:1}
\end{equation}
and 
\begin{equation}
Z_k(\mu) = \int_{\X}h(y_k,x)F_k^-(\mu)(dx) \geq \frac{\kappa}{\alpha}\int_{\X}F_k^-(\mu)(dx)=\frac{\kappa}{\alpha},
\label{ieq:ub:our:se:2}
\end{equation}
where $F_k^-(\mu)$ represents the probability measure obtained by propagating the prior $\mu$ through the dynamics. Substituting Equations~\eqref{ieq:ub:our:se:1} and ~\eqref{ieq:ub:our:se:2} into Equation~\eqref{ieq:ub:our:se} gives
\begin{equation}
d_{TV}(F_k(\mu),F_k(\mu^\prime)) \leq \frac{1}{\kappa ^2}d_{TV}(\mu,\mu^\prime), \quad \forall \mu,\mu^\prime \in \bar{\mathcal{P}}_k(\X).
\label{ieq:ub:our:se:final}
\end{equation}
Thus, the bound given in Equation~\eqref{ieq:ub:our:se:final} is exactly $\frac{1}{2}$ of the bound provided by Equation~\eqref{ieq:ub:lit:se}.

\section{Proof of Theorem~\ref{thm:pointwise_lipschitz}}
\label{sec:app:pointwise_lipschitz}

Theorem~\ref{thm:pointwise_lipschitz} is a conclusion of Propositions~\ref{prop:ip:normalized_bound_tv},~\ref{prop:se:normalized_bound_tv},~\ref{prop:ps:normalized_bound_tv},~\ref{prop:ip:normalized_bound_H},~\ref{prop:se:normalized_bound_H},~\ref{prop:ps:normalized_bound_H},~\ref{prop:ip:normalized_bound_W},~\ref{prop:se:normalized_bound_W}, and~\ref{prop:ps:normalized_bound_W}.

\subsection{Proof of Theorem~\ref{thm:pointwise_lipschitz} under the total variation distance}
Theorem~\ref{thm:pointwise_lipschitz} under the total variation distance is a conclusion of Propositions~\ref{prop:ip:normalized_bound_tv}---\ref{prop:ps:normalized_bound_tv}.
\begin{prop}
In inverse problems, given the data $y_k$, under Assumption IP.1, $\forall \mu, \mu^\prime \in \bar{\mathcal{P}}_k(\X )$, the posteriors $F_k(\mu)$ and $F_k(\mu^\prime)$ satisfy
\begin{equation*}
d_{TV}(F_k(\mu),F_k(\mu^\prime)) \leq \frac{C_h(y_k)}{Z_k(\mu)}d_{TV}(\mu,\mu^\prime).
\end{equation*}
\label{prop:ip:normalized_bound_tv}
\end{prop}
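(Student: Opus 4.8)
The plan is to work directly with Radon--Nikodym densities and to exploit the normalization constraint so as to avoid the factor of two that a naive triangle inequality would incur. First I would fix a common $\sigma$-finite dominating measure $\nu$ with $\mu \ll \nu$ and $\mu^\prime \ll \nu$ (for instance $\nu = \mu + \mu^\prime$) and set $p = \tfrac{d\mu}{d\nu}$, $p^\prime = \tfrac{d\mu^\prime}{d\nu}$. Since $\mu,\mu^\prime \in \bar{\mathcal{P}}_k(\X)$, the evidences $Z_k(\mu)$ and $Z_k(\mu^\prime)$ are finite and strictly positive, so $F_k\mu$ and $F_k\mu^\prime$ have $\nu$-densities $\tfrac{h(y_k,\cdot)p}{Z_k(\mu)}$ and $\tfrac{h(y_k,\cdot)p^\prime}{Z_k(\mu^\prime)}$. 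Plugging these into the integral form of the total variation distance yields
$$ d_{TV}(F_k\mu, F_k\mu^\prime) = \frac{1}{2Z_k(\mu)}\int_{\X} h(y_k,x)\,\bigl|\,p(x) - \lambda p^\prime(x)\,\bigr|\,\nu(dx), \qquad \lambda := \frac{Z_k(\mu)}{Z_k(\mu^\prime)}. $$

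The crucial step, and the source of the improvement over existing bounds, is the balance identity $\int_{\X} h(y_k,x)\bigl(p(x)-\lambda p^\prime(x)\bigr)\,\nu(dx) = Z_k(\mu) - \lambda Z_k(\mu^\prime) = 0$. Hence the $h$-weighted positive and negative parts of $p - \lambda p^\prime$ carry equal mass, so $\int h\,|p - \lambda p^\prime|\,\nu = 2M$ with $M := \int h\,(p-\lambda p^\prime)_+\,\nu = \int h\,(\lambda p^\prime - p)_+\,\nu$, giving $d_{TV}(F_k\mu, F_k\mu^\prime) = M/Z_k(\mu)$. Splitting the half-integral and bounding the two halves separately via a triangle inequality would double-count and produce the spurious factor of two; using the balance identity is precisely what removes it.

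It then remains to prove $M \le C_h(y_k)\,d_{TV}(\mu,\mu^\prime)$, which I would do by case-splitting on $\lambda$. If $\lambda \le 1$, I use the representation $M = \int h\,(\lambda p^\prime - p)_+\,\nu$ together with the pointwise bound $(\lambda p^\prime - p)_+ \le (p^\prime - p)_+$: on $\{\lambda p^\prime > p\}$ one has $p^\prime \ge \lambda p^\prime > p$, so $(p^\prime - p)_+ = p^\prime - p \ge \lambda p^\prime - p$. If $\lambda > 1$, the symmetric inequality $(p - \lambda p^\prime)_+ \le (p - p^\prime)_+$ holds by the same reasoning. In either case Assumption~\ref{assump:ip_single_step} gives $h(y_k,x) \le C_h(y_k)$, so $M \le C_h(y_k)\int_{\X}(p - p^\prime)_+\,\nu(dx) = C_h(y_k)\,d_{TV}(\mu,\mu^\prime)$, where the last equality uses that the positive and negative parts of $p - p^\prime$ both have mass equal to $d_{TV}(\mu,\mu^\prime)$. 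Dividing through by $Z_k(\mu)$ then delivers the claim.

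The main obstacle is identifying the mass-balance reduction $d_{TV}(F_k\mu,F_k\mu^\prime)=M/Z_k(\mu)$ and the accompanying pointwise domination of truncated differences; once these are in place the remaining manipulations are routine. I expect the resulting constant to be essentially sharp, since both the balance reduction and the pointwise comparison become equalities in extremal configurations of $\mu$ and $\mu^\prime$.
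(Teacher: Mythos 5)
Your proof is correct, and it reaches the paper's key intermediate inequality by a genuinely different route. The paper first applies the triangle inequality to split $d_{TV}(F_k(\mu),F_k(\mu^\prime))$ into an $h$-weighted integral of $\lvert \frac{d\mu}{d\nu}-\frac{d\mu^\prime}{d\nu}\rvert$ plus the evidence-difference term $\frac{\lvert Z_k(\mu)-Z_k(\mu^\prime)\rvert}{2Z_k(\mu)}$, and then recovers the factor lost in that split by expanding both terms over the set $\X^*$ (where the densities are ordered consistently with the sign of $Z_k(\mu)-Z_k(\mu^\prime)$) and letting the off-$\X^*$ contributions cancel. You never split: you factor $1/Z_k(\mu)$ out of the exact density difference, introduce $\lambda = Z_k(\mu)/Z_k(\mu^\prime)$, and use the balance identity $\int_{\X} h(y_k,x)\left(p(x)-\lambda p^\prime(x)\right)\nu(dx)=0$ to write the posterior TV distance exactly as the one-sided mass $M/Z_k(\mu)$; the pointwise domination $(\lambda p^\prime-p)_+\leq (p^\prime-p)_+$ for $\lambda\leq 1$ (and its mirror for $\lambda>1$) then does the work of the paper's cancellation argument. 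Both derivations land on essentially the same one-sided integral $\int_{\{p\geq p^\prime\}}h(y_k,x)\lvert p-p^\prime\rvert\,\nu(dx)$, so the resulting bound is identical; your version trades the paper's bookkeeping over $\X^*$ for an exact identity plus an elementary pointwise comparison, and it makes transparent where the factor-of-two improvement over the naive triangle-inequality bound comes from. All the supporting steps check out: the balance identity follows from $\int h\,p\,d\nu=Z_k(\mu)$ and $\lambda Z_k(\mu^\prime)=Z_k(\mu)$; the integrability needed to equate the $h$-weighted positive and negative parts holds since $\int h\lvert p-\lambda p^\prime\rvert\,d\nu\leq 2Z_k(\mu)<\infty$; and $\int(p-p^\prime)_+\,d\nu=d_{TV}(\mu,\mu^\prime)$ because $p$ and $p^\prime$ integrate to one. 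The one thing your route gives up is the slightly stronger intermediate constant $1/(Z_k(\mu)\vee Z_k(\mu^\prime))$ that the paper records along the way (in the case $\lambda\leq 1$ your denominator is the smaller evidence), but that refinement is not needed for the stated proposition and could be recovered by running your argument symmetrically in $\mu$ and $\mu^\prime$.
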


\begin{proof}
Let $\nu$ be a $\sigma$-finite measure defined on the measurable space $(\X,\mathcal{B}(\X))$. Assume that $\mu \ll \nu$ and $\mu^\prime \ll \nu$. As we have $F_k(\mu) \ll \mu$ and $F_k(\mu^\prime) \ll \mu^\prime$, $F_k(\mu)$ and $F_k(\mu^\prime)$ are both absolutely continuous w.r.t. $\nu$.

By the definition of total variation distance, we have
\begin{align}
&d_{TV}(F_k(\mu),F_k(\mu^\prime)) \nonumber \\
&\qquad =\frac{1}{2}\int_\mathcal{X} \bigg \lvert \frac{h(y_k,x)}{Z_k(\mu)}\cdot \frac{d\mu}{d\nu}(x) - \frac{h(y_k,x)}{Z_k(\mu^\prime)}\cdot \frac{d\mu^\prime}{d\nu}(x) \bigg \rvert \nu(dx) \nonumber \\
& \qquad \leq  \frac{1}{2}\int_\mathcal{X} \bigg \lvert \frac{h(y_k,x)}{Z_k(\mu)}\cdot \frac{d\mu}{d\nu}(x) - \frac{h(y_k,x)}{Z_k(\mu)}\cdot \frac{d\mu^\prime}{d\nu}(x) \bigg \rvert \nu(dx) \nonumber \\
&\qquad \qquad + \frac{1}{2}\int_\mathcal{X} \bigg \lvert \frac{h(y_k,x)}{Z_k(\mu)}\cdot \frac{d\mu^\prime}{d\nu}(x) - \frac{h(y_k,x)}{Z_k(\mu^\prime)}\cdot \frac{d\mu^\prime}{d\nu}(x) \bigg \rvert \nu(dx). 
\label{ieq:ip:tv:1}
\end{align}
The first term on the right-hand side in Equation~\eqref{ieq:ip:tv:1} can be expressed as:
\begin{align*}
&\frac{1}{2}\int_\mathcal{X} \bigg \lvert \frac{h(y_k,x)}{Z_k(\mu)}\cdot \frac{d\mu}{d\nu}(x) - \frac{h(y_k,x)}{Z_k(\mu)}\cdot \frac{d\mu^\prime}{d\nu}(x) \bigg \rvert \nu(dx) \\
&\qquad =\frac{1}{2Z_k(\mu)}\int_\mathcal{X} h(y_k,x) \bigg \lvert \frac{d\mu}{d\nu}(x) - \frac{d\mu^\prime}{d\nu}(x) \bigg \rvert \nu(dx).
\end{align*}
We can write the second term on the right-hand side in Equation~\eqref{ieq:ip:tv:1} as:
\begin{align*}
&\frac{1}{2}\int_\mathcal{X} \bigg \lvert \frac{h(y_k,x)}{Z_k(\mu)}\cdot \frac{d\mu^\prime}{d\nu}(x) - \frac{h(y_k,x)}{Z_k(\mu^\prime)}\cdot \frac{d\mu^\prime}{d\nu}(x) \bigg \rvert \nu(dx) \\
&\qquad =\frac{1}{2}\bigg \lvert \frac{1}{Z_k(\mu)} - \frac{1}{Z_k(\mu^\prime)} \bigg \rvert \int_\mathcal{X}  h(y_k,x)\cdot \frac{d\mu^\prime}{d\nu}(x)\nu(dx).
\end{align*}
Note that 
\begin{equation*}
\int_\mathcal{X}  h(y_k,x)\cdot \frac{d\mu^\prime}{d\nu}(x)\nu(dx) = Z_k(\mu^\prime).
\end{equation*}
Therefore the second term in Equation~\eqref{ieq:ip:tv:1} can be reformulated as:
\begin{align*}
\frac{1}{2}\int_\mathcal{X} \bigg \lvert \frac{h(y_k,x)}{Z_k(\mu)}\cdot \frac{d\mu^\prime}{d\nu}(x) - \frac{h(y_k,x)}{Z_k(\mu^\prime)}\cdot \frac{d\mu^\prime}{d\nu}(x) \bigg \rvert \nu(dx) = \frac{\lvert Z_k(\mu) - Z_k(\mu^\prime) \rvert }{2Z_k(\mu)}.
\end{align*}
It follows that
\begin{align}
&d_{TV}(F_k(\mu),F_k(\mu^\prime)) \nonumber \\
&\qquad \leq \frac{1}{2Z_k(\mu)}\left(\int_\mathcal{X} h(y_k,x) \bigg \lvert \frac{d\mu}{d\nu}(x) - \frac{d\mu^\prime}{d\nu}(x) \bigg \rvert \nu(dx) + \bigg \lvert Z_k(\mu) - Z_k(\mu^\prime) \bigg \rvert \right).
\label{ieq:ip:tv:2}
\end{align}
Similarly, we can prove that 
\begin{align}
&d_{TV}(F_k(\mu),F_k(\mu^\prime)) \nonumber \\
&\qquad \leq \frac{1}{2Z_k(\mu^\prime)}\left(\int_\mathcal{X} h(y_k,x) \bigg \lvert \frac{d\mu}{d\nu}(x) - \frac{d\mu^\prime}{d\nu}(x) \bigg \rvert \nu(dx) + \bigg \lvert Z_k(\mu) - Z_k(\mu^\prime) \bigg \rvert \right).
\label{ieq:ip:tv:2_new}
\end{align}
Combining Equations~\eqref{ieq:ip:tv:2} and~\eqref{ieq:ip:tv:2_new} gives
\begin{align}
d_{TV}(F_k(\mu),F_k(\mu^\prime)) 
&\leq \frac{1}{2(Z_k(\mu) \vee Z_k(\mu^\prime))}\left({\int_\mathcal{X} h(y_k,x) \bigg \lvert \frac{d\mu}{d\nu}(x) - \frac{d\mu^\prime}{d\nu}(x) \bigg \rvert \nu(dx)}\right. \nonumber \\
&\qquad \left.{ + \bigg \lvert Z_k(\mu) - Z_k(\mu^\prime) \bigg \rvert }\right).
\label{ieq:ip:tv:3_new}
\end{align}
Define a space $\mathcal{X}^*$ as:
\begin{equation*}
\mathcal{X}^* := \begin{cases} \{x \in \X: \frac{d\mu}{d\nu}(x) \geq \frac{d\mu^\prime}{d\nu}(x)\}, \quad \text{if } Z_k(\mu) \geq Z_k(\mu^\prime), \nonumber \\
\{x \in \X: \frac{d\mu}{d\nu}(x) \leq \frac{d\mu^\prime}{d\nu}(x)\}, \quad \text{otherwise}. \end{cases}
\end{equation*}
Suppose we have $Z_k(\mu) \geq Z_k(\mu^\prime)$. Then the space $\X^*$ is given by
\begin{equation*}
\mathcal{X}^* = \{x \in \X: \frac{d\mu}{d\nu}(x)-\frac{d\mu^\prime}{d\nu}(x) \geq 0\}.
\end{equation*} \\
Equation~\eqref{ieq:ip:tv:3_new} becomes
\begin{align}
d_{TV}(F_k(\mu),F_k(\mu^\prime)) \leq& \frac{1}{2(Z_k(\mu)\vee Z_k(\mu^\prime))}\left({\int_\mathcal{X} h(y_k,x) \bigg \lvert \frac{d\mu}{d\nu}(x) - \frac{d\mu^\prime}{d\nu}(x) \bigg \rvert \nu(dx)}\right. \nonumber \\
&\qquad \left.{+ Z_k(\mu) - Z_k(\mu^\prime) }\right).
\label{ieq:ip:tv:5}
\end{align}
According to the Radon-Nikodym theorem, $\frac{d\mu}{d\nu}$ and $\frac{d\mu^\prime}{d\nu}$, as Radon-Nikodym derivatives, are $(\mathcal{B}(\X),\mathcal{B}(\reals))$-measurable. Therefore, the function $\frac{d\mu}{d\nu}-\frac{d\mu^\prime}{d\nu}$ is also $(\mathcal{B}(\X),\mathcal{B}(\reals))$-measurable. 
Since $[0,+\infty)$ belongs to $\mathcal{B}(\reals)$, the set $\X^*$ belongs to $\mathcal{B}(\X)$. 
Then the term $Z_k(\mu) - Z_k(\mu^\prime)$ can be written as
\begin{align}
Z_k(\mu) - Z_k(\mu^\prime) 
=& \int_{\X^*}h(y_k,x)\left(\frac{d\mu}{d\nu}(x) - \frac{d\mu^\prime}{d\nu}(x)\right)\nu(dx) \nonumber \\
&\qquad +\int_{\mathcal{X} \setminus \X^*} h(y_k,x)\left(\frac{d\mu}{d\nu}(x) - \frac{d\mu^\prime}{d\nu}(x)\right)\nu(dx).
\label{eq:ip:tv:1}
\end{align} \\
The term $\int_\mathcal{X} h(y_k,x) \bigg \lvert \frac{d\mu}{d\nu}(x) - \frac{d\mu^\prime}{d\nu}(x) \bigg \rvert \nu(dx)$ in Equation~\eqref{ieq:ip:tv:5} can be expressed as
\begin{align}
\int_\mathcal{X} h(y_k,x) \bigg \lvert \frac{d\mu}{d\nu}(x) - \frac{d\mu^\prime}{d\nu}(x) \bigg \rvert \nu(dx) 
&=\int_{\X^*}h(y_k,x) \left( \frac{d\mu}{d\nu}(x) - \frac{d\mu^\prime}{d\nu}(x) \right) \nu(dx) \nonumber \\
&\quad + \int_{\mathcal{X}\setminus \X^*}h(y_k,x) \left(\frac{d\mu^\prime}{d\nu}(x) - \frac{d\mu}{d\nu}(x)  \right) \nu(dx).
\label{eq:ip:tv:2}
\end{align}
Substitute Equation~\eqref{eq:ip:tv:1} and~\eqref{eq:ip:tv:2} into Equation~\eqref{ieq:ip:tv:5}. We obtain
\begin{align}
d_{TV}(F_k(\mu),F_k(\mu^\prime)) \leq & 
\frac{1}{Z_k(\mu)\vee Z_k(\mu^\prime)}\int_{\X^*} h(y_k,x) \bigg \lvert  \frac{d\mu}{d\nu}(x) - \frac{d\mu^\prime}{d\nu}(x) \bigg \rvert \nu(dx).
\label{ieq:intermediate:tv:ip}
\end{align}
We can write the total variation distance between the two priors $\mu$ and $\mu^\prime$ as
\begin{align*}
d_{TV}(\mu,\mu^\prime)
=& \frac{1}{2}\left(\int_{\X^*} \left(\frac{d\mu}{d\nu}(x) - \frac{d\mu^\prime}{d\nu}(x) \right)\nu(dx) + \int_{\mathcal{X}\setminus \X^*}\left( \frac{d\mu^\prime}{d\nu}(x) -\frac{d\mu}{d\nu}(x) \right)\nu(dx) \right).
\end{align*}
Note that
\begin{equation*}
\int_\mathcal{X}\frac{d\mu}{d\nu}(x)\nu(dx) = 1 = \int_\mathcal{X}\frac{d\mu^\prime}{d\nu}(x)\nu(dx).
\end{equation*}
Therefore, we have
\begin{equation*}
\int_{\X^*}\frac{d\mu}{d\nu}(x)\nu(dx) + \int_{\mathcal{X} \setminus \X^*}\frac{d\mu}{d\nu}(x)\nu(dx) = \int_{\X^*}\frac{d\mu^\prime}{d\nu}(x)\nu(dx) + \int_{\mathcal{X} \setminus \X^*}\frac{d\mu^\prime}{d\nu}(x)\nu(dx).
\end{equation*}
It follows that
\begin{equation*}
\int_{\X^*}\left(\frac{d\mu}{d\nu}(x) - \frac{d\mu^\prime}{d\nu}(x)\right)\nu(dx) = \int_{\mathcal{X} \setminus \X^*}\left(\frac{d\mu^\prime}{d\nu}(x) -\frac{d\mu}{d\nu}(x)\right)\nu(dx),
\end{equation*}
and we can re-write $d_{TV}(\mu,\mu^\prime)$ as
\begin{equation}
d_{TV}(\mu,\mu^\prime) = \int_{\X^*}\bigg \lvert \frac{d\mu}{d\nu}(x) - \frac{d\mu^\prime}{d\nu}(x) \bigg \rvert \nu(dx).
\label{eq:ip:normalized:prior_diff:rewrite}
\end{equation}
Similarly, we can prove that Equations~\eqref{ieq:intermediate:tv:ip} and ~\eqref{eq:ip:normalized:prior_diff:rewrite} hold when $Z_k(\mu)<Z_k(\mu^\prime)$. \\
Given Equations~\eqref{ieq:intermediate:tv:ip} and~\eqref{eq:ip:normalized:prior_diff:rewrite}, under Assumption IP.1, we have
\begin{align*}
d_{TV}(F_k(\mu),F_k(\mu^\prime)) 
\leq& \frac{C_h(y_k)}{Z_k(\mu)\vee Z_k(\mu^\prime)}\int_{\X^*} \left( \frac{d\mu}{d\nu}(x) - \frac{d\mu^\prime}{d\nu}(x) \right) \nu(dx) \\
=& \frac{C_h(y_k)}{Z_k(\mu)\vee Z_k(\mu^\prime)}d_{TV}(\mu,\mu^\prime) \\
\leq & \frac{C_h(y_k)}{Z_k(\mu)}d_{TV}(\mu,\mu^\prime)
\end{align*}
\end{proof}

\begin{prop}
In state estimation problems, given the data $y_k$, under Assumption SE.1, $\forall \mu, \mu^\prime \in \bar{\mathcal{P}}_k(\X )$, the posteriors $F_k(\mu)$ and $F_k(\mu^\prime)$ satisfy
\begin{equation*}
d_{TV}(F_k(\mu),F_k(\mu^\prime)) \leq \frac{C_{Th}(y_k;k)}{Z_k(\mu)}d_{TV}(\mu,\mu^\prime).
\end{equation*}
\label{prop:se:normalized_bound_tv}
\end{prop}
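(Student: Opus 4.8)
The plan is to mirror the structure of the proof of Proposition~\ref{prop:ip:normalized_bound_tv} for inverse problems, adding one manipulation to absorb the state-transition kernel. The key reduction is to write the evidence purely in $x_{k-1}$-space: setting $g(y_k,x_{k-1}) := \int_{\X}h_k(y_k,x)T_k(x,x_{k-1})dx$ (the state-estimation entry of Table~\ref{table:x_bar_g_M}), Tonelli's theorem gives $Z_k(\mu) = \int_{\X}g(y_k,x_{k-1})\mu(dx_{k-1})$, and Assumption~\ref{assump:se_single_step} reads exactly $\sup_{x_{k-1}}g(y_k,x_{k-1}) = C_{Th}(y_k;k) < \infty$. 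Thus $g$ should play the role that the likelihood $h$ played in the inverse-problem proof, so that the whole argument collapses to the $x_{k-1}$-space structure used there.

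Concretely, I would fix a $\sigma$-finite $\nu$ on $(\X,\mathcal{B}(\X))$ dominating both priors (e.g. $\nu=\tfrac12(\mu+\mu')$) and write $\phi := \frac{d\mu}{d\nu}-\frac{d\mu'}{d\nu}$. Both posteriors are absolutely continuous w.r.t. Lebesgue measure on $x_k$-space, with unnormalized densities $\rho_\mu(x_k) = h_k(y_k,x_k)\int_{\X}T_k(x_k,x_{k-1})\frac{d\mu}{d\nu}(x_{k-1})\nu(dx_{k-1})$. Repeating the add-and-subtract step of Proposition~\ref{prop:ip:normalized_bound_tv}, which splits off the normalizing-constant mismatch twice (once with denominator $Z_k(\mu)$, once with $Z_k(\mu')$), should yield
\begin{equation*}
d_{TV}(F_k\mu,F_k\mu') \leq \frac{1}{2(Z_k(\mu)\vee Z_k(\mu'))}\left(\int_{\X}|\rho_\mu-\rho_{\mu'}|\,dx_k + |Z_k(\mu)-Z_k(\mu')|\right).
\end{equation*}

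The one genuinely new step, and the main obstacle, is controlling $\int_{\X}|\rho_\mu-\rho_{\mu'}|\,dx_k$: the absolute value now sits outside the kernel integral $\int T_k\phi$ over $x_{k-1}$, so it lives in $x_k$-space and cannot be matched directly to the sign set that drove the inverse-problem argument (which lives in $x_{k-1}$-space). I would resolve this by pushing the absolute value through the kernel, $\big|\int T_k(x_k,\cdot)\phi\,d\nu\big| \leq \int T_k(x_k,\cdot)|\phi|\,d\nu$, and then invoking Tonelli to get $\int_{\X}|\rho_\mu-\rho_{\mu'}|\,dx_k \leq \int_{\X}g(y_k,x_{k-1})|\phi(x_{k-1})|\nu(dx_{k-1})$. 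At the same time, $Z_k(\mu)-Z_k(\mu') = \int_{\X}g(y_k,x_{k-1})\phi(x_{k-1})\nu(dx_{k-1})$. Both terms now live in $x_{k-1}$-space with the \emph{same} weight $g$, so the factor-of-$\tfrac12$ cancellation of the inverse-problem proof applies: with $\X^* := \{\phi\geq 0\}$ when $Z_k(\mu)\geq Z_k(\mu')$ (and $\{\phi\leq 0\}$ otherwise), the two terms sum to $2\int_{\X^*}g|\phi|\,\nu(dx_{k-1})$, the off-$\X^*$ contributions canceling exactly.

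Finally I would bound $g\leq C_{Th}(y_k;k)$ on $\X^*$ and use $\int_{\X^*}|\phi|\,\nu(dx_{k-1}) = d_{TV}(\mu,\mu')$, which holds because $\int_{\X}\phi\,\nu = 0$ exactly as in Equation~\eqref{eq:ip:normalized:prior_diff:rewrite}, to conclude $d_{TV}(F_k\mu,F_k\mu') \leq \frac{C_{Th}(y_k;k)}{Z_k(\mu)\vee Z_k(\mu')}d_{TV}(\mu,\mu') \leq \frac{C_{Th}(y_k;k)}{Z_k(\mu)}d_{TV}(\mu,\mu')$; measurability of $\X^*$ follows from measurability of the Radon–Nikodym derivatives, as before. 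Two points are worth flagging: the push-through-the-kernel step is a genuine inequality here (in the inverse-problem case $|\rho_\mu-\rho_{\mu'}|$ factored exactly), but the slack is absorbed without spoiling the constant; and it is precisely this step that lets the result run under the weaker Assumption~\ref{assump:se_single_step} ($C_{Th}<\infty$) instead of the stronger $\sup_x h_k(y_k,x)<\infty$ that a naïve propagate-then-reweight reduction would demand.
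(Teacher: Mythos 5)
Your proposal is correct and follows essentially the same route as the paper's own proof: the same add-and-subtract decomposition with both normalizations to get the $Z_k(\mu)\vee Z_k(\mu^\prime)$ denominator, the same push of the absolute value through the transition kernel followed by Tonelli to reduce everything to $x_{k-1}$-space with the effective weight $g(y_k,x_{k-1})=\int_{\X}h_k(y_k,x)T_k(x,x_{k-1})dx$, and the same sign-set cancellation yielding the factor-of-two improvement before bounding $g$ by $C_{Th}(y_k;k)$. Your closing remarks about the kernel inequality being strict slack (unlike the exact factorization in the inverse-problem case) and about why this enables the weaker Assumption SE.1 are accurate observations consistent with Remark~\ref{remark:se_single_step}.
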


\begin{proof}
Let $\nu$ be a $\sigma$-finite measure defined on the measurable space $(\X,\mathcal{B}(\X))$ such that $\mu \ll \nu$ and $\mu^\prime \ll \nu$. Let $\pi$ and $\pi^\prime$ denote the Radon-Nikodym derivatives $\frac{d\mu}{d\nu}$ and $\frac{d\mu^\prime}{d\nu}$, respectively. By the definition of $F_k$ for state estimation problems, $d_{TV}(F_k(\mu),F_k(\mu^\prime))$ can be written as 
\begin{align}
&d_{TV}(F_k(\mu),F_k(\mu^\prime)) = \frac{1}{2}\int_\mathcal{X}\bigg \lvert \frac{h_k(y_k,x)}{Z_k(\mu)}\int_{\X}T_k(x,x_{k-1})\mu(dx_{k-1}) \nonumber \\
&\quad \quad + \frac{h_k(y_k,x)}{Z_k(\mu)}\int_{\X}T_k(x,x_{k-1})\mu^\prime(dx_{k-1}) - \frac{h_k(y_k,x)}{Z_k(\mu^\prime)}\int_{\X}T_k(x,x_{k-1})\mu^\prime(dx_{k-1}) \bigg \rvert dx \nonumber \\
& \quad \leq \frac{1}{2}\int_\mathcal{X}\bigg \lvert \frac{h_k(y_k,x)}{Z_k(\mu)}\int_{\X}T_k(x,x_{k-1})\mu(dx_{k-1}) - \frac{h_k(y_k,x)}{Z_k(\mu)}\int_{\X}T_k(x,x_{k-1})\mu^\prime(dx_{k-1}) \bigg \rvert dx \nonumber \\
&\qquad \qquad + \frac{1}{2}\int_\mathcal{X}\bigg \lvert \frac{h_k(y_k,x)}{Z_k(\mu)}\int_{\X}T_k(x,x_{k-1})\mu^\prime(dx_{k-1}) \nonumber \\
&\qquad \qquad \qquad \qquad - \frac{h_k(y_k,x)}{Z_k(\mu^\prime)}\int_{\X}T_k(x,x_{k-1})\mu^\prime(dx_{k-1}) \bigg \rvert dx \nonumber \\
& \quad = \frac{1}{2Z_k(\mu)}\int_\mathcal{X}\bigg \lvert h_k(y_k,x)\int_{\X}T_k(x,x_{k-1})\left(\mu(dx_{k-1}) - \mu^\prime(dx_{k-1})\right) \bigg \rvert dx\nonumber \\
&\qquad \qquad + \frac{1}{2}\bigg \lvert \frac{1}{Z_k(\mu)} - \frac{1}{Z_k(\mu^\prime)}\bigg \rvert \int_\mathcal{X}h_k(y_k,x)\left(\int_{\X}T_k(x,x_{k-1})\mu^\prime(dx_{k-1})\right) dx.
\label{ieq:normalized:TV:se:1}
\end{align}
The first term on the right-hand side of Equation~\eqref{ieq:normalized:TV:se:1} satisfies
\begin{align}
&\frac{1}{2Z_k(\mu)}\int_\mathcal{X}\bigg \lvert h_k(y_k,x)\int_{\X}T_k(x,x_{k-1})\left(\mu(dx_{k-1}) - \mu^\prime(dx_{k-1})\right) \bigg \rvert dx \nonumber \\
& \qquad \leq \frac{1}{2Z_k(\mu)}\int_\mathcal{X} h_k(y_k,x)\left(\int_{\X}T_k(x,x_{k-1})\bigg \lvert \pi(x_{k-1}) - \pi^\prime(x_{k-1})\bigg \rvert \nu(dx_{k-1}) \right) dx \nonumber \\
& \qquad = \frac{1}{2Z_k(\mu)}\int_\mathcal{X} \left(\int_{\X}h_k(y_k,x)T_k(x,x_{k-1})dx\right) \bigg \lvert \pi(x_{k-1}) - \pi^\prime(x_{k-1})\bigg \rvert \nu(dx_{k-1}),
\label{ieq:normalized:TV:se:2}
\end{align}
where first equality is obtained using Tonelli's theorem. \\
Note that we have
\begin{equation*}
\int_\mathcal{X}h_k(y_k,x)\left(\int_{\X}T_k(x,x_{k-1})\mu^\prime(dx_{k-1})\right) dx = Z_k(\mu^\prime).
\end{equation*}
Hence the second term on the right-hand side of Equation~\eqref{ieq:normalized:TV:se:1} can be written as
\begin{align}
&\frac{1}{2}\bigg \lvert \frac{1}{Z_k(\mu)} - \frac{1}{Z_k(\mu^\prime)}\bigg \rvert \int_\mathcal{X}h_k(y_k,x)\left(\int_{\X}T_k(x,x_{k-1})\mu^\prime(dx_{k-1})\right) dx \nonumber \\
&\qquad = \frac{\lvert Z_k(\mu) - Z_k(\mu^\prime) \rvert}{2Z_k(\mu)Z_k(\mu^\prime)}Z_k(\mu^\prime) =\frac{\lvert Z_k(\mu) - Z_k(\mu^\prime) \rvert}{2Z_k(\mu)}.
\label{eq:normalized:TV:se:1}
\end{align}
Plug Equations~\eqref{ieq:normalized:TV:se:2} and ~\eqref{eq:normalized:TV:se:1} into ~\eqref{ieq:normalized:TV:se:1}. We obtain
\begin{align}
&d_{TV}(F_k(\mu),F_k(\mu^\prime)) \nonumber \\
&\qquad \leq \frac{1}{2Z_k(\mu)}\left({\int_\mathcal{X} \left(\int_{\X}h_k(y_k,x)T_k(x,x_{k-1})dx\right)\bigg \lvert \pi(x_{k-1}) - \pi^\prime(x_{k-1})\bigg \rvert \nu(dx_{k-1}) }\right. \nonumber \\
&\qquad \qquad \left.{+\lvert Z_k(\mu) - Z_k(\mu^\prime) \rvert}\right) .
\label{ieq:se:tv:total}
\end{align}
Similarly, we can show that $d_{TV}(F_k(\mu),F_k(\mu^\prime))$ also satisfies
\begin{align}
&d_{TV}(F_k(\mu),F_k(\mu^\prime)) \nonumber \\
&\qquad \leq \frac{1}{2Z_k(\mu^\prime)}\left({\int_\mathcal{X} \left(\int_{\X}h_k(y_k,x)T_k(x,x_{k-1})dx\right)\bigg \lvert \pi(x_{k-1}) - \pi^\prime(x_{k-1})\bigg \rvert \nu(dx_{k-1}) }\right. \nonumber \\
&\qquad \qquad \left.{+\lvert Z_k(\mu) - Z_k(\mu^\prime) \rvert}\right) .
\label{ieq:se:tv:total_new}
\end{align}
Combining Equation~\eqref{ieq:se:tv:total} and~\eqref{ieq:se:tv:total_new} gives
\begin{align}
d_{TV}(F_k(\mu),F_k(\mu^\prime)) &\leq \frac{1}{2(Z_k(\mu)\vee Z_k(\mu^\prime))}\left({\int_\mathcal{X} \left(\int_{\X}h_k(y_k,x)T_k(x,x_{k-1})dx\right)}\right. \nonumber \\
&\qquad \left.{\cdot \bigg \lvert \pi(x_{k-1}) - \pi^\prime(x_{k-1})\bigg \rvert \nu(dx_{k-1}) +\lvert Z_k(\mu) - Z_k(\mu^\prime) \rvert}\right) .
\label{ieq:se:tv:total_new_2}
\end{align}
By Tonelli's theorem, we can write the evidence terms $Z_k(\mu)$ and $Z_k(\mu^\prime)$ as:
\begin{align}
Z_k(\mu) 
&= \int_{\X}h_k(y_k,x)\left(\int_{\X}T_k(x,x_{k-1})\pi(x_{k-1})\nu(dx_{k-1})\right)dx_k \nonumber \\
&= \int_{\X}\left(\int_{\X}h_k(y_k,x)T_k(x,x_{k-1})dx \right)\pi(x_{k-1})\nu(dx_{k-1}),
\label{eq:se:tv:def:evidence_mu}
\end{align}
and
\begin{align}
Z_k(\mu^\prime) 
&= \int_{\X}h_k(y_k,x)\left(\int_{\X}T_k(x,x_{k-1})\pi^\prime(x_{k-1})\nu(dx_{k-1})\right)dx \nonumber \\
&= \int_{\X}\left(\int_{\X}h_k(y_k,x)T_k(x,x_{k-1})dx \right)\pi^\prime(x_{k-1})\nu(dx_{k-1}). 
\label{eq:se:tv:def:evidence_mu_prime}
\end{align}
Define a space $\X^*$ as:
\begin{equation*}
\mathcal{X}^* := \begin{cases} \{x \in \X: \pi(x) \geq \pi^\prime(x)\}, \quad \text{if } Z_k(\mu) \geq Z_k(\mu^\prime), \nonumber \\
\{x \in \X: \pi(x) \leq \pi^\prime(x)\}, \quad \text{otherwise}. \end{cases}
\end{equation*}
Following an argument analogous to that in the proof of Proposition~\ref{prop:ip:normalized_bound_tv}, we obtain
\begin{align}
d_{TV}(F_k(\mu),F_k(\mu^\prime)) 
&\leq \frac{1}{Z_k(\mu)\vee Z_k(\mu^\prime)}\int_{\X^*}\left(\int_{\X}h_k(y_k,x)T_k(x,x_{k-1})dx \right)\nonumber \\
&\qquad \cdot \lvert \pi(x_{k-1}) - \pi^\prime(x_{k-1}) \rvert \nu(dx_{k-1}),
\label{ieq:intermediate:tv:se}
\end{align}
and 
\begin{equation}
d_{TV}(\mu,\mu^\prime) = \int_{\X^*}\lvert \pi(x_{k-1}) - \pi^\prime(x_{k-1}) \rvert \nu(dx_{k-1}).
\label{ieq:intermediate:tv:se_prior}
\end{equation}
Combining Equations~\eqref{ieq:intermediate:tv:se} and~\eqref{ieq:intermediate:tv:se_prior} with Assumption~SE.1 gives
\begin{align*}
d_{TV}(F_k(\mu),F_k(\mu^\prime)) \leq& \frac{C_{Th}(y_k;k)}{Z_k(\mu)\vee Z_k(\mu^\prime)}d_{TV}(\mu,\mu^\prime) \\
\leq& \frac{C_{Th}(y_k;k)}{Z_k(\mu)}d_{TV}(\mu,\mu^\prime).
\end{align*}
\end{proof}

\begin{prop}
In parameter-state estimation problems, given the data $y_k$, under Assumption PS.1, $\forall \mu, \mu^\prime \in \bar{\mathcal{P}}_k(\X \times \W)$, the posteriors $F_k(\mu)$ and $F_k(\mu^\prime)$ satisfy
\begin{equation*}
d_{TV}(F_k(\mu),F_k(\mu^\prime)) \leq \frac{\tilde{C}_{Th}(y_k;k)}{Z_k(\mu)}d_{TV}(\mu,\mu^\prime).
\end{equation*}
\label{prop:ps:normalized_bound_tv}
\end{prop}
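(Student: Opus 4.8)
The plan is to follow the same three-move template used in the proofs of Propositions~\ref{prop:ip:normalized_bound_tv} and~\ref{prop:se:normalized_bound_tv}, adapting it to the joint variable $(x_{k-1},w)$ and exploiting the fact that, by the definition of $\bar{\mathcal{P}}_k(\X\times\W)$, both admissible priors $\mu,\mu'$ are absolutely continuous with respect to the Lebesgue measure $\lambda$. Thus I take $\lambda$ as the reference measure directly, writing $\pi=\frac{d\mu}{d\lambda}$ and $\pi'=\frac{d\mu'}{d\lambda}$, and denote the propagated (predicted) densities by $p_k^-(x_k,w):=\int_{\X}T_k(x_k,x_{k-1},w)\pi(x_{k-1},w)\,dx_{k-1}$ and analogously $p_k'^-$. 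The posterior density with respect to $\lambda$ is then $h_k(y_k,x_k,w)p_k^-(x_k,w)/Z_k(\mu)$, and the evidence is $Z_k(\mu)=\int_{\X\times\W}h_k(y_k,x_k,w)p_k^-(x_k,w)\,dx_k\,dw$.

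First I would expand $d_{TV}(F_k(\mu),F_k(\mu'))$ using its density form and insert the intermediate term $h_k(y_k,x_k,w)p_k'^-(x_k,w)/Z_k(\mu)$, splitting by the triangle inequality into a \emph{numerator-difference} term (with the common factor $1/Z_k(\mu)$) and a \emph{normalization} term. The normalization term reduces exactly as before to $\lvert Z_k(\mu)-Z_k(\mu')\rvert/(2Z_k(\mu))$, since $\int_{\X\times\W}h_k(y_k,x_k,w)p_k'^-(x_k,w)\,dx_k\,dw=Z_k(\mu')$. For the numerator-difference term, the key step is to bound $\lvert p_k^-(x_k,w)-p_k'^-(x_k,w)\rvert$ by $\int_{\X}T_k(x_k,x_{k-1},w)\lvert\pi(x_{k-1},w)-\pi'(x_{k-1},w)\rvert\,dx_{k-1}$ and then apply Tonelli's theorem to swap the $x_k$ and $x_{k-1}$ integrations, producing the inner integral $\int_{\X}h_k(y_k,x_k,w)T_k(x_k,x_{k-1},w)\,dx_k$. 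This is precisely the quantity whose supremum over $(x_{k-1},w)$ defines $\tilde{C}_{Th}(y_k;k)$ in Assumption~\ref{assump:ps_single_step}, so it is bounded by $\tilde{C}_{Th}(y_k;k)$.

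Next I would symmetrize: repeating the argument with the roles of $\mu$ and $\mu'$ interchanged yields the same bound with $Z_k(\mu')$ in place of $Z_k(\mu)$, and taking the smaller of the two upper bounds replaces the denominator by $Z_k(\mu)\vee Z_k(\mu')$. Then, exactly as in Proposition~\ref{prop:se:normalized_bound_tv}, I define $(\X\times\W)^*$ to be $\{(x,w):\pi(x,w)\geq\pi'(x,w)\}$ when $Z_k(\mu)\geq Z_k(\mu')$ and the reverse set otherwise; this set is measurable because $\pi-\pi'$ is Borel measurable. Writing $Z_k(\mu)-Z_k(\mu')$ as a signed integral over $(\X\times\W)^*$ and its complement via the representations analogous to Equations~\eqref{eq:se:tv:def:evidence_mu}--\eqref{eq:se:tv:def:evidence_mu_prime}, the normalization contribution combines with the absolute-value integral so that the two integrals over the complement cancel, collapsing everything to an integral over $(\X\times\W)^*$ alone. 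Finally, using the mass-conservation identity $\int\pi=\int\pi'=1$ to rewrite $d_{TV}(\mu,\mu')=\int_{(\X\times\W)^*}\lvert\pi-\pi'\rvert\,d\lambda$, I bound the inner integral by $\tilde{C}_{Th}(y_k;k)$ and conclude $d_{TV}(F_k(\mu),F_k(\mu'))\leq\frac{\tilde{C}_{Th}(y_k;k)}{Z_k(\mu)\vee Z_k(\mu')}d_{TV}(\mu,\mu')\leq\frac{\tilde{C}_{Th}(y_k;k)}{Z_k(\mu)}d_{TV}(\mu,\mu')$.

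I expect the main obstacle to be purely bookkeeping rather than conceptual: the parameter $w$ is carried unchanged through the dynamics (it is never integrated out in the transition), so I must verify throughout that $T_k$ and $h_k$ share the same $w$, that the supremum defining $\tilde{C}_{Th}$ is taken jointly over $(x_{k-1},w)$, and that the Tonelli swap is justified by nonnegativity and measurability of the integrand. The cancellation argument over the complement of $(\X\times\W)^*$, which is where the factor-of-two improvement originates, is the one step requiring care, but it is structurally identical to the inverse-problem and state-estimation cases already established.
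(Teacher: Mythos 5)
Your proposal is correct and follows essentially the same route as the paper's own proof: the triangle-inequality split into a numerator-difference term and a normalization term, the Tonelli swap producing the inner integral $\int_{\X}h_k(y_k,x_k,w)T_k(x_k,x_{k-1},w)\,dx_k$, the cancellation over the complement of the set where $\pi\geq\pi'$ (or $\pi\leq\pi'$, depending on the sign of $Z_k(\mu)-Z_k(\mu')$), and only then the application of Assumption PS.1 together with $d_{TV}(\mu,\mu')=\int_{\mathcal{S}^*}\lvert\pi-\pi'\rvert\,d\lambda$. The bookkeeping points you flag (shared $w$ in $T_k$ and $h_k$, joint supremum over $(x_{k-1},w)$, nonnegativity for Tonelli) are exactly the adaptations the paper makes.
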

\begin{proof}
Let $\pi$ and $\pi^\prime$ denote the densities of $\mu$ and $\mu^\prime$ with respect to the Lebesgue measure, respectively. By the definition of $F_k$ for parameter-state estimation problems, $d_{TV}(F_k(\mu),F_k(\mu^\prime))$ can be written as
\begin{align}
&d_{TV}(F_k(\mu),F_k(\mu^\prime)) \nonumber \\ 
&\qquad = \frac{1}{2}\int_{\mathcal{X}\times \W} \bigg \lvert \frac{h_k(y_k,x,w)}{Z_k(\mu)}\int_{\X}T_k(x,x_{k-1},w)\pi(x_{k-1},w)dx_{k-1} \nonumber \\
&\qquad \qquad - \frac{h_k(y_k,x,w)}{Z_k(\mu^\prime)}\int_{\X}T_k(x,x_{k-1},w)\pi^\prime(x_{k-1},w)dx_{k-1} \bigg \rvert dxdw \nonumber \\
& \qquad \leq \frac{1}{2}\int_{\mathcal{X}\times \W}\bigg \lvert \frac{h_k(y_k,x,w)}{Z_k(\mu)}\int_{\X}T_k(x,x_{k-1},w)\pi(x_{k-1},w)dx_{k-1} \nonumber \\
& \qquad \qquad - \frac{h_k(y_k,x,w)}{Z_k(\mu)}\int_{\X}T_k(x,x_{k-1},w)\pi^\prime(x_{k-1},w)dx_{k-1} \bigg \rvert dxdw \nonumber \\
&\qquad \qquad + \frac{1}{2}\int_{\mathcal{X}\times \W}\bigg \lvert \frac{h_k(y_k,x,w)}{Z_k(\mu)}\int_{\X}T_k(x,x_{k-1},w)\pi^\prime(x_{k-1},w)dx_{k-1} \nonumber \\
& \qquad \qquad - \frac{h_k(y_k,x,w)}{Z_k(\mu^\prime)}\int_{\X}T_k(x,x_{k-1},w)\pi^\prime(x_{k-1},w)dx_{k-1} \bigg \rvert dxdw 
\label{ieq:normalized:tv:ps:1}
\end{align}
The first integral on the right-hand side of Equation~\eqref{ieq:normalized:tv:ps:1} satisfies
\begin{align*}
&\frac{1}{2}\int_{\mathcal{X}\times \W}\bigg \lvert \frac{h_k(y_k,x,w)}{Z_k(\mu)}\int_{\X}T_k(x,x_{k-1},w)\pi(x_{k-1},w)dx_{k-1} \nonumber \\
& \qquad \qquad - \frac{h_k(y_k,x,w)}{Z_k(\mu)}\int_{\X}T_k(x,x_{k-1},w)\pi^\prime(x_{k-1},w)dx_{k-1} \bigg \rvert dxdw \nonumber \\
&\quad \leq \frac{1}{2Z_k(\mu)}\int_{\mathcal{X}\times \W}h_k(y_k,x,w) \nonumber \\
&\qquad \qquad \cdot \left( \int_{\X}T_k(x,x_{k-1},w)\bigg \lvert \pi(x_{k-1},w) - \pi^\prime(x_{k-1},w)\bigg \rvert dx_{k-1} \right) dxdw.
\end{align*}
According Tonelli's theorem, we have
\begin{align*}
&\int_{\mathcal{X}\times \W}h_k(y_k,x,w)\left( \int_{\X}T_k(x,x_{k-1},w)\bigg \lvert \pi(x_{k-1},w) - \pi^\prime(x_{k-1},w)\bigg \rvert dx_{k-1} \right)dxdw \\
& \quad = \int_{\X\times \W}\left( \int_{\X}h_k(y_k,x,w)T_k(x,x_{k-1},w)dx \right) \bigg \lvert \pi(x_{k-1},w) - \pi^\prime(x_{k-1},w)\bigg \rvert dx_{k-1}dw.
\end{align*} \\
The second integral on the right-hand side of Equation~\eqref{ieq:normalized:tv:ps:1} can be written as
\begin{align*}
&\frac{1}{2}\int_{\mathcal{X}\times \W}\bigg \lvert \frac{h_k(y_k,x,w)}{Z_k(\mu)}\int_{\X}T_k(x,x_{k-1},w)\pi^\prime(x_{k-1},w)dx_{k-1} \nonumber \\
& \qquad \qquad - \frac{h_k(y_k,x,w)}{Z_k(\mu^\prime)}\int_{\X}T_k(x,x_{k-1},w)\pi^\prime(x_{k-1},w)dx_{k-1} \bigg \rvert dxdw \nonumber \\
&\quad = \frac{1}{2}\bigg \lvert \frac{1}{Z_k(\mu)} - \frac{1}{Z_k(\mu^\prime)}\bigg \rvert \nonumber \\
&\qquad \qquad \cdot \int_{\mathcal{X}\times \W}h_k(y_k,x,w)\left(\int_{\X}T_k(x,x_{k-1},w)\pi^\prime(x_{k-1},w)dx_{k-1}\right)  dxdw \nonumber \\
&\quad = \frac{\lvert Z_k(\mu) - Z_k(\mu^\prime) \rvert}{2Z_k(\mu)Z_k(\mu^\prime)}Z_k(\mu^\prime) =\frac{\lvert Z_k(\mu) - Z_k(\mu^\prime) \rvert}{2Z_k(\mu)}.
\end{align*}
By Tonelli's theorem, we can write the evidence term $Z_k(\mu)$ as:
\begin{align}
Z_k(\mu) 
&= \int_{\mathcal{X}\times \W}h_k(y_k,x,w)\left( \int_{\X}T_k(x,x_{k-1},w)  \pi(x_{k-1},w) dx_{k-1} \right) dxdw \nonumber \\
& = \int_{\X\times \W}\left( \int_{\X}h_k(y_k,x,w)T_k(x,x_{k-1},w)dx \right) \pi(x_{k-1},w) dx_{k-1}dw.
\label{eq:ps:tv:def:evidence_mu}
\end{align}
Similarly, the evidence term $Z_k(\mu^\prime)$ can be expressed as:
\begin{align}
Z_k(\mu^\prime) 
& = \int_{\X\times \W}\left( \int_{\X}h_k(y_k,x,w)T_k(x,x_{k-1},w)dx \right) \pi^\prime(x_{k-1},w) dx_{k-1}dw.
\label{eq:ps:tv:def:evidence_mu_prime}
\end{align}
Define a space $\mathcal{S}^*$ as:
\begin{equation*}
\mathcal{S}^* := \begin{cases} \{(x,w) \in \X\times \W: \pi(x,w) \geq \pi^\prime(x,w)\}, \quad \text{if } Z_k(\mu) \geq Z_k(\mu^\prime), \nonumber \\
\{(x,w) \in \X\times \W: \pi(x,w) \leq \pi^\prime(x,w)\}, \quad \text{otherwise}. \end{cases}
\end{equation*}
Analogous to the proof of Proposition~\ref{prop:ip:normalized_bound_tv}, the following can be proved:
\begin{align}
d_{TV}(F_k(\mu),F_k(\mu^\prime)) 
&\leq \frac{1}{Z_k(\mu)\vee Z_k(\mu^\prime)}\int_{\mathcal{S}^*}\left(\int_{\X}h_k(y_k,x,w)T_k(x,x_{k-1},w)dx\right)\nonumber \\
&\qquad \cdot \lvert \pi(x_{k-1},w) - \pi^\prime(x_{k-1},w) \rvert dx_{k-1}dw,
\label{ieq:intermediate:tv:ps}
\end{align}
and 
\begin{equation}
d_{TV}(\mu,\mu^\prime) = \int_{\mathcal{S}^*}\lvert \pi(x_{k-1},w) - \pi^\prime(x_{k-1},w) \rvert dx_{k-1}dw.
\label{ieq:intermediate:tv:ps_prior}
\end{equation}
Combining Equations~\eqref{ieq:intermediate:tv:ps} and ~\eqref{ieq:intermediate:tv:ps_prior} with Assumption PS.1 yields:
\begin{align*}
d_{TV}(F_k(\mu),F_k(\mu^\prime)) \leq& \frac{\tilde{C}_{Th}(y_k;k)}{Z_k(\mu)\vee Z_k(\mu^\prime)}d_{TV}(\mu,\mu^\prime) \\
\leq& \frac{\tilde{C}_{Th}(y_k;k)}{Z_k(\mu)}d_{TV}(\mu,\mu^\prime).
\end{align*}
\end{proof}

\subsection{Proof of Theorem~\ref{thm:pointwise_lipschitz} under Hellinger distance}
Theorem~\ref{thm:pointwise_lipschitz} under the Hellinger distance is a conclusion of Propositions~\ref{prop:ip:normalized_bound_H}---\ref{prop:ps:normalized_bound_H}.
\begin{prop}
In inverse problems, given the data $y_k$, under Assumption IP.1, $\forall \mu, \mu^\prime \in \bar{\mathcal{P}}_k(\X )$, the posteriors $F_k(\mu)$ and $F_k(\mu^\prime)$ satisfy
\begin{equation*}
d_{H}(F_k(\mu),F_k(\mu^\prime)) \leq 2\sqrt{\frac{C_h(y_k)}{Z_k(\mu) }}d_{H}(\mu,\mu^\prime).
\end{equation*}
\label{prop:ip:normalized_bound_H}
\end{prop}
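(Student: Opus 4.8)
The plan is to reduce the bound to a triangle-inequality argument in $L^2(\nu)$, mirroring the split used in the total-variation proof of Proposition~\ref{prop:ip:normalized_bound_tv} but now exploiting that the Hellinger distance is, up to the factor $\sqrt2$, an $L^2$ distance between square-root densities. Fix a common $\sigma$-finite dominating measure $\nu$ with $\mu \ll \nu$ and $\mu^\prime \ll \nu$, and write $\pi = \frac{d\mu}{d\nu}$, $\pi^\prime = \frac{d\mu^\prime}{d\nu}$, so that the posterior densities are $\frac{dF_k(\mu)}{d\nu} = \frac{h(y_k,\cdot)\pi}{Z_k(\mu)}$ and $\frac{dF_k(\mu^\prime)}{d\nu} = \frac{h(y_k,\cdot)\pi^\prime}{Z_k(\mu^\prime)}$. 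The first observation I would record is that $f,g \mapsto \bigl(\tfrac12\int(\sqrt{f}-\sqrt{g})^2\,d\nu\bigr)^{1/2}$ defines a genuine metric on nonnegative $\nu$-integrable functions (it equals $\tfrac{1}{\sqrt2}\lVert\sqrt{f}-\sqrt{g}\rVert_{L^2(\nu)}$), so the triangle inequality is available even for the \emph{unnormalized} densities that will appear below.

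Next I would insert the intermediate unnormalized density $\frac{h(y_k,\cdot)\pi^\prime}{Z_k(\mu)}$ and apply this triangle inequality, splitting $d_H(F_k(\mu),F_k(\mu^\prime))$ into a ``shape'' term comparing $\frac{h\pi}{Z_k(\mu)}$ with $\frac{h\pi^\prime}{Z_k(\mu)}$ and a ``normalization'' term comparing $\frac{h\pi^\prime}{Z_k(\mu)}$ with $\frac{h\pi^\prime}{Z_k(\mu^\prime)}$. For the shape term, factoring out $\frac{h}{Z_k(\mu)}$ and bounding $h(y_k,\cdot)\le C_h(y_k)$ (Assumption IP.1) inside the integral immediately yields $\sqrt{C_h(y_k)/Z_k(\mu)}\,d_H(\mu,\mu^\prime)$. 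For the normalization term the common factor $h\pi^\prime$ pulls out of the square, and using $\int h(y_k,\cdot)\pi^\prime\,d\nu = Z_k(\mu^\prime)$ I would evaluate it exactly as $\lvert\sqrt{Z_k(\mu)}-\sqrt{Z_k(\mu^\prime)}\rvert\big/\sqrt{2\,Z_k(\mu)}$.

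The one nontrivial estimate---and the step I expect to be the crux---is controlling $\lvert\sqrt{Z_k(\mu)}-\sqrt{Z_k(\mu^\prime)}\rvert$ by $d_H(\mu,\mu^\prime)$. The clean way is to view $\sqrt{Z_k(\mu)} = \lVert\sqrt{h}\,\sqrt{\pi}\rVert_{L^2(\nu)}$ and $\sqrt{Z_k(\mu^\prime)} = \lVert\sqrt{h}\,\sqrt{\pi^\prime}\rVert_{L^2(\nu)}$ and apply the reverse triangle inequality in $L^2$, giving $\lvert\sqrt{Z_k(\mu)}-\sqrt{Z_k(\mu^\prime)}\rvert \le \lVert\sqrt{h}(\sqrt{\pi}-\sqrt{\pi^\prime})\rVert_{L^2(\nu)}$; bounding $h\le C_h(y_k)$ once more turns the right-hand side into $\sqrt{2\,C_h(y_k)}\,d_H(\mu,\mu^\prime)$. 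Substituting back makes the normalization term also equal to $\sqrt{C_h(y_k)/Z_k(\mu)}\,d_H(\mu,\mu^\prime)$, and adding the two identical contributions produces the factor $2$ and the claimed bound $2\sqrt{C_h(y_k)/Z_k(\mu)}\,d_H(\mu,\mu^\prime)$. By symmetry of $d_H$, repeating the argument with the roles of $\mu$ and $\mu^\prime$ exchanged gives the $Z_k(\mu^\prime)$ version as well, hence the sharper $Z_k(\mu)\vee Z_k(\mu^\prime)$ bound, although the stated inequality only requires the one-sided form.
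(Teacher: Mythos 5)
Your proposal is correct and, once the paper's two-lemma factorization is unwound, it is essentially the paper's own argument: the paper proves the bound by combining the Lipschitz estimate $\tilde{d}_H(\tilde{F}_k(\mu),\tilde{F}_k(\mu'))\leq\sqrt{C_h(y_k)}\,d_H(\mu,\mu')$ with Lemma~\ref{lemma:unnormalized2normalized:H}, whose proof inserts precisely your intermediate unnormalized density $h\pi'/Z_k(\mu)$ and controls $\lvert\sqrt{Z_k(\mu)}-\sqrt{Z_k(\mu')}\rvert$ by the same Cauchy--Schwarz step that underlies your reverse triangle inequality in $L^2(\nu)$. The only differences are cosmetic---you combine the two pieces with the triangle inequality rather than the paper's $(a+b)^2\leq 2(a^2+b^2)$ split (both yield the same constant here since the two contributions coincide), and you work directly at the level of the posteriors rather than routing through the space of scaled probability measures.
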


\begin{prop}
In state estimation problems, given the data $y_k$, under Assumption SE.1, $\forall \mu, \mu^\prime \in \bar{\mathcal{P}}_k(\X )$, the posteriors $F_k(\mu)$ and $F_k(\mu^\prime)$ satisfy
\begin{equation*}
d_{H}(F_k(\mu),F_k(\mu^\prime)) \leq 2\sqrt{\frac{C_{Th}(y_k;k)}{Z_k(\mu) }}d_{H}(\mu,\mu^\prime).
\end{equation*}
\label{prop:se:normalized_bound_H}
\end{prop}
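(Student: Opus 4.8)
The plan is to mirror the structure of the inverse-problem Hellinger bound (Proposition~\ref{prop:ip:normalized_bound_H}), reducing the state-estimation case to it by absorbing the prediction step into a \emph{predictive density}. Let $\nu$ be a common dominating $\sigma$-finite measure with $\mu\ll\nu$ and $\mu^\prime\ll\nu$, and write $\pi=\tfrac{d\mu}{d\nu}$, $\pi^\prime=\tfrac{d\mu^\prime}{d\nu}$. I would introduce $\rho_\mu(x):=\int_\X T_k(x,x_{k-1})\pi(x_{k-1})\nu(dx_{k-1})$ and likewise $\rho_{\mu^\prime}$, so the posterior Lebesgue densities are $f_\mu(x)=h_k(y_k,x)\rho_\mu(x)/Z_k(\mu)$ and $f_{\mu^\prime}$. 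First I would split the difference of square roots through the intermediate term $\sqrt{h_k\rho_{\mu^\prime}/Z_k(\mu)}$,
\begin{align*}
\sqrt{f_\mu} - \sqrt{f_{\mu^\prime}} &= \sqrt{\tfrac{h_k}{Z_k(\mu)}}\left(\sqrt{\rho_\mu}-\sqrt{\rho_{\mu^\prime}}\right) \\
&\qquad + \sqrt{h_k\rho_{\mu^\prime}}\left(\tfrac{1}{\sqrt{Z_k(\mu)}} - \tfrac{1}{\sqrt{Z_k(\mu^\prime)}}\right),
\end{align*}
apply $(a+b)^2\le 2a^2+2b^2$, integrate, and use $\int_\X h_k\rho_{\mu^\prime}\,dx = Z_k(\mu^\prime)$ to obtain
\begin{equation*}
d_H(F_k\mu,F_k\mu^\prime)^2 \le \frac{1}{Z_k(\mu)}\int_\X h_k\left(\sqrt{\rho_\mu}-\sqrt{\rho_{\mu^\prime}}\right)^2 dx + \frac{\left(\sqrt{Z_k(\mu)}-\sqrt{Z_k(\mu^\prime)}\right)^2}{Z_k(\mu)}.
\end{equation*}

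Next I would bound the two terms separately, showing both collapse onto the single quantity $\int_\X w(x_{k-1})\left(\sqrt{\pi}-\sqrt{\pi^\prime}\right)^2 \nu(dx_{k-1})$, where $w(x_{k-1}):=\int_\X h_k(y_k,x)T_k(x,x_{k-1})\,dx$. For the first term I expand the square as $Z_k(\mu)+Z_k(\mu^\prime)-2\int_\X h_k\sqrt{\rho_\mu\rho_{\mu^\prime}}\,dx$; for the second as $Z_k(\mu)+Z_k(\mu^\prime)-2\sqrt{Z_k(\mu)Z_k(\mu^\prime)}$. Using the evidence identities $Z_k(\mu)=\int w\pi\,\nu(dx_{k-1})$ and $Z_k(\mu^\prime)=\int w\pi^\prime\,\nu(dx_{k-1})$ already established in Proposition~\ref{prop:se:normalized_bound_tv}, together with lower bounds on the two cross terms (described below), each expansion reduces to $\int w\left(\sqrt{\pi}-\sqrt{\pi^\prime}\right)^2\nu(dx_{k-1})$. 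Assumption~\ref{assump:se_single_step} gives $w(x_{k-1})\le C_{Th}(y_k;k)$, so each term is at most $\tfrac{2C_{Th}(y_k;k)}{Z_k(\mu)}d_H(\mu,\mu^\prime)^2$. Summing yields $d_H(F_k\mu,F_k\mu^\prime)^2\le \tfrac{4C_{Th}(y_k;k)}{Z_k(\mu)}d_H(\mu,\mu^\prime)^2$, and taking the square root gives the claim (the constant $2$ being $\sqrt{4}$).

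I expect the main obstacle to be the lower bound on the predictive cross term $\int_\X h_k\sqrt{\rho_\mu\rho_{\mu^\prime}}\,dx$, which is precisely where one must verify that the prediction step does not inflate the Hellinger affinity. The key estimate is the pointwise Cauchy--Schwarz inequality
\begin{equation*}
\sqrt{\rho_\mu(x)\rho_{\mu^\prime}(x)} \;\ge\; \int_\X T_k(x,x_{k-1})\sqrt{\pi(x_{k-1})\pi^\prime(x_{k-1})}\,\nu(dx_{k-1}),
\end{equation*}
which holds because $T_k(\cdot,x_{k-1})$ is a transition density, and is essentially a data-processing/contraction property of the Markov kernel. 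Combined with Tonelli's theorem to introduce $w$, this passes from the predictive affinity to the prior affinity. The second term is handled by the analogous inequality $\sqrt{Z_k(\mu)Z_k(\mu^\prime)}\ge \int w\sqrt{\pi\pi^\prime}\,\nu(dx_{k-1})$, again by Cauchy--Schwarz. Once these two affinity bounds are in place, the remainder is routine bookkeeping identical to the inverse-problem argument, and the measurability/applicability of Tonelli follows from the measurability assumptions on $h_k$ and $T_k$.
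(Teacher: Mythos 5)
Your proposal is correct and is essentially the paper's own argument with the intermediate machinery inlined: the split through $\sqrt{h_k\rho_{\mu^\prime}/Z_k(\mu)}$ is exactly the decomposition the paper performs inside Lemma~\ref{lemma:unnormalized2normalized:H}, and your two affinity lower bounds (the pointwise Cauchy--Schwarz on $\sqrt{\rho_\mu\rho_{\mu^\prime}}$ and the bound $\sqrt{Z_k(\mu)Z_k(\mu^\prime)}\ge\int w\sqrt{\pi\pi^\prime}\,\nu(dx_{k-1})$) are precisely the H\"older steps in Lemma~\ref{prop:se:unnormalized_bound_H} and Lemma~\ref{lemma:unnormalized2normalized:H}, respectively, followed by the same Tonelli swap and the bound $w\le C_{Th}(y_k;k)$. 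The only difference is organizational: the paper factors the computation through the scaled-measure distance $\tilde{d}_H$ and two reusable lemmas, while you carry it out directly, arriving at the same constant $2\sqrt{C_{Th}(y_k;k)/Z_k(\mu)}$.
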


\begin{prop}
In parameter-state estimation problems, given the data $y_k$, under Assumption PS.1, $\forall \mu, \mu^\prime \in \bar{\mathcal{P}}_k(\X \times \W)$, the posteriors $F_k(\mu)$ and $F_k(\mu^\prime)$ satisfy
\begin{equation*}
d_{H}(F_k(\mu),F_k(\mu^\prime)) \leq 2\sqrt{\frac{\tilde{C}_{Th}(y_k;k)}{Z_k(\mu)}}d_{H}(\mu,\mu^\prime).
\end{equation*}
\label{prop:ps:normalized_bound_H}
\end{prop}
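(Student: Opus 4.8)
The plan is to follow the same two-step template used for the inverse-problem Hellinger bound in Proposition~\ref{prop:ip:normalized_bound_H} and for the total-variation bound in Proposition~\ref{prop:ps:normalized_bound_tv}, now adapted to the Hellinger geometry. Let $\pi$ and $\pi^\prime$ be the Lebesgue densities of $\mu$ and $\mu^\prime$, and abbreviate the one-step prediction densities $\phi(x,w) := \int_{\X}T_k(x,x_{k-1},w)\pi(x_{k-1},w)\,dx_{k-1}$ and $\phi^\prime$ defined analogously with $\pi^\prime$, so that the posterior densities are $h_k(y_k,x,w)\phi(x,w)/Z_k(\mu)$ and $h_k(y_k,x,w)\phi^\prime(x,w)/Z_k(\mu^\prime)$. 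Viewing $\sqrt{h_k\phi}/\sqrt{Z_k(\mu)}$ as an element of $L^2(\X\times\W)$, I would first apply the triangle inequality in $L^2$ to the definition of $d_H(F_k(\mu),F_k(\mu^\prime))$, inserting the intermediate function $\sqrt{h_k\phi^\prime}/\sqrt{Z_k(\mu)}$. This splits $\sqrt{2}\,d_H(F_k(\mu),F_k(\mu^\prime))$ into a \emph{shape} term $\frac{1}{\sqrt{Z_k(\mu)}}\lVert \sqrt{h_k\phi}-\sqrt{h_k\phi^\prime}\rVert_{L^2}$ and a \emph{normalization} term $\bigl\lvert Z_k(\mu)^{-1/2}-Z_k(\mu^\prime)^{-1/2}\bigr\rvert\,\lVert \sqrt{h_k\phi^\prime}\rVert_{L^2}$, where the latter norm equals $\sqrt{Z_k(\mu^\prime)}$.

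For the shape term, I would factor $\sqrt{h_k}$ out of $\sqrt{h_k\phi}-\sqrt{h_k\phi^\prime}=\sqrt{h_k}\,(\sqrt{\phi}-\sqrt{\phi^\prime})$, reducing the task to controlling $\int_{\X\times\W}h_k(y_k,x,w)\bigl(\sqrt{\phi}-\sqrt{\phi^\prime}\bigr)^2\,dx\,dw$. The crucial step is a pointwise contraction of the prediction step: for each fixed $(x,w)$,
\begin{equation*}
\Bigl(\sqrt{\phi(x,w)}-\sqrt{\phi^\prime(x,w)}\Bigr)^2 \le \int_{\X}T_k(x,x_{k-1},w)\Bigl(\sqrt{\pi(x_{k-1},w)}-\sqrt{\pi^\prime(x_{k-1},w)}\Bigr)^2\,dx_{k-1},
\end{equation*}
which follows by expanding both sides and recognizing that the resulting cross term reduces to the Cauchy--Schwarz inequality $\int_{\X}\sqrt{\pi\pi^\prime}\,T_k\,dx_{k-1}\le\sqrt{\int_{\X}\pi T_k\,dx_{k-1}}\,\sqrt{\int_{\X}\pi^\prime T_k\,dx_{k-1}}$ applied with the measure $T_k(x,\cdot,w)\,dx_{k-1}$. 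Multiplying by $h_k$, integrating, and swapping the order of integration by Tonelli's theorem then yields $\int(\int_{\X}h_k T_k\,dx)(\sqrt{\pi}-\sqrt{\pi^\prime})^2\,dx_{k-1}\,dw$; bounding the inner integral by $\tilde{C}_{Th}(y_k;k)$ via Assumption~\ref{assump:ps_single_step} gives $\int h_k(\sqrt{\phi}-\sqrt{\phi^\prime})^2 \le 2\,\tilde{C}_{Th}(y_k;k)\,d_H^2(\mu,\mu^\prime)$, so the shape term is at most $\sqrt{2\tilde{C}_{Th}(y_k;k)/Z_k(\mu)}\,d_H(\mu,\mu^\prime)$.

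For the normalization term, I would observe that $\sqrt{Z_k(\mu)}=\lVert\sqrt{h_k\phi}\rVert_{L^2}$ and $\sqrt{Z_k(\mu^\prime)}=\lVert\sqrt{h_k\phi^\prime}\rVert_{L^2}$, so the reverse triangle inequality gives $\bigl\lvert\sqrt{Z_k(\mu^\prime)}-\sqrt{Z_k(\mu)}\bigr\rvert\le\lVert\sqrt{h_k\phi}-\sqrt{h_k\phi^\prime}\rVert_{L^2}$, which is exactly the quantity already bounded above. The normalization term thus equals $\bigl\lvert\sqrt{Z_k(\mu^\prime)}-\sqrt{Z_k(\mu)}\bigr\rvert/\sqrt{Z_k(\mu)}$ and is likewise at most $\sqrt{2\tilde{C}_{Th}(y_k;k)/Z_k(\mu)}\,d_H(\mu,\mu^\prime)$. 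Adding the two terms and dividing by $\sqrt{2}$ produces the factor-of-two constant $2\sqrt{\tilde{C}_{Th}(y_k;k)/Z_k(\mu)}$. I expect the main obstacle to be establishing and correctly invoking the pointwise prediction-step contraction: it is the only place where the mixing over $x_{k-1}$ is absorbed, and one must verify that the Cauchy--Schwarz argument applies even though $T_k(x,\cdot,w)\,dx_{k-1}$ need not be a probability measure in $x_{k-1}$; the remaining manipulations are the routine $L^2$ triangle-inequality bookkeeping already seen in Propositions~\ref{prop:ip:normalized_bound_H} and~\ref{prop:ps:normalized_bound_tv}.
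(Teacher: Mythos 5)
Your argument is correct and is essentially the paper's proof with the lemma structure flattened: the paper factors the same computation into a Lipschitz bound on the unnormalized map $\tilde{F}_k$ under $\tilde{d}_H$ (proved via exactly your pointwise prediction-step contraction, Tonelli, and the bound $\int_{\X}h_kT_k\,dx\leq\tilde{C}_{Th}(y_k;k)$) followed by a generic unnormalized-to-normalized conversion lemma that inserts the same intermediate $\sqrt{h_k\phi^\prime}/\sqrt{Z_k(\mu)}$ and controls $\lvert\sqrt{Z_k(\mu)}-\sqrt{Z_k(\mu^\prime)}\rvert$ by $\sqrt{2}\,\tilde{d}_H$, yielding the same constant. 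The obstacle you flag is not one: Cauchy--Schwarz (Hölder) holds for the nonnegative measure $T_k(x,\cdot,w)\,dx_{k-1}$ without any normalization, which is precisely how the paper invokes it.
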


To prove Propositions~\ref{prop:ip:normalized_bound_H}---~\ref{prop:ps:normalized_bound_H}, we first present a supporting proposition, Proposition~\ref{thm:lipschitz}, which aids in the proof of Propositions~\ref{prop:ip:normalized_bound_H}---~\ref{prop:ps:normalized_bound_H}. 
\subsubsection{Supporting proposition for the proof of Theorem~\ref{thm:pointwise_lipschitz} under the Hellinger distance: Proposition~\ref{thm:lipschitz}}
Proposition~\ref{thm:lipschitz} is used to prove Propositions~\ref{prop:ip:normalized_bound_H}---~\ref{prop:ps:normalized_bound_H}. In this section, we provide Proposition~\ref{thm:lipschitz} and its proof.

Before presenting Proposition~\ref{thm:lipschitz}, we first define distances for scaled probability measures. Let $\tilde{\mathcal{P}}(\Z)$ denote the set of all scaled probability measures defined on $(\Z, \mathcal{B}(\Z))$, i.e., $$\tilde{\mathcal{P}}(\Z)=\{\tilde{\mu} \mid \tilde{\mu}(\epsilon) = c\mu(\epsilon), c \in \reals_+, c < +\infty, \mu \in \mathcal{P}(\Z), \epsilon \in \mathcal{B}(\Z)\}.$$ Let $\tilde{\mu}$ and $\tilde{\mu}^\prime$ be two measures in $\tilde{\mathcal{P}}(\Z)$. Suppose both $\tilde{\mu}$ and $\tilde{\mu}^\prime$ are absolutely continuous with respect to a common reference measure $\nu$ (such $\nu$ always exists). The distances $\tilde{d}_{TV}$ and $\tilde{d}_{H}$ between $\tilde{\mu}$ and $\tilde{\mu}^\prime$ are defined as follows.

\begin{definition}[\textbf{Distances for scaled probability measures}]
\label{def:general distance}
Distance $\tilde{d}_{TV}$ between $\tilde{\mu}$ and $\tilde{\mu}^\prime$ is given by
\begin{equation*}
\tilde{d}_{TV}(\tilde{\mu},\tilde{\mu}^\prime) := \frac{1}{2}\int_{\Z}\bigg \lvert \frac{d\tilde{\mu}}{d\nu}(z) - \frac{d\tilde{\mu}^\prime}{d\nu}(z) \bigg \rvert \nu(dz), 
\end{equation*}
and the distance $\tilde{d}_{H}$ between $\tilde{\mu}$ and $\tilde{\mu}^\prime$ is given by
\begin{equation*}
\tilde{d}_{H}(\tilde{\mu},\tilde{\mu}^\prime) := \sqrt{\frac{1}{2}\int_{\Z}\left( \sqrt{\frac{d\tilde{\mu}}{d\nu}(z) } - \sqrt{\frac{d\tilde{\mu}^\prime}{d\nu}(z)} \right)^2 \nu(dz)}.
\end{equation*}
\end{definition}
Note that the values of $\tilde{d}_{TV}(\tilde{\mu},\tilde{\mu}^\prime)$ and $\tilde{d}_{H}(\tilde{\mu},\tilde{\mu}^\prime)$ are invariant to the choice of the reference measure $\nu$. The total variation distance $d_{TV}$ and Hellinger distance $d_H$ can be viewed as specific instances of $\tilde{d}_{TV}$ and $\tilde{d}_{H}$, respectively, with the inputs restricted to probability measures. Building on Definition ~\ref{def:general distance}, we now present the following lemma.
\begin{lemma}\label{lemma:general metric}
If $(\Z,d_{\Z})$ is a metric space, then $(\tilde{\mathcal{P}}(\Z),\tilde{d}_{TV})$ and $(\tilde{\mathcal{P}}(\Z),\tilde{d}_{H})$ are metric spaces.
\end{lemma}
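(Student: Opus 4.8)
The plan is to verify the four metric axioms for each of $\tilde{d}_{TV}$ and $\tilde{d}_{H}$ by identifying these quantities with rescaled $L^1$ and $L^2$ norms of densities, so that the axioms descend from the corresponding well-known properties of those function-space norms. Fix two scaled probability measures $\tilde{\mu},\tilde{\mu}^\prime\in\tilde{\mathcal{P}}(\Z)$ together with a common $\sigma$-finite reference measure $\nu$ satisfying $\tilde{\mu}\ll\nu$ and $\tilde{\mu}^\prime\ll\nu$; such $\nu$ always exists, for instance $\nu=\tilde{\mu}+\tilde{\mu}^\prime$, which is a finite measure dominating both. Writing $f=\frac{d\tilde{\mu}}{d\nu}$ and $g=\frac{d\tilde{\mu}^\prime}{d\nu}$, each scaled probability measure has total mass equal to its finite scaling constant, so $f,g\in L^1(\nu)$ and $\sqrt{f},\sqrt{g}\in L^2(\nu)$ with $\lVert\sqrt{f}\rVert_{L^2(\nu)}^2=\tilde{\mu}(\Z)<\infty$. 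Under these identifications one has $\tilde{d}_{TV}(\tilde{\mu},\tilde{\mu}^\prime)=\tfrac12\lVert f-g\rVert_{L^1(\nu)}$ and $\tilde{d}_{H}(\tilde{\mu},\tilde{\mu}^\prime)=\tfrac{1}{\sqrt{2}}\lVert\sqrt{f}-\sqrt{g}\rVert_{L^2(\nu)}$, both finite. I note that the hypothesis that $(\Z,d_{\Z})$ is a metric space enters only through the Borel $\sigma$-algebra $\mathcal{B}(\Z)$ it induces; the metric $d_{\Z}$ itself never appears in either distance.

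Non-negativity and symmetry are immediate from these expressions, since $\lvert f-g\rvert=\lvert g-f\rvert\ge 0$ and $(\sqrt{f}-\sqrt{g})^2=(\sqrt{g}-\sqrt{f})^2\ge 0$. For the triangle inequality, given three measures $\tilde{\mu}_1,\tilde{\mu}_2,\tilde{\mu}_3$, I would select a \emph{single} reference measure $\nu$ dominating all three (say $\nu=\tilde{\mu}_1+\tilde{\mu}_2+\tilde{\mu}_3$), which is legitimate because, as noted after Definition~\ref{def:general distance}, the values of $\tilde{d}_{TV}$ and $\tilde{d}_{H}$ do not depend on the choice of reference measure. Setting $f_i=\frac{d\tilde{\mu}_i}{d\nu}$, the bound $\tilde{d}_{TV}(\tilde{\mu}_1,\tilde{\mu}_3)\le\tilde{d}_{TV}(\tilde{\mu}_1,\tilde{\mu}_2)+\tilde{d}_{TV}(\tilde{\mu}_2,\tilde{\mu}_3)$ then reduces to $\lVert f_1-f_3\rVert_{L^1(\nu)}\le\lVert f_1-f_2\rVert_{L^1(\nu)}+\lVert f_2-f_3\rVert_{L^1(\nu)}$, the $L^1$ triangle inequality, while the Hellinger case reduces to the $L^2$ triangle inequality applied to $\sqrt{f_1},\sqrt{f_2},\sqrt{f_3}$.

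The one axiom requiring genuine measure-theoretic care, and the step I expect to be the main obstacle, is the nontrivial direction of the identity of indiscernibles: one must show that vanishing distance forces $\tilde{\mu}=\tilde{\mu}^\prime$ as measures, not merely as equivalence classes of $\nu$-a.e.\ equal densities. If $\tilde{d}_{TV}(\tilde{\mu},\tilde{\mu}^\prime)=0$, then $\int_{\Z}\lvert f-g\rvert\,\nu(dz)=0$, so $f=g$ holds $\nu$-almost everywhere; since $\tilde{\mu}\ll\nu$ and $\tilde{\mu}^\prime\ll\nu$, integrating over an arbitrary $A\in\mathcal{B}(\Z)$ yields $\tilde{\mu}(A)=\int_A f\,d\nu=\int_A g\,d\nu=\tilde{\mu}^\prime(A)$, whence $\tilde{\mu}=\tilde{\mu}^\prime$. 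For the Hellinger distance, $\tilde{d}_{H}(\tilde{\mu},\tilde{\mu}^\prime)=0$ gives $\sqrt{f}=\sqrt{g}$ $\nu$-a.e., hence $f=g$ $\nu$-a.e., and the same argument applies. The converse is immediate, as $\tilde{\mu}=\tilde{\mu}^\prime$ makes $f=g$ $\nu$-a.e.\ and both distances vanish. Having verified non-negativity, symmetry, the triangle inequality, and the identity of indiscernibles for each distance, I conclude that $(\tilde{\mathcal{P}}(\Z),\tilde{d}_{TV})$ and $(\tilde{\mathcal{P}}(\Z),\tilde{d}_{H})$ are metric spaces.
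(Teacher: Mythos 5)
Your proof is correct and follows essentially the same route as the paper: the paper's explicit Hölder-inequality computation for the Hellinger triangle inequality is precisely the standard derivation of the $L^2$ Minkowski inequality that you invoke directly, and the remaining axioms (which the paper declares straightforward and omits) are handled by the same density identifications. Your treatment is slightly more complete in spelling out the identity of indiscernibles and the finiteness of the relevant norms, but no new idea is involved.
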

Let $\Z$ be a set and $d_{\Z}: \Z \times \Z \rightarrow \reals$ be a function. The pair $(\Z,d_{\Z})$ is a metric space if and only if the following conditions are satisfied for any $z1,z2,z3 \in \Z$.
\begin{enumerate}
\item $d_{\Z}(z_1,z_2)=0$ $\Leftrightarrow$ $z_1=z_2$.
\item \textbf{Non-negativity}: $d_{\Z}(z_1,z_2)\geq 0$.
\item \textbf{Symmetry}: $d_{\Z}(z_1,z_2)=d_{\Z}(z_2,z_1)$.
\item \textbf{Triangle inequality}: $d_{\Z}(z_1,z_3) \leq d_{\Z}(z_1,z_2)+d_{\Z}(z_2,z_3)$.
\end{enumerate}
It is straightforward to prove that $(\tilde{\mathcal{P}}(\Z),\tilde{d}_{TV})$ is a metric space and the pair $(\tilde{\mathcal{P}}(\Z),\tilde{d}_{H})$ satisfies the first three requirements for being a metric space. The corresponding proofs are omitted for brevity. 

Below, we provide a proof of the triangle inequality of $\tilde{d}_{H}$ in space $\tilde{\mathcal{P}}(\Z)$.
\begin{proof}
Assume $\mu_1, \mu_2, \mu_3 \in \tilde{\mathcal{P}}(\Z)$. Let the measure $\nu$ satisfies $\mu_1 \ll \nu, \quad \mu_2 \ll \nu$, and $\mu_3 \ll \nu$ --- note that such $\nu$ can always exist. Denote the Radon-Nikodym derivatives $\frac{d\mu_1}{d\nu}$, $\frac{d\mu_2}{d\nu}$, and $\frac{d\mu_3}{d\nu}$ by $g_1$, $g_2$, and $g_3$, respectively. By the definition of the distance $\tilde{d}_H$, $\tilde{d}_H(\mu_1,\mu_3)$ satisfies
\begin{align}
\tilde{d}_H^2(\mu_1,\mu_3) =& \frac{1}{2}\int_{\Z}\left(\sqrt{g_1(z)}-\sqrt{g_3(z)}\right)^2\nu(dz) = \frac{1}{2}\int_{\Z}\bigg \lvert \sqrt{g_1(z)}-\sqrt{g_3(z)}\bigg \rvert ^2\nu(dz).
\label{eq:dL2:1}
\end{align}
The square of $\tilde{d}_H(\mu_1,\mu_2)+\tilde{d}_H(\mu_2,\mu_3)$ can be written as
\begin{align}
&\left(\sqrt{\frac{1}{2}\int_{\Z}\left(\sqrt{g_1(z)}-\sqrt{g_2(z)}\right)^2\nu(dz)}+\sqrt{\frac{1}{2}\int_{\Z}\left(\sqrt{g_2(z)}-\sqrt{g_3(z)}\right)^2\nu(dz)}\right)^2 \nonumber \\
&\qquad = \frac{1}{2}\int_{\Z}\left(\sqrt{g_1(z)}-\sqrt{g_2(z)}\right)^2\nu(dz) + \frac{1}{2}\int_{\Z}\left(\sqrt{g_2(z)}-\sqrt{g_3(z)}\right)^2\nu(dz) \nonumber \\
& \qquad \qquad + \sqrt{\int_{\Z}\left(\sqrt{g_1(z)}-\sqrt{g_2(z)}\right)^2\nu(dz)}\sqrt{\int_{\Z}\left(\sqrt{g_2(z)}-\sqrt{g_3(z)}\right)^2\nu(dz)}.
\label{eq:dL2:2}
\end{align}
By Hölder's inequality, the third term in Equation ~\eqref{eq:dL2:2} satisfies
\begin{align}
&\sqrt{\int_{\Z}\left(\sqrt{g_1(z)}-\sqrt{g_2(z)}\right)^2\nu(dz)}\sqrt{\int_{\Z}\left(\sqrt{g_2(z)}-\sqrt{g_3(z)}\right)^2\nu(dz)} \nonumber \\
&\qquad  \geq \int_{\Z}\bigg \lvert \left(\sqrt{g_1(z)}-\sqrt{g_2(z)}\right)\left(\sqrt{g_2(z)}-\sqrt{g_3(z)}\right) \bigg \rvert \nu(dz) \nonumber \\
&\qquad = \int_{\Z}\bigg \lvert \sqrt{g_1(z)}-\sqrt{g_2(z)}\bigg \rvert \bigg \lvert \sqrt{g_2(z)}-\sqrt{g_3(z)} \bigg \rvert \nu(dz)
\label{ieq:dL2:1}
\end{align}
Plugging Equation ~\eqref{ieq:dL2:1} into Equation ~\eqref{eq:dL2:2} gives
\begin{align}
&\left(\sqrt{\int_{\Z}\left(\sqrt{g_1(z)}-\sqrt{g_2(z)}\right)^2\nu(dz)}+\sqrt{\int_{\Z}\left(\sqrt{g_2(z)}-\sqrt{g_3(z)}\right)^2\nu(dz)}\right)^2 \nonumber \\
&\qquad \geq  \frac{1}{2}\int_{\Z}\left(\sqrt{g_1(z)}-\sqrt{g_2(z)}\right)^2\nu(dz) + \frac{1}{2}\int_{\Z}\left(\sqrt{g_2(z)}-\sqrt{g_3(z)}\right)^2\nu(dz) \nonumber \\
&\qquad \qquad + \int_{\Z}\bigg \lvert \sqrt{g_1(z)}-\sqrt{g_2(z)}\bigg \rvert \bigg \lvert \sqrt{g_2(z)}-\sqrt{g_3(z)} \bigg \rvert \nu(dz) \nonumber \\
&\qquad  = \frac{1}{2}\int_{\Z}\left(\bigg \lvert \sqrt{g_1(z)}-\sqrt{g_2(z)}\bigg \rvert + \bigg \lvert \sqrt{g_2(z)}-\sqrt{g_3(z)} \bigg \rvert \right)^2\nu(dz).
\label{ieq:dL2:2}
\end{align}
According to the triangle inequality of the Euclidean distance on $\reals$, for every $z \in \Z$, we have
\begin{equation}
0 \leq \bigg \lvert \sqrt{g_1(z)}-\sqrt{g_3(z)} \bigg \rvert \leq \bigg \lvert \sqrt{g_1(z)}-\sqrt{g_2(z)}\bigg \rvert + \bigg \lvert \sqrt{g_2(z)}-\sqrt{g_3(z)} \bigg \rvert.
\label{ieq:dL2:3}
\end{equation}
Combining Equations~\eqref{eq:dL2:1},~\eqref{ieq:dL2:2} and~\eqref{ieq:dL2:3} completes the proof.
\end{proof}

Next, we state Proposition~\ref{thm:lipschitz}, which establishes the global Lipschitz continuity of the map $\tilde{F}_k$.
\begin{prop}\label{thm:lipschitz}
(\textbf{Global Lipschitz continuity of $\tilde{F}_k$}) Given data $y_k$, the function $\tilde{F}_k: (\bar{\mathcal{P}}_k(\bar{\X}), d) \rightarrow (\tilde{\mathcal{P}}(\bar{\X}), \tilde{d})$ is globally Lipschitz continuous, i.e., $\tilde{F}_k$ satisfies
\begin{equation*}
\tilde{d}(\tilde{F}_k(\mu),\tilde{F}_k(\mu^\prime)) \leq \tilde{K}(y_k)d(\mu,\mu^\prime), \quad \forall \mu,\mu^\prime \in \bar{\mathcal{P}}_k(\bar{\X}),
\end{equation*}
when $d$ is $d_{TV}$ and $\tilde{d}$ is $\tilde{d}_{TV}$, or when $d$ is $d_H$ and $\tilde{d}$ is $\tilde{d}_H$, under the assumption for the total variation and Hellinger distance in Theorem~\ref{thm:pointwise_lipschitz}.
\end{prop}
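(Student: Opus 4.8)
The plan is to exploit the linearity of the unnormalised map $\tilde F_k$ and argue directly with densities against a common reference measure $\nu$, which exists because $\mu,\mu'\in\bar{\mathcal P}_k(\bar{\X})$; I write $\pi=d\mu/d\nu$ and $\pi'=d\mu'/d\nu$. In each setting $\tilde F_k\mu$ has density $\bar x\mapsto\int G(\bar x,\bar x')\pi(\bar x')\,\nu(d\bar x')$, where $G$ bundles the likelihood with the transition kernel: in IP there is no propagation and $G(x)=h(y_k,x)$ acts as a pure multiplier; in SE $G(x,x_{k-1})=h_k(y_k,x)T_k(x,x_{k-1})$; in PS $G((x,w),(x_{k-1},w))=h_k(y_k,x,w)T_k(x,x_{k-1},w)$. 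The controlling constant $C$ --- namely $C_h(y_k)$, $C_{Th}(y_k;k)$, $\tilde C_{Th}(y_k;k)$ respectively --- is in every case the supremum over the conditioning variable of the total mass $\int G(\bar x,\bar x')\,d\bar x$ (reducing to $\sup_x h(y_k,x)$ in IP), which is exactly what Assumption IP.1/SE.1/PS.1 guarantees to be finite. The Lipschitz constant is then $\tilde K(y_k)=C$ under the total variation distance and $\tilde K(y_k)=\sqrt{C}$ under the Hellinger distance; since $\tilde d_{TV}$ and $\tilde d_H$ are invariant to the choice of $\nu$, this density-level argument is legitimate.

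For the total-variation claim I would proceed directly. Subtracting the two densities, pulling the absolute value inside the inner integral, and applying Tonelli's theorem to exchange the order of integration gives
\begin{align*}
\tilde d_{TV}(\tilde F_k\mu,\tilde F_k\mu')
&\le \tfrac12\int\Big(\int G(\bar x,\bar x')\,d\bar x\Big)\big\lvert\pi(\bar x')-\pi'(\bar x')\big\rvert\,\nu(d\bar x')\\
&\le \tfrac{C}{2}\int\big\lvert\pi(\bar x')-\pi'(\bar x')\big\rvert\,\nu(d\bar x')
= C\,d_{TV}(\mu,\mu').
\end{align*}
Only nonnegativity (so that Tonelli applies) and the boundedness assumption are used, so this part is routine.

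The Hellinger case is the crux, because the square root does not commute with the inner integral. The key is a pointwise estimate: for each fixed $\bar x$,
\begin{equation*}
\Big(\sqrt{\textstyle\int G(\bar x,\bar x')\pi(\bar x')\,d\bar x'}-\sqrt{\textstyle\int G(\bar x,\bar x')\pi'(\bar x')\,d\bar x'}\Big)^{2}
\le \int G(\bar x,\bar x')\big(\sqrt{\pi(\bar x')}-\sqrt{\pi'(\bar x')}\big)^{2}\,d\bar x'.
\end{equation*}
Expanding both sides, this reduces precisely to $\int\sqrt{G\pi}\,\sqrt{G\pi'}\le\big(\int G\pi\big)^{1/2}\big(\int G\pi'\big)^{1/2}$, which is the Cauchy--Schwarz inequality applied to $\sqrt{G\pi}$ and $\sqrt{G\pi'}$. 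Integrating the pointwise bound over $\bar x$ and invoking Tonelli then yields
\begin{equation*}
\tilde d_H^{2}(\tilde F_k\mu,\tilde F_k\mu')
\le \tfrac12\int\Big(\int G(\bar x,\bar x')\,d\bar x\Big)\big(\sqrt{\pi(\bar x')}-\sqrt{\pi'(\bar x')}\big)^{2}\nu(d\bar x')
\le C\,d_H^{2}(\mu,\mu'),
\end{equation*}
so that $\tilde d_H\le\sqrt{C}\,d_H$, giving $\sqrt{C_h(y_k)}$, $\sqrt{C_{Th}(y_k;k)}$, and $\sqrt{\tilde C_{Th}(y_k;k)}$ in the three settings. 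I expect the main obstacle to be isolating and justifying this pointwise Cauchy--Schwarz reduction cleanly in the SE and PS settings, where the propagation step couples the two integrals inside the square roots; the IP case is the degenerate instance in which $G$ carries no inner integration and the estimate is immediate.
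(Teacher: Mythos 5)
Your proposal is correct and follows essentially the same route as the paper's proof: for total variation you pull the absolute value inside and apply Tonelli exactly as in the paper's Lemmas for the three settings, and for Hellinger your pointwise estimate via Cauchy--Schwarz on $\sqrt{G\pi}$ and $\sqrt{G\pi'}$ is precisely the Hölder-inequality step the paper uses before exchanging the order of integration. The only difference is cosmetic---you unify the three problem classes under a single kernel $G$ where the paper treats IP, SE, and PS in separate lemmas.
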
 

The Lipschitz constants $\tilde{K}(y_k)$ corresponding to different distances and different problems are provided in Table~\ref{table:unnormalized}.

\begin{table}
\caption{(Global) Lipschitz constant $\tilde{K}(y_k)$ of function $\tilde{F}_k$ for different problems and distances of probability measures.}
\centering
\begin{tabularx}{0.9\linewidth}{ 
  | >{\centering\arraybackslash}X 
  | >{\centering\arraybackslash}X 
  | >{\centering\arraybackslash}X |>{\centering\arraybackslash}X | }
 \hline
 Problem name or distance $d$ and $\tilde{d}$& Inverse problems & State estimation &Parameter-state learning \\
 \hline
 \rule{0pt}{11pt} $(d_{TV},\tilde{d}_{TV})$ & $C_h(y_k)$ & $C_{Th}(y_k;k)$ & $\tilde{C}_{Th}(y_k;k)$\\
 \hline
 \rule{0pt}{16pt}$(d_{H},\tilde{d}_{H})$ & $\sqrt{C_h(y_k)}$ & $\sqrt{C_{Th}(y_k;k)}$ &$\sqrt{\tilde{C}_{Th}(y_k;k)}$ \\
\hline
\end{tabularx}
\label{table:unnormalized}
\end{table}

\begin{remark}
The conclusion in Proposition~\ref{thm:lipschitz} under the total variation distance is not used for the proof of Theorem~\ref{thm:pointwise_lipschitz}. We include it in Proposition~\ref{thm:lipschitz} for completeness.
\end{remark}

Proposition~\ref{thm:lipschitz} is a conclusion of Lemmas~\ref{prop:ip:unnormalized_bound_tv}---\ref{prop:ps:unnormalized_bound_H}. 
\begin{lemma}
In inverse problems, under Assumption IP.1, $\forall \mu, \mu^\prime \in \bar{\mathcal{P}}_k(\X)$, $\tilde{F}_k(\mu)$ and $\tilde{F}_k(\mu^\prime)$ satisfy
\begin{equation*}
\tilde{d}_{TV}(\tilde{F}_k(\mu),\tilde{F}_k(\mu^\prime)) \leq C_h(y_k)d_{TV}(\mu,\mu^\prime).
\end{equation*}
\label{prop:ip:unnormalized_bound_tv}
\end{lemma}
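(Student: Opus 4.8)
The plan is to work directly from the definition of the scaled total variation distance $\tilde{d}_{TV}$ in Definition~\ref{def:general distance} together with the definition of $\tilde{F}_k$ for inverse problems, namely $\tilde{F}_k(\mu)(dx) = h(y_k,x)\mu(dx)$. The whole argument is a short computation: no deep machinery is required beyond choosing a common dominating measure and pulling the likelihood bound $C_h(y_k)$ out of the integral.

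First I would fix a $\sigma$-finite measure $\nu$ on $(\X,\mathcal{B}(\X))$ with $\mu \ll \nu$ and $\mu^\prime \ll \nu$ (such $\nu$ always exists, e.g. $\nu = \mu + \mu^\prime$), and write $\pi = d\mu/d\nu$ and $\pi^\prime = d\mu^\prime/d\nu$. Since $\tilde{F}_k(\mu) \ll \mu \ll \nu$, the measure $\tilde{F}_k(\mu)$ admits the $\nu$-density $x \mapsto h(y_k,x)\pi(x)$, and likewise $\tilde{F}_k(\mu^\prime)$ has density $h(y_k,x)\pi^\prime(x)$. Because $\mu,\mu^\prime \in \bar{\mathcal{P}}_k(\X)$, the evidence terms $Z_k(\mu)$ and $Z_k(\mu^\prime)$ are finite and positive, so both $\tilde{F}_k(\mu)$ and $\tilde{F}_k(\mu^\prime)$ are genuine finite (scaled probability) measures in $\tilde{\mathcal{P}}(\X)$ and $\tilde{d}_{TV}$ is well defined on them.

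The key step is then to insert these densities into the definition of $\tilde{d}_{TV}$ and use the nonnegativity of $h$ to factor it out of the absolute value, and finally apply Assumption~\ref{assump:ip_single_step}:
\begin{align*}
\tilde{d}_{TV}(\tilde{F}_k(\mu),\tilde{F}_k(\mu^\prime)) &= \frac{1}{2}\int_\X \bigl\lvert h(y_k,x)\pi(x) - h(y_k,x)\pi^\prime(x)\bigr\rvert \,\nu(dx) \\
&= \frac{1}{2}\int_\X h(y_k,x)\bigl\lvert \pi(x) - \pi^\prime(x)\bigr\rvert \,\nu(dx) \\
&\leq \frac{C_h(y_k)}{2}\int_\X \bigl\lvert \pi(x) - \pi^\prime(x)\bigr\rvert \,\nu(dx) = C_h(y_k)\,d_{TV}(\mu,\mu^\prime),
\end{align*}
where the last equality is the definition of $d_{TV}$.

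There is essentially no hard part here; the only points to check are (i) that $\tilde{F}_k(\mu)$ really is a finite measure, so that $\tilde{d}_{TV}$ applies, which follows from admissibility, and (ii) the invariance of $\tilde{d}_{TV}$ to the choice of $\nu$, already noted after Definition~\ref{def:general distance}, so the estimate does not depend on the auxiliary reference measure. This lemma is precisely the \emph{unnormalized} analogue of Proposition~\ref{prop:ip:normalized_bound_tv}: whereas the normalized version must additionally control the discrepancy between the evidence terms $Z_k(\mu)$ and $Z_k(\mu^\prime)$, the scaled map $\tilde{F}_k$ skips normalization entirely, which is exactly why its Lipschitz analysis collapses to this one-line estimate.
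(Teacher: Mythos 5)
Your proof is correct and follows essentially the same route as the paper's: fix a common dominating measure $\nu$, write the $\nu$-density of $\tilde{F}_k(\mu)$ as $h(y_k,x)\,d\mu/d\nu(x)$, pull the nonnegative likelihood out of the absolute value, and bound it by $C_h(y_k)$ to recover $d_{TV}(\mu,\mu^\prime)$. The extra remarks on finiteness of $\tilde{F}_k(\mu)$ and invariance under the choice of $\nu$ are sound but not needed beyond what the paper already records.
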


\begin{proof}
Let $\nu$ be a $\sigma$-finite measure defined on the measurable space $(\X,\mathcal{B}(\X))$. Assume that $\mu \ll \nu$ and $\mu^\prime \ll \nu$. According to the definition of $\tilde{F}_k$, $\tilde{F}_k(\mu)$ and $\tilde{F}_k(\mu^\prime)$ are equivalent to $F_k(\mu)$ and $F_k(\mu^\prime)$, respectively. Since $F_k(\mu)$ and $F_k(\mu^\prime)$ satisfy $F_k(\mu) \ll \mu$ and $F_k(\mu^\prime) \ll \mu^\prime$, we have $F_k(\mu) \ll \nu, F_k(\mu^\prime)\ll \nu$ and $\tilde{F}_k(\mu) \ll \nu, \tilde{F}_k(\mu^\prime)\ll \nu$. The Radon–Nikodym
derivatives $\frac{d\tilde{F}_k\mu}{d\nu}$ and $\frac{d\tilde{F}_k\mu^\prime}{d\nu}$ can be written as
\begin{align*}
   \frac{d\tilde{F}_k\mu}{d\nu}(x) &= \frac{d\tilde{F}_k\mu}{d\mu}(x) \cdot \frac{d\mu}{d\nu}(x) = h(y_k,x)\cdot \frac{d\mu}{d\nu}(x), \\
   \frac{d\tilde{F}_k\mu^\prime}{d\nu}(x) &= \frac{d\tilde{F}_k\mu^\prime}{d\mu^\prime}(x) \cdot \frac{d\mu^\prime}{d\nu}(x) = h(y_k,x)\cdot \frac{d\mu^\prime}{d\nu}(x).
\end{align*}
By the definition of distance $\tilde{d}_{TV}$, $\tilde{d}_{TV}(\tilde{F}_k(\mu),\tilde{F}_k(\mu^\prime))$ can be written as
\begin{align*}
\tilde{d}_{TV}(\tilde{F}_k(\mu),\tilde{F}_k(\mu^\prime)) 
=& \frac{1}{2}\int_{\X}\bigg \lvert h(y_k,x)\frac{d\mu}{d\nu}(x) - h(y_k,x)\frac{d\mu^\prime}{d\nu}(x) \bigg \rvert \nu(dx) \\
\leq & \frac{1}{2}C_h(y_k)\int_{\X}\bigg \lvert \frac{d\mu}{d\nu}(x) - \frac{d\mu^\prime}{d\nu}(x) \bigg \rvert \nu(dx) \\
=&C_h(y_k)d_{TV}(\mu,\mu^\prime).
\end{align*}
\end{proof}

\begin{lemma}
In state estimation problems, under Assumption SE.1, $\forall \mu, \mu^\prime \in \bar{\mathcal{P}}_k(\X)$, $\tilde{F}_k(\mu)$ and $\tilde{F}_k(\mu^\prime)$ satisfy
\begin{equation*}
\tilde{d}_{TV}(\tilde{F}_k(\mu),\tilde{F}_k(\mu^\prime)) \leq C_{Th}(y_k;k)d_{TV}(\mu,\mu^\prime).
\end{equation*}
\label{prop:se:unnormalized_bound_tv}
\end{lemma}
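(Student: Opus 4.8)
The plan is to mirror the argument used for the inverse-problem case in Lemma~\ref{prop:ip:unnormalized_bound_tv}, with the one additional ingredient being the marginalization against the transition kernel $T_k$. First I would fix a $\sigma$-finite reference measure $\nu$ on $(\X,\mathcal{B}(\X))$ dominating both $\mu$ and $\mu^\prime$, and write $\pi = \frac{d\mu}{d\nu}$, $\pi^\prime = \frac{d\mu^\prime}{d\nu}$. By the definition of $\tilde{F}_k$ in state estimation, both $\tilde{F}_k(\mu)$ and $\tilde{F}_k(\mu^\prime)$ are absolutely continuous with respect to the Lebesgue measure $dx$ on $\X$, with densities $h_k(y_k,x)\int_{\X}T_k(x,x_{k-1})\mu(dx_{k-1})$ and $h_k(y_k,x)\int_{\X}T_k(x,x_{k-1})\mu^\prime(dx_{k-1})$, respectively. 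Hence $\tilde{d}_{TV}(\tilde{F}_k(\mu),\tilde{F}_k(\mu^\prime))$ can be evaluated directly from Definition~\ref{def:general distance} using $dx$ as the reference measure.

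The key steps proceed as follows. Writing $\mu(dx_{k-1})-\mu^\prime(dx_{k-1}) = \big(\pi(x_{k-1})-\pi^\prime(x_{k-1})\big)\nu(dx_{k-1})$ and factoring out the common $h_k(y_k,x)$, I would apply the triangle inequality to move the absolute value inside the inner integral, which is permitted because $h_k$ and $T_k$ are nonnegative. This gives
\begin{equation*}
\tilde{d}_{TV}(\tilde{F}_k(\mu),\tilde{F}_k(\mu^\prime)) \leq \frac{1}{2}\int_{\X}h_k(y_k,x)\int_{\X}T_k(x,x_{k-1})\big\lvert \pi(x_{k-1})-\pi^\prime(x_{k-1})\big\rvert \nu(dx_{k-1})\,dx.
\end{equation*}
Since the integrand is nonnegative, Tonelli's theorem lets me interchange the order of integration, yielding an outer integral over $x_{k-1}$ whose inner factor is $\int_{\X}h_k(y_k,x)T_k(x,x_{k-1})\,dx$. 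Invoking Assumption~\ref{assump:se_single_step}, this inner factor is bounded above by its supremum $C_{Th}(y_k;k)$ uniformly in $x_{k-1}$, so it can be pulled out of the integral. The remaining quantity $\frac{1}{2}\int_{\X}\lvert \pi(x_{k-1})-\pi^\prime(x_{k-1})\rvert\nu(dx_{k-1})$ is exactly $d_{TV}(\mu,\mu^\prime)$, completing the bound.

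The only technical point requiring care is the justification of the Tonelli interchange together with the finiteness that makes the supremum bound meaningful: measurability of $h_k$ and $T_k$ is assumed, nonnegativity is immediate, and $C_{Th}(y_k;k)<\infty$ under Assumption~\ref{assump:se_single_step} guarantees the resulting expression is finite. Beyond this, the derivation is a routine chain of inequalities, so I do not anticipate a substantive obstacle; the main subtlety compared with the inverse-problem lemma is simply tracking the extra layer of integration introduced by the transition kernel and recognizing that the supremum in Assumption~\ref{assump:se_single_step} is precisely the quantity needed to close the estimate.
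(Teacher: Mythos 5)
Your proposal is correct and follows essentially the same route as the paper's proof: both pass to densities with respect to a common dominating measure, move the absolute value inside the inner integral by the triangle inequality, interchange the order of integration via Tonelli, and then bound $\int_{\X}h_k(y_k,x)T_k(x,x_{k-1})\,dx$ uniformly by $C_{Th}(y_k;k)$ to recover $d_{TV}(\mu,\mu^\prime)$. No substantive differences.
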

\begin{proof}
Let $\nu$ be a $\sigma$-finite measure defined on the measurable space $(\X,\mathcal{B}(\X))$ such that $\mu \ll \nu$ and $\mu^\prime \ll \nu$. Let $\pi(x)$ and $\pi^\prime(x)$ denote the Radon-Nikodym derivatives $\frac{d\mu}{d\nu}(x)$ and $\frac{d\mu^\prime}{d\nu}(x)$, respectively. By the definition of $\tilde{F}_k$ for state estimation problems, we can write $\tilde{d}_{TV}(\tilde{F}_k(\mu),\tilde{F}_k(\mu^\prime))$ as:
\begin{align}
&\tilde{d}_{TV}(\tilde{F}_k(\mu),\tilde{F}_k(\mu^\prime)) \nonumber \\
&\qquad=\frac{1}{2}\int_\mathcal{X}\bigg \lvert h_k(y_k, x)\int_\mathcal{X}T_k(x,x_{k-1})\mu(dx_{k-1}) - h_k(y_k ,x)\int_\mathcal{X}T_k(x,x_{k-1})\mu^\prime(dx_{k-1}) \bigg \rvert dx \nonumber \\
&\qquad=\frac{1}{2}\int_\mathcal{X}\bigg \lvert h_k(y_k, x)\int_\mathcal{X}T_k(x,x_{k-1})\pi(x_{k-1})\nu(dx_{k-1}) \nonumber \\
&\qquad \qquad \qquad - h_k(y_k ,x)\int_\mathcal{X}T_k(x,x_{k-1})\pi^\prime(x_{k-1})\nu(dx_{k-1}) \bigg \rvert dx \nonumber \\
& \qquad \leq \frac{1}{2}\int_\mathcal{X}h_k(y_k,x) \left(\int_\mathcal{X}T_k(x,x_{k-1})\lvert \pi(x_{k-1})-\pi^\prime(x_{k-1})\rvert \nu(dx_{k-1})\right)dx.
\label{ieq:unnormalized:tv:se:1}
\end{align}
By Tonelli's theorem, the right-hand side of Equation~\eqref{ieq:unnormalized:tv:se:1} can be written as
\begin{align*}
&\frac{1}{2}\int_\mathcal{X}h_k(y_k,x) \left(\int_\mathcal{X}T_k(x,x_{k-1})\lvert \pi(x_{k-1})-\pi^\prime(x_{k-1})\rvert \nu(dx_{k-1})\right)dx \nonumber \\
&\qquad = \frac{1}{2}\int_\mathcal{X}\left(\int_\mathcal{X}h_k(y_k,x)T_k(x, x_{k-1})dx\right)\lvert \pi(x_{k-1})-\pi^\prime(x_{k-1})\rvert \nu(dx_{k-1}) \nonumber \\
&\qquad \leq \frac{1}{2}C_{Th}(y_k;k)\int_\mathcal{X}\lvert \pi(x_{k-1})-\pi^\prime(x_{k-1})\rvert \nu(dx_{k-1}) \nonumber \\
&\qquad =C_{Th}(y_k;k)d_{TV}(\mu,\mu^\prime).
\end{align*}
\end{proof}

\begin{lemma}
In parameter-state estimation problems, under Assumption PS.1, $\forall \mu, \mu^\prime \in \bar{\mathcal{P}}_k(\X \times \W)$, $\tilde{F}_k(\mu)$ and $\tilde{F}_k(\mu^\prime)$ satisfy
\begin{equation*}
\tilde{d}_{TV}(\tilde{F}_k(\mu),\tilde{F}_k(\mu^\prime)) \leq \tilde{C}_{Th}(y_k;k)d_{TV}(\mu,\mu^\prime).
\end{equation*}
\label{prop:ps:unnormalized_bound_tv}
\end{lemma}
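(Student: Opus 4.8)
The plan is to mirror the arguments already carried out for Lemmas~\ref{prop:ip:unnormalized_bound_tv} and~\ref{prop:se:unnormalized_bound_tv}, now carrying the parameter $w$ through every step. Since every $\mu \in \bar{\mathcal{P}}_k(\X\times\W)$ is by definition absolutely continuous with respect to the Lebesgue measure, I would take the Lebesgue measure on $\X\times\W$ as the common reference and let $\pi,\pi'$ denote the densities of $\mu,\mu'$. Inserting the definition of $\tilde{F}_k$ for parameter-state estimation into the definition of $\tilde{d}_{TV}$, and using linearity of the inner integral to collect the two terms over the common factor $h_k(y_k,x,w)$, gives
\[
\tilde{d}_{TV}(\tilde{F}_k(\mu),\tilde{F}_k(\mu')) = \frac{1}{2}\int_{\X\times\W}\bigg\lvert h_k(y_k,x,w)\int_\X T_k(x,x_{k-1},w)\big(\pi(x_{k-1},w)-\pi'(x_{k-1},w)\big)dx_{k-1}\bigg\rvert dxdw.
\]

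Next I would pull the absolute value inside both integrals: since $h_k\geq 0$ and $T_k\geq 0$, bounding $\lvert\int\cdots\rvert$ by $\int\lvert\cdots\rvert$ yields an upper bound in which the integrand is nonnegative. I would then invoke Tonelli's theorem to interchange the order of integration so that the $x$-integral acts on $h_k T_k$ first, producing
\[
\tilde{d}_{TV}(\tilde{F}_k(\mu),\tilde{F}_k(\mu')) \leq \frac{1}{2}\int_{\X\times\W}\left(\int_\X h_k(y_k,x,w)T_k(x,x_{k-1},w)dx\right)\lvert\pi(x_{k-1},w)-\pi'(x_{k-1},w)\rvert\, dx_{k-1}dw.
\]
At this point I would apply Assumption~\ref{assump:ps_single_step}, which states precisely that the inner integral $\int_\X h_k(y_k,x,w)T_k(x,x_{k-1},w)dx$ is bounded by $\tilde{C}_{Th}(y_k;k)$ uniformly over $x_{k-1}\in\X$ and $w\in\W$. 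Substituting this uniform bound and recognizing the remaining integral as $2\,d_{TV}(\mu,\mu')$ gives the claimed inequality $\tilde{d}_{TV}(\tilde{F}_k(\mu),\tilde{F}_k(\mu'))\leq \tilde{C}_{Th}(y_k;k)\,d_{TV}(\mu,\mu')$.

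This argument is essentially routine, so there is no obstacle of real substance; the main thing requiring care is the bookkeeping between the integration variable $x$ (the propagated/observed state) and the dummy prior-state variable $x_{k-1}$, together with ensuring that the supremum defining $\tilde{C}_{Th}(y_k;k)$ is taken over both $x_{k-1}$ and $w$ so that the uniform bound is applicable immediately after the Tonelli interchange. Because the parameter $w$ is held fixed by the transition kernel, it factors through cleanly and never interferes with the marginalization over the state.
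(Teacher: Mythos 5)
Your proposal is correct and follows essentially the same route as the paper's proof: factoring $h_k(y_k,x,w)$ out of the difference, pushing the absolute value inside the inner integral, applying Tonelli's theorem to swap the order of integration, and then invoking the uniform bound $\tilde{C}_{Th}(y_k;k)$ from Assumption PS.1 before recognizing the leftover integral as $2\,d_{TV}(\mu,\mu^\prime)$. Your observation that the supremum defining $\tilde{C}_{Th}(y_k;k)$ must be taken jointly over $x_{k-1}$ and $w$ is exactly the point that makes the final substitution legitimate.
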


\begin{proof}
Let $\pi$ and $\pi^\prime$ denote the Lebesgue densities of $\mu$ and $\mu^\prime$, respectively. By the definition of $\tilde{F}_k$ for parameter-state estimation problems, we can write $\tilde{d}_{TV}(\tilde{F}_k(\mu),\tilde{F}_k(\mu^\prime))$ as
\begin{align*}
&\tilde{d}_{TV}(\tilde{F}_k(\mu),\tilde{F}_k(\mu^\prime)) \\
& \quad  = \frac{1}{2}\int_{\X \times \W} \Bigg \lvert h_k(y_k, x, w)\int_{\X} T_k(x, x_{k-1},w )\pi(x_{k-1},w) dx_{k-1} \\
&\qquad \qquad - h_k(y_k,x,w)\int_{\X} T_k(x, x_{k-1},w)\pi^\prime(x_{k-1},w)dx_{k-1}  \Bigg \rvert dxdw \nonumber \\
& \quad = \frac{1}{2}\int_{\X \times \W} h_k(y_k,x,w)\Bigg \lvert \int_{\X} T_k(x, x_{k-1},w)\left(\pi(x_{k-1},w)-\pi^\prime(x_{k-1},w)\right) dx_{k-1} \Bigg \rvert dxdw \\
& \quad \leq \frac{1}{2}\int_{\X \times \W} h_k(y_k,x,w) \left(\int_{\X} T_k(x, x_{k-1},w)\big \lvert \pi(x_{k-1},w)-\pi^\prime(x_{k-1},w)\big \rvert dx_{k-1} \right)dxdw.
\end{align*}
By Tonelli's theorem, we have
\begin{align*}
& \frac{1}{2}\int_{\X \times \W} h_k(y_k,x,w) \left(\int_{\X} T_k(x, x_{k-1},w)\big \lvert \pi(x_{k-1},w)-\pi^\prime(x_{k-1},w)\big \rvert dx_{k-1} \right)dxdw \\
& \quad= \frac{1}{2}\int_{\X\times\W} \left(\int_{\X} h_k(y_k,x,w) T_k(x, x_{k-1},w)dx\right)\big \lvert \pi(x_{k-1},w)-\pi^\prime(x_{k-1},w)\big \rvert dx_{k-1}dw \\
& \quad \leq \tilde{C}_{Th}(y_k;k)d_{TV}(\mu,\mu^\prime).
\end{align*}
\end{proof}

\begin{lemma}
In inverse problems, under Assumption IP.1, $\forall \mu, \mu^\prime \in \bar{\mathcal{P}}_k(\X)$, $\tilde{F}_k(\mu)$ and $\tilde{F}_k(\mu^\prime)$ satisfy
\begin{equation*}
\tilde{d}_{H}(\tilde{F}_k(\mu),\tilde{F}_k(\mu^\prime)) \leq \sqrt{C_{h}(y_k)}d_{H}(\mu,\mu^\prime).
\end{equation*}
\label{prop:ip:unnormalized_bound_H}
\end{lemma}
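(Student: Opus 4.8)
The plan is to mirror the total-variation argument already carried out in Lemma~\ref{prop:ip:unnormalized_bound_tv}, replacing the $L^1$-type integrand by the Hellinger (squared-root-difference) integrand. First I would fix a common $\sigma$-finite reference measure $\nu$ on $(\X,\mathcal{B}(\X))$ with $\mu \ll \nu$ and $\mu^\prime \ll \nu$ (such $\nu$ always exists), and record, exactly as in the proof of Lemma~\ref{prop:ip:unnormalized_bound_tv}, that $\tilde{F}_k(\mu) \ll \nu$ and $\tilde{F}_k(\mu^\prime) \ll \nu$, with Radon--Nikodym derivatives
\begin{equation*}
\frac{d\tilde{F}_k\mu}{d\nu}(x) = h(y_k,x)\frac{d\mu}{d\nu}(x), \qquad \frac{d\tilde{F}_k\mu^\prime}{d\nu}(x) = h(y_k,x)\frac{d\mu^\prime}{d\nu}(x).
\end{equation*}

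Next I would substitute these derivatives into the definition of $\tilde{d}_H$ (Definition~\ref{def:general distance}) to obtain
\begin{equation*}
\tilde{d}_H^2(\tilde{F}_k(\mu),\tilde{F}_k(\mu^\prime)) = \frac{1}{2}\int_{\X}\left(\sqrt{h(y_k,x)\tfrac{d\mu}{d\nu}(x)} - \sqrt{h(y_k,x)\tfrac{d\mu^\prime}{d\nu}(x)}\right)^2 \nu(dx).
\end{equation*}
The key algebraic step is to factor out $\sqrt{h(y_k,x)}$ from inside the squared difference, which is legitimate because $h(y_k,x)\geq 0$ and the densities are nonnegative; this yields the factor $h(y_k,x)$ multiplying $\big(\sqrt{d\mu/d\nu} - \sqrt{d\mu^\prime/d\nu}\big)^2$. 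Then I would invoke Assumption~\ref{assump:ip_single_step} pointwise, $h(y_k,x)\leq C_h(y_k)$, to pull the bound out of the integral, giving
\begin{equation*}
\tilde{d}_H^2(\tilde{F}_k(\mu),\tilde{F}_k(\mu^\prime)) \leq \frac{1}{2}C_h(y_k)\int_{\X}\left(\sqrt{\tfrac{d\mu}{d\nu}(x)} - \sqrt{\tfrac{d\mu^\prime}{d\nu}(x)}\right)^2 \nu(dx) = C_h(y_k)\,d_H^2(\mu,\mu^\prime).
\end{equation*}
Taking square roots completes the argument.

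There is no genuine obstacle here; the result is a direct computation once the derivatives are identified. The only point requiring a moment of care is the factoring of $\sqrt{h(y_k,x)}$ out of the squared-root difference, which is where the Hellinger case is slightly cleaner than the TV case: since the common multiplicative weight $h$ enters both $\tilde{F}_k(\mu)$ and $\tilde{F}_k(\mu^\prime)$ identically, no cross term involving the differing evidence values $Z_k(\mu)$ and $Z_k(\mu^\prime)$ appears (in contrast to the normalized statement of Proposition~\ref{prop:ip:normalized_bound_H}), so the argument stays entirely pointwise and the Lipschitz constant emerges as $\sqrt{C_h(y_k)}$.
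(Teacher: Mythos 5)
Your proof is correct and is precisely the argument the paper intends: the paper states only that the proof of this lemma is analogous to that of Lemma~\ref{prop:ip:unnormalized_bound_tv}, and your writeup carries out exactly that analogy—identifying the Radon--Nikodym derivatives, factoring $h(y_k,x)$ out of the squared root-difference, and bounding it pointwise by $C_h(y_k)$.
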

The proof of Lemma~\ref{prop:ip:unnormalized_bound_H} is analogous to that of Lemma~\ref{prop:ip:unnormalized_bound_tv}. 

\begin{lemma}
In state estimation problems, under Assumption SE.1, $\forall \mu, \mu^\prime \in \bar{\mathcal{P}}_k(\X)$, $\tilde{F}_k(\mu)$ and $\tilde{F}_k(\mu^\prime)$ satisfy
\begin{equation*}
\tilde{d}_{H}(\tilde{F}_k(\mu),\tilde{F}_k(\mu^\prime)) \leq \sqrt{C_{Th}(y_k;k)}d_{H}(\mu,\mu^\prime).
\end{equation*}
\label{prop:se:unnormalized_bound_H}
\end{lemma}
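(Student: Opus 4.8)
The plan is to mirror the structure of the proof of Lemma~\ref{prop:se:unnormalized_bound_tv}, replacing the $L^1$-type manipulations by $L^2$-type ones adapted to the Hellinger distance. I would fix a $\sigma$-finite measure $\nu$ with $\mu \ll \nu$ and $\mu^\prime \ll \nu$, and write $\pi = \frac{d\mu}{d\nu}$, $\pi^\prime = \frac{d\mu^\prime}{d\nu}$. By the definition of $\tilde{F}_k$ for state estimation, $\tilde{F}_k(\mu)$ and $\tilde{F}_k(\mu^\prime)$ are absolutely continuous with respect to the Lebesgue measure, with densities $h_k(y_k,x)m(x)$ and $h_k(y_k,x)m^\prime(x)$, where $m(x) := \int_{\X}T_k(x,x_{k-1})\pi(x_{k-1})\nu(dx_{k-1})$ and $m^\prime$ is defined analogously. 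Taking the Lebesgue measure as the reference measure in Definition~\ref{def:general distance}, the factor $h_k$ pulls out of the square, giving
\[
\tilde{d}_H^2(\tilde{F}_k(\mu),\tilde{F}_k(\mu^\prime)) = \frac{1}{2}\int_{\X}h_k(y_k,x)\left(\sqrt{m(x)}-\sqrt{m^\prime(x)}\right)^2 dx.
\]
The task thus reduces to controlling the integrand $\left(\sqrt{m(x)}-\sqrt{m^\prime(x)}\right)^2$ pointwise in $x$.

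The key step---and the only nontrivial one---is the pointwise bound
\[
\left(\sqrt{m(x)}-\sqrt{m^\prime(x)}\right)^2 \leq \int_{\X}T_k(x,x_{k-1})\left(\sqrt{\pi(x_{k-1})}-\sqrt{\pi^\prime(x_{k-1})}\right)^2\nu(dx_{k-1}),
\]
valid for each fixed $x \in \X$. I would obtain this by viewing $\sqrt{m(x)}$ as an $L^2$ norm: for the measure $\rho_x(dx_{k-1}) := T_k(x,x_{k-1})\nu(dx_{k-1})$ one has $\sqrt{m(x)} = \lVert \sqrt{\pi}\rVert_{L^2(\rho_x)}$ and $\sqrt{m^\prime(x)} = \lVert \sqrt{\pi^\prime}\rVert_{L^2(\rho_x)}$, both finite because $\mu,\mu^\prime \in \bar{\mathcal{P}}_k(\X)$ guarantees the prediction integrals are finite. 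The desired inequality is then exactly the reverse triangle inequality $\big\lvert \lVert u\rVert - \lVert v\rVert \big\rvert \leq \lVert u-v\rVert$ for the $L^2(\rho_x)$ seminorm, applied with $u = \sqrt{\pi}$ and $v = \sqrt{\pi^\prime}$. Crucially, Minkowski's inequality holds for an arbitrary measure $\rho_x$, normalized or not, so no normalization of the transition kernel $T_k(x,\cdot)$ in the $x_{k-1}$ variable is required---this is precisely what makes the mixture structure of the predictive density $m$ harmless. The step is the square-root analogue of the triangle-inequality argument already carried out in Lemma~\ref{lemma:general metric}.

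With this pointwise bound in hand, the remainder is routine. I would substitute it into the displayed expression for $\tilde{d}_H^2$, swap the order of integration by Tonelli's theorem, and recognize the inner integral $\int_{\X}h_k(y_k,x)T_k(x,x_{k-1})dx$, which Assumption~\ref{assump:se_single_step} bounds uniformly by $C_{Th}(y_k;k)$. This yields
\[
\tilde{d}_H^2(\tilde{F}_k(\mu),\tilde{F}_k(\mu^\prime)) \leq \frac{1}{2}C_{Th}(y_k;k)\int_{\X}\left(\sqrt{\pi(x_{k-1})}-\sqrt{\pi^\prime(x_{k-1})}\right)^2\nu(dx_{k-1}) = C_{Th}(y_k;k)\,d_H^2(\mu,\mu^\prime),
\]
and taking square roots gives the claim. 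I expect the main obstacle to be isolating and justifying the pointwise Minkowski step; once the prediction map is recognized as an $L^2$-norm contraction in the appropriate weighted sense, the likelihood weighting and Tonelli's theorem close the argument exactly as in the total-variation case of Lemma~\ref{prop:se:unnormalized_bound_tv}.
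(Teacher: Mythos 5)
Your proposal is correct and follows essentially the same route as the paper: pull the likelihood factor out of the squared Hellinger integrand, bound $\bigl(\sqrt{m(x)}-\sqrt{m^\prime(x)}\bigr)^2$ by $\int_{\X}T_k(x,x_{k-1})\bigl(\sqrt{\pi}-\sqrt{\pi^\prime}\bigr)^2\nu(dx_{k-1})$, then apply Tonelli and Assumption SE.1. Your reverse-triangle-inequality step in $L^2(\rho_x)$ is exactly the paper's expansion of the square followed by H\"older (Cauchy--Schwarz) on the cross term, just stated as a named inequality rather than written out.
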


\begin{proof}
Let $\nu$ be a $\sigma$-finite measure defined on the measurable space $(\X,\mathcal{B}(\X))$ such that $\mu \ll \nu$ and $\mu^\prime \ll \nu$. Let $\pi(x)$ and $\pi^\prime(x)$ denote the Radon-Nikodym derivatives $\frac{d\mu}{d\nu}(x)$ and $\frac{d\mu^\prime}{d\nu}(x)$, respectively. By the definition of $\tilde{d}_H$ distance, we have
\begin{align}
&\tilde{d}_{H}^2(\tilde{F}_k(\mu),\tilde{F}_k(\mu^\prime)) \nonumber \\
&\qquad = \frac{1}{2}\int_{\X}\left({\sqrt{h_k(y_k,x)\int_{\X}T_k(x,x_{k-1})\mu(dx_{k-1})}}\right.\nonumber \\
&\qquad \qquad \qquad \qquad \qquad\left.{- \sqrt{h_k(y_k,x)\int_{\X}T_k(x,x_{k-1})\mu^\prime(dx_{k-1})}}\right)^2dx \nonumber \\
&\qquad = \frac{1}{2}\int_{\X}h_k(y_k,x)\left({\int_{\X}T_k(x,x_{k-1})\left(\pi(x_{k-1})+\pi^\prime(x_{k-1})\right)\nu(dx_{k-1})}\right. \nonumber \\
&\qquad \quad \left.{-2\sqrt{\int_{\X}T_k(x,x_{k-1})\pi(x_{k-1}) \nu(dx_{k-1})}\sqrt{\int_{\X}T_k(x,x_{k-1})\pi^\prime(x_{k-1}) \nu(dx_{k-1})}}\right)dx.
\label{eq:se:unnormalized:H:def}
\end{align}
By Hölder's inequality, we have
\begin{align}
&\sqrt{\int_{\X}T_k(x,x_{k-1})\pi(x_{k-1}) \nu(dx_{k-1})}\sqrt{\int_{\X}T_k(x,x_{k-1})\pi^\prime(x_{k-1}) \nu(dx_{k-1})}\nonumber \\
& \qquad \geq \int_{\X}T_k(x,x_{k-1})\sqrt{\pi(x_{k-1})} \sqrt{\pi^\prime(x_{k-1})}\nu(dx_{k-1}).
\label{ieq:se:unnormalized:H:1}
\end{align}
Substituting Equation~\eqref{ieq:se:unnormalized:H:1} into Equation~\eqref{eq:se:unnormalized:H:def} gives
\begin{align}
&\tilde{d}_{H}^2(\tilde{F}_k(\mu),\tilde{F}_k(\mu^\prime)) \leq \frac{1}{2}\int_{\X}h_k(y_k,x)\left({\int_{\X}T_k(x,x_{k-1})\left(\pi(x_{k-1})+\pi^\prime(x_{k-1})\right)\nu(dx_{k-1})}\right. \nonumber \\
&\qquad \qquad \qquad \qquad \qquad\left.{-2\int_{\X}T_k(x,x_{k-1})\sqrt{\pi(x_{k-1})}\sqrt{\pi^\prime(x_{k-1})} \nu(dx_{k-1})}\right)dx\nonumber \\
&\qquad \quad = \frac{1}{2}\int_{\X}h_k(y_k,x)\left(\int_{\X}T_k(x,x_{k-1})\left(\sqrt{\pi(x_{k-1})}-\sqrt{\pi^\prime(x_{k-1})}\right)^2\nu(dx_{k-1})\right)dx.
\label{ieq:se:unnormalized:H:new:1}
\end{align}
According to Tonelli's theorem, the right-hand side of Equation~\eqref{ieq:se:unnormalized:H:new:1} satisfies
\begin{align*}
&\frac{1}{2}\int_{\X}h_k(y_k,x)\left(\int_{\X}T_k(x,x_{k-1})\left(\sqrt{\pi(x_{k-1})}-\sqrt{\pi^\prime(x_{k-1})}\right)^2\nu(dx_{k-1})\right)dx \\
& \qquad  =\frac{1}{2}\int_{\X}\left(\int_{\X}h_k(y_k,x)T_k(x,x_{k-1})dx\right)\left(\sqrt{\pi(x_{k-1})}-\sqrt{\pi^\prime(x_{k-1})}\right)^2\nu(dx_{k-1}) \nonumber \\
& \qquad \leq \frac{1}{2}C_{Th}(y_k;k)\int_{\X}\left(\sqrt{\pi(x_{k-1})}-\sqrt{\pi^\prime(x_{k-1})}\right)^2\nu(dx_{k-1}) \nonumber \\
& \qquad = C_{Th}(y_k;k)d_H^2(\mu,\mu^\prime).
\end{align*}
\end{proof}

\begin{lemma}
In parameter-state estimation problems, under Assumption PS.1, $\forall \mu, \mu^\prime \in \bar{\mathcal{P}}_k(\X \times \W)$, $\tilde{F}_k(\mu)$ and $\tilde{F}_k(\mu^\prime))$ satisfy
\begin{equation*}
\tilde{d}_{H}(\tilde{F}_k(\mu),\tilde{F}_k(\mu^\prime)) \leq \sqrt{\tilde{C}_{Th}(y_k;k)}d_{H}(\mu,\mu^\prime).
\end{equation*}
\label{prop:ps:unnormalized_bound_H}
\end{lemma}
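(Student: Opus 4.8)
The plan is to mirror the proof of Lemma~\ref{prop:se:unnormalized_bound_H} almost verbatim, carrying the extra parameter variable $w$ along throughout. Let $\pi$ and $\pi^\prime$ denote the Lebesgue densities of $\mu$ and $\mu^\prime$, respectively. Starting from the definition of $\tilde{d}_H$ together with the definition of $\tilde{F}_k$ for parameter-state estimation, I would first factor the common observation weight $h_k(y_k,x,w)$ out of the squared difference, writing $\tilde{d}_H^2(\tilde{F}_k(\mu),\tilde{F}_k(\mu^\prime))$ as
\begin{equation*}
\frac{1}{2}\int_{\X\times\W} h_k(y_k,x,w)\left(\sqrt{\int_{\X} T_k(x,x_{k-1},w)\pi(x_{k-1},w)dx_{k-1}} - \sqrt{\int_{\X} T_k(x,x_{k-1},w)\pi^\prime(x_{k-1},w)dx_{k-1}}\right)^2 dxdw.
\end{equation*}

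Next I would expand the square into a sum of the two ``diagonal'' terms and a cross term, exactly as in Equation~\eqref{eq:se:unnormalized:H:def}. The key inequality is again Hölder's (Cauchy--Schwarz) applied pointwise in $(x,w)$ with the kernel $T_k(x,\cdot,w)$ playing the role of the integration weight: for each fixed $(x,w)$,
\begin{equation*}
\sqrt{\int_{\X} T_k(x,x_{k-1},w)\pi(x_{k-1},w)\,dx_{k-1}}\,\sqrt{\int_{\X} T_k(x,x_{k-1},w)\pi^\prime(x_{k-1},w)\,dx_{k-1}} \geq \int_{\X} T_k(x,x_{k-1},w)\sqrt{\pi(x_{k-1},w)}\sqrt{\pi^\prime(x_{k-1},w)}\,dx_{k-1}.
\end{equation*}
Substituting this lower bound for the cross term collapses the three pieces into a single integral of $\left(\sqrt{\pi(x_{k-1},w)}-\sqrt{\pi^\prime(x_{k-1},w)}\right)^2$ weighted by $h_k T_k$, in direct analogy with Equation~\eqref{ieq:se:unnormalized:H:new:1}.

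Finally I would invoke Tonelli's theorem—justified because the integrand is nonnegative—to interchange the order of integration and integrate over $x$ first, producing the inner factor $\int_{\X} h_k(y_k,x,w)T_k(x,x_{k-1},w)\,dx$. Bounding this factor by $\tilde{C}_{Th}(y_k;k)$ via Assumption~\ref{assump:ps_single_step} and recognizing the remaining integral over $(x_{k-1},w)$ as $2\,d_H^2(\mu,\mu^\prime)$ yields $\tilde{d}_H^2(\tilde{F}_k(\mu),\tilde{F}_k(\mu^\prime)) \leq \tilde{C}_{Th}(y_k;k)\,d_H^2(\mu,\mu^\prime)$, and taking square roots finishes the argument. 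I do not anticipate a genuine obstacle here since the structure is identical to the state-estimation case; the only points requiring care are ensuring the Hölder step is applied correctly for each fixed $(x,w)$ before integrating, and confirming that the extra $w$-integration does not interfere with the Tonelli swap, which it does not because $w$ is simply carried as an inert coordinate of the joint state space $\X\times\W$.
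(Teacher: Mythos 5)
Your proposal is correct and matches the paper's own proof, which likewise reduces the parameter-state case to the state-estimation argument of Lemma~\ref{prop:se:unnormalized_bound_H} by carrying $w$ as an inert coordinate: pointwise Cauchy--Schwarz with respect to the kernel $T_k(x,\cdot,w)$, then Tonelli to pull out $\int_{\X}h_k(y_k,x,w)T_k(x,x_{k-1},w)\,dx \leq \tilde{C}_{Th}(y_k;k)$. The bookkeeping of the $\tfrac{1}{2}$ factors is also right, giving $\tilde{d}_H^2 \leq \tilde{C}_{Th}(y_k;k)\,d_H^2(\mu,\mu^\prime)$.
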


\begin{proof}
Let $\pi$ and $\pi^\prime$ denote the Lebesgue densities of $\mu$ and $\mu^\prime$, respectively. Following an argument analogous to that used in the proof of Lemma~\ref{prop:se:unnormalized_bound_H}, we can show that
\begin{align}
&\tilde{d}^2_{H}(\tilde{F}_k(\mu), \tilde{F}_k(\mu^\prime)) \nonumber \\
& \leq \quad \frac{1}{2}\int_{\X \times \W} h_k(y_k,x,w) \left(\int_{\X} T_k(x,x_{k-1},w)\left(\sqrt{\pi(x_{k-1},w)} - \sqrt{\pi^\prime(x_{k-1},w)} \right)^2 dx_{k-1}\right) \nonumber \\
&\qquad \qquad \cdot dxdw.
\label{ieq:ps:unnormalized:H:new:1}
\end{align}
An argument similar to that in Lemma~\ref{prop:ps:unnormalized_bound_tv} yields
\begin{align*}
&\int_{\X \times \W} h_k(y_k,x,w) \left(\int_{\X} T_k(x,x_{k-1},w)\left(\sqrt{\pi(x_{k-1},w)} - \sqrt{\pi^\prime(x_{k-1},w)} \right)^2 dx_{k-1}\right) \\
&\qquad \cdot dxdw \nonumber \\
& \quad = \int_{\X \times \W} \left(\int_{\X} h_k(y_k,x,w) T_k(x,x_{k-1},w) dx \right) \left(\sqrt{\pi(x_{k-1},w) } - \sqrt{\pi^\prime(x_{k-1},w)} \right)^2  \\ &\qquad \qquad \cdot dx_{k-1}dw \nonumber \\
&\qquad \leq \tilde{C}_{Th}(y_k;k)\int_{\X \times \W} \left(\sqrt{\pi(x_{k-1},w)} - \sqrt{\pi^\prime(x_{k-1},w)} \right)^2 dx_{k-1}dw\nonumber \\
&\qquad = 2\tilde{C}_{Th}(y_k;k)d_H^2(\mu,\mu^\prime).
\end{align*}
\end{proof}

\subsubsection{Proofs of Propositions~\ref{prop:ip:normalized_bound_H}---\ref{prop:ps:normalized_bound_H}}

Given Proposition~\ref{thm:lipschitz}, to prove Propositions~\ref{prop:ip:normalized_bound_H}---\ref{prop:ps:normalized_bound_H}, we first prove the following lemma.

\begin{lemma}
Let $\tilde{\mu}, \tilde{\mu}^\prime \in \tilde{\mathcal{P}}(\Z)$ be two measures such that $\forall A \in \mathcal{B}(\Z)$,
\begin{equation*}
\tilde{\mu}(A) = k\mu(A), \quad \tilde{\mu}^\prime(A) = k^\prime\mu^\prime(A), \quad \mu,\mu^\prime \in \mathcal{P}(\Z), \quad k,k^\prime \in \reals_{+}.
\end{equation*}
 Then $\lvert \sqrt{k} -\sqrt{k^\prime} \rvert$ satisfies
\begin{equation}
\bigg \lvert \sqrt{k} -\sqrt{k^\prime} \bigg \rvert \leq \sqrt{2}\tilde{d}_H(\tilde{\mu},\tilde{\mu}^\prime).
\end{equation}
The Hellinger distance between the two probability measures $\mu$ and $\mu^\prime$ satisfies
\begin{equation}
d_H\left(\mu,\mu^\prime \right) \leq \frac{2}{\sqrt{k}}\tilde{d}_H(\tilde{\mu},\tilde{\mu}^\prime).
\end{equation}
\label{lemma:unnormalized2normalized:H}
\end{lemma}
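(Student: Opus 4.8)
The plan is to reduce both inequalities to a single closed-form expression for $\tilde{d}_H^2(\tilde{\mu},\tilde{\mu}^\prime)$ in terms of the scales $k,k^\prime$ and the Hellinger affinity of the normalized measures, and then to obtain the second (harder) bound by exploiting the fact, established in Lemma~\ref{lemma:general metric}, that $(\tilde{\mathcal{P}}(\Z),\tilde{d}_H)$ is a genuine metric space.

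First I would fix a common $\sigma$-finite dominating measure $\nu$ with $\mu \ll \nu$ and $\mu^\prime \ll \nu$ (such $\nu$ always exists), write $f=\tfrac{d\mu}{d\nu}$ and $g=\tfrac{d\mu^\prime}{d\nu}$, so that $\tfrac{d\tilde{\mu}}{d\nu}=kf$ and $\tfrac{d\tilde{\mu}^\prime}{d\nu}=k^\prime g$. Since $\tilde{d}_H$ is invariant to the choice of $\nu$, expanding the square in the definition and using $\int f\,d\nu=\int g\,d\nu=1$ gives
\[
2\tilde{d}_H^2(\tilde{\mu},\tilde{\mu}^\prime)=k+k^\prime-2\sqrt{kk^\prime}\,A, \qquad A:=\int_{\Z}\sqrt{f(z)g(z)}\,\nu(dz),
\]
where $A$ is the Hellinger affinity and Cauchy--Schwarz yields $A\le 1$. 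For the first bound I would simply compare this with $(\sqrt{k}-\sqrt{k^\prime})^2=k+k^\prime-2\sqrt{kk^\prime}$: since $A\le 1$ forces $-2\sqrt{kk^\prime}\le -2\sqrt{kk^\prime}A$, we get $(\sqrt{k}-\sqrt{k^\prime})^2\le 2\tilde{d}_H^2(\tilde{\mu},\tilde{\mu}^\prime)$, which is exactly $\lvert\sqrt{k}-\sqrt{k^\prime}\rvert\le\sqrt{2}\,\tilde{d}_H(\tilde{\mu},\tilde{\mu}^\prime)$.

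For the second bound, the key idea---and the step I expect to be the main obstacle---is to introduce the auxiliary scaled measure $\hat{\mu}^\prime:=k\mu^\prime\in\tilde{\mathcal{P}}(\Z)$, i.e. the same normalized measure $\mu^\prime$ but carrying the scale $k$ rather than $k^\prime$, and to apply the triangle inequality for $\tilde{d}_H$ from Lemma~\ref{lemma:general metric}:
\[
\tilde{d}_H(\tilde{\mu},\hat{\mu}^\prime)\le \tilde{d}_H(\tilde{\mu},\tilde{\mu}^\prime)+\tilde{d}_H(\tilde{\mu}^\prime,\hat{\mu}^\prime).
\]
Each distance on the right then evaluates in closed form because the relevant pair shares either a common scale or a common normalized measure: a direct computation gives $\tilde{d}_H(\tilde{\mu},\hat{\mu}^\prime)=\tilde{d}_H(k\mu,k\mu^\prime)=\sqrt{k}\,d_H(\mu,\mu^\prime)$, while $\tilde{d}_H(\tilde{\mu}^\prime,\hat{\mu}^\prime)=\tilde{d}_H(k^\prime\mu^\prime,k\mu^\prime)=\tfrac{1}{\sqrt{2}}\lvert\sqrt{k}-\sqrt{k^\prime}\rvert$.

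Substituting these two identities and then invoking the first bound $\lvert\sqrt{k}-\sqrt{k^\prime}\rvert\le\sqrt{2}\,\tilde{d}_H(\tilde{\mu},\tilde{\mu}^\prime)$ turns the right-hand side into $\tilde{d}_H(\tilde{\mu},\tilde{\mu}^\prime)+\tilde{d}_H(\tilde{\mu},\tilde{\mu}^\prime)$, so that $\sqrt{k}\,d_H(\mu,\mu^\prime)\le 2\tilde{d}_H(\tilde{\mu},\tilde{\mu}^\prime)$; dividing by $\sqrt{k}$ gives the claimed $d_H(\mu,\mu^\prime)\le\tfrac{2}{\sqrt{k}}\tilde{d}_H(\tilde{\mu},\tilde{\mu}^\prime)$. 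I anticipate that all the computations (the affinity expansion and the two closed-form distances) are routine once the decomposition through $k\mu^\prime$ is in place; the only genuine insight needed is choosing that intermediate measure so that the metric triangle inequality separates the ``shape'' discrepancy ($d_H(\mu,\mu^\prime)$) from the ``scale'' discrepancy ($\lvert\sqrt{k}-\sqrt{k^\prime}\rvert$), after which part~1 closes the argument.
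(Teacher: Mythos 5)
Your proposal is correct. For the first inequality your argument is essentially the paper's: you expand $2\tilde{d}_H^2(\tilde{\mu},\tilde{\mu}^\prime)=k+k^\prime-2\sqrt{kk^\prime}A$ and bound the cross term via $A\le 1$, which is exactly the Cauchy--Schwarz/H\"older step in the paper's proof, just packaged through the affinity $A$. For the second inequality you take a genuinely different (though closely related) route. The paper works pointwise inside the integral: it writes $d_H^2(\mu,\mu^\prime)$ in terms of $\sqrt{\tfrac{1}{k}}\sqrt{\tfrac{d\tilde{\mu}}{d\nu}}$ and $\sqrt{\tfrac{1}{k^\prime}}\sqrt{\tfrac{d\tilde{\mu}^\prime}{d\nu}}$, adds and subtracts $\sqrt{\tfrac{1}{k}}\sqrt{\tfrac{d\tilde{\mu}^\prime}{d\nu}}$, and applies $(a+b)^2\le 2(a^2+b^2)$, arriving at $d_H^2\le \tfrac{2}{k}\tilde{d}_H^2+\tfrac{(\sqrt{k}-\sqrt{k^\prime})^2}{k}\le\tfrac{4}{k}\tilde{d}_H^2$. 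You instead lift the same add-and-subtract idea to the level of measures: the intermediate object $\hat{\mu}^\prime=k\mu^\prime$, the metric triangle inequality for $\tilde{d}_H$ from Lemma~\ref{lemma:general metric}, and the two closed-form identities $\tilde{d}_H(k\mu,k\mu^\prime)=\sqrt{k}\,d_H(\mu,\mu^\prime)$ and $\tilde{d}_H(k^\prime\mu^\prime,k\mu^\prime)=\tfrac{1}{\sqrt{2}}\lvert\sqrt{k}-\sqrt{k^\prime}\rvert$ (both of which check out). Combined with part 1 this gives $\sqrt{k}\,d_H(\mu,\mu^\prime)\le 2\tilde{d}_H(\tilde{\mu},\tilde{\mu}^\prime)$, i.e.\ the same constant as the paper. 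What your version buys is a cleaner separation of the ``shape'' and ``scale'' discrepancies and reuse of the metric structure already established; what it costs is a dependence on the triangle inequality of Lemma~\ref{lemma:general metric} (itself proved by H\"older and Minkowski-type arguments), whereas the paper's computation is self-contained at the level of integrals. There is no circularity in your dependence, so the argument stands as written.
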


\begin{proof}
We first prove the statement $\lvert \sqrt{k} -\sqrt{k^\prime} \rvert \leq \sqrt{2}\tilde{d}_H(\tilde{\mu},\tilde{\mu}^\prime)$.
Let $\nu$ be a $\sigma$-finite measure defined on $(\Z,\mathcal{B}(\Z))$ such that $\mu \ll \nu$ and $\mu^\prime \ll \nu$. Then we have $\tilde{\mu} \ll \nu$ and $\tilde{\mu}^\prime \ll \nu$.
Given the definition of $\tilde{\mu}$ and $\tilde{\mu}^\prime$, we have
\begin{align*}
\int_{\Z}\frac{d\tilde{\mu}}{d\nu}(z)\nu(dz) =& \int_{\Z}\tilde{\mu}(dz) = k\int_{\Z}\mu(dz)=k, \\
\int_{\Z}\frac{d\tilde{\mu}^\prime}{d\nu}(z)\nu(dz) =& \int_{\Z}\tilde{\mu}^\prime(dz) = k^\prime\int_{\Z}\mu^\prime(dz)=k^\prime.
\end{align*}
Therefore, $(\sqrt{k} -\sqrt{k^\prime})^2$ can be written as
\begin{align}
&(\sqrt{k} -\sqrt{k^\prime})^2 =k + k ^\prime -2\sqrt{k}\sqrt{k^\prime} \nonumber \\
&\qquad \qquad =\int_{\Z}\frac{d\tilde{\mu}}{d\nu}(z)\nu(dz) +  \int_{\Z}\frac{d\tilde{\mu}^\prime}{d\nu}(z)\nu(dz) -2\sqrt{\int_{\Z}\frac{d\tilde{\mu}}{d\nu}(z)\nu(dz)}\sqrt{\int_{\Z}\frac{d\tilde{\mu}^\prime}{d\nu}(z)\nu(dz)}.
\label{eq:unnornalized2normalized:H:total}
\end{align}
By Hölder's inequality, the term $\sqrt{\int_{\Z}\frac{d\tilde{\mu}}{d\nu}(z)\nu(dz)}\sqrt{\int_{\Z}\frac{d\tilde{\mu}^\prime}{d\nu}(z)\nu(dz)}$ satisfies
\begin{equation}
\sqrt{\int_{\Z}\frac{d\tilde{\mu}}{d\nu}(z)\nu(dz)}\sqrt{\int_{\Z}\frac{d\tilde{\mu}^\prime}{d\nu}(z)\nu(dz)} \geq \int_{\Z}\sqrt{\frac{d\tilde{\mu}}{d\nu}(z)}\sqrt{\frac{d\tilde{\mu}^\prime}{d\nu}(z)}\nu(dz).
\label{ieq:unnornalized2normalized:H:1}
\end{equation}
Plugging Equation~\eqref{ieq:unnornalized2normalized:H:1} into Equation~\eqref{eq:unnornalized2normalized:H:total} gives
\begin{align*}
(\sqrt{k} -\sqrt{k^\prime})^2 \leq & \int_{\Z}\frac{d\tilde{\mu}}{d\nu}(z)\nu(dz) +  \int_{\Z}\frac{d\tilde{\mu}^\prime}{d\nu}(z)\nu(dz) -2\int_{\Z}\sqrt{\frac{d\tilde{\mu}}{d\nu}(z)}\sqrt{\frac{d\tilde{\mu}^\prime}{d\nu}(z)}\nu(dz) \\
=& \int_{\Z}\left(\sqrt{\frac{d\tilde{\mu}}{d\nu}(z)}-\sqrt{\frac{d\tilde{\mu}^\prime}{d\nu}(z)}\right)^2\nu(dz) \\
=& 2\tilde{d}_{H}^2(\tilde{\mu},\tilde{\mu}^\prime).
\end{align*}

Next, we prove the statement $d_H\left(\mu,\mu^\prime \right) \leq \frac{2}{\sqrt{k}}\tilde{d}_H(\tilde{\mu},\tilde{\mu}^\prime)$. \\
By the definition of the Hellinger distance, $d_H^2\left(\mu,\mu^\prime \right)$ can be written as
\begin{align*}
&d_H^2\left(\mu,\mu^\prime \right) = \frac{1}{2}\int_\mathcal{Z} \left(\sqrt{\frac{d\mu}{d\nu}(z)} - \sqrt{\frac{d\mu^\prime}{d\nu}(z)}\right)^2\nu(dz) \nonumber \\
& \quad =\frac{1}{2}\int_\mathcal{Z} \left(\sqrt{\frac{1}{k}}\sqrt{\frac{d\tilde{\mu}}{d\nu}(z)} - \sqrt{\frac{1}{k^\prime}}\sqrt{\frac{d\tilde{\mu}^\prime}{d\nu}(z)}\right)^2\nu(dz) \nonumber \\
&\quad =\frac{1}{2}\int_\mathcal{Z} \left(\sqrt{\frac{1}{k}}\sqrt{\frac{d\tilde{\mu}}{d\nu}(z)} - \sqrt{\frac{1}{k}}\sqrt{\frac{d\tilde{\mu}^\prime}{d\nu}(z)} + \sqrt{\frac{1}{k}}\sqrt{\frac{d\tilde{\mu}^\prime}{d\nu}(z)} -\sqrt{\frac{1}{k^\prime}}\sqrt{\frac{d\tilde{\mu}^\prime}{d\nu}(z)}\right)^2\nu(dz) \nonumber \\
&\quad \leq \int_\mathcal{Z} \left({\left(\sqrt{\frac{1}{k}}\sqrt{\frac{d\tilde{\mu}}{d\nu}(z)} - \sqrt{\frac{1}{k}}\sqrt{\frac{d\tilde{\mu}^\prime}{d\nu}(z)}\right)^2}\right. \\
&\qquad \qquad \left.{+ \left(\sqrt{\frac{1}{k}}\sqrt{\frac{d\tilde{\mu}^\prime}{d\nu}(z)} -\sqrt{\frac{1}{k^\prime}}\sqrt{\frac{d\tilde{\mu}^\prime}{d\nu}(z)}\right)^2}\right)\nu(dz) \nonumber \\
&\quad =\frac{1}{k}\int_\mathcal{Z} \left(\sqrt{\frac{d\tilde{\mu}}{d\nu}(z)} - \sqrt{\frac{d\tilde{\mu}^\prime}{d\nu}(z)}\right)^2\nu(dz) + \left(\sqrt{\frac{1}{k}}-\sqrt{\frac{1}{k^\prime}}\right)^2\int_{\Z}\frac{d\tilde{\mu}^\prime}{d\nu}(z)\nu(dz) \nonumber \\
& \quad =\frac{2}{k}\tilde{d}^2_H(\tilde{\mu},\tilde{\mu}^\prime)+ \frac{(\sqrt{k}-\sqrt{k^\prime})^2}{k} \leq \frac{4}{k}\tilde{d}^2_H(\tilde{\mu},\tilde{\mu}^\prime).
\end{align*}
\end{proof}

Lemma~\ref{lemma:unnormalized2normalized:H} is a generalization of Lemma~\ref{lemma:unnormalized2normalized:H:density} stated below. Lemma~\ref{lemma:unnormalized2normalized:H:density} is presented in Lemma 1 in \cite{tt_sequential} and the proofs of Proposition 5 and Theorem 1 in \cite{deep_composition}.

\begin{lemma}[
\cite{tt_sequential}, \cite{deep_composition}]
Let $\tilde{\mu}, \tilde{\mu}^\prime \in \tilde{\mathcal{P}}(\Z)$ be two measures, both absolutely continuous with respect to the Lebesgue measure. 
Let $\tilde{\pi}$ and $\tilde{\pi}^\prime$ represent the densities of $\tilde{\mu}$ and $\tilde{\mu}^\prime $ with respect to the Lebesgue measure, respectively. Assume that $\forall z \in \mathcal{Z}$, $\tilde{\pi}$ and $\tilde{\pi}^\prime$ satisfy 
\begin{equation*}
\tilde{\pi}(z) = k\pi(z), \quad \tilde{\pi}^\prime(z) = k^\prime\pi^\prime(z), \quad k,k^\prime \in \reals_{+},
\end{equation*}
where $\pi$ and $\pi^\prime$ are the densities of the probability measures $\mu$ and $\mu^\prime$ with respect to the Lebesgue measure, respectively.
Then $\lvert \sqrt{k} -\sqrt{k^\prime} \rvert$ satisfies
\begin{equation*}
\bigg \lvert \sqrt{k} -\sqrt{k^\prime} \bigg \rvert \leq \sqrt{2}\tilde{d}_H(\tilde{\mu},\tilde{\mu}^\prime).
\end{equation*}
The Hellinger distance between the two probability measures $\mu$ and $\mu^\prime$ satisfies
\begin{equation}
d_H\left(\mu,\mu^\prime \right) \leq \frac{2}{\sqrt{k}}\tilde{d}_H(\tilde{\mu},\tilde{\mu}^\prime).
\end{equation}
\label{lemma:unnormalized2normalized:H:density}
\end{lemma}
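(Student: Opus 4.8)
The plan is to deduce Lemma~\ref{lemma:unnormalized2normalized:H:density} directly from the already-established Lemma~\ref{lemma:unnormalized2normalized:H}, by recognizing the former as the specialization of the latter to the Lebesgue reference measure. Concretely, I would first verify that the hypotheses of Lemma~\ref{lemma:unnormalized2normalized:H:density} imply those of Lemma~\ref{lemma:unnormalized2normalized:H}. Since $\tilde{\mu}$ and $\tilde{\mu}^\prime$ are both absolutely continuous with respect to the Lebesgue measure, one may take the common dominating $\sigma$-finite measure $\nu$ appearing in Lemma~\ref{lemma:unnormalized2normalized:H} to be the Lebesgue measure, so that $\frac{d\tilde{\mu}}{d\nu} = \tilde{\pi}$ and $\frac{d\tilde{\mu}^\prime}{d\nu} = \tilde{\pi}^\prime$. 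The density-level scalings $\tilde{\pi} = k\pi$ and $\tilde{\pi}^\prime = k^\prime\pi^\prime$ integrate (over any $A \in \mathcal{B}(\Z)$) to the measure-level scalings $\tilde{\mu}(A) = k\mu(A)$ and $\tilde{\mu}^\prime(A) = k^\prime\mu^\prime(A)$ required by Lemma~\ref{lemma:unnormalized2normalized:H}. With this identification in place, both displayed inequalities of Lemma~\ref{lemma:unnormalized2normalized:H:density} are immediate transcriptions of the two conclusions of Lemma~\ref{lemma:unnormalized2normalized:H}.

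For completeness I would also record that a self-contained argument is available, mirroring the proof of Lemma~\ref{lemma:unnormalized2normalized:H} verbatim with $\nu$ fixed as the Lebesgue measure. For the first inequality, integrate to obtain $\int_{\Z}\tilde{\pi}(z)\,dz = k$ and $\int_{\Z}\tilde{\pi}^\prime(z)\,dz = k^\prime$, expand $(\sqrt{k}-\sqrt{k^\prime})^2 = k + k^\prime - 2\sqrt{k}\sqrt{k^\prime}$, and apply the Cauchy--Schwarz (Hölder) inequality in the form $\sqrt{\int \tilde{\pi}}\,\sqrt{\int \tilde{\pi}^\prime} \geq \int \sqrt{\tilde{\pi}}\,\sqrt{\tilde{\pi}^\prime}$; recognizing the resulting integral as $2\tilde{d}_H^2(\tilde{\mu},\tilde{\mu}^\prime)$ gives $(\sqrt{k}-\sqrt{k^\prime})^2 \leq 2\tilde{d}_H^2(\tilde{\mu},\tilde{\mu}^\prime)$. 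For the second inequality, write $\sqrt{\pi} = \sqrt{1/k}\,\sqrt{\tilde{\pi}}$ and $\sqrt{\pi^\prime} = \sqrt{1/k^\prime}\,\sqrt{\tilde{\pi}^\prime}$, insert and subtract $\sqrt{1/k}\,\sqrt{\tilde{\pi}^\prime}$, and apply $(a+b)^2 \leq 2a^2 + 2b^2$; the first resulting summand equals $\frac{2}{k}\tilde{d}_H^2(\tilde{\mu},\tilde{\mu}^\prime)$ and the second equals $\frac{(\sqrt{k}-\sqrt{k^\prime})^2}{k}$, which is controlled by the first inequality, yielding $d_H^2(\mu,\mu^\prime) \leq \frac{4}{k}\tilde{d}_H^2(\tilde{\mu},\tilde{\mu}^\prime)$.

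The main point is that there is essentially no genuine obstacle: the entire content is already contained in Lemma~\ref{lemma:unnormalized2normalized:H}, and the only thing requiring attention is the routine measure-theoretic bookkeeping that specializing the dominating measure to Lebesgue turns the Radon--Nikodym derivatives into the stated densities and that the two forms of the scaling hypothesis coincide. The single place to exercise minor care is justifying the choice $\nu$ equal to the Lebesgue measure, which is licensed precisely by the standing assumption that $\tilde{\mu}$ and $\tilde{\mu}^\prime$ are absolutely continuous with respect to it, so that every Radon--Nikodym derivative appearing in Lemma~\ref{lemma:unnormalized2normalized:H} is well defined and equal to $\tilde{\pi}$ or $\tilde{\pi}^\prime$.
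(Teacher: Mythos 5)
Your proposal is correct and matches the paper's treatment: the paper proves the more general Lemma~\ref{lemma:unnormalized2normalized:H} for an arbitrary dominating $\sigma$-finite measure $\nu$ and then presents Lemma~\ref{lemma:unnormalized2normalized:H:density} as precisely the special case obtained by taking $\nu$ to be the Lebesgue measure, which is exactly your reduction. Your self-contained fallback argument also reproduces the paper's proof of Lemma~\ref{lemma:unnormalized2normalized:H} verbatim (Hölder for the evidence-difference bound, then the insert-and-subtract step with $(a+b)^2 \leq 2a^2 + 2b^2$), so there is nothing to correct.
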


As shown in the proof of Lemma~\ref{lemma:unnormalized2normalized:H}, the proof of the conclusion $d_H\left(\mu,\mu^\prime \right) \leq \frac{2}{\sqrt{k}}\tilde{d}_H(\tilde{\mu},\tilde{\mu}^\prime)$
is based on the conclusion $\bigg \lvert \sqrt{k} -\sqrt{k^\prime} \bigg \rvert \leq \sqrt{2}\tilde{d}_H(\tilde{\mu},\tilde{\mu}^\prime)$. Our proof of the conclusion $\bigg \lvert \sqrt{k} -\sqrt{k^\prime} \bigg \rvert \leq \sqrt{2}\tilde{d}_H(\tilde{\mu},\tilde{\mu}^\prime)$ differs from the proof in \cite{deep_composition} and is more concise. Given this result, our proof of the conclusion $d_H\left(\mu,\mu^\prime \right) \leq \frac{2}{\sqrt{k}}\tilde{d}_H(\tilde{\mu},\tilde{\mu}^\prime)$ is analogous to the proof in \cite{deep_composition}. 
\\

Given the definition of $F_k$, directly combining Proposition~\ref{thm:lipschitz} and Lemma~\ref{lemma:unnormalized2normalized:H} proves Propositions~\ref{prop:ip:normalized_bound_H}---\ref{prop:ps:normalized_bound_H}. The structure of this argument is similar to that of an argument used in the proof of Theorem 8 in \cite{tt_sequential}, although the conclusions differ.

\subsection{Proof of Theorem~\ref{thm:pointwise_lipschitz} under $1$-Wasserstein distance}

Theorem~\ref{thm:pointwise_lipschitz} under the $1$-Wasserstein distance is a conclusion of Propositions~\ref{prop:ip:normalized_bound_W}---\ref{prop:ps:normalized_bound_W}. To prove Propositions~\ref{prop:ip:normalized_bound_W}---\ref{prop:ps:normalized_bound_W}, we first present Proposition~\ref{thm:lipschitz_evidence}, which is a supporting proposition for the proof of Theorem~\ref{thm:pointwise_lipschitz} under the $1$-Wasserstein distance.

\subsubsection{Supporting proposition for the proof of Theorem~\ref{thm:pointwise_lipschitz}: Proposition~\ref{thm:lipschitz_evidence}}
\begin{prop}\label{thm:lipschitz_evidence}
(\textbf{Global Lipschitz continuity of evidence function $Z_k$}) Given the data $y_k$, the evidence function $Z_k: (\bar{\mathcal{P}}_k(\bar{\X}) \bigcap \mathcal{P}_1(\bar{\X}), W_1) \rightarrow (\reals_{+},\lvert \cdot \rvert)$ is globally Lipschitz continuous, i.e., $Z_k$ satisfies
\begin{equation*}
\lvert Z_k(\mu) - Z_k(\mu^\prime) \rvert \leq K_Z(y_k)W_1(\mu,\mu^\prime), \quad \forall \mu,\mu^\prime \in \bar{\mathcal{P}}_k(\bar{\X}) \bigcap \mathcal{P}_1(\bar{\X}),
\end{equation*}
if 
\begin{itemize}
\item in inverse problems, Assumption IP.2 holds (Theorem 15, \cite{lipschitz_stability_ip}); or
\item in state-parameter estimation problems, Assumption SE.2 holds; or
\item in parameter-state estimation problems, Assumption PS.2 holds.
\end{itemize}
The Lipschitz constant $K_Z(y_k)$ is given in Table ~\ref{table:Z}.
\begin{table}[h!]
\caption{(Global) Lipschitz constant $K_Z(y_k)$ of evidence function $Z_k$ in different problems.}
\centering
\begin{tabularx}{1.0\linewidth}{ 
  | >{\centering\arraybackslash}X 
  | >{\centering\arraybackslash}X |>{\centering\arraybackslash}X | }
 \hline
 Inverse problems & State estimation &Parameter-state learning \\
 \hline
$\lVert h \rVert _{\text{Lip}}(y_k)$ & $C_{Th}^*(y_k;k)$ & $\tilde{C}_{Th}^*(y_k;k)$\\
 \hline
\end{tabularx}
\label{table:Z}
\end{table}
\end{prop}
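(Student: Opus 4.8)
The plan is to reduce all three cases to a single observation: in each setting the evidence is the integral of a fixed, measure-independent function against the prior, and that function is globally Lipschitz with exactly the constant listed in Table~\ref{table:Z}. Once this is established, global Lipschitz continuity of $Z_k$ under $W_1$ follows immediately from the Kantorovich--Rubinstein dual representation, namely that for any $L$-Lipschitz function $g$ and any $\mu,\mu^\prime \in \mathcal{P}_1(\bar{\X})$ one has $\lvert \E_{\mu}[g] - \E_{\mu^\prime}[g]\rvert \leq L\,W_1(\mu,\mu^\prime)$. Membership in $\mathcal{P}_1(\bar{\X})$ is what makes this legitimate: a Lipschitz $g$ grows at most linearly, so the finite first moment of $\mu$ and $\mu^\prime$ guarantees $\E_\mu[g]$ and $\E_{\mu^\prime}[g]$ are finite.

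For inverse problems the reduction is trivial, since $Z_k(\mu)=\int_{\X}h(y_k,x)\mu(dx)=\E_{X\sim\mu}[h(y_k,X)]$ and $h(y_k,\cdot)$ is Lipschitz with constant $\lVert h\rVert_{\text{Lip}}(y_k)$ by Assumption~IP.2; this case coincides with Theorem~15 of~\cite{lipschitz_stability_ip}. For state estimation, I would first apply Tonelli's theorem (all integrands are nonnegative, and the well-definedness conditions defining $\bar{\mathcal{P}}_k(\X)$ ensure finiteness) to write
\begin{equation*}
Z_k(\mu)=\int_{\X}\Big(\int_{\X}h_k(y_k,x)T_k(x,x_{k-1})\,dx\Big)\mu(dx_{k-1})=\E_{X_{k-1}\sim\mu}[g(X_{k-1})],
\end{equation*}
with $g(x_{k-1}):=\int_{\X}h_k(y_k,x)T_k(x,x_{k-1})\,dx$. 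Pulling the slice bound $\lvert T_k(x,x_{k-1})-T_k(x,x_{k-1}^\prime)\rvert\leq T_{\text{Lip}}(x;k)\,d_{\X}(x_{k-1},x_{k-1}^\prime)$ from Assumption~SE.2 through the integral shows $g$ is Lipschitz with constant $C^*_{Th}(y_k;k)=\int_{\X}h_k(y_k,x)T_{\text{Lip}}(x;k)\,dx$.

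The parameter-state case proceeds identically after the same Tonelli step, giving $Z_k(\mu)=\E_{(X_{k-1},W)\sim\mu}[\tilde{g}(X_{k-1},W)]$ where $\tilde{g}(x_{k-1},w):=\int_{\X}\tilde{f}_k(y_k,x,x_{k-1},w)\,dx$ and $\tilde{f}_k$ is the function introduced in Assumption~PS.2. Bounding the integrand by $\tilde{f}_{\text{Lip}}(x;k)\,d_{\X\times\W}((x_{k-1},w),(x_{k-1}^\prime,w^\prime))$ and integrating over $x$ yields the Lipschitz constant $\tilde{C}^*_{Th}(y_k;k)$ for $\tilde{g}$. Combining each of these with the duality bound gives precisely the constants in Table~\ref{table:Z}.

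The step needing the most care is verifying that the outer functions $g$ and $\tilde{g}$ inherit a \emph{global} Lipschitz constant from the family of per-$x_k$ slice constants. The clean way to see this is to recognize that $T_{\text{Lip}}(x;k)$ (respectively $\tilde{f}_{\text{Lip}}(x;k)$) is exactly the best Lipschitz constant of the slice $T_k(x,\cdot)$ (respectively $\tilde{f}_k(y_k,x,\cdot,\cdot)$), so the triangle inequality applied inside the integral followed by the finiteness guaranteed by Assumptions~SE.2/PS.2 produces a single Lipschitz constant independent of the two input measures. The only other point to check is the legitimacy of the Tonelli interchange and the integrability of $g,\tilde g$ against $\mu,\mu^\prime$, both of which follow from nonnegativity of the models, the definition of $\bar{\mathcal{P}}_k(\bar{\X})$, and the $\mathcal{P}_1$ restriction.
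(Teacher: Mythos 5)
Your proposal is correct and follows essentially the same route as the paper's proof (Lemmas~\ref{lemma:evidence_diff_bound:ip}--\ref{lemma:evidence_diff_bound:ps}): apply Tonelli to express $Z_k(\mu)$ as the integral of a fixed function $g$ against $\mu$, verify that $g$ is globally Lipschitz with the constant from Table~\ref{table:Z} using the slice bounds in Assumptions~IP.2/SE.2/PS.2, normalize, and invoke the Kantorovich--Rubinstein dual representation of $W_1$. No gaps.
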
 
Proposition~\ref{thm:lipschitz_evidence} is a conclusion of Lemmas~\ref{lemma:evidence_diff_bound:ip}---\ref{lemma:evidence_diff_bound:ps}, as shown below.

\begin{lemma}[Theorem 15, \cite{lipschitz_stability_ip}](\textbf{Upper-bounded evidence difference in inverse problems})
\label{lemma:evidence_diff_bound:ip}
For inverse problems, given the data $y_k$, suppose Assumption IP.2 holds. For any $\mu,\mu^\prime \in \bar{\mathcal{P}}_k(\X) \bigcap \mathcal{P}_1(\X)$, the difference between the two evidences, $Z_k(\mu)$ and $Z_k(\mu^\prime)$, is bounded by:
\begin{equation*}
\big \lvert Z_k(\mu) - Z_k(\mu^\prime) \big \rvert \leq \lVert h \rVert _{\text{Lip}}(y_k)W_1(\mu,\mu^\prime).
\end{equation*}
\end{lemma}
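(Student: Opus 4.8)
The plan is to recognize the evidence as an expectation of the likelihood against the prior, $Z_k(\mu) = \int_{\X} h(y_k,x)\,\mu(dx) = \E_{X\sim\mu}[h(y_k,X)]$, and then to convert the Lipschitz regularity of $h(y_k,\cdot)$ granted by Assumption~\ref{assump:lipschitz_likelihood} into a bound on the difference of these two integrals by using the coupling definition of the $1$-Wasserstein distance. Since both $\mu$ and $\mu'$ lie in $\bar{\mathcal{P}}_k(\X)$, the two evidences are finite and positive, so the left-hand side is well-defined; the restriction to $\mathcal{P}_1(\X)$ ensures $W_1(\mu,\mu') < \infty$ and that couplings with finite expected transport cost exist.

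First I would fix an arbitrary coupling $\gamma \in \Gamma(\mu,\mu')$. Because the two marginals of $\gamma$ are $\mu$ and $\mu'$, the evidence difference can be written as a single expectation over $\gamma$, and then the triangle inequality together with the global Lipschitz constant $\lVert h \rVert_{\text{Lip}}(y_k)$ gives
\begin{equation*}
\big\lvert Z_k(\mu) - Z_k(\mu') \big\rvert = \bigg\lvert \E_{(X,X')\sim\gamma}\big[h(y_k,X) - h(y_k,X')\big] \bigg\rvert \leq \lVert h \rVert_{\text{Lip}}(y_k)\,\E_{(X,X')\sim\gamma}\big[d_{\X}(X,X')\big].
\end{equation*}
Taking the infimum over all $\gamma \in \Gamma(\mu,\mu')$ on the right-hand side, while noting that the left-hand side is independent of $\gamma$, yields exactly $\lVert h \rVert_{\text{Lip}}(y_k)\,W_1(\mu,\mu')$ by the definition of the $1$-Wasserstein distance, completing the argument. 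An equivalent one-line route would be to invoke the Kantorovich--Rubinstein duality $W_1(\mu,\mu') = \sup_{\lVert f\rVert_{\text{Lip}}\leq 1}\lvert \int f\,d\mu - \int f\,d\mu'\rvert$ applied to the normalized test function $f = h(y_k,\cdot)/\lVert h\rVert_{\text{Lip}}(y_k)$, which is $1$-Lipschitz.

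The main obstacle is not conceptual but a matter of integrability bookkeeping: I must justify that rewriting the evidence difference as one expectation over $\gamma$ and exchanging the absolute value with the integral are legitimate. This follows because the integrand $\lvert h(y_k,X) - h(y_k,X')\rvert$ is dominated by $\lVert h\rVert_{\text{Lip}}(y_k)\,d_{\X}(X,X')$, which is integrable against any coupling of two measures in $\mathcal{P}_1(\X)$; hence each of $h(y_k,X)$ and $h(y_k,X')$ is $\gamma$-integrable and Fubini-type splitting is valid. Since the statement is attributed to \cite{lipschitz_stability_ip}, I would at this point simply cite that reference for the result rather than reproduce the full measure-theoretic details.
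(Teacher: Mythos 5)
Your argument is correct, and your ``equivalent one-line route'' is precisely the paper's proof: the paper normalizes the likelihood to $\tilde{h}(x) = h(y_k,x)/\lVert h \rVert_{\text{Lip}}(y_k)$, checks that $\lVert \tilde{h}\rVert_{\text{Lip}} = 1$, and then bounds $\lvert \int \tilde{h}\,d\mu - \int \tilde{h}\,d\mu'\rvert$ by the Kantorovich--Rubinstein supremum, which it identifies with $W_1(\mu,\mu')$. Your primary presentation instead works on the primal side: fix a coupling $\gamma \in \Gamma(\mu,\mu')$, bound the integrand pointwise by $\lVert h\rVert_{\text{Lip}}(y_k)\,d_{\X}(X,X')$, and take the infimum over couplings. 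The two are essentially the same estimate packaged differently; if anything, the coupling version is slightly more self-contained, since it uses only the elementary direction of duality (every $1$-Lipschitz test function is dominated by every coupling) rather than the full Kantorovich--Rubinstein theorem, and your integrability remarks (domination of $d_{\X}(X,X')$ under couplings of $\mathcal{P}_1$ marginals, finiteness of the evidences from $\mu,\mu' \in \bar{\mathcal{P}}_k(\X)$) correctly dispose of the only bookkeeping issue. No gap.
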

For completeness, we restate Lemma~\ref{lemma:evidence_diff_bound:ip} and provide its full proof below.
\begin{proof}
The difference between the two evidences $Z_k(\mu)$ and $Z_k(\mu^\prime)$ can be written as
\begin{align*}
\big \lvert Z_k(\mu) -Z_k(\mu^\prime) \big \rvert =& \bigg \lvert \int_{\mathcal{X}}h(y_k, x)\mu(dx)-\int_{\mathcal{X}}h(y_k, x)\mu^\prime(dx) \bigg \rvert.
\end{align*}
Given data $y_k$, define a function $\tilde{h}$ as
\begin{equation*}
\tilde{h}(x) := \frac{h(y_k,x)}{\lVert h \rVert_{\text{Lip}}(y_k)}.
\end{equation*}
Then $\tilde{h}$ satisfies
\begin{align}
\sup_{(x_1,x_2) \in \mathcal{X}\times \mathcal{X}}\frac{\big \lvert \tilde{h}(x_1)-\tilde{h}(x_2)\big \rvert}{d_{\X}(x_1,x_2)} =& \frac{1}{\lVert h \rVert_{\text{Lip}}(y_k)} \sup_{(x_1,x_2) \in \mathcal{X}\times \mathcal{X}}\frac{\big \lvert h(y_k,x_1) - h(y_k,x_2) \big \rvert}{d_{\X}(x_1,x_2)} \nonumber \\
=& \frac{\lVert h \rVert_{\text{Lip}}(y_k)}{\lVert h \rVert_{\text{Lip}}(y_k)}=1.
\label{eq:lipschitz_const_h_tilde}
\end{align}
By the definition of $\tilde{h}$, we can re-write $\big \lvert Z_k(\mu) - Z_k(\mu^\prime) \big \rvert$ as 
\begin{align}
\big \lvert Z_k(\mu) - Z_k(\mu^\prime) \big \rvert &=\lVert h \rVert_{\text{Lip}}(y_k)\bigg \lvert \int_{\mathcal{X}}\tilde{h}( x)\mu(dx)-\int_{\mathcal{X}}\tilde{h}( x)\mu^\prime(dx)\bigg \rvert \nonumber \\
&\leq \lVert h \rVert_{\text{Lip}}(y_k)\sup_{f: \mathcal{X} \rightarrow \mathbb{R}, \lVert f \rVert_{\text{Lip}} \leq 1}\bigg \lvert \int_{\mathcal{X}}f( x)\mu(dx)-\int_{\mathcal{X}}f(x)\mu^\prime(dx) \bigg \rvert \nonumber \\
&= \lVert h \rVert_{\text{Lip}}(y_k)W_1(\mu,\mu^\prime).
\label{ieq:ip:W:term2:part1:evidence}
\end{align}
The first inequality in Equation~\eqref{ieq:ip:W:term2:part1:evidence} is obtained because it has been shown in Equation~\eqref{eq:lipschitz_const_h_tilde} that the best Lipschitz constant of function $\tilde{h}$ is 1.
\end{proof}

By extending the proof of Lemma~\ref{lemma:evidence_diff_bound:ip} to state estimation and parameter-state estimation problems, we obtain Lemmas~\ref{lemma:evidence_diff_bound:se} and~\ref{lemma:evidence_diff_bound:ps}, as stated below.
\begin{lemma}(\textbf{Upper-bounded evidence difference in state estimation problems})
\label{lemma:evidence_diff_bound:se}
For state estimation problems, at time step $k$, given the data $y_k$, suppose Assumption SE.2 holds. For any $\mu,\mu^\prime \in \bar{\mathcal{P}}_k(\X) \bigcap \mathcal{P}_1(\X )$, the difference between the two evidences, $Z_k(\mu)$ and $Z_k(\mu^\prime)$, is bounded by:
\begin{equation*}
\big \lvert Z_k(\mu) - Z_k(\mu^\prime) \big \rvert \leq C_{Th}^*(y_k;k)W_1(\mu,\mu^\prime).
\end{equation*}
\end{lemma}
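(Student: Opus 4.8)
The plan is to mirror the proof of Lemma~\ref{lemma:evidence_diff_bound:ip} for inverse problems, replacing the likelihood $h$ by an integrated transition--observation kernel and then invoking the Kantorovich--Rubinstein dual representation of the $1$-Wasserstein distance. First I would apply Tonelli's theorem to rewrite the evidence, exactly as in Equation~\eqref{eq:se:tv:def:evidence_mu}, as $Z_k(\mu) = \int_{\X} G(x_{k-1})\,\mu(dx_{k-1})$, where $G(x_{k-1}) := \int_{\X} h_k(y_k,x) T_k(x,x_{k-1})\,dx$. The difference of evidences then becomes $\lvert Z_k(\mu)-Z_k(\mu^\prime)\rvert = \lvert \int_{\X} G\,d\mu - \int_{\X} G\,d\mu^\prime\rvert$, so the problem reduces to controlling a difference of expectations of the single fixed function $G$.

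The central step is to show that $G$ is globally Lipschitz with best Lipschitz constant at most $C_{Th}^*(y_k;k)$. For any $x_{k-1},x_{k-1}^\prime \in \X$, I would bound $\lvert G(x_{k-1})-G(x_{k-1}^\prime)\rvert$ by pulling the absolute value inside the integral and applying Assumption~\ref{assump:lipschitz_hf:se}, which gives the pointwise estimate $\lvert T_k(x,x_{k-1})-T_k(x,x_{k-1}^\prime)\rvert \leq T_{\text{Lip}}(x;k)\,d_{\X}(x_{k-1},x_{k-1}^\prime)$ with the first argument $x$ held fixed. Factoring out $d_{\X}(x_{k-1},x_{k-1}^\prime)$ leaves the constant $\int_{\X} h_k(y_k,x) T_{\text{Lip}}(x;k)\,dx = C_{Th}^*(y_k;k)$, which is finite by Equation~\eqref{eq:C_hf_star}.

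Having established the Lipschitz bound, I would normalize by setting $\tilde{G} := G / C_{Th}^*(y_k;k)$, so that $\tilde{G}$ has Lipschitz constant at most $1$, exactly as $\tilde h$ was constructed around Equation~\eqref{eq:lipschitz_const_h_tilde}. Then
\[
\lvert Z_k(\mu)-Z_k(\mu^\prime)\rvert = C_{Th}^*(y_k;k)\,\bigg\lvert \int_{\X}\tilde{G}\,d\mu - \int_{\X}\tilde{G}\,d\mu^\prime\bigg\rvert \leq C_{Th}^*(y_k;k)\sup_{\lVert f\rVert_{\text{Lip}}\leq 1}\bigg\lvert \int_{\X} f\,d\mu - \int_{\X} f\,d\mu^\prime\bigg\rvert = C_{Th}^*(y_k;k)\,W_1(\mu,\mu^\prime),
\]
where the final equality is the Kantorovich--Rubinstein duality, valid because $\mu,\mu^\prime \in \mathcal{P}_1(\X)$ guarantees that the integral of every $1$-Lipschitz $f$ is finite.

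I expect the main obstacle to be the rigorous justification of the Lipschitz bound on $G$: one must confirm that the interchange of integration and differencing is legitimate (Tonelli applies since $h_k$ and $T_k$ are nonnegative and measurable) and that the pointwise Lipschitz estimate for $T_k$ integrates to the finite constant $C_{Th}^*(y_k;k)$ under Assumption~\ref{assump:lipschitz_hf:se}. A minor accompanying check is that the Wasserstein-dual integrals are finite, which follows from membership in $\mathcal{P}_1(\X)$.
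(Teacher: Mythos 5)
Your proposal is correct and follows essentially the same route as the paper's proof: Tonelli to express each evidence as the integral of $g(x_{k-1}) := \int_{\X} h_k(y_k,x)T_k(x,x_{k-1})\,dx$ against the prior, the Lipschitz bound $\lVert g \rVert_{\text{Lip}} \leq C_{Th}^*(y_k;k)$ via Assumption SE.2, and the Kantorovich--Rubinstein duality after normalizing. No gaps; the justifications you flag (nonnegativity for Tonelli, finiteness of the dual integrals from $\mathcal{P}_1(\X)$) are exactly the ones the paper relies on.
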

\begin{proof}
Let $\nu$ be a $\sigma$-finite measure defined on the measurable space $(\X,\mathcal{B}(\X))$ such that $\mu \ll \nu$ and $\mu^\prime \ll \nu$. Let $\pi(x)$ and $\pi^\prime(x)$ denote the Radon-Nikodym derivatives $\frac{d\mu}{d\nu}(x)$ and $\frac{d\mu^\prime}{d\nu}(x)$, respectively. By the definition of evidence function $Z_k$, we can write $\lvert Z_k(\mu) - Z_k(\mu^\prime) \rvert$ as
\begin{align}
\lvert Z_k(\mu) - Z_k(\mu^\prime) \rvert & =
\bigg \lvert \int_\mathcal{X}\tilde{F}_k\mu(dx) - \int_\mathcal{X}\tilde{F}_k\mu^\prime(dx) \bigg \rvert \nonumber \\
&=\bigg \lvert \int_\mathcal{X}h_k(y_k, x)\left(\int_\mathcal{X}T_k(x,x_{k-1})\pi(x_{k-1})\nu(dx_{k-1})\right)dx  \nonumber \\
&\qquad - \int_\mathcal{X}h_k(y_k, x)\left(\int_\mathcal{X}T_k(x,x_{k-1})\pi^\prime(x_{k-1})\nu(dx_{k-1})\right)dx \bigg \rvert 
\label{ieq:evidence_diff:se:new:1}
\end{align}
According to Tonelli's theorem, the right-hand side of Equation~\eqref{ieq:evidence_diff:se:new:1} can be written as
\begin{align}
&\bigg \lvert \int_\mathcal{X}h_k(y_k, x)\left(\int_\mathcal{X}T_k(x,x_{k-1})\pi(x_{k-1})\nu(dx_{k-1})\right)dx \nonumber \\
&\qquad \qquad - \int_\mathcal{X}h_k(y_k, x)\left(\int_\mathcal{X}T_k(x,x_{k-1})\pi^\prime(x_{k-1})\nu(dx_{k-1})\right)dx \bigg \rvert \nonumber \\
& \qquad =\bigg \lvert \int_\mathcal{X}\left(\int_\mathcal{X}h_k(y_k, x)T_k(x,x_{k-1})dx\right)\pi(x_{k-1})\nu(dx_{k-1}) \nonumber \\
& \qquad \qquad \qquad - \int_\mathcal{X}\left(\int_\mathcal{X}h_k(y_k, x)T_k(x,x_{k-1})dx\right)\pi^\prime(x_{k-1})\nu(dx_{k-1}) \bigg \rvert. 
\end{align}
Given the data $y_k$, the expression $\int_\mathcal{X}h_k(y_k, x)T_k(x,x_{k-1})dx$ is a function of $x_{k-1}$. We denote this function as $g$:
\begin{equation*}
g(x_{k-1}) := \int_\mathcal{X}h_k(y_k,x)T_k(x,x_{k-1})dx.
\end{equation*}
We now show that $g$ is Lipschitz continuous and its best Lipschitz constant is no greater than $C_{Th}^*(y_k;k)$. \\
For two arbitrary points $x_{k-1}, x_{k-1}^\prime \in \mathcal{X}$, we have
\begin{align*}
\lvert g(x_{k-1}) - g(x_{k-1}^\prime) \rvert 
=& \bigg \lvert \int_\mathcal{X}h_k(y_k,x)\left(T_k(x, x_{k-1})-T_k(x,x_{k-1}^\prime)\right)dx \bigg \rvert \nonumber \\
\leq & \int_\mathcal{X}h_k(y_k,x) \bigg \lvert T_k(x, x_{k-1})-T_k(x,x_{k-1}^\prime)\bigg \rvert dx \nonumber \\
\leq & d_{\X}(x_{k-1},x_{k-1}^\prime)\int_\mathcal{X}h_k(y_k,x)T_{\text{Lip}}(x;k)dx\nonumber \\
=&C_{Th}^*(y_k;k)d_{\X}(x_{k-1},x_{k-1}^\prime).
\end{align*}
It follows that 
\begin{equation*}
\lVert g \rVert_{\text{Lip}} = \sup_{x_{k-1},x_{k-1}^\prime \in \mathcal{X},x_{k-1} \neq x_{k-1}^\prime}\frac{\lvert g(x_{k-1}) - g(x_{k-1}^\prime) \rvert}{d_{\X}(x_{k-1},x_{k-1}^\prime)} \leq C_{Th}^*(y_k;k).
\end{equation*}
We proceed by employing the same proof strategy used in the proof of Lemma~\ref{lemma:evidence_diff_bound:ip}. Define a function $\tilde{g}$ as
\begin{equation*}
\tilde{g}(x) := \frac{g(x)}{C_{Th}^*(y_k;k)}.
\end{equation*}
Then $\tilde{g}$ satisfies
\begin{align*}
\lVert \tilde{g} \rVert_{\text{Lip}} =&
\sup_{x,x^\prime \in \mathcal{X},x \neq x^\prime}\frac{\lvert \tilde{g}(x) - \tilde{g}(x^\prime) \rvert}{d_{\X}(x, x^\prime)}
=\frac{1}{C_{Th}^*(y_k;k)}\sup_{x,x^\prime \in \mathcal{X},x \neq x^\prime}\frac{\lvert g(x) - g(x^\prime) \rvert}{d_{\X}(x, x^\prime)}
\leq 1.
\end{align*}
The term $\lvert Z_k(\mu) - Z_k(\mu^\prime) \rvert$ can be written as:
\begin{align*}
&\lvert Z_k(\mu) - Z_k(\mu^\prime) \rvert \\
&\qquad = C_{Th}^*(y_k;k) \bigg \lvert \int_{\mathcal{X}}\tilde{g}(x_{k-1})\pi(x_{k-1})\nu(dx_{k-1})-\int_{\mathcal{X}}\tilde{g}(x_{k-1})\pi^\prime(x_{k-1})\nu(dx_{k-1}) \bigg \rvert.
\end{align*} 
As we have $\lVert \tilde{g} \rVert_{\text{Lip}} \leq 1$, the following condition is satisfied:
\begin{align*}
&\bigg \lvert \int_{\mathcal{X}}\tilde{g}(x_{k-1})\pi(x_{k-1})\nu(dx_{k-1})-\int_{\mathcal{X}}\tilde{g}(x_{k-1})\pi^\prime(x_{k-1})\nu(dx_{k-1}) \bigg \rvert \nonumber \\
&\qquad \leq \sup_{f: \mathcal{X} \rightarrow \mathbb{R},\lVert f \rVert_{\text{Lip}}\leq1}\bigg \lvert \int_{\mathcal{X}}f(x_{k-1})\pi(x_{k-1})\nu(dx_{k-1})-\int_{\mathcal{X}}f(x_{k-1})\pi^\prime(x_{k-1})\nu(dx_{k-1}) \bigg \rvert \nonumber \\
& \qquad = W_1(\mu,\mu^\prime).
\end{align*}
\end{proof}

\begin{lemma}(\textbf{Upper-bounded evidence difference in parameter-state estimation problems})
\label{lemma:evidence_diff_bound:ps}
For parameter-state estimation problems, at time step $k$, given the data $y_k$, suppose Assumption PS.2 holds. For any $\mu,\mu^\prime \in \bar{\mathcal{P}}_k(\X \times \W) \bigcap \mathcal{P}_1(\X \times \W)$, the difference between the two evidences, $Z_k(\mu)$ and $Z_k(\mu^\prime)$, is bounded by:
\begin{equation*}
\big \lvert Z_k(\mu) - Z_k(\mu^\prime) \big \rvert \leq \tilde{C}_{Th}^*(y_k;k)W_1(\mu,\mu^\prime). 
\end{equation*}
\end{lemma}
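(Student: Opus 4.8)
The plan is to mirror the argument of Lemma~\ref{lemma:evidence_diff_bound:se}, lifting it from the state space $\X$ to the joint state-parameter space $\X\times\W$. First I would use Tonelli's theorem to express the evidence as a single integral against the prior. As already established in Equation~\eqref{eq:ps:tv:def:evidence_mu}, for any $\mu \in \bar{\mathcal{P}}_k(\X\times\W) \bigcap \mathcal{P}_1(\X\times\W)$ with Lebesgue density $\pi$ one has
\[
Z_k(\mu) = \int_{\X\times\W} G(x_{k-1},w)\,\pi(x_{k-1},w)\,dx_{k-1}dw, \qquad G(x_{k-1},w) := \int_{\X} h_k(y_k,x_k,w)T_k(x_k,x_{k-1},w)\,dx_k.
\]
Recognizing that $G(x_{k-1},w) = \int_{\X}\tilde{f}_k(y_k,x_k,x_{k-1},w)\,dx_k$ with $\tilde{f}_k$ as defined in Assumption PS.2, the evidence difference reduces to $\lvert Z_k(\mu)-Z_k(\mu^\prime)\rvert = \big\lvert \int_{\X\times\W} G\,d\mu - \int_{\X\times\W} G\,d\mu^\prime\big\rvert$, so it suffices to control the oscillation of the single function $G$ against the $1$-Wasserstein distance.

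The key step is the Lipschitz estimate for $G$ with respect to the joint metric $d_{\X\times\W}$. For any two pairs $(x_{k-1},w),(x_{k-1}^\prime,w^\prime)\in\X\times\W$, I would pass the absolute value inside the integral and apply the pointwise bound from Assumption PS.2:
\[
\lvert G(x_{k-1},w)-G(x_{k-1}^\prime,w^\prime)\rvert \leq \int_{\X} \lvert \tilde{f}_k(y_k,x_k,x_{k-1},w)-\tilde{f}_k(y_k,x_k,x_{k-1}^\prime,w^\prime)\rvert\,dx_k \leq \tilde{C}_{Th}^*(y_k;k)\,d_{\X\times\W}\big((x_{k-1},w),(x_{k-1}^\prime,w^\prime)\big),
\]
where the final inequality uses $\lvert \tilde{f}_k(\cdots)-\tilde{f}_k(\cdots)\rvert \leq \tilde{f}_{\text{Lip}}(x_k;k)\,d_{\X\times\W}(\cdots)$ followed by integration of the pointwise Lipschitz constant $\tilde{f}_{\text{Lip}}(x_k;k)$ over $x_k$, which yields $\tilde{C}_{Th}^*(y_k;k)$ by Equation~\eqref{eq:tilde_C_hf_star}. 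Hence $\lVert G\rVert_{\text{Lip}}\leq \tilde{C}_{Th}^*(y_k;k)$.

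To finish, I would apply the normalization trick of Lemmas~\ref{lemma:evidence_diff_bound:ip} and~\ref{lemma:evidence_diff_bound:se}: set $\tilde{G} := G/\tilde{C}_{Th}^*(y_k;k)$, which has best Lipschitz constant at most $1$, and invoke the Kantorovich--Rubinstein dual representation of $W_1$ (valid since $\mu,\mu^\prime\in\mathcal{P}_1(\X\times\W)$) to obtain
\[
\lvert Z_k(\mu)-Z_k(\mu^\prime)\rvert = \tilde{C}_{Th}^*(y_k;k)\left\lvert \int_{\X\times\W}\tilde{G}\,d\mu - \int_{\X\times\W}\tilde{G}\,d\mu^\prime\right\rvert \leq \tilde{C}_{Th}^*(y_k;k)\,W_1(\mu,\mu^\prime).
\]
The main obstacle is the Lipschitz estimate for $G$: care is needed because $G$ must be shown Lipschitz \emph{jointly} in the state and parameter arguments, and Assumption PS.2 is precisely tailored to deliver this by bounding the variation of the product $\tilde{f}_k = h_k T_k$ simultaneously in $(x,w)$ rather than coordinatewise. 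The remaining points are routine once the hypotheses are in force, namely justifying the interchange of integration and absolute value via Tonelli applied to the nonnegative integrand $h_k T_k$, and checking that $\tilde{G}$ is an admissible $1$-Lipschitz test function with finite integral against $\mu$ and $\mu^\prime$, which follows from membership in $\mathcal{P}_1(\X\times\W)$.
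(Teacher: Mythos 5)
Your proposal is correct and follows essentially the same route as the paper's proof: Tonelli to rewrite the evidence as an integral of $g(x_{k-1},w)=\int_{\X}h_k(y_k,x,w)T_k(x,x_{k-1},w)\,dx$ against the prior, a joint Lipschitz bound on $g$ via the pointwise constant $\tilde{f}_{\text{Lip}}(\cdot\,;k)$ from Assumption PS.2 integrated to give $\tilde{C}^*_{Th}(y_k;k)$, then normalization and the Kantorovich--Rubinstein duality. No gaps.
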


\begin{proof}
Let $\pi$ and $\pi^\prime$ be the densities of $\mu$ and $\mu^\prime$ with respect to the Lebesgue measure, respectively. By the definition of evidence function $Z_k$, we can write $\lvert Z_k(\mu) - Z_k(\mu^\prime) \rvert$ as
\begin{align}
&\bigg \lvert Z_k(\mu) - Z_k(\mu^\prime) \bigg \rvert \nonumber \\
& \qquad =\bigg \lvert \int_{\mathcal{X}\times \W}h_k(y_k,x,w)\left(\int_{\mathcal{X}}T_k(x,x_{k-1},w)\pi(x_{k-1},w)dx_{k-1}\right)dxdw \nonumber \\
&\qquad \qquad - \int_{\mathcal{X}\times \W}h_k(y_k,x,w)\left(\int_{\mathcal{X}}T_k(x,x_{k-1},w)\pi^\prime(x_{k-1},w)dx_{k-1}\right)dxdw \bigg \rvert \nonumber \\
& \qquad =\bigg \lvert \int_{\mathcal{X}\times \W}\left(\int_{\mathcal{X}}h_k(y_k,x,w)T_k(x,x_{k-1},w)dx\right)\pi(x_{k-1},w)dx_{k-1}dw \nonumber \\
&\qquad \qquad - \int_{\mathcal{X}\times \W}\left(\int_{\mathcal{X}}h_k(y_k,x,w)T_k(x,x_{k-1},w)dx\right)\pi^\prime(x_{k-1},w)dx_{k-1}dw \bigg \rvert.
\label{eq:lemma:Z:W:ps}
\end{align}
The second equality in Equation~\eqref{eq:lemma:Z:W:ps} is obtained by Tonelli's thoerem. 
The expression $\int_{\mathcal{X}}h_k(y_k,x,w)T_k(x,x_{k-1},w)dx$ in Equation~\eqref{eq:lemma:Z:W:ps} is a function of $(x_{k-1},w)$. We denote this function as $g$:
\begin{equation*}
g(x_{k-1},w) := \int_{\mathcal{X}}h_k(y_k,x,w)T_k(x,x_{k-1},w)dx.
\end{equation*} \\
We now prove that function $g$ is Lipschitz continuous and its best Lipschitz constant is no greater than $\tilde{C}^*_{Th}(y_k;k)$. \\
For two arbitrary points $(x_{k-1},w)$ and $(x_{k-1}^\prime,w^\prime)$ in space $\mathcal{X}\times \W$, we have
\begin{align*}
&\lvert g(x_{k-1},w) - g(x_{k-1}^\prime,w^\prime) \rvert \nonumber \\
& \qquad = \bigg \lvert \int_{\mathcal{X}}h_k(y_k,x,w)T_k(x, x_{k-1},w)dx - \int_{\mathcal{X}}h_k(y_k,x,w^\prime)T_k(x, x_{k-1}^\prime,w^\prime)dx \bigg \rvert \nonumber \\
 & \qquad \leq \int_{\mathcal{X}}\bigg \lvert h_k(y_k,x,w)T_k(x, x_{k-1},w) - h_k(y_k,x,w^\prime)T_k(x, x_{k-1}^\prime,w^\prime) \bigg \rvert dx \nonumber \\
& \qquad \leq d_{\X \times \W}((x_{k-1},w),(x_{k-1}^\prime,w^\prime))\int_{\mathcal{X}}\tilde{f}_{\text{Lip}}(x;k)dx \nonumber \\
& \qquad = \tilde{C}_{Th}^*(y_k;k)d_{\X \times \W}((x_{k-1},w),(x_{k-1}^\prime,w^\prime)).
\end{align*}
As $(x_{k-1},w)$ and $(x_{k-1}^\prime,w^\prime)$ are arbitrary, we have
\begin{equation*}
\sup_{ (x_{k-1},w), (x_{k-1}^\prime,w^\prime) \in \mathcal{X}\times\W, (x_{k-1},w) \neq (x_{k-1}^\prime,w^\prime)}\frac{\lvert g(x_{k-1},w) - g(x_{k-1}^\prime,w^\prime) \rvert}{d_{\X \times \W}((x_{k-1},w),(x_{k-1}^\prime,w^\prime))} \leq \tilde{C}_{Th}^*(y_k;k).
\end{equation*}
By following an argument analogous to those used in the proofs of Lemmas~\ref{lemma:evidence_diff_bound:ip} and~\ref{lemma:evidence_diff_bound:se}, we can show that
\begin{align*}
\lvert Z_k(\mu) - Z_k(\mu^\prime) \rvert 
& \leq\tilde{C}_{Th}^*(y_k;k)\sup_{f: \mathcal{X}\times \W \rightarrow \mathbb{R}, \lVert f \rVert_{\text{Lip}} \leq1}\bigg \lvert \int_{\mathcal{X}\times \W}f(x_{k-1},w)\pi(x_{k-1},w)dx_{k-1}dw \\
&\qquad \qquad \qquad \qquad \qquad -\int_{\mathcal{X}\times \W}f(x_{k-1},w)\pi^\prime(x_{k-1},w)dx_{k-1}dw \bigg \rvert \nonumber \\
&  =\tilde{C}_{Th}^*(y_k;k)W_1(\mu,\mu^\prime).
\end{align*}
\end{proof}

\subsubsection{Theorem~\ref{thm:pointwise_lipschitz} under $1$-Wasserstein distance: Propositions~\ref{prop:ip:normalized_bound_W}---\ref{prop:ps:normalized_bound_W} and their proofs}
Theorem~\ref{thm:pointwise_lipschitz} under the $1$-Wasserstein distance is a conclusion of Propositions~\ref{prop:ip:normalized_bound_W}---\ref{prop:ps:normalized_bound_W}.
\begin{prop}
In inverse problems, given the data $y_k$, under Assumptions IP.1, IP.2, and IP-SE-PS.4, $\forall \mu, \mu^\prime \in \bar{\mathcal{P}}_k(\X) \bigcap \mathcal{P}_1(\X)$, the posteriors $F_k(\mu)$ and $F_k(\mu^\prime)$ satisfy
\begin{equation*}
W_1(F_k(\mu),F_k(\mu^\prime)) \leq \frac{C_h(y_k)+2D\lVert h\rVert_{\text{Lip}}(y_k)}{Z_k(\mu)}W_1(\mu,\mu^\prime).
\end{equation*}
\label{prop:ip:normalized_bound_W}
\end{prop}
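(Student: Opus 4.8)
The plan is to work through the Kantorovich--Rubinstein dual characterization of the $1$-Wasserstein distance, which expresses $W_1(F_k(\mu),F_k(\mu^\prime))$ as a supremum over $1$-Lipschitz test functions $f:\X\to\reals$. First I would fix an arbitrary such $f$ with $\lVert f\rVert_{\text{Lip}}\le 1$ and observe that, because $F_k(\mu)$ and $F_k(\mu^\prime)$ are both probability measures, the quantity $\int f\,dF_k(\mu)-\int f\,dF_k(\mu^\prime)$ is invariant under replacing $f$ by $f+c$ for any constant $c$. Exploiting this together with Assumption~\ref{assump:bounded_space} (bounded diameter $D$), I would subtract off $f(x_0)$ for a fixed reference point $x_0$, so that $\lvert f(x)\rvert=\lvert f(x)-f(x_0)\rvert\le d_{\X}(x,x_0)\le D$ for all $x$. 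This normalization is exactly what makes the subsequent estimates finite.

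Next, writing $\int f\,dF_k(\mu)=\frac{1}{Z_k(\mu)}\int f h\,d\mu$ and likewise for $\mu^\prime$, I would split the difference as
\[
\int f\,dF_k(\mu)-\int f\,dF_k(\mu^\prime)=\frac{1}{Z_k(\mu)}\left(\int fh\,d\mu-\int fh\,d\mu^\prime\right)+\left(\frac{1}{Z_k(\mu)}-\frac{1}{Z_k(\mu^\prime)}\right)\int fh\,d\mu^\prime.
\]
For the first term the key observation is that $fh$ is Lipschitz: decomposing $\lvert f(x)h(x)-f(x^\prime)h(x^\prime)\rvert\le \lvert f(x)\rvert\,\lvert h(x)-h(x^\prime)\rvert+\lvert h(x^\prime)\rvert\,\lvert f(x)-f(x^\prime)\rvert$ and using $\lvert f\rvert\le D$, $\lVert f\rVert_{\text{Lip}}\le 1$, Assumption~\ref{assump:ip_single_step} ($h\le C_h(y_k)$), and Assumption~\ref{assump:lipschitz_likelihood} ($h$ Lipschitz), I obtain $\lVert fh\rVert_{\text{Lip}}\le D\lVert h\rVert_{\text{Lip}}(y_k)+C_h(y_k)$. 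The dual formula for $W_1(\mu,\mu^\prime)$ then bounds this term by $\frac{D\lVert h\rVert_{\text{Lip}}(y_k)+C_h(y_k)}{Z_k(\mu)}W_1(\mu,\mu^\prime)$.

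For the second term I would write $\bigl\lvert\frac{1}{Z_k(\mu)}-\frac{1}{Z_k(\mu^\prime)}\bigr\rvert=\frac{\lvert Z_k(\mu)-Z_k(\mu^\prime)\rvert}{Z_k(\mu)Z_k(\mu^\prime)}$, invoke the evidence estimate $\lvert Z_k(\mu)-Z_k(\mu^\prime)\rvert\le \lVert h\rVert_{\text{Lip}}(y_k)W_1(\mu,\mu^\prime)$ from Lemma~\ref{lemma:evidence_diff_bound:ip}, and use the crude bound $\lvert\int fh\,d\mu^\prime\rvert\le D\int h\,d\mu^\prime=D\,Z_k(\mu^\prime)$, which again follows from $\lvert f\rvert\le D$. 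The factor $Z_k(\mu^\prime)$ cancels, leaving $\frac{D\lVert h\rVert_{\text{Lip}}(y_k)}{Z_k(\mu)}W_1(\mu,\mu^\prime)$. Adding the two contributions and taking the supremum over all admissible $f$ produces precisely the constant $\frac{C_h(y_k)+2D\lVert h\rVert_{\text{Lip}}(y_k)}{Z_k(\mu)}$. I expect the only delicate point to be the normalization step: the bound $\lvert f\rvert\le D$ is indispensable both for controlling $\lVert fh\rVert_{\text{Lip}}$ and for the estimate $\lvert\int fh\,d\mu^\prime\rvert\le D\,Z_k(\mu^\prime)$, and it rests on the translation invariance of the integral difference together with Assumption~\ref{assump:bounded_space}; without the bounded-space hypothesis the argument would not close.
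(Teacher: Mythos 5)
Your proposal is correct and follows essentially the same route as the paper's proof: the same Kantorovich--Rubinstein dual formulation with the $f(x_0)=0$ normalization yielding $\lvert f\rvert\le D$, the same two-term splitting of the posterior difference, the same Lipschitz estimate $\lVert fh\rVert_{\text{Lip}}\le D\lVert h\rVert_{\text{Lip}}(y_k)+C_h(y_k)$ for the first term, and the same use of Lemma~\ref{lemma:evidence_diff_bound:ip} together with $\lvert\int fh\,d\mu^\prime\rvert\le DZ_k(\mu^\prime)$ for the second.
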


\begin{proof}
By Kantorovich-Rubinstein duality, the $1$-Wasserstein distance between the two posterior probability measures $F_k(\mu)$ and $F_k(\mu^\prime)$ can be written as
\begin{align*}
W_1(F_k(\mu),F_k(\mu^\prime)) =& \sup_{f:\mathcal{X} \rightarrow \mathbb{R}, \lVert f \rVert_{\text{Lip}} \leq 1}\bigg \lvert \int_{\mathcal{X}}f(x)F_k\mu(dx) - \int_{\mathcal{X}}f(x)F_k\mu^\prime(dx) \bigg \rvert \nonumber \\
=& \sup_{f:\mathcal{X} \rightarrow \mathbb{R}, \lVert f \rVert_{\text{Lip}} \leq 1}\bigg \lvert \int_{\mathcal{X}}f(x)\frac{h(y_k,x)}{Z_k(\mu)}\mu(dx) - \int_{\mathcal{X}}f(x)\frac{h(y_k,x)}{Z_k(\mu^\prime)}\mu^\prime(dx) \bigg \rvert.
\end{align*}
We begin by following the proof strategy of Theorem 15 in \cite{lipschitz_stability_ip}, and first show that $W_1(F_k(\mu),F_k(\mu^\prime))$ can be expressed as
\begin{align}
&W_1(F_k(\mu),F_k(\mu^\prime)) \nonumber \\
& \qquad = \sup_{f:\mathcal{X} \rightarrow \mathbb{R}, \lVert f \rVert_{\text{Lip}} \leq 1, f(x_0)=0}\bigg \lvert \int_{\mathcal{X}}f(x)\frac{h(y_k,x)}{Z_k(\mu)}\mu(dx) - \int_{\mathcal{X}}f(x)\frac{h(y_k,x)}{Z_k(\mu^\prime)}\mu^\prime(dx) \bigg \rvert.
\label{eq:ip:W:re-write}
\end{align}
The proof for Equation~\eqref{eq:ip:W:re-write} is as follows. 

Let $x_0$ denote an arbitrary point in the space $\X$. Define a function $f_0$ as:
\begin{equation*}
f_0(x) := f(x) - f(x_0).
\end{equation*}
Then we have $f_0(x_0)=0$. For two arbitrary probability measures $\mu_1, \mu_2 \in \mathcal{P}_1(\X)$, we have
\begin{align*}
&\bigg \lvert \int_{\X}f_0(x)\mu_1(dx) - \int_{\X}f_0(x)\mu_2(dx) \bigg \rvert \\
& \qquad =\bigg \lvert \int_{\X}(f(x)-f(x_0))\mu_1(dx) - \int_{\X}(f(x)-f(x_0))\mu_2(dx) \bigg \rvert \\
& \qquad =\bigg \lvert \int_{\X}f(x)\mu_1(dx) - \int_{\X}f(x)\mu_2(dx) -f(x_0)\int_{\X}\mu_1(dx) + f(x_0)\int_{\X}\mu_2(dx) \bigg \rvert \\
& \qquad =\bigg \lvert \int_{\X}f(x)\mu_1(dx) - \int_{\X}f(x)\mu_2(dx) -f(x_0) + f(x_0) \bigg \rvert \\
& \qquad =\bigg \lvert \int_{\X}f(x)\mu_1(dx) - \int_{\X}f(x)\mu_2(dx) \bigg \rvert.
\end{align*}
Therefore, $W_1(F_k(\mu),F_k(\mu^\prime))$ can be re-written as
\begin{align*}
&W_1(F_k(\mu),F_k(\mu^\prime)) \\
& \qquad = \sup_{f:\mathcal{X} \rightarrow \mathbb{R}, \lVert f \rVert_{\text{Lip}} \leq 1, f(x_0)=0}\bigg \lvert \int_{\mathcal{X}}f(x)\frac{h(y_k,x)}{Z_k(\mu)}\mu(dx) - \int_{\mathcal{X}}f(x)\frac{h(y_k,x)}{Z_k(\mu^\prime)}\mu^\prime(dx) \bigg \rvert.
\end{align*}
It follows that
\begin{align}
&W_1(F_k(\mu),F_k(\mu^\prime)) \nonumber \\
& \qquad = \sup_{f:\mathcal{X} \rightarrow \mathbb{R}, \lVert f \rVert_{\text{Lip}} \leq 1, f(x_0)=0}\bigg \lvert \int_{\mathcal{X}}f(x)\frac{h(y_k,x)}{Z_k(\mu)}\mu(dx) - \int_{\mathcal{X}}f(x)\frac{h(y_k,x)}{Z_k(\mu)}\mu^\prime(dx) \nonumber \\
& \qquad \qquad + \int_{\mathcal{X}}f(x)\frac{h(y_k,x)}{Z_k(\mu)}\mu^\prime(dx) - \int_{\mathcal{X}}f(x)\frac{h(y_k,x)}{Z_k(\mu^\prime)}\mu^\prime(dx) \bigg \rvert \nonumber \\
& \qquad \leq \sup_{f:\mathcal{X} \rightarrow \mathbb{R}, \lVert f \rVert_{\text{Lip}} \leq 1, f(x_0)=0}\left[{\bigg \lvert \int_{\mathcal{X}}f(x)\frac{h(y_k,x)}{Z_k(\mu)}\mu(dx) - \int_{\mathcal{X}}f(x)\frac{h(y_k,x)}{Z_k(\mu)}\mu^\prime(dx)\bigg \rvert}\right. \nonumber \\
& \qquad \qquad  \left.{+ \bigg \lvert \int_{\mathcal{X}}f(x)\frac{h(y_k,x)}{Z_k(\mu)}\mu^\prime(dx) - \int_{\mathcal{X}}f(x)\frac{h(y_k,x)}{Z_k(\mu^\prime)}\mu^\prime(dx) \bigg \rvert}\right] \nonumber \\
& \qquad \leq \sup_{f:\mathcal{X} \rightarrow \mathbb{R}, \lVert f \rVert_{\text{Lip}} \leq 1, f(x_0)=0} \bigg \lvert \int_{\mathcal{X}}f(x)\frac{h(y_k,x)}{Z_k(\mu)}\mu(dx) - \int_{\mathcal{X}}f(x)\frac{h(y_k,x)}{Z_k(\mu)}\mu^\prime(dx)\bigg \rvert \nonumber \\
& \qquad \qquad  + \sup_{f:\mathcal{X} \rightarrow \mathbb{R}, \lVert f \rVert_{\text{Lip}} \leq 1, f(x_0)=0} \bigg \lvert \int_{\mathcal{X}}f(x)\frac{h(y_k,x)}{Z_k(\mu)}\mu^\prime(dx) - \int_{\mathcal{X}}f(x)\frac{h(y_k,x)}{Z_k(\mu^\prime)}\mu^\prime(dx) \bigg \rvert.
\label{ieq:ip:W:1}
\end{align}
The first term on the right-hand side in Equation~\eqref{ieq:ip:W:1} can be written as
\begin{align*}
&\sup_{f:\mathcal{X} \rightarrow \mathbb{R}, \lVert f \rVert_{\text{Lip}} \leq 1, f(x_0)=0} \bigg \lvert \int_{\mathcal{X}}f(x)\frac{h(y_k,x)}{Z_k(\mu)}\mu(dx) - \int_{\mathcal{X}}f(x)\frac{h(y_k,x)}{Z_k(\mu)}\mu^\prime(dx)\bigg \rvert \\
& \quad = \frac{1}{Z_k(\mu)}\sup_{f:\mathcal{X} \rightarrow \mathbb{R}, \lVert f \rVert_{\text{Lip}}\leq 1, f(x_0)=0}\bigg \lvert \int_{\mathcal{X}}f(x)h(y_k,x)\mu(dx) - \int_{\mathcal{X}}f(x)h(y_k,x)\mu^\prime(dx) \bigg \rvert.
\end{align*}
Next, we use the proof strategy used in the proof of Theorem 15 in \cite{lipschitz_stability_ip} to show that 
\begin{align*}
&\sup_{f:\mathcal{X} \rightarrow \mathbb{R}, \lVert f \rVert_{\text{Lip}}\leq 1, f(x_0)=0}\bigg \lvert \int_{\mathcal{X}}f(x)h(y_k,x)\mu(dx) - \int_{\mathcal{X}}f(x)h(y_k,x)\mu^\prime(dx) \bigg \rvert \nonumber \\
&\qquad \qquad \leq \left(D\lVert h \rVert_{\text{Lip}}(y_k)+C_h(y_k)\right)W_1(\mu,\tilde{\mu}).
\end{align*}
The proof is as follows.

Define a function $g: \X \rightarrow \reals$ as
\begin{equation*}
g(x) := f(x)h(y_k,x).
\end{equation*}
Then $g$ satisfies
\begin{equation*}
g(x_0) = f(x_0)h(y_k,x_0) = 0.
\end{equation*}
Next, we prove that $g$ is Lipschitz continuous and its best Lipschitz constant is no greater than $D\lVert h \rVert_{\text{Lip}}(y_k)+C_h(y_k)$. 

For two arbitrary points $x_1,x_2 \in \mathcal{X}$, $\lvert g(x_1) - g(x_2) \rvert$ satisfies
\begin{align}
&\lvert g(x_1) - g(x_2) \rvert \nonumber \\
& \qquad = \lvert f(x_1)h(y_k,x_1) - f(x_2)h(y_k,x_2) \rvert \nonumber \\
& \qquad \leq \lvert f(x_1)h(y_k,x_1) - f(x_1)h(y_k,x_2) \rvert + \lvert f(x_1)h(y_k,x_2) - f(x_2)h(y_k,x_2) \rvert \nonumber \\
& \qquad =\lvert f(x_1) \rvert \lvert h(y_k,x_1) - h(y_k,x_2) \rvert + h(y_k,x_2)\lvert f(x_1)-f(x_2) \rvert \nonumber \\
& \qquad \leq \lvert f(x_1) \rvert \lvert h(y_k,x_1) - h(y_k,x_2) \rvert + C_h(y_k) \lvert f(x_1)-f(x_2) \rvert.
\label{ieq:ip:unnormalized:W:1}
\end{align}
Given $\lVert f \rVert_{\text{Lip}} \leq 1$ and $f(x_0)=0$, we have
\begin{align}
\lvert f(x_1) \rvert =& \lvert f(x_1) - f(x_0) + f(x_0) \rvert 
\leq \lvert f(x_1) - f(x_0) \rvert + \lvert f(x_0) \rvert \nonumber \\
=& \lvert f(x_1) - f(x_0) \rvert 
\leq d(x_1,x_0) 
\leq D,
\label{ieq:ip:unnormalized:W:2}
\end{align}
and 
\begin{align}
\lvert f(x_1) - f(x_2) \rvert \leq d_{\X}(x_1,x_2).
\label{ieq:ip:unnormalized:W:3}
\end{align}
The term $\lvert h(y_k,x_1) - h(y_k,x_2) \rvert$ satisfies
\begin{align}
\lvert h(y_k,x_1) - h(y_k,x_2) \rvert \leq \lVert h \rVert_{\text{Lip}}(y_k)d_{\X}(x_1,x_2).
\label{ieq:ip:unnormalized:W:4}
\end{align}
By substituting Equation~\ref{ieq:ip:unnormalized:W:2},~\ref{ieq:ip:unnormalized:W:3} and~\ref{ieq:ip:unnormalized:W:4} into Equation~\ref{ieq:ip:unnormalized:W:1}, we obtain
\begin{align*}
\lvert g(x_1) - g(x_2) \rvert \leq& D\lVert h \rVert_{\text{Lip}}(y_k)d_{\X}(x_1,x_2) + C_h(y_k)d_{\X}(x_1,x_2) \\
=& \left(D\lVert h \rVert_{\text{Lip}}(y_k)+C_h(y_k)\right)d_{\X}(x_1,x_2).
\end{align*}
It follows that 
\begin{equation*}
\frac{\lvert g(x_1) - g(x_2) \rvert}{d_{\X}(x_1,x_2)} \leq D\lVert h \rVert_{\text{Lip}}(y_k)+C_h(y_k), \quad \forall x_1,x_2 \in \X, x_1 \neq x_2.
\end{equation*}
Therefore, the function $g$ is Lipschitz continuous and we have
\begin{equation*}
\lVert g \rVert_{\text{Lip}} = \sup_{x_1,x_2 \in \mathcal{X}, x_1 \neq x_2}\frac{\lvert g(x_1) - g(x_2) \rvert}{d_{\X}(x_1,x_2)} \leq D\lVert h \rVert_{\text{Lip}}(y_k)+C_h(y_k).
\end{equation*}
Define a function $\tilde{f}$ as 
\begin{equation*}
\tilde{f}(x) = \frac{g(x)}{D\lVert h \rVert_{\text{Lip}}(y_k)+C_h(y_k)}.
\end{equation*}
Then $\tilde{f}$ is Lipschitz continuous and it satisfies
\begin{equation*}
\tilde{f}(x_0) = \frac{g(x_0)}{D\lVert h \rVert_{\text{Lip}}(y_k)+C_h(y_k)}= 0,
\end{equation*}
and 
\begin{align*}
\lVert \tilde{f} \rVert_{\text{Lip}} 
=& \sup_{x_1,x_2 \in \mathcal{X} ,x_1 \neq x_2}\frac{1}{D\lVert h \rVert_{\text{Lip}}(y_k)+C_h(y_k)}\frac{\lvert g(x_1) - g(x_2) \rvert}{d_{\X}(x_1,x_2)} \leq 1.
\end{align*}
Then we have 
\begin{align*}
&\sup_{f:\mathcal{X} \rightarrow \mathbb{R}, \lVert f \rVert_{\text{Lip}} \leq 1, f(x_0)=0} \bigg \lvert \int_{\mathcal{X}}f(x)h(y_k,x)\mu(dx) -\int_{\mathcal{X}}f(x)h(y_k,x)\mu^\prime(dx)\bigg \rvert \nonumber \\
&\qquad =\left(D\lVert h \rVert_{\text{Lip}}(y_k)+C_h(y_k)\right)\sup_{f:\mathcal{X} \rightarrow \mathbb{R}, \lVert f \rVert_{\text{Lip}} \leq 1, f(x_0)=0} \bigg \lvert \int_{\mathcal{X}}\tilde{f}(x)\mu(dx)-\int_{\mathcal{X}}\tilde{f}(x)\mu^\prime(dx)\bigg \rvert  \nonumber \\
&\qquad =\left(D\lVert h \rVert_{\text{Lip}}(y_k)+C_h(y_k)\right)\sup_{\tilde{f} \in \tilde{A}}\bigg \lvert \int_{\mathcal{X}}\tilde{f}(x)\mu(dx)- \int_{\mathcal{X}}\tilde{f}(x)\mu^\prime(dx) \bigg \rvert,
\end{align*}
where 
\begin{align}
\tilde{A} :=& \bigg \{\tilde{f}:\mathcal{X} \rightarrow \mathbb{R} \mid  \lVert \tilde{f} \rVert_{\text{Lip}} \leq 1, \tilde{f}(x_0) = 0, \tilde{f}(x)=\frac{h(y_k,x)f(x)}{D\lVert h \rVert_{\text{Lip}}(y_k)+C_h(y_k)}, \nonumber \\
&\qquad \qquad \lVert f \rVert_{\text{Lip}} \leq 1, f(x_0)=0\bigg \}. \nonumber
\end{align}
Note that  
\begin{equation*}
\tilde{A} \subseteq \left \{f: \mathcal{X} \rightarrow \mathbb{R} \mid \lVert f \rVert_{\text{Lip}} \leq 1, f(x_0)=0\right\}.
\end{equation*}
Therefore, we have
\begin{align*}
&\sup_{\tilde{f} \in \tilde{A}} \bigg \lvert \int_{\mathcal{X}}\tilde{f}(x)\mu(dx)- \int_{\mathcal{X}}\tilde{f}(x)\mu^\prime(dx) \bigg \rvert \nonumber \\
&\qquad \leq \sup_{f:\mathcal{X} \rightarrow \mathbb{R},\lVert f \rVert_{\text{Lip}} \leq 1, f(x_0)=0}\bigg \lvert \int_{\mathcal{X}}f(x)\mu(dx)-\int_{\X}f(x)\mu^\prime(dx)\bigg \rvert \nonumber \\
&\qquad = W_1(\mu,\mu^\prime).
\end{align*}
It follows that
\begin{align}
&\sup_{f:\mathcal{X} \rightarrow \mathbb{R}, \lVert f \rVert_{\text{Lip}}\leq 1, f(x_0)=0}\bigg \lvert \int_{\mathcal{X}}f(x)h(y_k,x)\mu(dx) - \int_{\mathcal{X}}f(x)h(y_k,x)\mu^\prime(dx) \bigg \rvert \nonumber \\
&\qquad \leq \left(D\lVert h \rVert_{\text{Lip}}(y_k)+C_h(y_k)\right)W_1(\mu,\mu^\prime).
\label{ieq:ip:W:term1:bound}
\end{align}

So far, we have derived an upper bound for the first term in Equation~\eqref{ieq:ip:W:1} using the proof strategy used for Theorem 15 in \cite{lipschitz_stability_ip}.

Next, we derive an upper bound for the second term in Equation~\eqref{ieq:ip:W:1}. We have
\begin{align}
&\sup_{f:\mathcal{X} \rightarrow \mathbb{R}, \lVert f \rVert_{\text{Lip}} \leq 1, f(x_0)=0} \bigg \lvert \int_{\mathcal{X}}f(x)\frac{h(y_k,x)}{Z_k(\mu)}\mu^\prime(dx) - \int_{\mathcal{X}}f(x)\frac{h(y_k,x)}{Z_k(\mu^\prime)}\mu^\prime(dx) \bigg \rvert \nonumber \\
&\qquad = \sup_{f:\mathcal{X} \rightarrow \mathbb{R}, \lVert f \rVert_{\text{Lip}} \leq 1, f(x_0)=0} \bigg \lvert \left(\frac{1}{Z_k(\mu)}-\frac{1}{Z_k(\mu^\prime)}\right)\int_{\mathcal{X}}f(x)h(y_k, x)\mu^\prime(dx) \bigg \rvert \nonumber \\
&\qquad =\frac{\big \lvert Z_k(\mu) -Z_k(\mu^\prime) \big \rvert}{Z_k(\mu)Z_k(\mu^\prime)}\sup_{f:\mathcal{X} \rightarrow \mathbb{R}, \lVert f \rVert_{\text{Lip}} \leq 1, f(x_0)=0} \bigg \lvert \int_{\mathcal{X}}f(x)h(y_k,x)\mu^\prime(dx) \bigg \rvert.
\label{eq:ip:W:term2}
\end{align}
We first note that the expression $\sup_{f:\mathcal{X} \rightarrow \mathbb{R}, \lVert f \rVert_{\text{Lip}} \leq 1, f(x_0)=0} \bigg \lvert \int_{\mathcal{X}}f(x)h(y_k,x)\mu^\prime(dx) \bigg \rvert$ satisfies
\begin{align*}
&\sup_{f:\mathcal{X} \rightarrow \mathbb{R}, \lVert f \rVert_{\text{Lip}} \leq 1, f(x_0)=0} \bigg \lvert \int_{\mathcal{X}}f(x)h(y_k,x)\mu^\prime(dx) \bigg \rvert \\
&\qquad \leq \sup_{f:\mathcal{X} \rightarrow \mathbb{R}, \lVert f \rVert_{\text{Lip}} \leq 1, f(x_0)=0}  \int_{\mathcal{X}}\bigg \lvert f(x)\bigg \rvert h(y_k,x)\mu^\prime(dx) .
\end{align*}
Recall that for any $x \in \mathcal{X}$, the expression $\lvert f(x) \rvert$ satisfies $\lvert f(x) \rvert \leq D$.
Therefore, we have
\begin{align}
&\sup_{f:\mathcal{X} \rightarrow \mathbb{R}, \lVert f \rVert_{\text{Lip}} \leq 1, f(x_0)=0}  \int_{\mathcal{X}}\bigg \lvert f(x)\bigg \rvert h(y_k,x)\mu^\prime(dx)  \nonumber \\
&\qquad \leq D\sup_{f:\mathcal{X} \rightarrow \mathbb{R}, \lVert f \rVert_{\text{Lip}} \leq 1, f(x_0)=0}  \int_{\mathcal{X}} h(y_k,x)\mu^\prime(dx)  \nonumber \\
&\qquad = D\int_{\mathcal{X}}h(y_k,x)\mu^\prime(dx) = DZ_k(\mu^\prime).
\label{ieq:ip:W:term2:part2}
\end{align}
Substituting Equation~\eqref{ieq:ip:W:term2:part2} into Equation~\eqref{eq:ip:W:term2} gives
\begin{align}
&\sup_{f:\mathcal{X} \rightarrow \mathbb{R}, \lVert f \rVert_{\text{Lip}} \leq 1, f(x_0)=0} \bigg \lvert \int_{\mathcal{X}}f(x)\frac{h(y_k,x)}{Z_k(\mu)}\mu^\prime(dx) - \int_{\mathcal{X}}f(x)\frac{h(y_k,x)}{Z_k(\mu^\prime)}\mu^\prime(dx) \bigg \rvert \nonumber \\
&\qquad \leq \frac{\big \lvert Z_k(\mu) -Z_k(\mu^\prime) \big \rvert}{Z_k(\mu)}D.
\label{ieq:ip:W:term2}
\end{align}
By Lemma ~\ref{lemma:evidence_diff_bound:ip}$, \big \lvert Z_k(\mu) - Z_k(\mu^\prime) \big \rvert$ is bounded by: 
\begin{equation}
\big \lvert Z_k(\mu) - Z_k(\mu^\prime) \big \rvert \leq \lVert h \rVert_{\text{Lip}}(y_k) W_1(\mu,\mu^\prime).
\label{ieq:ip:W:term2:part1}
\end{equation}
Plug Equation~\eqref{ieq:ip:W:term2:part1} into Equation~\eqref{ieq:ip:W:term2}. We obtain
\begin{align}
&\sup_{f:\mathcal{X} \rightarrow \mathbb{R}, \lVert f \rVert_{\text{Lip}} \leq 1, f(x_0)=0} \bigg \lvert \int_{\mathcal{X}}f(x)\frac{h(y_k,x)}{Z_k(\mu)}\mu^\prime(dx) - \int_{\mathcal{X}}f(x)\frac{h(y_k,x)}{Z_k(\mu^\prime)}\mu^\prime(dx) \bigg \rvert \nonumber \\
&\qquad \leq \frac{D\lVert h \rVert_{\text{Lip}}(y_k)}{Z_k(\mu)}W_1(\mu,\mu^\prime).
\label{ieq:ip:W:term2:bound}
\end{align}
Combining Equation~\eqref{ieq:ip:W:term1:bound} with Equation~\eqref{ieq:ip:W:term2:bound} gives
\begin{equation*}
W_1(F_k(\mu),F_k(\mu^\prime)) \leq \frac{2D\lVert h \rVert_{\text{Lip}}(y_k)+C_h(y_k)}{Z_k(\mu)}W_1(\mu,\mu^\prime).
\end{equation*}
\end{proof}

\begin{prop}
In state estimation problems, given the data $y_k$, under Assumption SE.2 and IP-SE-PS.4, $\forall \mu, \mu^\prime \in \bar{\mathcal{P}}_k(\X) \bigcap \mathcal{P}_1(\X)$, the posteriors $F_k(\mu)$ and $F_k(\mu^\prime)$ satisfy
\begin{equation*}
W_1(F_k(\mu),F_k(\mu^\prime)) \leq \frac{2C^*_{Th}(y_k;k)D}{Z_k(\mu)}W_1(\mu,\mu^\prime).
\end{equation*}
\label{prop:se:normalized_bound_W}
\end{prop}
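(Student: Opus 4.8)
The plan is to mirror the structure of the proof of Proposition~\ref{prop:ip:normalized_bound_W} (the inverse-problem case), adapting it to the state-estimation kernel. First I would invoke Kantorovich--Rubinstein duality to write
\begin{equation*}
W_1(F_k(\mu),F_k(\mu^\prime)) = \sup_{f:\X\to\reals,\,\lVert f\rVert_{\text{Lip}}\leq 1}\left|\int_{\X}f(x)F_k\mu(dx) - \int_{\X}f(x)F_k\mu^\prime(dx)\right|,
\end{equation*}
and, exactly as in the inverse-problem proof, restrict the supremum to test functions with $f(x_0)=0$ for a fixed reference point $x_0$. This reduction is harmless because $F_k\mu$ and $F_k\mu^\prime$ are both probability measures, so subtracting the constant $f(x_0)$ leaves the difference of integrals unchanged, while it simultaneously forces $|f(x)| = |f(x)-f(x_0)| \leq d_{\X}(x,x_0)\leq D$ by Assumption~\ref{assump:bounded_space}.

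The manipulation specific to state estimation is to apply Tonelli's theorem to push the test function through the transition kernel. Writing $\int_{\X}f\,dF_k\mu = \frac{1}{Z_k(\mu)}\int_{\X}G(x_{k-1})\,\mu(dx_{k-1})$ with
\begin{equation*}
G(x_{k-1}) := \int_{\X}f(x)h_k(y_k,x)T_k(x,x_{k-1})\,dx,
\end{equation*}
I would split the difference as
\begin{equation*}
\frac{\int G\,d\mu}{Z_k(\mu)} - \frac{\int G\,d\mu^\prime}{Z_k(\mu^\prime)} = \frac{1}{Z_k(\mu)}\Big(\int G\,d\mu - \int G\,d\mu^\prime\Big) + \Big(\tfrac{1}{Z_k(\mu)}-\tfrac{1}{Z_k(\mu^\prime)}\Big)\int G\,d\mu^\prime,
\end{equation*}
and bound the two pieces separately. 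For the first piece, I would estimate the Lipschitz constant of $G$ in $x_{k-1}$: since only $T_k$ depends on $x_{k-1}$, Assumption~\ref{assump:lipschitz_hf:se} together with $|f(x)|\leq D$ gives
\begin{equation*}
|G(x_{k-1})-G(x_{k-1}^\prime)| \leq D\,d_{\X}(x_{k-1},x_{k-1}^\prime)\int_{\X}h_k(y_k,x)T_{\text{Lip}}(x;k)\,dx = D\,C^*_{Th}(y_k;k)\,d_{\X}(x_{k-1},x_{k-1}^\prime),
\end{equation*}
so rescaling $G$ by $D\,C^*_{Th}(y_k;k)$ and applying Kantorovich--Rubinstein duality once more bounds the first piece by $\frac{D\,C^*_{Th}(y_k;k)}{Z_k(\mu)}W_1(\mu,\mu^\prime)$. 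For the second piece, the bound $|f(x)|\leq D$ and Tonelli give $\big|\int G\,d\mu^\prime\big|\leq D\,Z_k(\mu^\prime)$, while Lemma~\ref{lemma:evidence_diff_bound:se} supplies $|Z_k(\mu)-Z_k(\mu^\prime)|\leq C^*_{Th}(y_k;k)W_1(\mu,\mu^\prime)$; together these bound the second piece by $\frac{D\,C^*_{Th}(y_k;k)}{Z_k(\mu)}W_1(\mu,\mu^\prime)$ as well. Adding the two pieces and using subadditivity of the supremum yields the claimed constant $\frac{2D\,C^*_{Th}(y_k;k)}{Z_k(\mu)}$.

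The calculations are routine once the Tonelli swap is in place; the one point deserving emphasis is recognizing why the state-estimation constant contains only the single term $2D\,C^*_{Th}$ rather than an analogue of the $C_h$ summand in the inverse-problem constant $2D\lVert h\rVert_{\text{Lip}}+C_h$. The reason is structural: here $f$ is integrated against the kernel (it depends on the output variable $x$, not on $x_{k-1}$), so it contributes nothing to the Lipschitz constant of $G$ in $x_{k-1}$ and enters only through the uniform bound $|f|\leq D$. I expect the main obstacle to be the careful bookkeeping in the Tonelli applications---verifying the integrability needed to interchange the order of integration, which is guaranteed by $\mu,\mu^\prime\in\bar{\mathcal{P}}_k(\X)$ together with Assumption~\ref{assump:lipschitz_hf:se}---and confirming that the scaled function $G/(D\,C^*_{Th}(y_k;k))$ is admissible in the dual formulation so that the duality bound applies verbatim.
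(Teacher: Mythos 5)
Your proposal is correct and follows essentially the same route as the paper's proof: the same Kantorovich--Rubinstein normalization $f(x_0)=0$ with $|f|\leq D$, the same two-term decomposition, the same Lipschitz estimate $\lVert G\rVert_{\text{Lip}}\leq D\,C^*_{Th}(y_k;k)$ for the kernel-integrated test function, and the same use of Lemma~\ref{lemma:evidence_diff_bound:se} for the evidence term. The only cosmetic difference is that the interchange of integrals for the signed integrand $f\,h_k\,T_k$ is formally an application of Fubini (justified, as you note, by $|f|\leq D$ and $Z_k(\mu)<\infty$) rather than Tonelli, which is exactly how the paper handles it.
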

The high-level proof strategy for Proposition~\ref{prop:se:normalized_bound_W} follows that of Proposition~\ref{prop:ip:normalized_bound_W}, with modifications to accommodate the more complex setting of state estimation problems compared to inverse problems.
\begin{proof}
Analogous to the proof of Proposition~\ref{prop:ip:normalized_bound_W}, we can show that the $1$-Wasserstein distance between the two posteriors $F_k(\mu)$ and $F_k(\mu^\prime)$ satisfies
\begin{align}
W_1(F_k(\mu),F_k(\mu^\prime)) 
& \leq \frac{1}{Z_k(\mu)}\sup_{f:\mathcal{X} \rightarrow \mathbb{R}, \lVert f \rVert_{\text{Lip}} \leq 1, f(\bar{x})=0}\bigg \lvert \int_\mathcal{X}f(x)\tilde{F}_k\mu(dx) - \int_\mathcal{X}f(x)\tilde{F}_k\mu^\prime(dx) \bigg \rvert \nonumber \\
&\qquad \qquad + \frac{\lvert Z_k(\mu) - Z_k(\mu^\prime) \rvert}{Z_k(\mu)Z_k(\mu^\prime)}\sup_{f:\mathcal{X} \rightarrow \mathbb{R}, \lVert f \rVert_{\text{Lip}} \leq 1, f(\bar{x})=0}\bigg \lvert \int_\mathcal{X}f(x)\tilde{F}_k\mu^\prime(dx)\bigg \rvert,
\label{ieq:se:normalized:W:1}
\end{align}
where $\bar{x} \in \X$ is an arbitrary point. 

We begin by bounding the first term on the right-hand side of Equation~\eqref{ieq:se:normalized:W:1}, which can be expressed as:

\begin{align}
&\sup_{f:\mathcal{X} \rightarrow \mathbb{R}, \lVert f \rVert_{\text{Lip}} \leq 1, f(\bar{x})=0}\bigg \lvert \int_\mathcal{X}f(x)\tilde{F}_k\mu(dx) - \int_\mathcal{X}f(x)\tilde{F}_k\mu^\prime(dx) \bigg \rvert \nonumber \\
&\qquad = \sup_{f:\mathcal{X} \rightarrow \mathbb{R}, \lVert f \rVert_{\text{Lip}}\leq 1,f(\bar{x})=0}\bigg \lvert \int_{\mathcal{X}}f(x)h_k(y_k, x)\left(\int_{\mathcal{X}}T_k(x,x_{k-1})\mu(dx_{k-1})\right)dx \nonumber \\
&\qquad \qquad - \int_{\mathcal{X}}f(x)h_k(y_k,x)\left(\int_{\mathcal{X}}T_k(x,x_{k-1})\mu^\prime(dx_{k-1})\right)dx\bigg \rvert. 
\label{eq:normalized:se:new:1:new}
\end{align}
Analogous to the proof of Proposition~\ref{prop:ip:normalized_bound_W}, we have $\lvert f(x) \rvert \leq D$. It follows that
\begin{align*}
&\int_{\X \times \X}\bigg \lvert f(x) h_k(y_k,x)T_k(x,x_{k-1})\bigg \rvert \mu(dx_{k-1})dx \\
&\qquad = \int_{\X \times \X} \big \lvert f(x) \big \rvert  h_k(y_k,x)T_k(x,x_{k-1})\mu(dx_{k-1})dx \\
& \qquad = \int_{\X}\big \lvert f(x) \big \rvert h_k(y_k,x)\left(\int_{\X}T_k(x,x_{k-1})\mu(dx_{k-1})\right)dx \\
& \qquad \leq D\int_{\X}h_k(y_k,x)\left(\int_{\X}T_k(x,x_{k-1})\mu(dx_{k-1})\right)dx \\
& \qquad = DZ_k(\mu) < \infty,
\end{align*}
where the second equality is obtained by Tonelli's theorem and the last inequality is obtained because $\mu \in \bar{\mathcal{P}}_k(\X)$. \\
Therefore, the function $f(x) h_k(y_k,x)T_k(x,x_{k-1})$ is integrable and we can use Fubini's theorem to obtain
\begin{align}
&\int_{\mathcal{X}}f(x)h_k(y_k, x)\left(\int_{\mathcal{X}}T_k(x,x_{k-1})\mu(dx_{k-1})\right)dx \nonumber \\
&\qquad = \int_{\mathcal{X}}\left(\int_\mathcal{X}f(x)h_k(y_k, x)T_k(x,x_{k-1})dx\right)\mu(dx_{k-1}).
\label{eq:rewrite_Fubini_mu}
\end{align}
Similarly, we have
\begin{align}
&\int_{\mathcal{X}}f(x)h_k(y_k, x)\left(\int_{\mathcal{X}}T_k(x,x_{k-1})\mu^\prime(dx_{k-1}))\right)dx \nonumber \\
&\qquad = \int_{\mathcal{X}}\left(\int_\mathcal{X}f(x)h_k(y_k, x)T_k(x,x_{k-1})dx\right)\mu^\prime(dx_{k-1}).
\label{eq:rewrite_Fubini_mu_prime}
\end{align}
Substituting Equations~\eqref{eq:rewrite_Fubini_mu} and~\eqref{eq:rewrite_Fubini_mu_prime} into Equation~\eqref{eq:normalized:se:new:1:new} yields:
\begin{align}
&\sup_{f:\mathcal{X} \rightarrow \mathbb{R}, \lVert f \rVert_{\text{Lip}} \leq 1, f(\bar{x})=0}\bigg \lvert \int_\mathcal{X}f(x)\tilde{F}_k\mu(dx) - \int_\mathcal{X}f(x)\tilde{F}_k\mu^\prime(dx) \bigg \rvert \nonumber \\
&\qquad = \sup_{f:\mathcal{X} \rightarrow \mathbb{R}, \lVert f \rVert_{\text{Lip}}\leq 1,f(\bar{x})=0}\bigg \lvert \int_{\mathcal{X}}\left(\int_\mathcal{X}f(x)h_k(y_k, x)T_k(x,x_{k-1})dx\right)\mu(dx_{k-1}) \nonumber \\
&\qquad \qquad - \int_{\mathcal{X}}\left(\int_\mathcal{X}f(x)h_k(y_k, x)T_k(x,x_{k-1})dx\right)\mu^\prime(dx_{k-1})\bigg \rvert.
\label{eq:normalized:se:new:1}
\end{align}
The expression $\int_\mathcal{X}f(x)h_k(y_k, x)T_k(x,x_{k-1})dx$ is a function of $x_{k-1}$. We denote this function as $\tilde{f}$, i.e.,
\begin{equation*}
\tilde{f}(x_{k-1}) := \int_\mathcal{X}f(x)h_k(y_k, x)T_k(x,x_{k-1})dx.
\end{equation*}
Next, we prove that $\tilde{f}$ is Lipschitz continuous and its best Lipschitz constant is smaller than or equal to $DC^*_{Th}(y_k;k)$.
For two arbitrary points $x_{k-1}$, $ x_{k-1}^\prime \in \mathcal{X}$, we have
\begin{align*}
\lvert \tilde{f}(x_{k-1}) - \tilde{f}(x_{k-1}^\prime) \rvert =& \bigg \lvert \int_\mathcal{X}f(x)h_k(y_k,x)T_k(x,x_{k-1})dx - \int_\mathcal{X}f(x)h_k(y_k,x)T_k(x,x_{k-1}^\prime)dx \bigg \rvert \nonumber \\
=& \bigg \lvert \int_\mathcal{X}f(x)h_k(y_k,x)\left(T_k(x,x_{k-1})-T_k(x,x_{k-1}^\prime)\right)dx \bigg \rvert \nonumber \\
\leq & \int_\mathcal{X}\bigg \lvert f(x) \bigg \rvert h_k(y_k,x) \bigg \lvert T_k(x,x_{k-1})-T_k(x,x_{k-1}^\prime) \bigg \rvert  dx.
\end{align*}
As we also have $\lvert f(x) \rvert \leq D$, and 
\begin{equation*}
\lvert T_k(x,x_{k-1})-T_k(x,x_{k-1}^\prime)\rvert \leq T_{\text{Lip}}(x;k)d_{\X}(x_{k-1},x_{k-1}^\prime),
\end{equation*}
the expression $\lvert \tilde{f}(x_{k-1}) - \tilde{f}(x_{k-1}^\prime) \rvert$ satisfies
\begin{align*}
\lvert \tilde{f}(x_{k-1}) - \tilde{f}(x_{k-1}^\prime) \rvert \leq &Dd(x_{k-1},x_{k-1}^\prime) \int_\mathcal{X}h_k(y_k, x)T_{\text{Lip}}(x;k)dx \\
= & DC^*_{Th}(y_k;k)d_{\X}(x_{k-1},x_{k-1}^\prime).
\end{align*}
It follows that
\begin{equation*}
\sup_{x_{k-1},x_{k-1}^\prime \in \mathcal{X},x_{k-1}\neq x^\prime_{k-1}}\frac{\lvert \tilde{f}(x_{k-1}) - \tilde{f}(x_{k-1}^\prime) \rvert}{d_{\X}(x_{k-1},x_{k-1}^\prime)} \leq DC^*_{Th}(y_k;k).
\end{equation*}
Define a function $g$ as
\begin{equation*}
g(x) := \frac{\tilde{f}(x)}{DC^*_{Th}(y_k;k)}.
\end{equation*}
Then $g$ is Lipschitz continuous with the best Lipschitz constant
\begin{align*}
\lVert g \rVert_{\text{Lip}} 
= \frac{1}{DC^*_{Th}(y_k;k)}\sup_{x,x^\prime \in \mathcal{X}, x\neq x^\prime }\frac{\lvert \tilde{f}(x) - \tilde{f}(x^\prime)\rvert}{d_{\X}(x,x^\prime)} 
\leq  1.
\end{align*}
Therefore, we have
\begin{align}
& \sup_{f:\mathcal{X} \rightarrow \mathbb{R}, \lVert f \rVert_{\text{Lip}} \leq 1, f(\bar{x})=0}\bigg \lvert \int_\mathcal{X}f(x)\tilde{F}_k\mu(dx) - \int_\mathcal{X}f(x)\tilde{F}_k\mu^\prime(dx) \bigg \rvert \nonumber \\
& \quad = DC^*_{Th}(y_k;k)\sup_{g \in \hat{A}}\bigg \lvert \int_{\mathcal{X}}g(x_{k-1})\mu(dx_{k-1})-\int_{\mathcal{X}}g(x_{k-1})\mu^\prime(dx_{k-1}) \bigg \rvert.
\label{eq:se:term1:rewrite}
\end{align}
where 
\begin{align*}
\hat{A} = &\bigg \{g: \mathcal{X}\rightarrow \mathbb{R} \mid \lVert g \rVert_{\text{Lip}}\leq 1, \\
& \qquad g(x_{k-1})=\frac{1}{DC^*_{Th}(y_k;k)}\int_{\mathcal{X}}f(x)h_k(y_k,x)T_k(x,x_{k-1})dx, f \in A\bigg \},
\end{align*}
with 
\begin{equation*}
A =\{f: \mathcal{X} \rightarrow \mathbb{R} \mid \lVert f \rVert_{\text{Lip}}\leq 1,f(\bar{x})=0 \}.
\end{equation*}
As $\hat{A} \subseteq \{f: \mathcal{X} \rightarrow \mathbb{R} \mid \lVert f \rVert_{\text{Lip}}\leq 1\}$, we have
\begin{align}
&\sup_{g \in \hat{A}}\bigg \lvert \int_{\mathcal{X}}g(x_{k-1})\mu(dx_{k-1})-\int_{\mathcal{X}}g(x_{k-1})\mu^\prime(dx_{k-1})\bigg \rvert \nonumber \\
&\qquad \leq \sup_{f:\mathcal{X}  \rightarrow \mathbb{R},\lVert f \rVert_{\text{Lip}} \leq 1}\bigg \lvert \int_{\mathcal{X}}f(x_{k-1})\mu(dx_{k-1})-\int_{\mathcal{X}}f(x_{k-1})\mu^\prime(dx_{k-1}) \bigg \rvert \nonumber \\
&\qquad = W_1(\mu,\mu^\prime).
\label{ieq:se:W:term1}
\end{align}
Analogous to the proof of Proposition~\ref{prop:ip:normalized_bound_W}, the second term on the right-hand side of Equation~\eqref{ieq:se:normalized:W:1} satisfies
\begin{align}
\frac{\lvert Z_k(\mu)-Z_k(\mu^\prime)\rvert }{Z_k(\mu)Z_k(\mu^\prime)}\sup_{f:\mathcal{X} \rightarrow \mathbb{R}, \lVert f \rVert_{\text{Lip}} \leq 1, f(\bar{x})=0}\bigg \lvert \int_\mathcal{X}f(x)\tilde{F}_k\mu^\prime(dx) \bigg \rvert 
\leq \frac{D\lvert Z_k(\mu)-Z_k(\mu^\prime)\rvert }{Z_k(\mu)}.
\label{ieq:se:normalized:W:2}
\end{align}
By plugging Equations~\eqref{eq:se:term1:rewrite}, ~\eqref{ieq:se:W:term1}, and ~\eqref{ieq:se:normalized:W:2} into Equation~\eqref{ieq:se:normalized:W:1}, we obtain
\begin{equation*}
W_1(F_k(\mu),F_k(\mu^\prime)) \leq \frac{DC^*_{Th}(y_k;k)}{Z_k(\mu)}W_1(\mu,\mu^\prime) + \frac{D\lvert Z_k(\mu) - Z_k(\mu^\prime)\rvert}{Z_k(\mu)}.
\end{equation*}
By Lemma~\ref{lemma:evidence_diff_bound:se}, the term $\lvert Z_k(\mu) - Z_k(\mu^\prime) \rvert$ satisfies
\begin{equation*}
\lvert Z_k(\mu) - Z_k(\mu^\prime) \rvert \leq C_{Th}^*(y_k;k)W_1(\mu,\mu^\prime).
\end{equation*}
It follows that
\begin{equation*}
W_1\left(F_k(\mu),F_k(\mu^\prime)\right) \leq \frac{2DC^*_{Th}(y_k;k)}{Z_k(\mu)}W_1(\mu,\mu^\prime).
\end{equation*}
\end{proof}

\begin{prop}
In parameter-state estimation problems, given the data $y_k$, under Assumptions PS.1, PS.2, PS.3 and IP-SE-PS.4, $\forall \mu, \mu^\prime \in \bar{\mathcal{P}}_k(\X \times \W) \bigcap \mathcal{P}_1(\X \times \W)$, the posteriors $F_k(\mu)$ and $F_k(\mu^\prime)$ satisfy
\begin{equation*}
W_1(F_k(\mu),F_k(\mu^\prime)) \leq \frac{2\tilde{C}^*_{Th}(y_k;k)D+\tilde{C}_{Th}(y_k;k)}{Z_k(\mu)}W_1(\mu,\mu^\prime).
\end{equation*}
\label{prop:ps:normalized_bound_W}
\end{prop}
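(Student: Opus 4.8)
The plan is to mirror the two-term decomposition used in the proofs of Propositions~\ref{prop:ip:normalized_bound_W} and~\ref{prop:se:normalized_bound_W}, adapting it to the joint state--parameter setting in which the parameter $w$ is carried through the update rather than integrated out. First I would invoke Kantorovich--Rubinstein duality to write $W_1(F_k(\mu),F_k(\mu^\prime))$ as a supremum over $1$-Lipschitz test functions $f$ on $\X\times\W$ and, as before, restrict to $f$ with $f(\bar{x})=0$ for a fixed reference point $\bar{x}$, so that $\lvert f\rvert\le D$ everywhere by Assumption~\ref{assump:bounded_space}. Inserting and subtracting the term with $\tilde{F}_k\mu^\prime$ normalized by $Z_k(\mu)$ splits the distance into (i) a ``numerator'' term $\frac{1}{Z_k(\mu)}\sup_f\lvert\int f\,\tilde{F}_k\mu-\int f\,\tilde{F}_k\mu^\prime\rvert$ and (ii) an ``evidence-mismatch'' term proportional to $\lvert Z_k(\mu)-Z_k(\mu^\prime)\rvert/(Z_k(\mu)Z_k(\mu^\prime))$.

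For term (ii) the argument is identical to the earlier propositions: $\sup_f\lvert\int f\,\tilde{F}_k\mu^\prime\rvert\le D\,Z_k(\mu^\prime)$, while Lemma~\ref{lemma:evidence_diff_bound:ps} bounds $\lvert Z_k(\mu)-Z_k(\mu^\prime)\rvert$ by $\tilde{C}^*_{Th}(y_k;k)W_1(\mu,\mu^\prime)$, so term (ii) contributes $\frac{D\tilde{C}^*_{Th}(y_k;k)}{Z_k(\mu)}W_1(\mu,\mu^\prime)$. Term (i) is where the real work lies. After justifying an application of Fubini's theorem (integrability following from $\lvert f\rvert\le D$ and $\mu,\mu^\prime\in\bar{\mathcal{P}}_k(\X\times\W)$, exactly as in Proposition~\ref{prop:se:normalized_bound_W}), I would rewrite the inner integral as a function of the previous variables $(x_{k-1},w)$, namely $\tilde{f}(x_{k-1},w):=\int_{\X}f(x_k,w)\tilde{f}_k(y_k,x_k,x_{k-1},w)\,dx_k$, recalling $\tilde{f}_k(y_k,x_k,x,w)=h_k(y_k,x_k,w)T_k(x_k,x,w)$.

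The main obstacle, and the step that differs essentially from state estimation, is bounding the Lipschitz constant of $\tilde{f}$ on $\X\times\W$, since the test function $f(x_k,w)$ now also depends on the parameter coordinate $w$ that appears inside $\tilde{f}_k$. I would control $\lvert\tilde{f}(x_{k-1},w)-\tilde{f}(x_{k-1}^\prime,w^\prime)\rvert$ via a two-part split of the integrand. The first part, $f(x_k,w)\bigl(\tilde{f}_k(y_k,x_k,x_{k-1},w)-\tilde{f}_k(y_k,x_k,x_{k-1}^\prime,w^\prime)\bigr)$, is handled using $\lvert f\rvert\le D$ and the bound $\tilde{f}_{\text{Lip}}(x_k;k)$ of Assumption~\ref{assump:lipschitz_hf:ps}, integrating to $D\tilde{C}^*_{Th}(y_k;k)\,d_{\X\times\W}\bigl((x_{k-1},w),(x_{k-1}^\prime,w^\prime)\bigr)$. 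The second part, $\bigl(f(x_k,w)-f(x_k,w^\prime)\bigr)\tilde{f}_k(y_k,x_k,x_{k-1}^\prime,w^\prime)$, is where Assumption~\ref{assump:metric_space:ps} becomes indispensable: since $f$ is $1$-Lipschitz, $\lvert f(x_k,w)-f(x_k,w^\prime)\rvert\le d_{\X\times\W}((x_k,w),(x_k,w^\prime))\le d_{\X\times\W}((x_{k-1},w),(x_{k-1}^\prime,w^\prime))$, and integrating the remaining factor over $x_k$ and invoking Assumption~\ref{assump:ps_single_step} yields $\tilde{C}_{Th}(y_k;k)\,d_{\X\times\W}\bigl((x_{k-1},w),(x_{k-1}^\prime,w^\prime)\bigr)$. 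Hence $\tilde{f}$ is Lipschitz with constant at most $D\tilde{C}^*_{Th}(y_k;k)+\tilde{C}_{Th}(y_k;k)$; normalizing $\tilde{f}$ by this constant and enlarging the feasible set to all $1$-Lipschitz functions (as in the earlier proofs) bounds term (i) by $\frac{D\tilde{C}^*_{Th}(y_k;k)+\tilde{C}_{Th}(y_k;k)}{Z_k(\mu)}W_1(\mu,\mu^\prime)$. Adding terms (i) and (ii) gives the claimed constant $\frac{2D\tilde{C}^*_{Th}(y_k;k)+\tilde{C}_{Th}(y_k;k)}{Z_k(\mu)}$. The crux is precisely the extra $\tilde{C}_{Th}$ summand, which has no analogue in state estimation because there the test function never sees a parameter coordinate, and Assumption~\ref{assump:metric_space:ps} is exactly what makes the $w$-variation of $f$ controllable by the ambient distance.
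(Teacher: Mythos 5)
Your proposal is correct and follows essentially the same route as the paper's own proof of Proposition~\ref{prop:ps:normalized_bound_W}: the same Kantorovich--Rubinstein decomposition into a numerator term and an evidence-mismatch term, the same Fubini rewriting in terms of $\tilde{f}(x_{k-1},w)$, and the same two-part split of the integrand that uses $\lvert f\rvert\le D$ with Assumption~\ref{assump:lipschitz_hf:ps} for one piece and the $1$-Lipschitz property of $f$ together with Assumptions~\ref{assump:metric_space:ps} and~\ref{assump:ps_single_step} for the other, yielding the Lipschitz constant $D\tilde{C}^*_{Th}(y_k;k)+\tilde{C}_{Th}(y_k;k)$. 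Your identification of the extra $\tilde{C}_{Th}$ summand as the genuinely new ingredient relative to the state-estimation case matches the paper exactly.
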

The high-level proof strategy for Proposition~\ref{prop:ps:normalized_bound_W} are similar to those of Propositions~\ref{prop:ip:normalized_bound_W} and~\ref{prop:se:normalized_bound_W}, with modifications to adapt to the increased complexity of parameter-state estimation problems compared to inverse and state estimation problems.

\begin{proof}
Analogous to the proof of Proposition~\ref{prop:ip:normalized_bound_W}, we can show that the $1$-Wasserstein distance between the two posteriors $F_k(\mu)$ and $F_k(\mu^\prime)$ satisfies
\begin{align}
&W_1(F_k(\mu),F_k(\mu^\prime)) \nonumber \\
&\qquad \leq \frac{1}{Z_k(\mu)}\sup_{f:\mathcal{X}\times \W \rightarrow \mathbb{R}, \lVert f \rVert_{\text{Lip}}\leq 1,f(\bar{x},\bar{w})=0}\bigg \lvert \int_{\mathcal{X}\times \W}f(x,w)\tilde{F}_k\mu(dx,dw) \nonumber \\
&\qquad \qquad -\int_{\mathcal{X}\times \W}f(x,w)\tilde{F}_k\mu^\prime(dx,dw) \bigg \rvert \nonumber \\
&\qquad \qquad +\frac{\lvert Z_k(\mu)-Z_k(\mu^\prime)\rvert}{Z_k(\mu)Z_k(\mu^\prime)}\sup_{f:\mathcal{X}\times \W \rightarrow \mathbb{R}, \lVert f \rVert_{\text{Lip}}\leq 1,f(\bar{x},\bar{w})=0}\bigg \lvert \int_{\mathcal{X}\times \W}f(x,w)\tilde{F}_k\mu^\prime(dx,dw)  \bigg \rvert.
\label{ieq:ps:W}
\end{align}
where $(\bar{x},\bar{w})$ is an arbitrary point in the space $\mathcal{X}\times \W$.
We first derive an upper bound on the first term on the right-hand side of Equation~\eqref{ieq:ps:W}.

Let $\pi$ and $\pi^\prime$ denote the densities of $\mu$ and $\mu^\prime$ with respect to the Lebesgue measure, respectively. We have
\begin{align}
&\sup_{ f \in A}\bigg \lvert \int_{\mathcal{X}\times \W}f(x,w)\tilde{F}_k\mu(dx,dw) -\int_{\mathcal{X}\times \W}f(x,w)\tilde{F}_k\mu^\prime(dx,dw) \bigg \rvert \nonumber \\
&\qquad = \sup_{f \in A}\bigg \lvert \int_{\mathcal{X}\times \W}f(x,w)h_k(y_k, x,w)\left(\int_{\mathcal{X}}T_k(x, x_{k-1},w)\pi(x_{k-1},w)dx_{k-1}\right)dxdw \nonumber \\
& \qquad \qquad - \int_{\mathcal{X}\times \W}f(x,w)h_k(y_k, x,w)\left(\int_{\mathcal{X}}T_k(x, x_{k-1},w)\pi^\prime(x_{k-1},w)dx_{k-1}\right)dxdw  \bigg \rvert,
\label{eq:term1:ps:normalized:new}
\end{align}
where 
\begin{equation*}
A = \{f:\mathcal{X}\times \W \rightarrow \mathbb{R} \mid  \lVert f \rVert_{\text{Lip}}\leq 1,f(\bar{x},\bar{w})=0\}.
\end{equation*}
Analogous to the proof of Proposition~\ref{prop:ip:normalized_bound_W}, we have 
\begin{equation*}
\lvert f(x,w) \rvert \leq D, \quad \forall x \in \X, w \in \W.
\end{equation*}
It follows that
\begin{align}
&\int_{\X \times \W \times \X}\bigg \lvert f(x,w) h_k(y_k,x,w)T_k(x,x_{k-1},w)\pi(x_{k-1},w) \bigg \rvert dxdw dx_{k-1} \nonumber \\
&\qquad = \int_{\X \times \W \times \X} \big \lvert f(x,w) \big \rvert  h_k(y_k,x,w)T_k(x,x_{k-1},w)\pi(x_{k-1},w) dxdw dx_{k-1} \nonumber \\
&\qquad = \int_{\X\times \W}\big \lvert f(x,w) \big \rvert h_k(y_k,x,w)\left(\int_{\X}T_k(x,x_{k-1},w)\pi(x_{k-1},w) dx_{k-1}\right)dxdw \nonumber \\
& \qquad \leq D\int_{\X\times \W}h_k(y_k,x,w)\left(\int_{\X}T_k(x,x_{k-1},w)\pi(x_{k-1},w) dx_{k-1}\right)dxdw  \nonumber \\
& \qquad = DZ_k(\mu) < \infty.
\label{eq:integrable}
\end{align}
The second equality in Equation~\eqref{eq:integrable} is obtained by Tonelli's theorem and the last inequality in Equation~\eqref{eq:integrable} is obtained since $\mu \in \bar{\mathcal{P}}_k(\X \times \W)$. \\
Then we can use Fubini's theorem to re-write the first term on the right-hand side of Equation~\eqref{eq:term1:ps:normalized:new} as
\begin{align}
&\int_{\mathcal{X}\times \W}f(x,w)h_k(y_k, x,w)\left(\int_{\mathcal{X}}T_k(x, x_{k-1},w)\pi(x_{k-1},w)dx_{k-1}\right)dxdw \nonumber \\
&\qquad = \int_{\mathcal{X}\times \W}\left(\int_{\mathcal{X}}f(x,w)h_k(y_k, x,w)T_k(x, x_{k-1},w)dx\right)\pi(x_{k-1},w)dx_{k-1}dw.
\label{eq:rewrite_Fubini_mu:ps}
\end{align}
Similarly, the second term on the right-hand side of Equation~\eqref{eq:term1:ps:normalized:new} can be written as:
\begin{align}
&\int_{\mathcal{X}\times \W}f(x,w)h_k(y_k, x,w)\left(\int_{\mathcal{X}}T_k(x, x_{k-1},w)\pi^\prime(x_{k-1},w)dx_{k-1}\right)dxdw \nonumber \\
&\qquad = \int_{\mathcal{X}\times \W}\left(\int_{\mathcal{X}}f(x,w)h_k(y_k, x,w)T_k(x, x_{k-1},w)dx\right)\pi^\prime(x_{k-1},w)dx_{k-1}dw.
\label{eq:rewrite_Fubini_mu_prime:ps}
\end{align}
By plugging Equations~\eqref{eq:rewrite_Fubini_mu:ps} and~\eqref{eq:rewrite_Fubini_mu_prime:ps} into Equation~\eqref{eq:term1:ps:normalized:new}, we obtain:
\begin{align}
&\sup_{ f \in A}\bigg \lvert \int_{\mathcal{X}\times \W}f(x,w)\tilde{F}_k\mu(dx,dw) -\int_{\mathcal{X}\times \W}f(x,w)\tilde{F}_k\mu^\prime(dx,dw) \bigg \rvert \nonumber \\
&\quad =\sup_{f \in A}\bigg \lvert \int_{\mathcal{X}\times \W}\left(\int_{\mathcal{X}}f(x,w)h_k(y_k, x,w)T_k(x, x_{k-1},w)dx\right)\pi(x_{k-1},w)dx_{k-1}dw \nonumber \\
& \qquad \qquad - \int_{\mathcal{X}\times \W}\left(\int_{\mathcal{X}}f(x,w)h_k(y_k, x,w)T_k(x, x_{k-1},w)dx\right)\pi^\prime(x_{k-1},w)dx_{k-1}dw \bigg \rvert.
\label{eq:normalized:ps:new:1}
\end{align}

The expression $\int_{\mathcal{X}}f(x,w)h_k(y_k, x,w)T_k(x, x_{k-1},w)dx$ is a function of $(x_{k-1},w)$, which we denote by $\tilde{f}$:
\begin{equation*}
\tilde{f}(x_{k-1},w) := \int_{\mathcal{X}}f(x,w)h_k(y_k,x,w)T_k(x, x_{k-1},w)dx.
\end{equation*}
We now show that $\tilde{f}$ is Lipschitz continuous and its best Lipschitz constant is no greater than $\tilde{C}^*_{Th}(y_k;k)D+\tilde{C}_{Th}(y_k;k)$.

For two arbitrary points $(x_{k-1},w)$ and $(x_{k-1}^\prime, w^\prime)$ in the space $\mathcal{X}\times \W$, we have
\begin{align}
& \big \lvert \tilde{f}(x_{k-1}, w) - \tilde{f}(x_{k-1}^\prime, w^\prime) \big \rvert \nonumber \\
&=\bigg \lvert \int_{\mathcal{X}}f(x,w)h_k(y_k, x,w)T_k(x, x_{k-1},w)dx - \int_{\mathcal{X}}f(x,w^\prime)h_k(y_k, x,w^\prime)T_k(x, x_{k-1}^\prime,w^\prime)dx \bigg \rvert \nonumber \\
&\leq \bigg \lvert \int_{\mathcal{X}}f(x,w)\left(h_k(y_k, x,w)T_k(x, x_{k-1},w)-h_k(y_k, x,w^\prime)T_k(x, x_{k-1}^\prime,w^\prime)\right)dx \bigg \rvert \nonumber \\
&\qquad + \bigg \lvert \int_{\mathcal{X}}\left(f(x,w)-f(x,w^\prime)\right)h_k(y_k, x,w^\prime)T_k(x, x_{k-1}^\prime,w^\prime)dx \bigg \rvert.
\label{ieq:unnormalized:W:ini}
\end{align}
We first show that the first term on the right-hand side of Equation~\eqref{ieq:unnormalized:W:ini} is bounded by the product of a constant and $d_{\X \times \W}((x_{k-1},w),(x_{k-1}^\prime,w^\prime))$. We have
\begin{align*}
&\bigg \lvert \int_{\mathcal{X}}f(x,w)\left(h_k(y_k, x,w)T_k(x, x_{k-1},w)-h_k(y_k, x,w^\prime)T_k(x, x_{k-1}^\prime,w^\prime)\right)dx \bigg \rvert \nonumber \\
& \qquad \leq  \int_{\mathcal{X}}\bigg \lvert f(x,w) \bigg \rvert \bigg \lvert h_k(y_k, x,w)T_k(x, x_{k-1},w)-h_k(y_k, x,w^\prime)T_k(x, x_{k-1}^\prime,w^\prime) \bigg \rvert dx .
\end{align*}
Recall that $\lvert f(x,w) \rvert \leq D$ for any $x \in \X, w \in \W$. Hence we have
\begin{align}
&\int_{\mathcal{X}}\bigg \lvert f(x,w) \bigg \rvert \bigg \lvert h_k(y_k, x,w)T_k(x, x_{k-1},w)-h_k(y_k, x,w^\prime)T_k(x, x_{k-1}^\prime,w^\prime) \bigg \rvert dx \nonumber \\
& \qquad \leq D\int_{\mathcal{X}}\bigg \lvert h_k(y_k, x,w)T_k(x, x_{k-1},w)-h_k(y_k, x,w^\prime)T_k(x, x_{k-1}^\prime,w^\prime) \bigg \rvert dx \nonumber \\
& \qquad \leq D \int_{\mathcal{X}}\tilde{f}_{\text{Lip}}(x;k)d_{\X \times \W}((x_{k-1},w),(x_{k-1}^\prime,w^\prime))dx\nonumber \\
&\qquad = Dd_{\X \times \W}((x_{k-1},w),(x_{k-1}^\prime,w^\prime))\int_{\mathcal{X}}\tilde{f}_{\text{Lip}}(x;k)dx \nonumber \\
&\qquad = D\tilde{C}^*_{Th}(y_k;k)d_{\X \times \W}((x_{k-1},w),(x_{k-1}^\prime,w^\prime)).
\label{ieq:unnormalized:W:term1}
\end{align}
Next, we prove that the second term on the right-hand side of Equation~\eqref{ieq:unnormalized:W:ini} is also bounded by the product of a constant and $d_{\X \times \W}((x_{k-1},w),(x_{k-1}^\prime,w^\prime))$.
\begin{align}
&\bigg \lvert \int_{\mathcal{X}}\left(f(x,w)-f(x,w^\prime)\right)h_k(y_k, x,w^\prime)T_k(x, x_{k-1}^\prime,w^\prime)dx \bigg \rvert \nonumber \\
&\qquad \leq \int_{\mathcal{X}}\bigg \lvert f(x,w)-f(x,w^\prime) \bigg \rvert h_k(y_k, x,w^\prime)T_k(x, x_{k-1}^\prime,w^\prime)dx  \nonumber \\
& \qquad \leq \int_{\mathcal{X}}d_{\X \times \W}((x,w),(x,w^\prime))h_k(y_k, x,w^\prime)T_k(x, x_{k-1}^\prime,w^\prime)dx \nonumber \\
& \qquad \leq \int_{\mathcal{X}}d_{\X \times \W}((x_{k-1},w),(x_{k-1}^\prime,w^\prime))h_k(y_k, x,w^\prime)T_k(x, x_{k-1}^\prime,w^\prime)dx \nonumber \\
&\qquad = d_{\X \times \W}((x_{k-1},w),(x_{k-1}^\prime,w^\prime))\int_{\mathcal{X}}h_k(y_k, x,w^\prime)T_k(x, x_{k-1}^\prime,w^\prime)dx \nonumber \\
&\qquad \leq \tilde{C}_{Th}(y_k;k)d_{\X \times \W}((x_{k-1},w),(x_{k-1}^\prime,w^\prime)),
\label{ieq:unnormalized:W:term2}
\end{align}
where the third inequality is obtained according to Assumption PS.3. 
Combining Equation~\eqref{ieq:unnormalized:W:term1} with Equation~\eqref{ieq:unnormalized:W:term2} gives
\begin{align*}
\lVert \tilde{f} \rVert_{\text{Lip}} &= \sup_{(x_{k-1},w), (x_{k-1}^\prime,w^\prime) \in \mathcal{X} \times \W, (x_{k-1},w) \neq (x_{k-1}^\prime,w^\prime)}\frac{\lvert \tilde{f}(x_{k-1},w) - \tilde{f}(x_{k-1}^\prime,w^\prime)\rvert}{d_{\X \times \W}((x_{k-1},w),(x_{k-1}^\prime,w^\prime))} \\
&\leq \tilde{C}^*_{Th}(y_k;k)D+\tilde{C}_{Th}(y_k;k).
\end{align*}
Then following an argument analogous to that used in the proof of Proposition~\ref{prop:se:normalized_bound_W}, we can show that 
\begin{align}
&\sup_{ f \in A}\bigg \lvert \int_{\mathcal{X}\times \W}f(x,w)\tilde{F}_k\mu(dx,dw) -\int_{\mathcal{X}\times \W}f(x,w)\tilde{F}_k\mu^\prime(dx,dw) \bigg \rvert \nonumber \\
&\qquad \leq \left(\tilde{C}^*_{Th}(y_k;k)D+\tilde{C}_{Th}(y_k;k)\right)W_1(\mu,\mu^\prime).
\label{ieq:ps:W:term1}
\end{align}
Similar to the proof of Proposition~\ref{prop:ip:normalized_bound_W}, the second term on the right-hand side of Equation~\eqref{ieq:ps:W} satisfies
\begin{align*}
&\frac{\lvert Z_k(\mu)-Z_k(\mu^\prime) \rvert}{Z_k(\mu)Z_k(\mu^\prime)}\sup_{f:\mathcal{X}\times \W \rightarrow \mathbb{R}, \lVert f \rVert_{\text{Lip}}\leq 1,f(\bar{x},\bar{w})=0}\bigg \lvert 
\int_{\mathcal{X}\times \W}f(x,w)\tilde{F}_k\mu^\prime(dxdw)  \bigg \rvert \nonumber \\
&\qquad  \leq \frac{D\lvert Z_k(\mu)-Z_k(\mu^\prime)\rvert }{Z_k(\mu)}.
\end{align*}
By Lemma~\ref{lemma:evidence_diff_bound:ps}, the term $\lvert Z_k(\mu) -Z_k(\mu^\prime)\rvert$ satisfies
\begin{equation*}
\lvert Z_k(\mu) -Z_k(\mu^\prime)\rvert \leq \tilde{C}_{Th}^*(y_k;k)W_1(\mu,\mu^\prime).
\end{equation*}
It follows that
\begin{align}
&\frac{\lvert Z_k(\mu)-Z_k(\mu^\prime) \rvert}{Z_k(\mu)Z_k(\mu^\prime)}\sup_{f:\mathcal{X}\times \W \rightarrow \mathbb{R}, \lVert f \rVert_{\text{Lip}}\leq 1,f(\bar{x},\bar{w})=0}\bigg \lvert \int_{\mathcal{X}\times \W}f(x,w)\tilde{F}_k\mu^\prime(dxdw)  \bigg \rvert \nonumber \\
& \nonumber \\
&\qquad \leq \frac{D\tilde{C}_{Th}^*(y_k;k)}{Z_k(\mu)}W_1(\mu,\mu^\prime).
\label{ieq:ps:W:term2}
\end{align}
Plugging Equations~\eqref{ieq:ps:W:term1} and ~\eqref{ieq:ps:W:term2} into Equation~\eqref{ieq:ps:W} gives
\begin{equation*}
W_1(F_k(\mu),F_k(\mu^\prime)) \leq \frac{2D\tilde{C}_{Th}^*(y_k;k)+\tilde{C}_{Th}(y_k;k)}{Z_k(\mu)}W_1(\mu,\mu^\prime).
\end{equation*}
\end{proof}

\section{Proofs of Theorem~\ref{thm:learning_error}}
\label{sec:app:learning_error}
\begin{proof}
We first prove that Theorem~\ref{thm:learning_error} holds for inverse problems under the total variation distance. 

By the definitions of the evidence $Z_k$ and the true posterior $P_k$ for inverse problems, we have
\begin{align*}
Z_k(P_{k-1}) =\int_{\X}h(y_k,x)P_{k-1}(dx) 
= \int_{\X}p(y_k \mid x, \Y_{1:k-1})P_{k-1}(dx) 
= p(y_k \mid \Y_{1:k-1}),
\end{align*}
where the third equality is obtained because $P_{k-1}(dx)$ represent the probability of $dx$ given $\Y_{1:k-1}$. The remainder of the proof proceeds by mathematical induction.\\
Statement: for any integer $k \geq 2$, we have
\begin{equation*}
d_{TV}(P_k,Q_k) \leq \sum_{j=1}^{k-1}\frac{\prod_{i=j+1}^kC_h(y_i)}{p(\Y_{j+1:k}\mid \Y_{1:j})}d_{TV}(Q_j^*,Q_j)+d_{TV}(Q_k^*,Q_k),
\end{equation*}
and
\begin{equation*}
d_{TV}(P_k,Q_k) \leq \sum_{j=1}^{k-1}\left(\prod_{i=j+1}^k\frac{C_h(y_i)}{Z_i(Q_{i-1})}\right)d_{TV}(Q_j^*,Q_j)+d_{TV}(Q_k^*,Q_k).
\end{equation*}
Base case: Consider $k=2$. By the triangle inequality satisfied by the metric $d_{TV}$, the learning error $d_{TV}(P_2, Q_2)$ can be bounded as
\begin{align*}
d_{TV}(P_2,Q_2) \leq d_{TV}(P_2,Q_2^*) + d_{TV}(Q_2^*,Q_2) 
=d_{TV}(F_2(P_1),F_2(Q_1)) + d_{TV}(Q_2^*,Q_2). 
\end{align*}
According to Theorem~\ref{thm:pointwise_lipschitz}, $d_{TV}(F_2(P_1),F_2(Q_1))$ satisfies
\begin{align*}
d_{TV}(F_2(P_1),F_2(Q_1)) \leq& \frac{C_h(y_2)}{Z_2(P_1)}d_{TV}(P_1,Q_1), \\
d_{TV}(F_2(P_1),F_2(Q_1)) \leq& \frac{C_h(y_2)}{Z_2(Q_1)}d_{TV}(P_1,Q_1). 
\end{align*}
As we have
\begin{equation*}
Q_1^* = F_1(P_0) = P_1,
\end{equation*}
it follows that 
\begin{align*}
d_{TV}(P_1,Q_1) \leq d_{TV}(P_1,Q_1^*)+ d_{TV}(Q_1^*,Q_1) = d_{TV}(Q_1^*,Q_1).
\end{align*}
Then we have
\begin{align*}
d_{TV}(P_2,Q_2) \leq& \frac{C_h(y_2)}{Z_2(P_1)}d_{TV}(Q_1^*,Q_1) + d_{TV}(Q_2^*,Q_2) \nonumber \\
=& \sum_{j=1}^{2-1}\frac{\prod_{i=j+1}^2C_h(y_i)}{p(\Y_{j+1:2}\mid \Y_{1:j})}d_{TV}(Q_j^*,Q_j)+d_{TV}(Q_2^*,Q_2),
\end{align*}
and 
\begin{align*}
d_{TV}(P_2,Q_2) \leq& \frac{C_h(y_2)}{Z_2(Q_1)}d_{TV}(Q_1^*,Q_1) + d_{TV}(Q_2^*,Q_2) \nonumber \\
=& \sum_{j=1}^{2-1}\left(\prod_{i=j+1}^2\frac{C_h(y_i)}{Z_i(Q_{i-1})}\right)d_{TV}(Q_j^*,Q_j)+d_{TV}(Q_2^*,Q_2).
\end{align*}
Therefore, we conclude that the statement holds for $k=2$.\\
Induction step: Assume the statement holds for some integer $k \geq 2$. As $d_{TV}$ satisfies the triangle inequality, the learning error at step $k+1$ satisfies
\begin{align}
d_{TV}(P_{k+1},Q_{k+1}) \leq& d_{TV}(P_{k+1},Q_{k+1}^*) + d_{TV}(Q_{k+1}^*,Q_{k+1}).
\label{ieq:le:1}
\end{align}
By Theorem~\ref{thm:pointwise_lipschitz}, we have
\begin{align}
d_{TV}(P_{k+1},Q_{k+1}^*) &= d_{TV}(F_{k+1}(P_k),F_{k+1}(Q_k)) \nonumber \\
&\leq \frac{C_h(y_{k+1})}{Z_{k+1}(P_{k})}d_{TV}(P_{k},Q_{k})  
= \frac{C_h(y_{k+1})}{p(y_{k+1}\mid \Y_{1:k})}d_{TV}(P_{k},Q_{k}),
\label{ieq:le:2}
\end{align}
and 
\begin{equation*}
d_{TV}(P_{k+1},Q_{k+1}^*) = d_{TV}(F_{k+1}(P_k),F_{k+1}(Q_k)) \leq \frac{C_h(y_{k+1})}{Z_{k+1}(Q_{k})}d_{TV}(P_{k},Q_{k}).
\end{equation*}
Plugging Equation~\eqref{ieq:le:2} into Equation~\eqref{ieq:le:1} gives
\begin{align*}
&d_{TV}(P_{k+1},Q_{k+1}) \leq \frac{C_h(y_{k+1})}{p(y_{k+1}\mid \Y_{1:k})}d_{TV}(P_{k},Q_{k}) + d_{TV}(Q_{k+1}^*,Q_{k+1}) \\
&\qquad \leq \frac{C_h(y_{k+1})}{p(y_{k+1}\mid \Y_{1:k})}\left(\sum_{j=1}^{k-1}\frac{\prod_{i=j+1}^kC_h(y_i)}{p(\Y_{j+1:k}\mid \Y_{1:j})}d_{TV}(Q_j^*,Q_j)+d_{TV}(Q_k^*,Q_k)\right) \\
& \qquad \qquad \qquad \qquad + d_{TV}(Q_{k+1}^*,Q_{k+1}) \\
&\qquad = \frac{C_h(y_{k+1})}{p(y_{k+1}\mid \Y_{1:k})}\sum_{j=1}^{k-1}\frac{\prod_{i=j+1}^kC_h(y_i)}{p(\Y_{j+1:k}\mid \Y_{1:j})}d_{TV}(Q_j^*,Q_j) \\
&\qquad \qquad \qquad \qquad + \frac{C_h(y_{k+1})}{p(y_{k+1}\mid \Y_{1:k})}d_{TV}(Q_k^*,Q_k) + d_{TV}(Q_{k+1}^*,Q_{k+1}) \\
&\qquad = \sum_{j=1}^{k-1}\frac{\prod_{i=j+1}^{k+1}C_h(y_i)}{p(\Y_{j+1:k+1}\mid \Y_{1:j})}d_{TV}(Q_j^*,Q_j) + \frac{C_h(y_{k+1})}{p(y_{k+1}\mid \Y_{1:k})}d_{TV}(Q_k^*,Q_k) \\
&\qquad \qquad \qquad \qquad  + d_{TV}(Q_{k+1}^*,Q_{k+1}) \\
&\qquad =\sum_{j=1}^{k}\frac{\prod_{i=j+1}^{k+1}C_h(y_i)}{p(\Y_{j+1:k+1}\mid \Y_{1:j})}d_{TV}(Q_j^*,Q_j)+d_{TV}(Q_{k+1}^*,Q_{k+1}).
\end{align*}
Similarly, we can prove that
\begin{equation*}
d_{TV}(P_{k+1},Q_{k+1}) \leq \sum_{j=1}^{k}\left(\prod_{i=j+1}^{k+1}\frac{C_h(y_i)}{Z_i(Q_{i-1})}\right)d_{TV}(Q_j^*,Q_j)+d_{TV}(Q_{k+1}^*,Q_{k+1}).
\end{equation*}
Hence, the statement holds for $k+1$.\\
Conclusion: the statement holds for any $k\geq2$.

So far, we have shown that Theorem~\ref{thm:learning_error} holds for inverse problems under the total variation distance. Analogously, we can prove that Theorem~\ref{thm:learning_error} holds for inverse problems under the Hellinger and $1$-Wasserstein distances.

For state estimation problems, by the definition of $Z_k$, we can write $Z_k(P_{k-1})$  as
\begin{align}
Z_k(P_{k-1}) =& \int_{\X}h_k(y_k,x_k)\left(\int_{\X}T_k(x_k,x_{k-1})P_{k-1}(dx_{k-1})\right)dx_k \nonumber \\
=& \int_{\X}p(y_k \mid x_k)p(x_k \mid \Y_{1:k-1})dx_k \nonumber \\
=& p(y_k \mid \Y_{1:k-1}).
\label{le:se:1}
\end{align}
Similarly, for parameter-state estimation problems, the evidence term $Z_k(P_{k-1})$ can be expressed as
\begin{align}
Z_k(P_{k-1}) =& \int_{\X \times \W}h_k(y_k,x_k,w)\left(\int_{\X}T_k(x_k,x_{k-1},w)\pi_{k-1}(x_{k-1},w)dx_{k-1}\right)dx_kdw \nonumber \\
=& \int_{\X \times \W}p(y_k \mid x_k,w)p(x_k,w \mid \Y_{1:k-1})dx_kdw \nonumber \\
=& p(y_k \mid \Y_{1:k-1}),
\label{le:ps:1}
\end{align}
where $\pi_{k-1}$ denotes the density of $P_{k-1}$ with respect to the Lebesgue measure.

Given Equations~\eqref{le:se:1} and ~\eqref{le:ps:1}, the proofs of Theorem~\ref{thm:learning_error} for state estimation and parameter-state estimation problems are analogous to the proof of Theorem~\ref{thm:learning_error} for inverse problems under the total variation distance.

\end{proof}

\section{Proofs of error reduction theorems: Theorems~\ref{thm:er:all:tv},~\ref{thm:er:all:H},~\ref{thm:er:ip:W}, and ~\ref{thm:er:se&ps:W}}
\subsection{Proof of Theorem~\ref{thm:er:all:tv}}
\label{sec:app:er:tv}
Theorem~\ref{thm:er:all:tv} is a conclusion of Propositions~\ref{prop:ip:er:tv}---\ref{prop:ps:er:tv}.
\begin{prop}\label{prop:ip:er:tv}
For inverse problems, given the data $y_k$, assume that $P_{k-1} \in \bar{\mathcal{P}}_k(\X)$ and $Q_{k-1} \in \bar{\mathcal{P}}_k(\X)$. Suppose there exists a $\sigma$-finite measure $\nu$ defined on $(\X,\mathcal{B}(\X))$ such that $P_{k-1} \ll \nu, Q_{k-1} \ll \nu$, and 
\begin{align*}
&\int_{\X^*}h(y_k,x)\bigg \lvert \frac{dP_{k-1}}{d\nu}(x)-\frac{dQ_{k-1}}{d\nu}(x) \bigg \rvert \nu(dx)  \\
&\qquad \leq \int_{\X^*}h(y_k,x)\nu(dx)\int_{\X^*}\bigg \lvert \frac{dP_{k-1}}{d\nu}(x)-\frac{dQ_{k-1}}{d\nu}(x) \bigg \rvert \nu(dx),
\end{align*}
and 
\begin{equation*}
\int_{\X^*}h(y_k,x)\nu(dx) \leq  \int_{\mathcal{X}}h(y_k,x)P_{k-1}(dx) \vee \int_{\mathcal{X}}h(y_k,x)Q_{k-1}(dx) ,
\end{equation*}
where
\begin{equation*}
\X^* := \begin{cases} \{x \in \mathcal{X}: \frac{dP_{k-1}}{d\nu}(x) \geq \frac{dQ_{k-1}}{d\nu}(x)\}, \quad \text{if } Z_k(P_{k-1}) \geq Z_k(Q_{k-1}), \nonumber \\
\{x \in \mathcal{X}: \frac{dP_{k-1}}{d\nu}(x) \leq \frac{dQ_{k-1}}{d\nu}(x)\}, \quad \text{otherwise}. \end{cases}
\end{equation*}
Then the posteriors $P_{k}$ and $Q_k^*$ satisfy
\begin{equation*}
d_{TV}(P_{k},Q_k^*) \leq d_{TV}(P_{k-1},Q_{k-1}).
\end{equation*}
\end{prop}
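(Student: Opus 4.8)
The plan is to recycle the intermediate estimate already obtained inside the proof of Proposition~\ref{prop:ip:normalized_bound_tv}, rather than re-deriving anything from scratch. Taking $\mu = P_{k-1}$ and $\mu' = Q_{k-1}$ in that proof, the set $\X^*$ defined there coincides verbatim with the set $\X^*$ appearing in this proposition (both split $\X$ according to whether $Z_k(P_{k-1}) \geq Z_k(Q_{k-1})$ and whether the corresponding Radon--Nikodym derivatives dominate). Hence inequality~\eqref{ieq:intermediate:tv:ip} specializes to
\begin{equation*}
d_{TV}(P_k, Q_k^*) \leq \frac{1}{Z_k(P_{k-1}) \vee Z_k(Q_{k-1})} \int_{\X^*} h(y_k,x) \left\lvert \frac{dP_{k-1}}{d\nu}(x) - \frac{dQ_{k-1}}{d\nu}(x) \right\rvert \nu(dx),
\end{equation*}
while~\eqref{eq:ip:normalized:prior_diff:rewrite} gives $d_{TV}(P_{k-1},Q_{k-1}) = \int_{\X^*} \lvert \frac{dP_{k-1}}{d\nu} - \frac{dQ_{k-1}}{d\nu}\rvert \, \nu(dx)$.

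Next I would introduce the shorthand $A = \int_{\X^*} h(y_k,x)\lvert \frac{dP_{k-1}}{d\nu} - \frac{dQ_{k-1}}{d\nu}\rvert\,\nu(dx)$, $I = \int_{\X^*} h(y_k,x)\,\nu(dx)$, and $B = d_{TV}(P_{k-1},Q_{k-1})$, together with $Z_{\max} = Z_k(P_{k-1}) \vee Z_k(Q_{k-1})$. The displayed bound then reads $d_{TV}(P_k,Q_k^*) \leq A / Z_{\max}$. The first hypothesis of the proposition is precisely $A \leq I\,B$, and the second hypothesis is $I \leq Z_{\max}$, where one observes that $\int_{\X} h(y_k,x) P_{k-1}(dx) = Z_k(P_{k-1})$ and likewise for $Q_{k-1}$, so that the right-hand side of the second condition is exactly $Z_{\max}$.

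Chaining these gives $d_{TV}(P_k,Q_k^*) \leq A/Z_{\max} \leq (I B)/Z_{\max} = (I/Z_{\max})\,B \leq B = d_{TV}(P_{k-1},Q_{k-1})$, which is the claim. The proof is therefore a two-line consequence once the intermediate inequality is imported, and I do not expect any genuine obstacle beyond the bookkeeping of verifying that the $\X^*$ used inside Proposition~\ref{prop:ip:normalized_bound_tv} matches the $\X^*$ stated here and that the two hypotheses are exactly the statements $A \leq IB$ and $I \leq Z_{\max}$. The only mild care needed is to treat both cases in the definition of $\X^*$ (namely $Z_k(P_{k-1}) \geq Z_k(Q_{k-1})$ or its complement) uniformly; since the imported inequality was itself established for both cases, this is automatic.
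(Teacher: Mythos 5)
Your proposal is correct and follows essentially the same route as the paper: the paper's proof likewise imports inequality~\eqref{ieq:intermediate:tv:ip} and identity~\eqref{eq:ip:normalized:prior_diff:rewrite} with $\mu=P_{k-1}$, $\mu'=Q_{k-1}$, identifies the right-hand side of the second hypothesis with $Z_k(P_{k-1})\vee Z_k(Q_{k-1})$, and chains the two hypotheses. The only cosmetic difference is that the paper treats the case $d_{TV}(P_{k-1},Q_{k-1})=0$ separately before dividing, whereas your direct chaining $A\le IB$, $I\le Z_{\max}$ avoids any division and handles that case automatically.
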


\begin{proof}
Since $P_k=F_k(P_{k-1})$ and $Q_k^*=F_k(Q_{k-1})$, according to Equations~\eqref{ieq:intermediate:tv:ip} and~\eqref{eq:ip:normalized:prior_diff:rewrite}, we have
\begin{align}
d_{TV}(P_k,Q_k^*) \leq & 
\frac{1}{Z_k(P_{k-1})\vee Z_k(Q_{k-1})}\int_{\X^*} h(y_k,x) \bigg \lvert  \frac{dP_{k-1}}{d\nu}(x) - \frac{dQ_{k-1}}{d\nu}(x) \bigg \rvert \nu(dx),
\label{ieq:er:ip:tv:1}
\end{align}
and 
\begin{equation}
d_{TV}(P_{k-1},Q_{k-1}) = \int_{\X^*}\bigg \lvert \frac{dP_{k-1}}{d\nu}(x) - \frac{dQ_{k-1}}{d\nu}(x) \bigg \rvert \nu(dx).
\label{eq:er:ip:tv:1}
\end{equation}
If $d_{TV}(P_{k-1},Q_{k-1})=0$, then $P_{k-1} = Q_{k-1}$. It follows that $F_k(P_{k-1}) = F_k(Q_{k-1})$ and $d_{TV}(P_k,Q_k^*)=0$.
If $d_{TV}(P_{k-1},Q_{k-1}) > 0$, we have
\begin{align*}
\frac{d_{TV}(P_k,Q_k^*)}{d_{TV}(P_{k-1},Q_{k-1})} \leq& \frac{1}{Z_k(P_{k-1}) \vee Z_k(Q_{k-1})}\frac{\int_{\X^*}h(y_k, x)\bigg \lvert \frac{dP_{k-1}}{d\nu}(x) - \frac{dQ_{k-1}}{d\nu}(x) \bigg \rvert \nu(dx)}{\int_{\X^*}\bigg \lvert \frac{dP_{k-1}}{d\nu}(x) - \frac{dQ_{k-1}}{d\nu}(x) \bigg \rvert \nu(dx)}.
\end{align*}
If $\nu$ satisfies
\begin{align*}
&\int_{\X^*}h(y_k,x)\bigg \lvert \frac{dP_{k-1}}{d\nu}(x)-\frac{dQ_{k-1}}{d\nu}(x) \bigg \rvert \nu(dx) \\
& \qquad \leq \int_{\X^*}h(y_k,x)\nu(dx)\int_{\X^*}\bigg \lvert \frac{dP_{k-1}}{d\nu}(x)-\frac{dQ_{k-1}}{d\nu}(x) \bigg \rvert \nu(dx),
\end{align*}
and 
\begin{equation*}
\int_{\X^*}h(y_k,x)\nu(dx) \leq  \int_{\mathcal{X}}h(y_k,x)P_{k-1}(dx) \vee \int_{\mathcal{X}}h(y_k,x)Q_{k-1}(dx) ,
\end{equation*}
then we have
\begin{align*}
\frac{d_{TV}(P_k,Q_k^*)}{d_{TV}(P_{k-1},Q_{k-1})} \leq& \frac{1}{Z_k(P_{k-1}) \vee Z_k(Q_{k-1})}\frac{\int_{\X^*}h(y_k, x)\bigg \lvert \frac{dP_{k-1}}{d\nu}(x) - \frac{dQ_{k-1}}{d\nu}(x) \bigg \rvert \nu(dx)}{\int_{\X^*}\bigg \lvert \frac{dP_{k-1}}{d\nu}(x) - \frac{dQ_{k-1}}{d\nu}(x) \bigg \rvert \nu(dx)} \nonumber \\
=& \frac{\int_{\X^*}h(y_k, x)\nu(dx)}{\int_\mathcal{X}h(y_k,x)P_{k-1}(dx) \vee \int_\mathcal{X}h(y_k,x)Q_{k-1}(dx)} \nonumber \\
\leq & 1.
\end{align*}
The statement of the theorem is proved.
\end{proof}

\begin{prop}\label{prop:se:er:tv}
In state estimation problems, given the data $y_k$, assume $P_{k-1} \in \bar{\mathcal{P}}_k(\X)$ and $Q_{k-1} \in \bar{\mathcal{P}}_k(\X)$. Suppose there exists a $\sigma$-finite measure $\nu$ defined on $(\X,\mathcal{B}(\X))$ such that $P_{k-1} \ll \nu, Q_{k-1} \ll \nu$, and 
\begin{align}
&\int_{\X^*}g(x,y_k)\bigg \lvert \frac{dP_{k-1}}{d\nu}(x)-\frac{dQ_{k-1}}{d\nu}(x) \bigg \rvert \nu(dx)  \\
&\qquad \leq \int_{\X^*}g(x,y_k)\nu(dx)\int_{\X^*}\bigg \lvert \frac{dP_{k-1}}{d\nu}(x)-\frac{dQ_{k-1}}{d\nu}(x) \bigg \rvert \nu(dx),
\label{ieq:er:se:tv:assump_1}
\end{align}
and 
\begin{equation}
\int_{\X^*}g(x,y_k)\nu(dx) \leq  \int_{\mathcal{X}}g(x,y_k)P_{k-1}(dx) \vee \int_{\mathcal{X}}g(x,y_k)Q_{k-1}(dx),
\label{ieq:er:se:tv:assump_2}
\end{equation}
where $g(x,y_k):=\int_{\X}h_k(y_k,x_k)T_k(x_k,x)dx_k$, and 
\begin{equation*}
\X^* := \begin{cases} \{x \in \mathcal{X}: \frac{dP_{k-1}}{d\nu}(x) \geq \frac{dQ_{k-1}}{d\nu}(x)\}, \quad \text{if } Z_k(P_{k-1}) \geq Z_k(Q_{k-1}), \nonumber \\
\{x \in \mathcal{X}: \frac{dP_{k-1}}{d\nu}(x) \leq \frac{dQ_{k-1}}{d\nu}(x)\}, \quad \text{otherwise}. \end{cases}
\end{equation*}
Then the posteriors $P_k$ and $Q_k^*$ satisfy
\begin{equation*}
d_{TV}(P_k,Q_k^*) \leq d_{TV}(P_{k-1},Q_{k-1}).
\end{equation*}
\end{prop}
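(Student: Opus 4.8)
The plan is to reduce this statement to the already-established inverse-problem case, Proposition~\ref{prop:ip:er:tv}, by recognizing that the state-estimation posterior update is an inverse-problem update driven by an \emph{effective likelihood} $g(x,y_k) = \int_{\X}h_k(y_k,x_k)T_k(x_k,x)dx_k$. First I would recall the two intermediate identities derived inside the proof of Proposition~\ref{prop:se:normalized_bound_tv}: with $\mu = P_{k-1}$, $\mu^\prime = Q_{k-1}$ and densities $\pi,\pi^\prime$ relative to $\nu$, Equation~\eqref{ieq:intermediate:tv:se} gives
\begin{equation*}
d_{TV}(P_k,Q_k^*) \leq \frac{1}{Z_k(P_{k-1})\vee Z_k(Q_{k-1})}\int_{\X^*}g(x,y_k)\bigl\lvert \tfrac{dP_{k-1}}{d\nu}(x) - \tfrac{dQ_{k-1}}{d\nu}(x)\bigr\rvert\nu(dx),
\end{equation*}
and Equation~\eqref{ieq:intermediate:tv:se_prior} gives $d_{TV}(P_{k-1},Q_{k-1}) = \int_{\X^*}\lvert \tfrac{dP_{k-1}}{d\nu}(x) - \tfrac{dQ_{k-1}}{d\nu}(x)\rvert\nu(dx)$, where $\X^*$ is exactly the set defined in the statement. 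Here the inner integral $\int_{\X}h_k(y_k,x_k)T_k(x_k,x)dx_k$ appearing in that proof is precisely $g(x,y_k)$ after renaming the dummy variable.

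Next I would rewrite the evidences in terms of $g$ via the Tonelli computations~\eqref{eq:se:tv:def:evidence_mu} and~\eqref{eq:se:tv:def:evidence_mu_prime}, which yield $Z_k(P_{k-1}) = \int_{\X}g(x,y_k)P_{k-1}(dx)$ and $Z_k(Q_{k-1}) = \int_{\X}g(x,y_k)Q_{k-1}(dx)$. With these identifications, the two hypotheses~\eqref{ieq:er:se:tv:assump_1} and~\eqref{ieq:er:se:tv:assump_2} become literally the hypotheses of Proposition~\ref{prop:ip:er:tv} with $h(y_k,\cdot)$ replaced by $g(\cdot,y_k)$.

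Finally the argument closes exactly as in Proposition~\ref{prop:ip:er:tv}. If $d_{TV}(P_{k-1},Q_{k-1})=0$ then $P_{k-1}=Q_{k-1}$, so $F_k(P_{k-1})=F_k(Q_{k-1})$ and $d_{TV}(P_k,Q_k^*)=0$. Otherwise I divide the displayed bound by the prior distance, substitute hypothesis~\eqref{ieq:er:se:tv:assump_1} to replace the ratio of weighted integrals by $\int_{\X^*}g(x,y_k)\nu(dx)$, and then invoke~\eqref{ieq:er:se:tv:assump_2} together with the evidence identities to conclude the ratio is at most one. I do not anticipate a genuine obstacle: the only points requiring care are confirming that the set $\X^*$ in the state-estimation intermediate bounds coincides with the one in the statement (both are defined through the same sign condition on $\tfrac{dP_{k-1}}{d\nu}-\tfrac{dQ_{k-1}}{d\nu}$ governed by the ordering of the evidences) and verifying the $g$-form of the evidences; both are direct.
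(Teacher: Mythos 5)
Your proposal is correct and follows essentially the same route as the paper: the paper's proof of this proposition likewise invokes the intermediate bounds~\eqref{ieq:intermediate:tv:se} and~\eqref{ieq:intermediate:tv:se_prior}, the Tonelli evidence identities~\eqref{eq:se:tv:def:evidence_mu} and~\eqref{eq:se:tv:def:evidence_mu_prime}, and then repeats the closing ratio argument of Proposition~\ref{prop:ip:er:tv} with the effective likelihood $g(\cdot,y_k)$ in place of $h(y_k,\cdot)$. No gaps.
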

\begin{proof}
Let $\nu$ be a $\sigma$-finite measure defined on $(\X,\mathcal{B}(\X))$ such that $P_{k-1} \ll \nu$, $Q_{k-1} \ll \nu$. Since $P_k=F_k(P_{k-1})$ and $Q_k^*=F_k(Q_{k-1})$, given Equations~\eqref{eq:se:tv:def:evidence_mu},~\eqref{eq:se:tv:def:evidence_mu_prime},~\eqref{ieq:intermediate:tv:se}, and~\eqref{ieq:intermediate:tv:se_prior}, and assuming that $\nu$ satisfies the conditions in Equations~\eqref{ieq:er:se:tv:assump_1} and~\eqref{ieq:er:se:tv:assump_2}, the conclusion of Proposition~\ref{prop:se:er:tv} follows by an argument analogous to that used in the proof of Proposition~\ref{prop:ip:er:tv}.
\end{proof}

\begin{prop}\label{prop:ps:er:tv}
In parameter-state estimation problems, given the data $y_k$, suppose $P_{k-1} \in \bar{\mathcal{P}}_k(\X\times \W)$ and $Q_{k-1} \in \bar{\mathcal{P}}_k(\X\times \W)$. Assume the state transition model $h_k$, the observation model $T_k$, and priors $P_{k-1}$ and $Q_{k-1}$ satisfy 
\begin{align}
&\int_{\mathcal{S}^*}g(x,w,y_k)\bigg \lvert \frac{dP_{k-1}}{d\lambda}(x,w)-\frac{dQ_{k-1}}{d\lambda}(x,w) \bigg \rvert dxdw  \nonumber \\
&\qquad \leq \int_{\mathcal{S}^*}g(x,w,y_k)dxdw\int_{\mathcal{S}^*}\bigg \lvert \frac{dP_{k-1}}{d\lambda}(x,w)-\frac{dQ_{k-1}}{d\lambda}(x,w) \bigg \rvert dxdw,
\label{ieq:er:ps:tv:assump_1}
\end{align}
and 
\begin{equation}
\int_{\mathcal{S}^*}g(x,w,y_k)dxdw \leq  \int_{\mathcal{X}\times \W}g(x,w,y_k)dP_{k-1}(x,w) \vee \int_{\mathcal{X}\times \W}g(x,w,y_k)dQ_{k-1}(x,w),
\label{ieq:er:ps:tv:assump_2}
\end{equation}
where $\lambda$ is the Lebesgue measure defined on the space $(\X\times \W,\mathcal{B}(\X\times \W))$, and
\begin{equation*}
g(x,w,y_k):=\int_{\X}h_k(y_k,x_k,w)T_k(x_k,x,w)dx_k,
\end{equation*}
and
\begin{equation*}
\mathcal{S}^* := \begin{cases} \{(x,w) \in \mathcal{X}\times \W: \frac{dP_{k-1}}{d\lambda}(x,w) \geq \frac{dQ_{k-1}}{d\lambda}(x,w)\}, \quad \text{if } Z_k(P_{k-1}) \geq Z_k(Q_{k-1}), \nonumber \\
\{(x,w) \in \mathcal{X}\times \W: \frac{dP_{k-1}}{d\lambda}(x,w) \leq \frac{dQ_{k-1}}{d\lambda}(x,w)\}, \quad \text{otherwise}. \end{cases}
\end{equation*}
Then the posteriors $P_k$ and $Q_k^*$ satisfy
\begin{equation*}
d_{TV}(P_k,Q_k^*) \leq d_{TV}(P_{k-1},Q_{k-1}).
\end{equation*}
\end{prop}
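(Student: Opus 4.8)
The plan is to mirror the arguments for Propositions~\ref{prop:ip:er:tv} and~\ref{prop:se:er:tv}, recycling the intermediate estimates already established in the proof of Proposition~\ref{prop:ps:normalized_bound_tv}. Taking $\mu = P_{k-1}$ and $\mu^\prime = Q_{k-1}$ so that $P_k = F_k(P_{k-1})$ and $Q_k^* = F_k(Q_{k-1})$, I would first observe that the set $\mathcal{S}^*$ in the statement is defined exactly as in that earlier proof. Hence Equations~\eqref{ieq:intermediate:tv:ps} and~\eqref{ieq:intermediate:tv:ps_prior} apply directly, with the reference measure taken to be the Lebesgue measure $\lambda$. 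This is legitimate because $\bar{\mathcal{P}}_k(\X\times\W)$ consists only of priors absolutely continuous with respect to $\lambda$, so $\pi = \tfrac{dP_{k-1}}{d\lambda}$ and $\pi^\prime = \tfrac{dQ_{k-1}}{d\lambda}$ are genuine densities and $g(x,w,y_k)=\int_{\X}h_k(y_k,x_k,w)T_k(x_k,x,w)dx_k$ is precisely the inner kernel appearing in Equations~\eqref{eq:ps:tv:def:evidence_mu} and~\eqref{eq:ps:tv:def:evidence_mu_prime}. Concretely, I would record the two resulting facts: \[ d_{TV}(P_k,Q_k^*) \leq \frac{1}{Z_k(P_{k-1})\vee Z_k(Q_{k-1})}\int_{\mathcal{S}^*}g(x,w,y_k)\Big\lvert \tfrac{dP_{k-1}}{d\lambda}(x,w)-\tfrac{dQ_{k-1}}{d\lambda}(x,w)\Big\rvert dxdw, \] and \[ d_{TV}(P_{k-1},Q_{k-1}) = \int_{\mathcal{S}^*}\Big\lvert \tfrac{dP_{k-1}}{d\lambda}(x,w)-\tfrac{dQ_{k-1}}{d\lambda}(x,w)\Big\rvert dxdw. \]

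Next I would dispose of the degenerate case. If $d_{TV}(P_{k-1},Q_{k-1})=0$, then $P_{k-1}=Q_{k-1}$, so $F_k$ sends them to a common posterior and $d_{TV}(P_k,Q_k^*)=0$, which satisfies the desired inequality trivially. For the remaining case $d_{TV}(P_{k-1},Q_{k-1})>0$, I would divide the first displayed bound by the second and study the quotient $d_{TV}(P_k,Q_k^*)/d_{TV}(P_{k-1},Q_{k-1})$.

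The two hypotheses then close the argument. Assumption~\eqref{ieq:er:ps:tv:assump_1} bounds the numerator integral $\int_{\mathcal{S}^*}g\,\lvert\pi-\pi^\prime\rvert\,dxdw$ by $\big(\int_{\mathcal{S}^*}g\,dxdw\big)\,d_{TV}(P_{k-1},Q_{k-1})$, which cancels the denominator integral. Assumption~\eqref{ieq:er:ps:tv:assump_2} then bounds $\int_{\mathcal{S}^*}g\,dxdw$ by $\int_{\X\times\W}g\,dP_{k-1}\vee\int_{\X\times\W}g\,dQ_{k-1}$; invoking the Tonelli identities in Equations~\eqref{eq:ps:tv:def:evidence_mu} and~\eqref{eq:ps:tv:def:evidence_mu_prime} identifies these two integrals with $Z_k(P_{k-1})$ and $Z_k(Q_{k-1})$, respectively. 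Chaining these, the quotient is at most $\big(Z_k(P_{k-1})\vee Z_k(Q_{k-1})\big)/\big(Z_k(P_{k-1})\vee Z_k(Q_{k-1})\big)=1$, giving $d_{TV}(P_k,Q_k^*)\leq d_{TV}(P_{k-1},Q_{k-1})$.

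Since the proposition is structurally identical to the inverse- and state-estimation cases, I do not anticipate a genuine obstacle; the work is essentially bookkeeping. The only point requiring a moment's care is verifying the evidence identification $\int_{\X\times\W}g(x,w,y_k)\,P_{k-1}(dx,dw)=Z_k(P_{k-1})$ (and its analogue for $Q_{k-1}$), which rests on a Tonelli interchange justified by the nonnegativity of $h_k$ and $T_k$—but this is exactly what Equations~\eqref{eq:ps:tv:def:evidence_mu} and~\eqref{eq:ps:tv:def:evidence_mu_prime} already supply, so it can be cited rather than re-derived.
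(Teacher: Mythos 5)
Your proposal is correct and follows essentially the same route as the paper: the paper's proof likewise invokes Equations~\eqref{eq:ps:tv:def:evidence_mu}, \eqref{eq:ps:tv:def:evidence_mu_prime}, \eqref{ieq:intermediate:tv:ps}, and \eqref{ieq:intermediate:tv:ps_prior} and then transplants the quotient argument from Proposition~\ref{prop:ip:er:tv}, exactly as you do (including the degenerate case $d_{TV}(P_{k-1},Q_{k-1})=0$). No gaps.
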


\begin{proof}
Since $P_k=F_k(P_{k-1})$ and $Q_k^*=F_k(Q_{k-1})$, given Equations~\eqref{eq:ps:tv:def:evidence_mu},~\eqref{eq:ps:tv:def:evidence_mu_prime},~\eqref{ieq:intermediate:tv:ps}, and~\eqref{ieq:intermediate:tv:ps_prior}, if Equations~\eqref{ieq:er:ps:tv:assump_1} and~\eqref{ieq:er:ps:tv:assump_2} hold, the conclusion of Proposition~\ref{prop:ps:er:tv} follows by an argument analogous to that used in the proof of Proposition~\ref{prop:ip:er:tv}.    
\end{proof}

\subsection{Proof of Theorem~\ref{thm:er:all:H}}
\label{sec:app:er:H}
\subsubsection{Proof of Theorem~\ref{thm:er:all:H} for inverse problems}
The proof of Theorem~\ref{thm:er:all:H} for inverse problems is as follows.
\begin{proof} 
First, we prove that the conclusion of Theorem~\ref{thm:er:all:H} holds if Assumption~\ref{assump:er:all:H:1} is satisfied. Let $\nu$ be a measure which satisfies the conditions in Assumption~\ref{assump:er:all:H:1}. By the definition of Hellinger distance, we can write $d_H^2(P_k, Q_k^*)$ as 
\begin{align*}
d_H^2(P_k, Q_k^*) =& \frac{1}{2}\int_\mathcal{X}\left(\sqrt{\frac{dP_k}{d\nu}(x)}- \sqrt{\frac{dQ_k^*}{d\nu}(x)}\right)^2\nu(dx) \nonumber \\
=&\frac{1}{2}\int_{\X}\frac{dP_k}{d\nu}(x)\nu(dx)+\frac{1}{2}\int_{\X}\frac{dQ_k^*}{d\nu}(x)\nu(dx) \\
&\qquad \qquad -\int_\mathcal{X}\sqrt{\frac{dP_k}{d\nu}(x)}\sqrt{\frac{dQ_k^*}{d\nu}(x)}\nu(dx).
\end{align*}
Note that we have
\begin{align*}
\int_{\X}\frac{dP_k}{d\nu}(x)\nu(dx)=& 1, \qquad 
\int_{\X}\frac{dQ_k^*}{d\nu}(x)\nu(dx) =1.
\end{align*}
Therefore, $d_H^2(P_k, Q_k^*)$ can be re-written as
\begin{align*}
d_H^2(P_k, Q_k^*) &= 1 - \int_\mathcal{X}\sqrt{\frac{dP_k}{d\nu}(x)}\sqrt{\frac{dQ_k^*}{d\nu}(x)}\nu(dx) \nonumber \\
& =1 - \frac{1}{\sqrt{Z_k(P_{k-1})Z_k(Q_{k-1})}}\int_\mathcal{X}h(y_k,x)\sqrt{\frac{dP_{k-1}}{d\nu}(x)\frac{dQ_{k-1}}{d\nu}(x)}\nu(dx).
\end{align*}
Similarly, we can write $d_H^2(P_{k-1},Q_{k-1})$ as
\begin{align*}
d_H^2(P_{k-1},Q_{k-1})=& 1 - \int_\mathcal{X}\sqrt{\frac{dP_{k-1}}{d\nu}(x)\frac{dQ_{k-1}}{d\nu}(x)}\nu(dx).
\end{align*}
Then the term $d_H^2(P_k, Q_k^*) - d_H^2(P_{k-1},Q_{k-1})$ can be expressed as
\begin{align*}
&d_H^2(P_k, Q_k^*) - d_H^2(P_{k-1},Q_{k-1}) 
= \int_\mathcal{X}\sqrt{\frac{dP_{k-1}}{d\nu}(x)\frac{dQ_{k-1}}{d\nu}(x)}\nu(dx) \nonumber \\
&\qquad - \frac{1}{\sqrt{Z_k(P_{k-1})Z_k(Q_{k-1})}}\int_\mathcal{X}h(y_k,x)\sqrt{\frac{dP_{k-1}}{d\nu}(x)\frac{dQ_{k-1}}{d\nu}(x)}\nu(dx).
\end{align*}
As $\nu$ satisfies the conditions in Assumption~\ref{assump:er:all:H:1}, we have
\begin{align*}
&\int_\mathcal{X}\sqrt{\frac{dP_{k-1}}{d\nu}(x)\frac{dQ_{k-1}}{d\nu}(x)}\nu(dx) \\
&\quad - \frac{1}{\sqrt{Z_k(P_{k-1})Z_k(Q_{k-1})}}\int_\mathcal{X}h(y_k,x)\sqrt{\frac{dP_{k-1}}{d\nu}(x)\frac{dQ_{k-1}}{d\nu}(x)}\nu(dx) \\
& \qquad \leq \int_\mathcal{X}\sqrt{\frac{dP_{k-1}}{d\nu}(x)\frac{dQ_{k-1}}{d\nu}(x)}\nu(dx) \\
&\qquad \qquad - \frac{\int_{\X}h(y_k,x)\nu(dx)}{\sqrt{Z_k(P_{k-1})Z_k(Q_{k-1})}}\int_\mathcal{X}\sqrt{\frac{dP_{k-1}}{d\nu}(x)\frac{dQ_{k-1}}{d\nu}(x)}\nu(dx) \nonumber \\
&\qquad = \int_\mathcal{X}\sqrt{\frac{dP_{k-1}}{d\nu}(x)\frac{dQ_{k-1}}{d\nu}(x)}\nu(dx)\left(1 - \frac{\int_{\X}h(y_k,x)\nu(dx)}{\sqrt{Z_k(P_{k-1})Z_k(Q_{k-1})}}\right) 
\leq 0.
\end{align*}

Next, we prove that if Assumption~\ref{assump:er:all:H:2} is satisfied, we then have $d_H(P_k, Q_k^*) \leq d_H(P_{k-1},Q_{k-1})$. 

We can re-write $d_H^2(P_k, Q_k^*)$ as
\begin{align*}
d_H^2(P_k, Q_k^*) &= 1 - \frac{1}{\sqrt{Z_k(P_{k-1})Z_k(Q_{k-1})}}\int_\mathcal{X}h(y_k,x)\sqrt{\frac{dP_{k-1}}{d\nu}(x)\frac{dQ_{k-1}}{d\nu}(x)}\nu(dx) \nonumber \\
& = 1 + \frac{1}{2}\frac{1}{\sqrt{Z_k(P_{k-1})Z_k(Q_{k-1})}}\int_\mathcal{X}h(y_k,x)\left({\left(\sqrt{\frac{dP_{k-1}}{d\nu}(x)}-\sqrt{\frac{dQ_{k-1}}{d\nu}(x)}\right)^2}\right. \\
&\qquad \qquad \left.{-\frac{dP_{k-1}}{d\nu}(x)-\frac{dQ_{k-1}}{d\nu}(x)}\right)\nu(dx) \nonumber \\
&= 1- \frac{1}{2}\left(\sqrt{\frac{Z_k(P_{k-1})}{Z_k(Q_{k-1})}} + \sqrt{\frac{Z_k(Q_{k-1})}{Z_k(P_{k-1})}}\right) \\
&\qquad + \frac{1}{2\sqrt{Z_k(P_{k-1})Z_k(Q_{k-1})}}\int_\mathcal{X}h(y_k,x)\left(\sqrt{\frac{dP_{k-1}}{d\nu}(x)}-\sqrt{\frac{dQ_{k-1}}{d\nu}(x)}\right)^2\nu(dx) \nonumber \\
&\leq  \frac{1}{2\sqrt{Z_k(P_{k-1})Z_k(Q_{k-1})}}\int_\mathcal{X}h(y_k,x)\left(\sqrt{\frac{dP_{k-1}}{d\nu}(x)}-\sqrt{\frac{dQ_{k-1}}{d\nu}(x)}\right)^2\nu(dx). 
\end{align*}
Hence, the term $d_H^2(P_k,Q_k^*) - d_H^2(P_{k-1},Q_{k-1})$ satisfies 
\begin{align}
&d_H^2(P_k,Q_k^*) - d_H^2(P_{k-1},Q_{k-1}) \nonumber \\
&\qquad \leq \frac{1}{2\sqrt{Z_k(P_{k-1})Z_k(Q_{k-1})}}\int_\mathcal{X}h(y_k,x)\left(\sqrt{\frac{dP_{k-1}}{d\nu}(x)}-\sqrt{\frac{dQ_{k-1}}{d\nu}(x)}\right)^2\nu(dx) \nonumber \\
&\qquad \qquad \qquad - \frac{1}{2}\int_\mathcal{X}\left(\sqrt{\frac{dP_{k-1}}{d\nu}(x)}-\sqrt{\frac{dQ_{k-1}}{d\nu}(x)}\right)^2\nu(dx).
\label{ieq:ip:er:H:assump2:new:1}
\end{align}
When $\nu$ satisfies the conditions in Assumption~\ref{assump:er:all:H:2}, the right-hand side of Equation~\eqref{ieq:ip:er:H:assump2:new:1} satisfies
\begin{align*}
&\frac{1}{2\sqrt{Z_k(P_{k-1})Z_k(Q_{k-1})}}\int_\mathcal{X}h(y_k,x)\left(\sqrt{\frac{dP_{k-1}}{d\nu}(x)}-\sqrt{\frac{dQ_{k-1}}{d\nu}(x)}\right)^2\nu(dx) \\
&\qquad- \frac{1}{2}\int_\mathcal{X}\left(\sqrt{\frac{dP_{k-1}}{d\nu}(x)}-\sqrt{\frac{dQ_{k-1}}{d\nu}(x)}\right)^2\nu(dx) \\
& \qquad \qquad\leq \frac{1}{2}\int_\mathcal{X}\left(\sqrt{\frac{dP_{k-1}}{d\nu}(x)}-\sqrt{\frac{dQ_{k-1}}{d\nu}(x)}\right)^2\nu(dx) \\
&\qquad \qquad \qquad \cdot \left(\frac{\int_\mathcal{X}h(y_k,x)\nu(dx)}{\sqrt{\int_{\X}h(y_k,x)P_{k-1}(dx)}\sqrt{\int_{\X}h(y_k,x)Q_{k-1}(dx)}} - 1 \right) \nonumber \\
& \qquad \qquad \leq0.
\end{align*}
The statement of Theorem~\ref{thm:er:all:H} for inverse problems is proved.
\end{proof}

\subsubsection{Proof of Theorem~\ref{thm:er:all:H} for state estimation problems}
The proof of Theorem~\ref{thm:er:all:H} for state estimation problems is as follows.
\begin{proof} 
First, we prove that if Assumption~\ref{assump:er:all:H:1} is satisfied, we have $d_H(P_k,Q_k^*) \leq d_H(P_{k-1},Q_{k-1})$. Let $\nu$ be a measure which satisfies the conditions in Assumption SE.ER.1. Let $\pi$ and $\pi^\prime$ denote the Radon-Nikodym derivatives $\frac{dP_{k-1}}{d\nu}$ and $\frac{dQ_{k-1}}{d\nu}$, respectively. Then we can write $d_H^2(P_k,Q_k^*)$ as 
\begin{align}
d_H^2(P_k,Q_k^*) =& \frac{1}{2}\int_\mathcal{X}\left({\sqrt{\frac{h_k(y_k,x)\int_{\X}T_k(x,x_{k-1})P_{k-1}(dx_{k-1})}{Z_k(P_{k-1})}}}\right. \nonumber \\
&\qquad \qquad \left.{- \sqrt{\frac{h_k(y_k,x)\int_{\X}T_k(x,x_{k-1})Q_{k-1}(dx_{k-1})}{Z_k(Q_{k-1})}}}\right)^2dx \nonumber \\
=& 1 - \frac{1}{\sqrt{Z_k(P_{k-1})Z_k(Q_{k-1})}}\int_\mathcal{X}h_k(y_k, x)\sqrt{\int_\mathcal{X}T_k(x,x_{k-1})\pi(x_{k-1})\nu(dx_{k-1})} \nonumber \\
&\qquad \qquad \cdot \sqrt{\int_\mathcal{X}T_k(x, x_{k-1})\pi^\prime(x_{k-1})\nu(dx_{k-1})}dx.
\label{eq:er:se:new:1}
\end{align}
Using Hölder's inequality, we obtain
\begin{align}
&\sqrt{\int_\mathcal{X}T_k(x,x_{k-1})\pi(x_{k-1})\nu(dx_{k-1})} \sqrt{\int_\mathcal{X}T_k(x, x_{k-1})\pi^\prime(x_{k-1})\nu(dx_{k-1})} \nonumber \\  
&\qquad \geq \int_{\X}T_k(x,x_{k-1})\sqrt{\pi(x_{k-1})\pi^\prime(x_{k-1})}\nu(dx_{k-1}) 
\label{ieq:er:se:new:1}
\end{align}
Substituting Equation~\eqref{ieq:er:se:new:1} into Equation~\eqref{eq:er:se:new:1} yields
\begin{align*}
&d_H^2(P_k,Q_k^*) \\
&\quad \leq 1 - \frac{\int_\mathcal{X}h_k(y_k, x)\left(\int_\mathcal{X}T_k(x,x_{k-1})\sqrt{\pi(x_{k-1})}\sqrt{\pi^\prime(x_{k-1})}\nu(dx_{k-1})\right)dx}{\sqrt{Z_k(P_{k-1})Z_k(Q_{k-1})}}.
\end{align*}
By the definition of the Hellinger distance, we can write $d_H^2(P_{k-1},Q_{k-1})$ as
\begin{align*}
d_H^2(P_{k-1},Q_{k-1}) =& \frac{1}{2}\int_\mathcal{X}\left(\sqrt{\pi(x_{k-1})}-\sqrt{\pi^\prime(x_{k-1})}\right)^2\nu(dx_{k-1}) \nonumber \\
=& 1 - \int_\mathcal{X}\sqrt{\pi(x_{k-1})\pi^\prime(x_{k-1})}\nu(dx_{k-1}).
\end{align*}
Thus the term $d_H^2(P_k,Q_k^*)-d_H^2(P_{k-1},Q_{k-1})$ satisfies
\begin{align*}
&d_H^2(P_k,Q_k^*)-d_H^2(P_{k-1},Q_{k-1}) \nonumber \\
&\qquad \leq \int_\mathcal{X}\sqrt{\pi(x_{k-1})\pi^\prime(x_{k-1})}\nu(dx_{k-1}) \\
&\qquad \qquad \qquad - \frac{\int_\mathcal{X}h_k(y_k, x)\left(\int_\mathcal{X}T_k(x,x_{k-1})\sqrt{\pi(x_{k-1})}\sqrt{\pi^\prime(x_{k-1})}\nu(dx_{k-1})\right)dx}{\sqrt{Z_k(P_{k-1})Z_k(Q_{k-1})}}.
\end{align*}
By Tonelli's theorem, we have
\begin{align*}
&\int_\mathcal{X}h_k(y_k, x)\left(\int_\mathcal{X}T_k(x,x_{k-1})\sqrt{\pi(x_{k-1})}\sqrt{\pi^\prime(x_{k-1})}\nu(dx_{k-1})\right)dx \\
&\qquad = \int_{\X}\left(\int_{\X}h_k(y_k,x)T_k(x,x_{k-1})dx\right)\sqrt{\pi(x_{k-1})\pi^\prime(x_{k-1})}\nu(dx_{k-1}).
\end{align*}
When $\nu$ satisfies the conditions in Assumption~\ref{assump:er:all:H:1}, an argument analogous to that used in the proof of Theorem~\ref{thm:er:all:H} for inverse problems yields
\begin{align*}
& d_H^2(P_k,Q_k^*)-d_H^2(P_{k-1},Q_{k-1}) \leq 0.
\end{align*}

Next, we prove that under Assumption~\ref{assump:er:all:H:2}, we have $d_H(P_k,Q_k^*) \leq d_H(P_{k-1},Q_{k-1})$.

Following an argument similar to that is used in the proof for Theorem~\ref{thm:er:all:H} for inverse problems, we can show that
\begin{align}
&d_H^2(P_k,Q_k^*) \nonumber \\
& \leq \frac{1}{2\sqrt{Z_k(P_{k-1})Z_k(Q_{k-1})}}\int_\mathcal{X}h_k(y_k, x) \nonumber \\ 
&\qquad \cdot \left(\sqrt{\int_\mathcal{X}T_k(x,x_{k-1})\pi(x_{k-1})\nu(dx_{k-1})} - \sqrt{\int_\mathcal{X}T_k(x,x_{k-1})\pi^\prime(x_{k-1})\nu(dx_{k-1})} \right)^2dx \nonumber \\
& \quad \leq \frac{\int_{\mathcal{X}}h_k(y_k,x)\left(\int_{\X}T_k(x,x_{k-1})\left(\sqrt{\pi(x_{k-1})}-\sqrt{\pi^\prime(x_{k-1})}\right)^2\nu(dx_{k-1})\right)dx}{2\sqrt{Z_k(P_{k-1})Z_k(Q_{k-1})}}.
\label{ieq:se:er:H:assump2:new:1}
\end{align}
The second inequality in Equation~\eqref{ieq:se:er:H:assump2:new:1} is obtained according to Equation~\eqref{ieq:er:se:new:1}.

By Tonelli's theorem, we have
\begin{align*}
& \int_{\mathcal{X}}h_k(y_k,x)\left(\int_{\X}T_k(x,x_{k-1})\left(\sqrt{\pi(x_{k-1})}-\sqrt{\pi^\prime(x_{k-1})}\right)^2\nu(dx_{k-1})\right)dx \\
&\quad = \int_{\mathcal{X}}\left(\int_{\X}h_k(y_k,x)T_k(x,x_{k-1})dx\right)\left(\sqrt{\pi(x_{k-1})}-\sqrt{\pi^\prime(x_{k-1})}\right)^2\nu(dx_{k-1}).
\end{align*}
Then we can prove that $d_H(P_k,Q_k^*) \leq d_H(P_{k-1},Q_{k-1})$ holds under Assumption~\ref{assump:er:all:H:2} by following an argument analogous to that used in the proof of Theorem~\ref{thm:er:all:H} for inverse problems.
\end{proof}
\subsubsection{Proof of Theorem~\ref{thm:er:all:H} for parameter-state estimation problems}

The proof of Theorem~\ref{thm:er:all:H} for parameter-state estimation problems is analogous to the proof of Theorem~\ref{thm:er:all:H} for state estimation problems. 

\subsection{Proof of Theorem~\ref{thm:er:ip:W}}
\label{sec:app:er:ip:W}
\begin{proof}
For the metric space $\left(\mathcal{X},d_{\X}\right)$, the $1$-Wasserstein distance between the posteriors $P_k$ and $Q_k^*$ is defined as
\begin{equation*}
W_1(P_k,Q_k^*) := \inf_{\gamma \in \Gamma(P_k,Q_k^*)}\int_{\mathcal{X}\times\mathcal{X}}d_{\X}(x_1,x_2)\gamma(dx_1,dx_2),
\end{equation*}
where $\Gamma(P_k,Q_k^*)$ denotes the set of all couplings of $P_k$ and $Q_k^*$. 
Define a probability measure $\gamma_0: \mathcal{X} \times \mathcal{X} \rightarrow \mathbb{R}$ as 
\begin{equation*}
\gamma_0(dx,dx^\prime) := P_k(dx)Q_k^*(dx^\prime).
\end{equation*}
Then $\gamma_0$ is a coupling of $P_k$ and $Q_k^*$. It follows that
\begin{align}
W_1(P_k,Q_k^*) & 
\leq \int_{\mathcal{X}\times \mathcal{X}}d_{\X}(x,x^\prime)P_k(dx)Q_k^*(dx^\prime) \nonumber \\
& =\frac{1}{Z_k(P_{k-1})Z_k(Q_{k-1})}\int_{\mathcal{X}\times \mathcal{X}}d_{\X}(x,x^\prime)h(y_k,x)h(y_k, x^\prime)P_{k-1}(dx)Q_{k-1}(dx^\prime) \nonumber \\
&  = \frac{\E_{(X,X^\prime)\sim P_{k-1} \otimes Q_{k-1}}[d_{\X}(X,X^\prime)h(y_k,X)h(y_k,X^\prime)]}{E_{X \sim P_{k-1}}[h(y_k,X)]E_{X \sim Q_{k-1}}[h(y_k,X)]}.
\label{ieq:er:ip:W:1}
\end{align}
According to the Kantorovich-Rubinstein duality, the $1$-Wasserstein distance between the two priors $P_{k-1}$ and $Q_{k-1}$ can be written as
\begin{equation*}
W_1(P_{k-1},Q_{k-1}) = \sup_{f:\mathcal{X} \rightarrow \mathbb{R}, \lVert f \rVert_{\text{Lip}} \leq 1}\bigg \lvert \int_{\mathcal{X}}f(x)P_{k-1}(dx) - \int_{\mathcal{X}}f(x)Q_{k-1}(dx) \bigg \rvert.
\end{equation*}
Define a function $g: \mathcal{X} \rightarrow \mathbb{R}$ as
\begin{equation*}
g(x) := d_{\X}(x,x_0),
\end{equation*}
where $x_0 \in \mathcal{X}$ is an arbitrary point in space $\mathcal{X}$. We first show that the function $g$ is Lipschitz continuous on metric space $\left(\mathcal{X},d_{\X}\right)$ and its best Lipschitz constant $\lVert g \rVert_{\text{Lip}}$ is no greater than 1. \\
For two arbitrary points $x, x^\prime \in \mathcal{X}$, we have
\begin{equation*}
\lvert g(x) - g(x^\prime) \rvert = \lvert d_{\X}(x,x_0) - d_{\X}(x^\prime, x_0) \rvert.
\end{equation*}
Since metric $d_{\X}$ is symmetric and satisfies triangle inequality, we have
\begin{equation*}
d_{\X}(x,x_0) \leq d_{\X}(x,x^\prime) + d_{\X}(x^\prime,x_0), \nonumber 
\end{equation*}
and
\begin{equation*}
d_{\X}(x^\prime,x_0) \leq d_{\X}(x^\prime,x)+d_{\X}(x,x_0) = d_{\X}(x,x^\prime)+d_{\X}(x,x_0).
\end{equation*}
Then $d_{\X}(x,x^\prime)$ satisfies
\begin{equation*}
d_{\X}(x,x^\prime) \geq \lvert d_{\X}(x,x_0) - d_{\X}(x^\prime,x_0) \rvert.
\end{equation*}
It follows that
\begin{equation*}
\lVert g \rVert_{\text{Lip}} = \sup_{x,x^\prime \in \mathcal{X}, x \neq x^\prime}\frac{\lvert d_{\X}(x,x_0) - d_{\X}(x^\prime,x_0) \rvert}{d_{\X}(x,x^\prime)} \leq 1.
\end{equation*}
Then we have
\begin{equation*}
W_1(P_{k-1},Q_{k-1}) \geq \bigg \lvert \int_\mathcal{X}d_{\X}(x,x_0)P_{k-1}(dx) - \int_\mathcal{X}d_{\X}(x,x_0)Q_{k-1}(dx) \bigg \rvert.
\end{equation*}
As $x_0 \in \mathcal{X}$ is arbitrary, $W_1(P_{k-1},Q_{k-1})$ satisfies
\begin{align}
W_1(P_{k-1},Q_{k-1}) \geq& \sup_{x_0 \in \mathcal{X}}\bigg \lvert \int_\mathcal{X}d_{\X}(x,x_0)P_{k-1}(dx) - \int_\mathcal{X}d_{\X}(x,x_0)Q_{k-1}(dx) \bigg \rvert \nonumber \\
=& \sup_{x_0 \in \mathcal{X}}\bigg \lvert \E_{X \sim P_{k-1}}[d_{\X}(X,x_0)] - \E_{X \sim Q_{k-1}}[d_{\X}(X,x_0)] \bigg \rvert.
\label{ieq:er:ip:W:2}
\end{align}
Given Equations~\eqref{ieq:er:ip:W:1} and~\eqref{ieq:er:ip:W:2}, if the following inequality holds:
\begin{align*}
&\frac{\E_{(X,X^\prime)\sim P_{k-1} \otimes Q_{k-1}}[d_{\X}(X,X^\prime)h(y_k,X)h(y_k,X^\prime)]}{E_{X \sim P_{k-1}}[h(y_k,X)]E_{X \sim Q_{k-1}}[h(y_k,X)]} \\
&\qquad \leq \sup_{x_0 \in \mathcal{X}}\bigg \lvert \E_{X \sim P_{k-1}}[d_{\X}(X,x_0)] - \E_{X \sim Q_{k-1}}[d_{\X}(X,x_0)] \bigg \rvert,
\end{align*}
then we have
$$W_1(P_k,Q_k^*) \leq W_1(P_{k-1},Q_{k-1}).$$
\end{proof}
\subsection{Proof of Theorem~\ref{thm:er:se&ps:W}}
\label{sec:app:er:se&ps:W}
\begin{proof}
First, we prove that Theorem~\ref{thm:er:se&ps:W} holds for state estimation problems. 

For state estimation problems, define a probability measure $\gamma_0$ as 
\begin{equation*}
\gamma_0(dx,dx^\prime) = P_k(dx)Q_k^*(dx^\prime).
\end{equation*}
Then $\gamma_0$ is a coupling of $P_k$ and $Q_k^*$. According to the definition of $1$-Wasserstein, we have
\begin{equation*}
W_1(P_k,Q_k^*) \leq \int_{\X\times \X}d_{\X}(x,x^\prime)P_k(dx)Q_k^*(dx^\prime). 
\end{equation*}
By the definition of $F_k$ for state estimation problems, $P_k(dx)$ can be written as
\begin{equation*}
P_k(dx) = \frac{h_k(y_k,x)P_k^-(dx)}{Z_k(P_{k-1})}.
\end{equation*}
Similarly, we can express $Q_k^*(dx^\prime)$ as
\begin{equation*}
Q_k^*(dx^\prime) = \frac{h_k(y_k,x^\prime)Q_k^-(dx^\prime)}{Z_k(Q_{k-1})}.
\end{equation*}
It follows that 
\begin{align}
W_1(P_k,Q_k^*) \leq& \frac{1}{Z_k(P_{k-1})Z_k(Q_{k-1})}\int_{\X\times\X}h_k(y_k,x)h_k(y_k,x^\prime)P_k^-(dx)Q_k^-(dx^\prime) \nonumber \\
=& \frac{\E_{(X,X^\prime)\sim P_k^- \otimes Q_k^-}[d_{\X}(X,X^\prime)h_k(y_k,X)h_k(y_k,X^\prime)]}{\E_{X \sim P_k^-}[h_k(y_k,X)]\E_{X \sim Q_k^-}[h_k(y_k,X)]}.
\label{ieq:er:es:W:1}
\end{align}

According to the Kantorovich-Rubinstein duality, the $1$-Wasserstein distance between the two priors $P_{k-1}$ and $Q_{k-1}$ can be written as
\begin{equation*}
W_1(P_{k-1},Q_{k-1}) = \sup_{f:\mathcal{X} \rightarrow \mathbb{R}, \lVert f \rVert_{\text{Lip}}\leq 1}\bigg \lvert \int_{\mathcal{X}}f(x)p_{k-1}(x)dx - \int_{\mathcal{X}}f(x)q_{k-1}(x)dx \bigg \rvert.
\end{equation*}
Following an arguments similar to that used in the proof of Theorem~\ref{thm:er:ip:W}, we can show that 
\begin{equation}
W_1(P_{k-1},Q_{k-1}) \geq \sup_{\hat{x} \in \mathcal{X}}\bigg \lvert \E_{X \sim P_{k-1}}[d_{\X}(X,\hat{x})]-\E_{X \sim Q_{k-1}}[d_{\X}(X,\hat{x})] \bigg \rvert.
\label{ieq:er:es:W:2}
\end{equation}
Under the conditions given in Theorem~\ref{thm:er:se&ps:W}, we have
\begin{align*}
&\frac{\E_{(X,X^\prime)\sim P_k^- \otimes Q_k^-}[d_{\X}(X,X^\prime)h_k(y_k,X)h_k(y_k,X^\prime)]}{\E_{X \sim P_k^-}[h_k(y_k,X)]\E_{X \sim Q_k^-}[h_k(y_k,X)]} \\
&\qquad \leq \sup_{\hat{x} \in \mathcal{X}}\bigg \lvert \E_{X \sim P_{k-1}}[d_{\X}(X,\hat{x})]-\E_{X \sim Q_{k-1}}[d_{\X}(X,\hat{x})] \bigg \rvert.
\end{align*}
Given Equations~\eqref{ieq:er:es:W:1} and~\eqref{ieq:er:es:W:2}, it follows that
\begin{equation*}
W_1(P_k,Q_k^*) \leq W_1(P_{k-1},Q_{k-1}).
\end{equation*}
Analogously, Theorem~\ref{thm:er:se&ps:W} can be shown to hold for parameter-state estimation problems.
\end{proof}

\section{Proofs of Corollary~\ref{coro:example1} and Corollary~\ref{coro:example2}}
\subsection{Proof of Corollary~\ref{coro:example1}}
\label{sec:app:coro1}
\begin{proof}
Given the underlying system, for any integer $k\geq1$, we have
\begin{align*}
h_k(y_k,x,w) = p_{\mathcal{N}}(y_k \mid \Phi_k(x,w), \Gamma) \leq (2\pi)^{-\frac{r}{2}}\det (\Gamma)^{-\frac{1}{2}}.
\end{align*}
It follows that
\begin{align}
\tilde{C}_{Th}(y_k;k) &= \sup_{x_{k-1}\in \X, w\in \W}\int_{\X}h_k(y_k,x,w)T_k(x,x_{k-1},w)dx \nonumber \\
&\leq (2\pi)^{-\frac{r}{2}}\det (\Gamma)^{-\frac{1}{2}}\sup_{x_{k-1}\in \X, w\in \W}\int_{\X}T_k(x,x_{k-1},w)dx  \nonumber \\
&= (2\pi)^{-\frac{r}{2}}\det (\Gamma)^{-\frac{1}{2}} < \infty.
\label{ieq:vi:type1:C}
\end{align}
Hence Assumption PS.1 is satisfied for all $k \geq1$ and Theorem~\ref{thm:learning_error} can be applied to this problem under the total variation and Hellinger distance.

Next, we derive an upper bound of the approximation error $d(Q_k,Q_k^*)$ for any integer $k \geq 1$.

Let $q_k^*$ denote the density of $Q_k^*$ with respect to the Lebesgue measure. The Kullback-Leibler (KL) divergence between $Q_k$ and $Q_k^*$ is given by
\begin{align}
&KL(Q_k \lVert Q_k^*) = \E_{q_k(x_k,w)}\left[\log \frac{q_k(x_k,w)}{q_k^*(x_k,w)}\right] \nonumber \\
&\qquad = \E_{q_k(x_k,w)}\left[\log\left(\int_{\X\times\W}p_{\mathcal{N}}(y_k \mid \Phi_k(x_k,w),\Gamma)q_k^{*-}(x_k,w)dx_kdw\right)\right] \nonumber \\
&\qquad \qquad -\E_{q_k(x_k,w)}\left[\log \left(p_{\mathcal{N}}(y_k \mid \Phi_k(x_k,w),\Gamma)q_k^{*-}(x_k,w)\right)\right] + \E_{q_k(x_k,w)}[\log q_k(x_k,w)] \nonumber \\
&\qquad = \log\left(\int_{\X\times\W}p_{\mathcal{N}}(y_k \mid \Phi_k(x_k,w),\Gamma)q_k^{*-}(x_k,w)dx_kdw\right) - \mathcal{L}_k(Q_k).
\label{ieq:vi:type1:kl}
\end{align}
The first term on the right-hand side of Equation~\eqref{ieq:vi:type1:kl} satisfies
\begin{align}
&\log(\int_{\X\times\W}p_{\mathcal{N}}(y_k \mid \Phi_k(x_k,w),\Gamma)q_k^{*-}(x_k,w)dx_kdw \nonumber \\
&\qquad \leq \log\left((2\pi)^{-\frac{r}{2}}\det(\Gamma)^{-\frac{1}{2}}\int_{\X\times\W}q_k^{*-}(x_k,w)dx_kdw \right)\nonumber \\
&\qquad = -\frac{r}{2}\log(2\pi)-\frac{1}{2}\log(\det(\Gamma)).
\label{ieq:vi:type1:log_evidence}
\end{align}
Substituting Equation~\eqref{ieq:vi:type1:log_evidence} and $\mathcal{L}_k(Q_k) \geq \epsilon_k$ into Equation~\eqref{ieq:vi:type1:kl} yields:
\begin{equation*}
KL(Q_k \lVert Q_k^*) \leq -\frac{r}{2}\log(2\pi)-\frac{1}{2}\log(\det(\Gamma)) - \epsilon_k.
\end{equation*}
Therefore, we have
\begin{equation}
d_{TV}(Q_k^*,Q_k) \leq \frac{1}{\sqrt{2}}\sqrt{KL(Q_k \lVert Q_k^*)} \leq \frac{1}{\sqrt{2}}\sqrt{-\frac{r}{2}\log(2\pi)-\frac{1}{2}\log(\det(\Gamma)) - \epsilon_k},
\label{ieq:vi:type1:tv_approx}
\end{equation}
and 
\begin{equation}
d_{H}(Q_k^*,Q_k) \leq \frac{1}{\sqrt{2}}\sqrt{KL(Q_k \lVert Q_k^*)} \leq \frac{1}{\sqrt{2}}\sqrt{-\frac{r}{2}\log(2\pi)-\frac{1}{2}\log(\det(\Gamma)) - \epsilon_k}.
\label{ieq:vi:type1:H_approx}
\end{equation}
Combining Theorem~\ref{thm:learning_error} and Equations~\eqref{ieq:vi:type1:C},~\eqref{ieq:vi:type1:tv_approx}, and~\eqref{ieq:vi:type1:H_approx} completes the proof of Corollary~\ref{coro:example1} under the total variation and Hellinger distances.

According to~\cite{choose_metric}, if the metric space $(\X \times \W, d_{\X\times\W})$ satisfies
\begin{equation*}
\sup_{(x,w),(x^\prime,w^\prime) \in \X\times \W}d_{\X\times\W}((x,w),(x^\prime,w^\prime))=D < \infty,
\end{equation*}
then we have
\begin{equation}
W_1(P_k,Q_k) \leq Dd_{TV}(P_k,Q_k).
\label{ieq:vi:type1:W_lr}
\end{equation}
The statement of Corollary~\ref{coro:example1} for the $1$-Wasserstein distance is directly proved by combining the result of Corollary~\ref{coro:example1} for the total variation distance and Equation~\eqref{ieq:vi:type1:W_lr}.
\end{proof}
\subsection{Proof of Corollary~\ref{coro:example2}}
\label{sec:app:coro2}
\begin{proof}
For any time step $k\geq1$, we have
\begin{align*}
h_k(y_k,x) = p_{\mathcal{N}}(y_k \mid \Phi_k(x), \Gamma) \leq (2\pi)^{-\frac{r}{2}}\det (\Gamma)^{-\frac{1}{2}}.
\end{align*}
Thus the underlying system with the true system parameter $\bar{w}$ satisfies
\begin{align}
C_{Th}(y_k;k) &= \sup_{x_{k-1}\in \X}\int_{\X}h_k(y_k,x)T_k(x,x_{k-1},\bar{w})dx \nonumber \\
&\leq (2\pi)^{-\frac{r}{2}}\det (\Gamma)^{-\frac{1}{2}}\sup_{x_{k-1}\in \X}\int_{\X}T_k(x,x_{k-1},\bar{w})dx  \nonumber \\
&= (2\pi)^{-\frac{r}{2}}\det (\Gamma)^{-\frac{1}{2}} < \infty.
\label{ieq:vi:type2:C}
\end{align}
Therefore, the underlying system with true parameter $\bar{w}$ satisfies Assumption SE.2.
Next, we derive an upper bound for the approximation error $(Q_k^*,Q_k)$. Here, $Q_k^*$ is the exact posterior obtained by propagating the prior $Q_{k-1}$ under the system with the true parameter $\bar{w}$. Let $Q_{k,\hat{w}_k}^*$ be the exact posterior obtained using the same prior $Q_{k-1}$ but under system with the estimated parameter $\hat{w}_k$. By the triangle inequality satisfied by the metric $d$, we have
\begin{equation}
d(Q_k^*,Q_k) \leq d(Q_k^*,Q_{k,\hat{w}_k}^*)+d(Q_{k,\hat{w}_k}^*,Q_k).
\end{equation}
First, we derive an upper bound of $d(Q_k^*,Q_{k,\hat{w}_k}^*)$. Note that $Z_k(Q_{k-1})$ and $Z_{k,\hat{w}_k}(Q_{k-1})$ are corresponding evidences of $Q_k^*$ and $Q_{k,\hat{w}_k}^*$, respectively. Then we can write $d_{TV}(Q_k^*,Q_{k,\hat{w}_k}^*)$ as:
\begin{align}
d_{TV}(Q_k^*,Q_{k,\hat{w}_k}^*) =&\frac{1}{2}\int_{\X}\bigg \lvert \frac{h_k(y_k,x)}{Z_{k,\hat{w}_k}(Q_{k-1})}\int_{\X}T_k(x,x_{k-1},\hat{w}_k)q_{k-1}(x_{k-1})dx_{k-1} \nonumber \\
&\qquad  -\frac{h_k(y_k,x)}{Z_k(Q_{k-1})}\int_{\X}T_k(x,x_{k-1},\bar{w})q_{k-1}(x_{k-1})dx_{k-1}\bigg \rvert dx \nonumber \\
\leq & I_1 + I_2,
\label{ieq:vi:type2:tv:total}
\end{align}
where 
\begin{align*}
I_1 &= \frac{1}{2Z_{k,\hat{w}_k}(Q_{k-1})}\int_{\X}h_k(y_k,x)\bigg \lvert \int_{\X}T_k(x,x_{k-1},\hat{w}_k)q_{k-1}(x_{k-1})dx_{k-1} \nonumber \\
&\qquad  - \int_{\X}T_k(x,x_{k-1},\bar{w})q_{k-1}(x_{k-1})dx_{k-1} \bigg \rvert dx,
\end{align*}
and 
\begin{align*}
I_2 &= \frac{1}{2}\bigg \lvert \frac{1}{Z_{k,\hat{w}_k}(Q_{k-1})}-\frac{1}{Z_{k}(Q_{k-1})} \bigg \rvert \nonumber \\
&\qquad \cdot \int_{\X}h_k(y_k,x)\left(\int_{\X}T_k(x,x_{k-1},\bar{w})q_{k-1}(x_{k-1})dx_{k-1}\right) dx.
\end{align*}
First, we derive an upper bound for $I_1$. By Assumption VI.1, $I_1$ satisfies
\begin{align}
I_1 =& \frac{1}{2Z_{k,\hat{w}_k}(Q_{k-1})}\int_{\X}h_k(y_k,x) \nonumber \\
&\qquad \cdot \bigg \lvert \int_{\X}q_{k-1}(x_{k-1})\left(T_k(x,x_{k-1},\hat{w}_k)-T_k(x,x_{k-1},\bar{w})\right)dx_{k-1} \bigg \rvert dx \nonumber \\
\leq& \frac{1}{2Z_{k,\hat{w}_k}(Q_{k-1})}\int_{\X}h_k(y_k,x) \nonumber \\
&\qquad \cdot \left(\int_{\X}q_{k-1}(x_{k-1})\bigg \lvert T_k(x,x_{k-1},\hat{w}_k)-T_k(x,x_{k-1},\bar{w})\bigg \rvert dx_{k-1}\right) dx \nonumber \\
\leq& \frac{\lVert \hat{w}_k - \bar{w} \rVert_m }{2Z_{k,\hat{w}_k}(Q_{k-1})}\int_{\X}h_k(y_k,x)g_{\text{Lip}}(x;k)\left(\int_{\X}q_{k-1}(x_{k-1})dx_{k-1}\right) dx \nonumber \\
=& \frac{\lVert \hat{w}_k - \bar{w} \rVert_m }{2Z_{k,\hat{w}_k}(Q_{k-1})}\int_{\X}h_k(y_k,x)g_{\text{Lip}}(x;k)dx = \frac{\tilde{C}_{VI}(y_k;k)\lVert \hat{w}_k - \bar{w} \rVert_m }{2Z_{k,\hat{w}_k}(Q_{k-1})}.
\label{ieq:bound_I_1}
\end{align}
We then derive an upper bound for $I_2$. Note that
\begin{align*}
\int_{\X}h_k(y_k,x)\left(\int_{\X}T_k(x,x_{k-1},\bar{w})q_{k-1}(x_{k-1})dx_{k-1}\right)dx = Z_k(Q_{k-1}).
\end{align*}
Then $I_2$ satisfies
\begin{align}
I_2 &= \frac{\bigg \lvert Z_{k,\hat{w}_k}(Q_{k-1})-Z_{k}(Q_{k-1}) \bigg \rvert}{2Z_{k,\hat{w}_k}(Q_{k-1})} \nonumber \\
&= \frac{1}{2Z_{k,\hat{w}_k}(Q_{k-1})} \nonumber \\
&\qquad \cdot \bigg \lvert \int_{\X}h_k(y_k,x)\left(\int_{\X}\left(T_k(x,x_{k-1},\hat{w}_k)-T_k(x,x_{k-1},\bar{w})\right)q_{k-1}(x_{k-1})dx_{k-1}\right)dx \bigg \rvert \nonumber \\
&\leq  \frac{1}{2Z_{k,\hat{w}_k}(Q_{k-1})} \nonumber \\
&\qquad \cdot \int_{\X}h_k(y_k,x)\left(\int_{\X}\bigg \lvert T_k(x,x_{k-1},\hat{w}_k)-T_k(x,x_{k-1},\bar{w})\bigg \rvert q_{k-1}(x_{k-1})dx_{k-1}\right)dx \nonumber \\
& \leq \frac{\tilde{C}_{VI}(y_k;k)\lVert \hat{w}_k - \bar{w} \rVert_m }{2Z_{k,\hat{w}_k}(Q_{k-1})}.
\label{ieq:bound_I_2}
\end{align}
Substituting Equations~\eqref{ieq:bound_I_1} and~\eqref{ieq:bound_I_2} into Equation~\eqref{ieq:vi:type2:tv:total} yields:
\begin{align}
d_{TV}(Q_k^*,Q_{k,\hat{w}_k}^*) \leq \frac{\tilde{C}_{VI}(y_k;k)\lVert \hat{w}_k - \bar{w} \rVert_m }{Z_{k,\hat{w}_k}(Q_{k-1})}.
\label{ieq:vi:tyep2:tv_part1}
\end{align}

Subsequently, we derive an upper bound for $d_{H}(Q_k^*,Q_{k,\hat{w}_k}^*)$. Let $q_k^*$ and $q_{k,\hat{w}_k}^*$ denote the Lebesgue densities of $Q_k^*$ and $Q_{k,\hat{w}_k}^*$, respectively. Let $\tilde{Q}_k^*$ and $\tilde{Q}_{k,\hat{w}_k}^*$ be two measures with Lebesgue densities $\tilde{q}_k^*(x)= Z_k(Q_{k-1})q_k^*(x)$ and $\tilde{q}_{k,\hat{w}_k}^*= Z_{k,\hat{w}_k}(Q_{k-1})q_{k,\hat{w}_k}^*(x)$. Then the distance $\tilde{d}_H(\tilde{Q}_k^*,\tilde{Q}_{k,\hat{w}_k}^*)$ satisfies
\begin{align*}
\tilde{d}_H^2(\tilde{Q}_k^*,\tilde{Q}_{k,\hat{w}_k}^*) &= \frac{1}{2}\int_{\X}\left({\sqrt{h_k(y_k,x)\int_{\X}T_k(x,x_{k-1},\hat{w}_k)q_{k-1}(x_{k-1})dx_{k-1}}}\right. \nonumber \\
&\qquad \left.{-\sqrt{h_k(y_k,x)\int_{\X}T_k(x,x_{k-1},\bar{w})q_{k-1}(x_{k-1})dx_{k-1}}}\right)^2dx \nonumber \\
&=\frac{1}{2}\int_{\X}h_k(y_k,x)\left({\sqrt{\int_{\X}T_k(x,x_{k-1},\hat{w}_k)q_{k-1}(x_{k-1})dx_{k-1}}}\right. \nonumber \\
&\qquad \left.{-\sqrt{\int_{\X}T_k(x,x_{k-1},\bar{w})q_{k-1}(x_{k-1})dx_{k-1}}}\right)^2dx.
\end{align*}
As we have
\begin{align*}
&\left(\sqrt{\int_{\X}T_k(x,x_{k-1},\hat{w}_k)q_{k-1}(x_{k-1})dx_{k-1}}-\sqrt{\int_{\X}T_k(x,x_{k-1},\bar{w})q_{k-1}(x_{k-1})dx_{k-1}}\right)^2 \\
&\quad =\int_{\X}T_k(x,x_{k-1},\hat{w}_k)q_{k-1}(x_{k-1})dx_{k-1}+\int_{\X}T_k(x,x_{k-1},\bar{w})q_{k-1}(x_{k-1})dx_{k-1} \\
&\qquad \quad- 2\sqrt{\int_{\X}T_k(x,x_{k-1},\hat{w}_k)q_{k-1}(x_{k-1})dx_{k-1}}\sqrt{\int_{\X}T_k(x,x_{k-1},\bar{w})q_{k-1}(x_{k-1})dx_{k-1}} \\
& \quad \leq \int_{\X}T_k(x,x_{k-1},\hat{w}_k)q_{k-1}(x_{k-1})dx_{k-1}+\int_{\X}T_k(x,x_{k-1},\bar{w})q_{k-1}(x_{k-1})dx_{k-1} \\
&\qquad \quad- 2\int_{\X}\sqrt{T_k(x,x_{k-1},\hat{w}_k)T_k(x,x_{k-1},\bar{w})}q_{k-1}(x_{k-1})dx_{k-1}\\
& \quad= \int_{\X}q_{k-1}(x_{k-1})\left(\sqrt{T_k(x,x_{k-1},\hat{w}_k)}-\sqrt{T_k(x,x_{k-1},\bar{w})}\right)^2dx_{k-1},
\end{align*}
where the first inequality is obtained according to Hölder's inequality, $\tilde{d}_H(\tilde{Q}_k^*,\tilde{Q}_{k,\hat{w}_k}^*)$ satisfies
\begin{align*}
&\tilde{d}_H^2(\tilde{Q}_k^*,\tilde{Q}_{k,\hat{w}_k}^*) \\
&\; \leq \frac{1}{2}\int_{\X}h_k(y_k,x)\left(\int_{\X}q_{k-1}(x_{k-1})\left(\sqrt{T_k(x,x_{k-1},\hat{w}_k)}-\sqrt{T_k(x,x_{k-1},\bar{w})}\right)^2dx_{k-1}\right)dx.
\end{align*}
The term $\left(\sqrt{T_k(x,x_{k-1},\hat{w}_k)}-\sqrt{T_k(x,x_{k-1},\bar{w})}\right)^2$ satisfies
\begin{align*}
&\left(\sqrt{T_k(x,x_{k-1},\hat{w}_k)}-\sqrt{T_k(x,x_{k-1},\bar{w})}\right)^2 \\
&\quad \leq \bigg \lvert \sqrt{T_k(x,x_{k-1},\hat{w}_k)}-\sqrt{T_k(x,x_{k-1},\bar{w})} \bigg \rvert \cdot \bigg \lvert \sqrt{T_k(x,x_{k-1},\hat{w}_k)}+\sqrt{T_k(x,x_{k-1},\bar{w})} \bigg \rvert \\
&\qquad = \bigg \lvert T_k(x,x_{k-1},\hat{w}_k)-T_k(x,x_{k-1},\bar{w}) \bigg \rvert \\
&\qquad \leq g_{\text{Lip}}(x;k)\lVert \hat{w}_k - \bar{w} \rVert_m.
\end{align*}
It follows that
\begin{align*}
\tilde{d}_H^2(\tilde{Q}_k^*,\tilde{Q}_{k,\hat{w}_k}^*) 
&\leq \frac{1}{2}\lVert \hat{w}_k - \bar{w} \rVert_m \int_{\X}h_k(y_k,x)g_{\text{Lip}}(x;k)dx \nonumber \\
&= \frac{1}{2}\tilde{C}_{VI}(y_k;k)\lVert \hat{w}_k - \bar{w} \rVert_m.
\end{align*}
Therefore, by Lemma~\ref{lemma:unnormalized2normalized:H}, we have
\begin{align}
d_H(Q_k^*,Q_{k,\hat{w}_k}^*) &\leq \frac{2}{\sqrt{Z_{k,\hat{w}_k}(Q_{k-1})}}\tilde{d}_H(\tilde{Q}_k^*,\tilde{Q}_{k,\hat{w}_k}^*) \nonumber \\
&= \sqrt{\frac{2\tilde{C}_{VI}(y_k;k)\lVert \hat{w}_k - \bar{w} \rVert_m}{Z_{k,\hat{w}_k}(Q_{k-1})}}.
\label{ieq:vi:tyep2:H_part1}
\end{align}

We next derive an upper bound of $d(Q_{k,\hat{w}_k}^*,Q_k)$. The KL divergence between $Q_k$ and $Q_{k,\hat{w}_k}^*$ satisfies:
\begin{align}
&KL(Q_k \lVert Q_{k,\hat{w}_k}^*) = \E_{q_k(x_k)}\left[\log \frac{q_k(x_k)}{q_{k,\hat{w}_k}^*(x_k)}\right] \nonumber \\
&\qquad = \E_{q_k(x_k)}\left[\log\left(\int_{\X}p_{\mathcal{N}}(y_k \mid \Phi_k(x_k),\Gamma)q_{k,\hat{w}_k}^{*-}(x_k)dx_k\right)\right] \nonumber \\
&\qquad \qquad -\E_{q_k(x_k)}\left[\log \left(p_{\mathcal{N}}(y_k \mid \Phi_k(x_k),\Gamma)q_{k,\hat{w}_k}^{*-}(x_k)\right)\right] + \E_{q_k(x_k)}[\log q_k(x_k)] \nonumber \\
&\qquad = \log\left(\int_{\X}p_{\mathcal{N}}(y_k \mid \Phi_k(x_k),\Gamma)q_{k,\hat{w}_k}^{*-}(x_k)dx_k\right) - \mathcal{L}_k(Q_k,\hat{w}_k).
\label{ieq:vi:type2:kl}
\end{align}
The first term on the right-hand side of Equation~\eqref{ieq:vi:type2:kl} satisfies
\begin{align*}
\log\left(\int_{\X}p_{\mathcal{N}}(y_k \mid \Phi_k(x_k),\Gamma)q_{k,\hat{w}_k}^{*-}(x_k)dx_k\right) \leq -\frac{r}{2}\log(2\pi)-\frac{1}{2}\log(\det(\Gamma)).
\end{align*}
As we also have $\mathcal{L}_k(Q_k,\hat{w}_k) \leq \epsilon_k$, the KL divergence $KL(Q_k \lVert Q_{k,\hat{w}_k}^*)$ satisfies
\begin{equation*}
KL(Q_k \lVert Q_{k,\hat{w}_k}^*) \leq -\frac{r}{2}\log(2\pi)-\frac{1}{2}\log(\det(\Gamma)) - \epsilon_k.
\end{equation*}
It follows that
\begin{align}
d_{TV}(Q_k ,Q_{k,\hat{w}_k}^*) \leq \frac{1}{\sqrt{2}}\sqrt{KL(Q_k \lVert Q_{k,\hat{w}_k}^*)} \leq \frac{1}{\sqrt{2}}\sqrt{-\frac{r}{2}\log(2\pi)-\frac{1}{2}\log(\det(\Gamma)) - \epsilon_k},
\label{ieq:vi:tyep2:tv_part2}
\end{align}
and 
\begin{align}
d_{H}(Q_k ,Q_{k,\hat{w}_k}^*) \leq \frac{1}{\sqrt{2}}\sqrt{KL(Q_k \lVert Q_{k,\hat{w}_k}^*)} \leq \frac{1}{\sqrt{2}}\sqrt{-\frac{r}{2}\log(2\pi)-\frac{1}{2}\log(\det(\Gamma)) - \epsilon_k}.
\label{ieq:vi:tyep2:H_part2}
\end{align}
Combining Equation~\eqref{ieq:vi:tyep2:tv_part1} with  Equation~\eqref{ieq:vi:tyep2:tv_part2} gives
\begin{equation}
d_{TV}(Q_k^*,Q_k) \leq \frac{\tilde{C}_{VI}(y_k;k)\lVert \hat{w}_k - \bar{w} \rVert_m }{Z_{k,\hat{w}_k}(Q_{k-1})}+\frac{1}{\sqrt{2}}\sqrt{-\frac{r}{2}\log(2\pi)-\frac{1}{2}\log(\det(\Gamma)) - \epsilon_k}.
\label{ieq:vi:tyep2:tv_final}
\end{equation}
Similarly, the bound of $d_H(Q_k^*,Q_k)$ can be obtained by combining Equation~\eqref{ieq:vi:tyep2:H_part1} and Equation~\eqref{ieq:vi:tyep2:H_part2}:
\begin{align}
d_{H}(Q_k^*,Q_k) &\leq \sqrt{\frac{2\tilde{C}_{VI}(y_k;k)\lVert \hat{w}_k - \bar{w} \rVert_m}{Z_{k,\hat{w}_k}(Q_{k-1})}} +\frac{1}{\sqrt{2}}\sqrt{-\frac{r}{2}\log(2\pi)-\frac{1}{2}\log(\det(\Gamma)) - \epsilon_k}.
\label{ieq:vi:tyep2:H_final}
\end{align}
Combining Theorem~\ref{thm:learning_error} with Equations~\eqref{ieq:vi:type2:C},~\eqref{ieq:vi:tyep2:tv_final}, and~\eqref{ieq:vi:tyep2:H_final} completes the proof the Corollary~\ref{coro:example2} for the total variation and Hellinger distances.

According to~\cite{choose_metric}, if the metric space $(\X,d_{\X})$ satisfies
\begin{equation*}
\sup_{x,x^\prime \in \X}d_{\X}(x,x^\prime) = D < \infty,
\end{equation*}
then we have
\begin{equation}
W_1(P_k,Q_k) \leq Dd_{TV}(P_k,Q_k).
\label{ieq:vi:tyep2:W_final}
\end{equation}
The conclusion of Corollary~\ref{coro:example2} for the $1$-Wasserstein distance is proved by combining the results of Corollary~\ref{coro:example2} for the total variation distance and inequality~\eqref{ieq:vi:tyep2:W_final}.

\end{proof}

\vskip 0.2in
\bibliography{ref}

\end{document}